\documentclass{article}
\usepackage{amssymb,latexsym,amsmath,amscd,amsthm,amsfonts,wasysym,amsmath}
\usepackage{graphicx}
\usepackage{enumerate}

\newtheorem{theorem}{Theorem}[section]
\newtheorem{lemma}[theorem]{Lemma}
\newtheorem{corollary}[theorem]{Corollary}
\newtheorem{proposition}[theorem]{Proposition}

\theoremstyle{definition}
\newtheorem{definition}[theorem]{Definition}
\newtheorem{remark}[theorem]{Remark}
\newtheorem{example}[theorem]{Example}
\newtheorem{notation}[theorem]{Notation}

\newcommand{\Coker}{{ \rm Coker }}
\newcommand{\End}{{ \rm End }}

\newcommand{\Ext}{{ \rm Ext }}
\newcommand{\Hom}{{ \rm Hom }}

\newcommand{\Res}{{ \rm Res }}

\newcommand{\Ker}{{ \rm Ker }}
\newcommand{\Mod}{{ \rm Mod }}

\newcommand{\Tor}{{ \rm Tor }}

\newcommand{\rad}{{ \rm rad }}

\newcommand{\add}{{ \rm add }}

\newcommand{\g}{\hbox{-}}

\newcommand{\hueca}[1]{\mathbb{#1}}
\renewcommand{\mod}{{\rm mod }}

\newcommand{\h}{{\rm I\hskip-2pt h}}
\newcommand{\cirmin }{\circleddash}

\newcommand{\rightmap}[1]{\smash{\mathop{\hbox to
20pt{\rightarrowfill}}\limits^{#1}}}
\newcommand{\leftmap}[1]{\smash{\mathop{\hbox to
20pt{\leftarrowfill}}\limits^{#1}}}

\newcommand{\longrightmap}[1]{\smash{\mathop{\hbox to
4cm{\rightarrowfill}}\limits^{#1}}}
\newcommand{\longleftmap}[1]{\smash{\mathop{\hbox to
4cm{\leftarrowfill}}\limits^{#1}}}
\newcommand{\medrightmap}[1]{\smash{\mathop{\hbox to
2cm{\rightarrowfill}}\limits^{#1}}}
\newcommand{\medleftmap}[1]{\smash{\mathop{\hbox to
2cm{\leftarrowfill}}\limits^{#1}}}

\newcommand{\shortlmapup}[1]
{\uparrow\rlap{$\vcenter{\hbox{$\scriptstyle#1$}}$}}

\newcommand{\shortlmapdown}[1]
{\downarrow\rlap{$\vcenter{\hbox{$\scriptstyle#1$}}$}}

%%%%%%%%%%%%%%%%%%%%%%%
\newcommand{\dobleflecha}[2]{\ \smash{\mathop{
   \raise 3pt \hbox to 40pt{\rightarrowfill}\hskip-40pt
\lower 3pt
   \hbox to 40pt{\rightarrowfill}}\limits^{#1}_{#2}}\ }
%%%%%%%%%%%%%%%%%%%%%%%%%%%

%

%\setlength{\textheight}{23.5cm} \setlength{\evensidemargin}{0cm}
%\setlength{\oddsidemargin}{0cm} \setlength{\topmargin}{-1.4cm}
%\setlength{\textwidth}{16.5cm} \setlength{\parskip}{0.25cm}

\input xy
\xyoption{all}

   \begin{document}
   \date{}
   \setcounter{page}{1}

\title{\bf Tame and wild theorem for the category of
filtered by standard modules}
\vskip-1cm
\author{R. Bautista, E. P\'erez and L. Salmer\'on}
\vskip-1cm

\maketitle

\renewcommand{\thefootnote}{}

\footnote{2010 \emph{Mathematics Subject Classification}:
 16G60, 16G70, 16G20.}

\footnote{\emph{Keywords and phrases}: differential tensor
algebras, ditalgebras,  quasi-hereditary algebras, standardly stratified algebras, homological systems,
  tame and wild algebras, reduction functors.}

  \begin{abstract}
\noindent
We introduce the notion of interlaced weak ditalgebras and apply
reduction procedures to
their module categories to prove a tame-wild dichotomy for the category
${\cal F}(\Delta)$ of $\Delta$-filtered modules for an arbitrary finite homological system $({\cal P},\leq,\{\Delta_i\}_{i\in {\cal P}})$.  This includes the case of standardly stratified algebras.  Moreover, in the
tame case, we show
that given a fixed dimension
$d$, for every $d$-dimensional indecomposable module $M\in
{\cal F}(\Delta)$, with the only possible exception of those lying in a finite
number
of isomorphism classes,
the module $M$ coincides with its Auslander-Reiten translate in ${\cal
F}(\Delta)$.
Our proofs rely on the equivalence of ${\cal F}(\Delta)$
with
the module category of some special type of ditalgebra.
\end{abstract}

\section{Introduction}
 Denote by $k$ a fixed ground field, which will be assumed
 to be algebraically closed all over this work.
 Whenever we consider a $k$-algebra or a bimodule,
 we always assume that the field $k$ acts centrally on them.
 Given an algebra $\Lambda$, we denote
 by $\Lambda\g\Mod$  the
 category of left $\Lambda$-modules,
 and by $\Lambda\g\mod$ its full subcategory of finitely generated  modules.

We recall that a \emph{preordered set} $({\cal P},\leq)$ is a non-empty set ${\cal P}$ equipped with a reflexive and transitive relation $\leq$.
  Two elements $i,j\in {\cal P}$ are \emph{equivalent} iff $i\leq j$ and $j\leq i$. In this case, we write $i\sim j$.
 We denote with $\overline{{\cal P}}={\cal P}/\sim$ the set of equivalence
  classes of ${\cal P}$ modulo the equivalence relation $\sim$. For any $i\in {\cal P}$, denote by $\overline{i}$ its equivalence class. Then, $\overline{\cal P}$ is a partially ordered set with the relation defined by $\overline{i}\leq \overline{j}$ iff $i\leq j$.

 We recall the following terminology from \cite{MSX} and \cite{hsb}.

 \begin{definition}\label{D: homological system y F(Delta)}
  Given a finite-dimensional $k$-algebra $\Lambda$, a \emph{(finite) homological system $({\cal P},\leq,\{\Delta_i\}_{i\in {\cal P}})$} for $\Lambda$ consists of
  a finite preordered set $({\cal P},\leq)$ and
  a family of pairwise non-isomorphic indecomposable finite-dimensional $\Lambda$-modules $\{\Delta_i\}_{i\in {\cal P}}$ satisfying the following two conditions:
  \begin{enumerate}
  \item $\Hom_\Lambda(\Delta_i,\Delta_j)\not=0$ implies $i\leq j$;
  \item $\Ext^1_\Lambda(\Delta_i,\Delta_j)\not=0$ implies $i\leq j$ and $i\not\sim j$.
  \end{enumerate}

  We write  $\Delta:=\{\Delta_i\mid i\in {\cal P}\}$ and denote by ${\cal F}(\Delta)$  the full subcategory of $\Lambda\g\mod$ consisting of the trivial module  and all those $M\in \Lambda\g\mod$ which admit a \emph{$\Delta$-filtration},  that is a filtration of submodules
$$0=M_t\subseteq M_{t-1}\subseteq \cdots\subseteq M_1\subseteq M_0=M$$
such that $M_j/M_{j+1}$ is isomorphic to some module in $\Delta$, for each $j\in [0,t-1]$.

 A finite homological system ${\cal H}=({\cal P},\leq,\{\Delta_i\}_{i\in {\cal P}})$ for a finite-dimensional $k$-algebra $\Lambda$ is called \emph{admissible} if $\Lambda\in {\cal F}(\Delta)$ and the number of isoclasses of indecomposable projective $\Lambda$-modules coincides with the cardinality of ${\cal P}$.

 In this case, the modules in the family $\Delta$ are determined up to isomorphism and are known as \emph{the standard modules};  $\Lambda$ is called a \emph{pre-standardly stratified algebra}, see \cite{hsb}(2.11).

We recall also that a pre-standardly stratified algebra $\Lambda$, with preordered index set $({\cal P},\leq)$, is called \emph{standardly stratified} if $({\cal P},\leq)$ is a  partial order. A standardly stratified algebra  $\Lambda$, equipped with the poset $({\cal P},\leq)$, is \emph{quasi-hereditary} iff  $\End_\Lambda(\Delta_i)\cong k$, for each $i\in {\cal P}$.
  \end{definition}

In the following, when we say that \emph{almost every} object in a class ${\cal
M}$ of
objects in a given category satisfies some property, we mean that every object
in
${\cal M}$ has this property, with the possible exception of those lying in a
finite union of isoclasses of ${\cal M}$.

\begin{definition}\label{D: tame and wild}
Let $\Lambda$ be a finite-dimensional algebra and ${\cal C}$ a full subcategory
of
$\Lambda\g\mod$ closed under direct summands and direct sums. Then,
\begin{enumerate}
 \item The category ${\cal C}$ is called \emph{tame} iff, for each dimension
$d$, there are
 rational algebras $\Gamma_1,\ldots,\Gamma_{t_d}$ and bimodules
$Z_1,\ldots,Z_{t_d}$, where each $Z_i$ is a
 $\Lambda\g\Gamma_i$-bimodule,
 which is free of finite rank as a $\Gamma_i$-module, such that almost
every indecomposable
$d$-dimensional
 module $M$ in ${\cal C}$  is of the form $M\cong Z_i\otimes_{\Gamma_i}S$,
 for some $i$ and some simple  $\Gamma_i$-module $S$.
  \item The category ${\cal C}$ is called \emph{strictly tame} iff,
  for each dimension $d$, there are
 rational algebras $\Gamma_1,\ldots,\Gamma_{t_d}$ and bimodules
$Z_1,\ldots,Z_{t_d}$, where each $Z_i$ is a
 $\Lambda\g\Gamma_i$-bimodule,
 which is free of finite rank as a $\Gamma_i$-module, such that almost
every indecomposable
$d$-dimensional
 module $M$ in ${\cal C}$  is of the form $M\cong Z_i\otimes_{\Gamma_i}N$,
 for some $i$ and some indecomposable  $\Gamma_i$-module $N$. Moreover, for each $i\in [1,t_d]$,
 the functor $Z_i\otimes_{\Gamma_i}-:\Gamma_i\g\mod\rightmap{}\Lambda\g\mod$
 preserves isoclasses and indecomposables, and its image lies within
 ${\cal C}$.
 \item The category ${\cal C}$ is \emph{wild} iff there is a $\Lambda\g
k\langle
x,y\rangle$-bimodule $Z$, free of finite rank as a right $k\langle
x,y\rangle$-module, such that $Z\otimes_{k\langle x,y\rangle}-:k\langle
x,y\rangle\g\mod\rightmap{}{\cal C}$
preserves indecomposables and isomorphism classes.
\end{enumerate}
\end{definition}

The preceding definitions are very natural generalizations of the usual definitions of tameness and wildness for finite-dimensional algebras over  algebraically closed fields. This notion of wildeness for subcategories of modules was already considered explicitely in \cite{S0}(\S14.2) and \cite{S1}.
The notion  of tameness in the first item, for particular cases, was somehow considered in
  \cite{S2} and \cite{BKM}. Precise definitions were given in \cite{K}(2.9), but here we do not require that ${\cal C}$ is of infinite representation type to be of tame type.

It is not hard to see that strict tameness implies tameness.

Given a general homological system $({\cal P},\leq,\{\Delta_i\}_{i\in {\cal P}})$ for a finite-dimensional algebra
$\Lambda$, it is well known that the
category ${\cal F}(\Delta)$ of $\Delta$-filtered modules is a subcategory of
$\Lambda\g\mod$ closed under direct sums and direct summands, see \cite{MSX}(3.16) and \cite{P}.
The question of whether a tame and wild dichotomy theorem holds for the
category ${\cal F}(\Delta)$ of filtered by standard modules for a general
quasi-hereditary algebra $\Lambda$ is very natural and was explicitely
raised in \cite{K}. In 2017, we uploaded to arXiv an affirmative answer to this question, see \cite{bpsqh}. This article is a revised version of that one, where some of the arguments are simplified, but furthermore, we generalize it to prove the tame-wild dichotomy for ${\cal F}(\Delta)$ not only for the case of  pre-standardly stratified algebras, but for arbitrary homological systems.
We prove the following.

\begin{theorem}\label{T: main theorem qh-algebas}
 Assume that the ground field $k$ is algebraically closed, let $\Lambda$ be a
finite-dimensional $k$-algebra and $({\cal P},\leq,\{\Delta_i\}_{i\in {\cal P}})$ any homological system for $\Lambda$. Then, the category of $\Delta$-filtered  modules  ${\cal F}(\Delta)$ is either tame or wild,
but not both. Moreover, the category ${\cal F}(\Delta)$ is tame iff it is
strictly tame.
\end{theorem}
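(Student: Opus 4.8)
The plan is to transfer the problem from the module category $\mathcal{F}(\Delta)$ to the module category of a ditalgebra and then apply the reduction machinery for ditalgebras. The starting point is the theorem of Koenig, Külshammer and Ovsienko referred to in the abstract: it produces a ditalgebra $\mathcal{A}$ (in fact one of the special ``interlaced weak ditalgebras'' introduced in this paper) together with an equivalence of exact categories between $\mathcal{A}\g\Mod$ (or the relevant finite-dimensional part) and $\mathcal{F}(\Delta)$. This equivalence must be compatible with dimension vectors up to a fixed linear bookkeeping, so that a tameness/wildness statement for $\mathcal{A}\g\mod$ translates into the corresponding statement for $\mathcal{F}(\Delta)$, and so that the parametrizing bimodules $Z_i$ and the functors $Z_i\otimes_{\Gamma_i}-$ on the ditalgebra side are carried to honest $\Lambda\g\Gamma_i$-bimodules and functors into $\mathcal{F}(\Delta)\subseteq\Lambda\g\mod$.

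Next, I would prove the tame--wild dichotomy directly at the level of the ditalgebra $\mathcal{A}$. The key tool is the battery of reduction functors (edge reduction, loop reduction, unravelling, deletion of idempotents, regularization, etc.) which, applied to a layered/weak ditalgebra, either terminate in a situation from which one reads off that $\mathcal{A}\g\mod$ is tame, or else at some finite stage produce a ``wild'' configuration — a minimal subditalgebra whose module category admits the $k\langle x,y\rangle$ embedding. One fixes a dimension $d$ and runs the reduction on all modules of that dimension; since each reduction functor is full, faithful on objects up to iso, preserves indecomposables, and its image is controlled, finitely many reductions suffice for each $d$ by the usual boundedness of the length of reduction chains on a fixed dimension. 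In the non-wild case this yields finitely many rational algebras $\Gamma_i$ and bimodules $Z_i$ parametrizing almost all $d$-dimensional indecomposables; moreover the reduction functors automatically give back the extra properties (preservation of isoclasses and indecomposables, image inside the subcategory), so the very same $Z_i$ witness \emph{strict} tameness — this is why tame $\Leftrightarrow$ strictly tame. The ``not both'' assertion follows because wildness would force, via composition with the parametrizing functor, a tame-representation-type algebra to have wild subquotients, contradicting the (already known) tame--wild dichotomy for the base bricks, e.g. via a dimension/parameter-count argument on $k\langle x,y\rangle$-modules.

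The main obstacle, and the reason the paper needs the new notion of \emph{interlaced weak ditalgebras}, is that the ditalgebra coming out of the Koenig--Külshammer--Ovsienko construction is \emph{not} of the standard layered form to which the classical reduction theory applies: it has a more complicated differential, and the subcategory of $\mathcal{A}\g\mod$ corresponding to $\mathcal{F}(\Delta)$ is cut out by extra conditions. So a substantial part of the argument is to check that the reduction functors can still be defined on this class of ditalgebras, that they send interlaced weak ditalgebras to interlaced weak ditalgebras (so the induction goes through), and that they are compatible with the extra cutting-out conditions — i.e. that one stays inside the image of $\mathcal{F}(\Delta)$ throughout. Once the reduction calculus is available in the interlaced weak setting, the tame--wild dichotomy for $\mathcal{A}\g\mod$ (hence for $\mathcal{F}(\Delta)$) follows by the same inductive scheme as in the classical case, and the strictness and ``not both'' statements come along for free.

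Finally, for the Auslander--Reiten part of the paper's results (the claim that almost every indecomposable $M\in\mathcal{F}(\Delta)$ of fixed dimension $d$ is $\tau$-periodic in $\mathcal{F}(\Delta)$ in the tame case), I would note that after the reductions the generic $d$-dimensional indecomposables are of the form $Z_i\otimes_{\Gamma_i}S$ with $S$ a one-dimensional module over a rational algebra $\Gamma_i$, and such ``homogeneous'' modules are fixed by the relative AR-translate; transporting this back through the KKO-equivalence, which is exact and hence compatible with relative almost split sequences in $\mathcal{F}(\Delta)$, gives $\tau_{\mathcal{F}(\Delta)}M\cong M$ for all but finitely many isoclasses. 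This uses only that the equivalence is exact and that the exceptional finite set at each reduction step accumulates to a finite set in dimension $d$.
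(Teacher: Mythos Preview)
Your outline is correct and follows essentially the same route as the paper: use the KKO equivalence to pass to a ditalgebra $\dot{\mathcal{A}}$, lift it to a directed triangular interlaced weak ditalgebra $(\mathcal{A},I)$ on which the reduction calculus can be run, reduce (in the non-wild case) to a minimal ditalgebra to read off strict tameness, and transfer the parametrizing bimodules back to $\mathcal{F}(\Delta)$ via $\mathcal{R}(\dot{\mathcal{A}})\otimes_{\dot A}-$ (this is the ``linear bookkeeping'' you mention; it works because $\mathcal{R}(\dot{\mathcal{A}})$ is finitely generated projective on the right). One small imprecision: the KKO equivalence already identifies \emph{all} of $\dot{\mathcal{A}}\g\mod$ with $\mathcal{F}(\Delta)$, not a subcategory cut out by side conditions---the ideal $I$ in $(\mathcal{A},I)$ is there so that $\dot{\mathcal{A}}\cong\mathcal{A}/J$ is presented as a quotient of a \emph{layered} weak ditalgebra, which is what makes the reductions available; you then carry $I$ through each reduction step rather than ``staying inside'' a subcategory.
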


An important precedent to our result is \cite{BKM}, where Th. Br\"ustle, S. Koenig, and V. Mazorchuk prove a dichotomy result in a special case of   quasi-hereditary algebra by explicitly classifying the tame and wild subcategories of filtered modules.
D. Simson presented in \cite{S2} another  tame-wild dichotomy result for an interesting subcategory of modules over a commutative uniserial algebra.

By a well known result of C.M. Ringel, for a general homological system $({\cal P},\leq,\{\Delta_i\}_{i\in {\cal P}})$, the category ${\cal F}(\Delta)$ admits almost split
sequences, see \cite{Rin}. Whenever we refer here to almost split sequences in ${\cal F}(\Delta)$, we mean precisely in the sense of Ringel. We will prove the following statement, which is similar to a theorem of
Crawley-Boevey
for the category $\Lambda\g  \mod$, when $\Lambda$ is an arbitrary
finite-dimensional
tame algebra over an algebraically closed field, see \cite{CB1}.

\begin{theorem}\label{T: Crawley para F(Delta)}
  Assume that the ground field $k$ is algebraically closed, let $\Lambda$ be a finite-dimensional $k$-algebra and $({\cal P},\leq,\{\Delta_i\}_{i\in {\cal P}})$ any homological system for $\Lambda$. If the category
  of $\Delta$-filtered modules ${\cal F}(\Delta)$ is tame, then,
for each $d\in\hueca{N}$, almost
every $d$-dimensional indecomposable module $M\in {\cal F}(\Delta)$ admits an
almost split
sequence in
${\cal F}(\Delta)$ of the form
$0\rightmap{}M\rightmap{}E\rightmap{}M\rightmap{}0.$
\end{theorem}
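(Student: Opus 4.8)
The plan is to transport the problem from $\mathcal{F}(\Delta)$ to the module category of an interlaced weak ditalgebra $\mathcal{A}$, using the theorem of Koenig, K\"ulshammer, and Ovsienko that realizes $\mathcal{F}(\Delta)$ as (equivalent to) $\mathcal{A}\g\Mod$ for a suitable ditalgebra $\mathcal{A}$, and then to run the reduction machinery developed earlier in the paper. First I would fix a dimension $d$ and invoke the tameness hypothesis together with Theorem \ref{T: main theorem qh-algebas} to obtain strict tameness: there are rational algebras $\Gamma_1,\dots,\Gamma_{t_d}$ and parametrizing $\Lambda\g\Gamma_i$-bimodules $Z_i$, free of finite rank over $\Gamma_i$, such that almost every $d$-dimensional indecomposable $M\in\mathcal{F}(\Delta)$ is of the form $Z_i\otimes_{\Gamma_i}N$ with $N$ indecomposable over $\Gamma_i$, and each functor $Z_i\otimes_{\Gamma_i}-$ preserves indecomposables, reflects isomorphisms, and lands in $\mathcal{F}(\Delta)$. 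Since a rational algebra is (a localization of) $k[x]$, its indecomposable finite-dimensional modules are the modules $k[x]/(p(x)^m)$; by discarding finitely many isoclasses in each $d$-dimensional layer I may assume $N$ runs over the one-dimensional $\Gamma_i$-modules $S_\lambda = \Gamma_i/(x-\lambda)$ for $\lambda$ outside a finite set, so that $M\cong Z_i\otimes_{\Gamma_i}S_\lambda =: M_\lambda$ lies in a one-parameter family inside $\mathcal{F}(\Delta)$.

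The core of the argument is then to show that for such a generic member $M_\lambda$ of a one-parameter family, the almost split sequence in $\mathcal{F}(\Delta)$ ending at $M_\lambda$ — which exists by Ringel's theorem — has the form $0\to M_\lambda \to E \to M_\lambda \to 0$, i.e. the Auslander--Reiten translate $\tau_{\mathcal{F}(\Delta)}(M_\lambda)$ is isomorphic to $M_\lambda$ itself. The key step is to compute this translate along the parametrizing functor. I would argue that $Z_i\otimes_{\Gamma_i}-$, being part of the reduction data, is compatible with the relevant homological functors up to the finitely many degeneracies one always throws away: on the ditalgebra side a one-parameter family arises from a bimodule over a "minimal" ditalgebra built on a rational algebra $\Gamma$, and for such families the Auslander--Reiten translate within the module category of the ditalgebra is computed by the same mechanism as for $k[x]$-modules (cf. the homogeneous tubes picture), which fixes the one-dimensional modules $S_\lambda$ up to a shift $\lambda\mapsto\lambda'$ by an automorphism of $\Gamma_i$; since $\Gamma_i$ is a rational algebra and $S_\lambda\cong S_{\lambda'}$ as $\Gamma_i$-modules exactly when $\lambda=\lambda'$ off the finite exceptional set where the family is not "homogeneous," one gets $\tau(S_\lambda)\cong S_\lambda$ for almost all $\lambda$, hence $\tau_{\mathcal{F}(\Delta)}(M_\lambda)\cong Z_i\otimes_{\Gamma_i}\tau(S_\lambda)\cong Z_i\otimes_{\Gamma_i}S_\lambda\cong M_\lambda$ for almost every $\lambda$. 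Finally I would note that the union over $i\in[1,t_d]$ of the finitely many exceptional $\lambda$'s (together with the finitely many $M$ not captured by any family) is again a finite set of isoclasses, which gives the "almost every" conclusion for the fixed dimension $d$; this is exactly the statement.

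The main obstacle I anticipate is the commutation "$\tau_{\mathcal{F}(\Delta)}(Z_i\otimes_{\Gamma_i}-)\cong Z_i\otimes_{\Gamma_i}\tau(-)$" up to finitely many exceptions: almost split sequences in $\mathcal{F}(\Delta)$ are governed by the Ext-duality coming from Ringel's construction, and one must check that the reduction functor $Z_i\otimes_{\Gamma_i}-$, together with the equivalence $\mathcal{F}(\Delta)\simeq\mathcal{A}\g\Mod$, carries almost split sequences to almost split sequences (equivalently, is an exact functor between exact categories that preserves the relevant relative Ext groups and minimal almost split maps) — a statement that typically holds only after localizing $\Gamma_i$ and discarding finitely many modules, so the bookkeeping of which finite sets get excluded at each stage must be done carefully. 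A secondary point is to make sure the reduction procedures for \emph{interlaced weak} ditalgebras, introduced in this paper, enjoy the same $\tau$-compatibility as in the classical ditalgebra setting; I would reduce this to the behaviour of $\tau$ under the basic reduction steps (deletion of idempotents, regularization, unravelling, edge reduction) one composition at a time, invoking the already-established properties of those steps from the body of the paper.
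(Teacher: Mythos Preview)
Your approach has the right overall shape --- transport to a ditalgebra, use reductions --- but the pivot on which you hang the argument is the commutation $\tau_{\mathcal F(\Delta)}(Z_i\otimes_{\Gamma_i}-)\cong Z_i\otimes_{\Gamma_i}\tau(-)$, and you correctly flag this as the obstacle. It is a real one: the parametrizing functors $Z_i\otimes_{\Gamma_i}-$ from the strict-tameness definition preserve indecomposables and reflect isomorphisms, but they are not asserted to be full, nor to carry almost split sequences to almost split sequences. Establishing such a commutation, even generically, would require a separate argument about how $\tau_{\mathcal F(\Delta)}$ interacts with the Koenig--K\"ulshammer--Ovsienko equivalence and then with each individual reduction step; none of that is supplied by the body of the paper, and your sketch of checking it ``one composition at a time'' is not a proof.

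The paper's proof sidesteps this entirely by reversing the direction of transport. Rather than pushing $\tau$ forward through a parametrizing functor, it first proves a uniform dimension bound $\dim_k\tau_{\mathcal F(\Delta)}M\le c_0\dim_k M$, obtained by comparing the almost split sequence in $\mathcal F(\Delta)$ with the one in $\mathcal R(\mathcal Q)\g\mod$: the multiplication map $\mu:\mathcal R(\mathcal Q)\otimes_Q H\to H$ turns any right almost split morphism in $\mathcal R(\mathcal Q)\g\mod$ into one in $\mathcal F(\Delta)$ of controlled dimension, and minimality of $g$ then bounds $\dim_k E$ and hence $\dim_k\tau M$. With this bound in hand one takes the \emph{single} full and faithful reduction functor $F:\mathcal B\g\mod\to\mathcal F(\Delta)$, $\mathcal B$ minimal, that covers all modules of dimension $\le c_0d$; then for almost every $d$-dimensional indecomposable $M$, all three terms $\tau M$, $E$, $M$ lie in the essential image of $F$. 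Full faithfulness lets one \emph{pull back} the whole almost split sequence to an almost split sequence in $\mathcal B\g\mod$, where such sequences are explicit (\cite{BSZ}(32.3)) and give $\tau M'\cong M'$ for almost all indecomposables of bounded dimension. The ingredient you are missing is precisely this a priori dimension bound on $\tau M$: without it you cannot know that $\tau M$ is captured by the same reduction as $M$, and you are forced into the delicate $\tau$-commutation question instead.
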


  In the quasi-hereditary case, Ziting Zeng has proved in \cite{Z}(6.5) that  the second Brauer-Thrall conjecture holds for ${\cal F}(\Delta)$.  That is,  if ${\cal F}(\Delta)$ is of infinite representation type, it admits
infinitely many non-isomorphic $d$-dimensional indecomposables,
for infinitely many $d\in \hueca{N}$. This makes the preceding result particularly interesting. The problem of existence of some special type of generic modules for the category ${\cal F}(\Delta)$ in the tame representation type case has been addressed in \cite{genqh}.

In the case of admissible homological systems, the proofs of our main results rely on the existence  of an equivalence from
${\cal F}(\Delta)$ to the module category of a suitable \emph{bocs with relations} (or, equivalently, a \emph{ditalgebra with relations}) which is constructed for pre-standardly stratified  algebras in
 \cite{hsb}. This construction generalizes the one given by
 S. Koenig, J. K\"ulshammer, and S. Ovsienko in \cite{KKO}, for the particular, but of cardinal importance, case of quasi-hereditary algebras.

The bocs with relations mentioned before can be presented as a quotient of an  \emph{interlaced weak ditalgebra $({\cal A},I)$}, see \cite{hsb}(5.22), a notion which will be defined rigorously in  \S\ref{S: Interlaced weak dits} and studied in the subsequent ones. Moreover, it will be a quotient of a ${\cal P}$-oriented interlaced weak ditalgebra $({\cal A},I)$ with special geometric features related to ${\cal P}$, which we describe below.

\begin{definition}\label{D: directed bigraph}
A \emph{biquiver} $\hueca{B}$ is a triple
$\hueca{B}=({\cal P},\hueca{B}_0,\hueca{B}_1)$ formed by a finite set ${\cal P}$
of \emph{points}, a finite set $\hueca{B}_0$ of \emph{full arrows}, and a
finite
set $\hueca{B}_1$ of \emph{dashed arrows}. Each \emph{arrow}
$\alpha\in\hueca{B}_0\cup \hueca{B}_1$ has a \emph{starting point}
$s(\alpha)\in {\cal P}$ and a \emph{terminal point} $t(\alpha)\in {\cal P}$.

Given a biquiver $\hueca{B}$ with $n$ points, we consider the
$k$-algebra $R$ defined as the product
$R:=k\times k\times\cdots\times k$ of $n$ copies of the ground field $k$.
Denote by $e_i$ the idempotent of $R$ with 1 in its $i^{th}$ coordinate and
0 in the others.
Then, we consider the vector space $W_0$ (resp. $W_1$) as the vector space with
basis
$\hueca{B}_0$ (resp. $\hueca{B}_1$). Set $W:=W_0\oplus W_1$. If we define
$e_j\alpha e_i=\delta_{i,s(\alpha)}\delta_{t(\alpha),j}\alpha$, we get a
natural structure of $R$-$R$-bimodule
on the space $W$, and $W=W_0\oplus W_1$  is an $R$-$R$-bimodule decomposition.
Then, we have the graded tensor algebra $T=T_R(W)$, with $[T]_0=T_R(W_0)$ and
$[T]_1=[T]_0W_1[T]_0$, which is called \emph{the tensor algebra of the biquiver  $\hueca{B}$}.

Notice that $T$ can be identified with the \emph{path algebra $k\hueca{B}$ of
the biquiver $\hueca{B}$}, with underlying vector space with basis the set of
paths (of any kind of arrows) of $\hueca{B}$ (including one trivial path for
each point $i\in {\cal P}$). Each primitive idempotent $e_i$  of $R$ is
identified with the trivial path at the point $i$.
The subspace $[k\hueca{B}]_u$ of $k\hueca{B}$ of homogeneous elements  of
degree
$u$ consists of the linear combinations of paths containing exactly $u$ dashed arrows.
\end{definition}

\begin{definition}\label{D: biquiver P-orientado}
Let ${\cal P}=({\cal P},\leq)$ be a finite preordered set and $\hueca{B}$ a biquiver with set of points ${\cal P}$, then we say that $\hueca{B}$ is \emph{${\cal P}$-oriented} iff
 \begin{enumerate}
  \item $i\leq j$, whenever there is a dashed arrow from $i$ to $j$;
  \item $\overline{i}<\overline{j}$, whenever there is a solid arrow from $i$ to $j$.
 \end{enumerate}
 A weak ditalgebra ${\cal A}=(T,\delta)$ will be called a \emph{${\cal P}$-oriented weak ditalgebra} iff its underlying tensor algebra $T$ is the tensor algebra of a ${\cal P}$-oriented biquiver.
 \end{definition}

 A  biquiver $\hueca{B}$ is called \emph{directed} iff it admits no oriented
(non-trivial) cycle (composed by any kind of arrows).

 Given  a finite preordered set ${\cal P}$, a biquiver  $\hueca{B}$ is \emph{${\cal P}$-quasi-directed} iff the only (non trivial) oriented
cycles (composed of any kind of arrows) consist of dashed arrows between points in
the same $\sim$ class. In case $\hueca{B}$ is ${\cal P}$-quasi-directed where ${\cal P}$ is a partially ordered set, then the only non trivial oriented
cycles consist of dashed loops. Thus, ${\cal P}$-oriented biquivers are ${\cal P}$-quasi-directed, but not necessarily directed.

Our proofs, for admissible homological systems, rest on the following two theorems. For the sake of precision, let us recall from \cite{BSZ} that a ditalgebra ${\cal Q}=(T,\delta)$  is a pair where $T$ is a differential tensor algebra with differential $\delta$. We denote by $Q:=[T]_0$ the subalgebra of $T$ formed by the  elements with  degree zero and by $V:=[T]_1$ the $Q$-$Q$-subbimodule of $T$ formed by the elements with degree one. The category of ${\cal Q}$-modules, denoted by ${\cal Q}\g\Mod$,  has objects the $Q$-modules and a morphism $f:M\rightmap{}N$ in ${\cal Q}\g\Mod$ is a pair $f=(f^0,f^1)$ with $f^0\in \Hom_k(M,N)$ and $f^1\in \Hom_{Q\g Q}(V,\Hom_k(M,N))$ such that $qf^0(m)=f^0(qm)+f^1(\delta(q))(m)$, for all $q\in Q$ and $m\in M$. The composition of ${\cal Q}\g\Mod$ is defined using again the differential $\delta$, see \cite{BSZ}(2.2). Given $M\in {\cal Q}\g\Mod$, we denote by $\End_{\cal Q}(M)$ the endomorphism algebra of $M$ in ${\cal Q}\g\Mod$.

By definition, the
\emph{right algebra} of ${\cal Q}$ is
$\Gamma:=\End_{\cal Q}(\,_QQ)^{op}$.
We have the inclusion $\phi:Q\rightmap{}\Gamma$, with
$\phi(q)=(\rho_q,0)$,
where $\rho_q$ denotes the right multiplication by $q$ in $Q$.
Thus $\Gamma$ is a $Q\g Q$-bimodule.
We denote by
${\cal I}(\Gamma)$   the
full subcategory of $\Gamma\g\mod$ with objects $M$ of the form
$M\cong \Gamma\otimes_QN$, for some $N\in Q\g\mod$, and call it the category
of \emph{induced modules}. In the following statement we collect some of the main results obtained in \cite{hsb} for pre-standardly stratified algebras,  generalizing those of \cite{KKO} for quasi-hereditary algebras, which play an essential role in our proofs of (\ref{T: main theorem qh-algebas}) and (\ref{T: Crawley para F(Delta)}).

The ditalgebra ${\cal Q}$ appearing in the next theorem is a quotient of a \emph{${\cal P}$-oriented weak ditalgebra ${\cal A}=(T,\delta)$}. Thus,  ${\cal A}$ has a
 ${\cal P}$-quasi-directed biquiver. In the quasi-hereditary case, we furthermore know that it is a directed biquiver.

\begin{theorem}\label{T: KKO}
 Let $k$ be an algebraically closed field and
  $\Lambda$  a finite-dimensional $k$-algebra with an admissible homological system $({\cal P},\leq,\{\Delta_i\}_{ i\in {\cal P}})$. Then,
 \begin{enumerate}
  \item There is a ditalgebra
 ${\cal Q}$, quotient of a ${\cal P}$-oriented  weak ditalgebra,  such that $\Lambda$ is Morita equivalent
 to the right algebra $\Gamma$ of ${\cal Q}$. In this situation,
 the right $Q$-module $\Gamma$ is finite-dimensional,
projective, and the following holds:
 \item Denote by  $\{S_i\}_{i\in {\cal P}}$ the non-isomorphic simple
 $Q$-modules. Then,
 the  algebra $\Gamma$ has the admissible homological system $({\cal P},\leq,\{\Delta'_i\}_{i\in {\cal P}})$, with
 $$\Delta'_i=\Gamma\otimes_QS_i, \hbox{ for each } i\in {\cal P}.$$
 \item There is an exact full and faithful functor
 $F:{\cal Q}\g\mod\rightmap{}\Gamma\g\mod,$ which restricts to
 an equivalence of categories
 $F:{\cal Q}\g\mod\rightmap{}{\cal I}(\Gamma),$ satisfying
 $$F(M)\cong \Gamma\otimes_QM, \hbox{ for each } M\in {\cal
Q}\g\mod.$$
 \item We have ${\cal I}(\Gamma)={\cal F}(\Delta')$. So, the
equivalence $F$  restricts to an equivalence of categories
 $F:{\cal Q}\g\mod\rightmap{}{\cal F}(\Delta'),$
 such that $\Delta'_i=F(S_i)$, for all $i\in {\cal P}$.
 \item There is a Morita equivalence $\Theta:\Lambda\g\Mod\rightmap{}\Gamma\g\Mod$ such that $\Theta(\Delta_i)\cong \Delta_i'$, for all $i\in {\cal P}$.
 \item There is an equivalence of categories $K:{\cal F}(\Delta)\rightmap{}{\cal Q}\g\mod$
which maps  short exact sequences onto conflations
and $K(\Delta_i)\cong S_i$, for each $i\in {\cal P}$.
 \end{enumerate}
\end{theorem}

The preceding statements and their proofs are scattered throughout \cite{hsb}: The construction of ${\cal Q}$ or, equivalently, of the ${\cal P}$-oriented interlaced weak  ditalgebra  $({\cal A}(\Delta),I(\Delta))$,  is done in \cite{hsb}\S5[see (5.22),(13.1),(13.2)]; the fact that the ideal $I(\Delta)$ is triangular is verified in \cite{hsb}(10.10);
for \emph{2}, see \cite{hsb}(13.7); for \emph{3}, see \cite{hsb}(12.11), and, to recall the exact structure of ${\cal Q}\g\mod$, see \cite{BSZ}\S6 or \cite{hsb}\S11; for \emph{4}, see \cite{hsb}(13.8); for \emph{5}, see \cite{hsb}(13.9); item \emph{6} follows from \cite{hsb}(11.12).

\begin{remark}\label{R: F(Delta) tame (wild) iff F(Delta') tame (wild)}
The equivalence $\Theta:\Lambda\g\Mod\rightmap{}\Gamma\g\Mod$ mentioned in the last theorem restricts to an equivalence of categories  $\Theta_\vert:{\cal F}(\Delta)\rightmap{}{\cal F}(\Delta')$ given by a tensor product by some progenerator  and its quasi-inverse $\Theta'_\vert:{\cal F}(\Delta')\rightmap{}{\cal F}(\Delta)$ has the same form. It follows that the category ${\cal F}(\Delta)$ is tame (resp. strictly tame, or wild) iff the category ${\cal F}(\Delta')$ is tame (resp. strictly tame, or wild). The argument that justifies this statement is similar to the one justifying that Morita equivalent finite-dimensional algebras are simultaneously tame (or strictly tame, or wild).
\end{remark}

The word \emph{ditalgebra} used before is an acronym for differential tensor
algebra (also refered to as \emph{semi-free differential graded algebra}). Their study was first introduced by the Kiev School in representation
theory, see \cite{RK} and \cite{R}, and is handled in \cite{KKO} and \cite{K} with the equivalent formulation in terms of
bocses.

The ditalgebra ${\cal Q}$ appearing in the last theorem is a special kind of  \emph{quotient of a ${\cal P}$-oriented  weak  $k$-ditalgebra,} which will be discussed in the
next sections.  We will prove the following statement, which  applies to them.

\begin{theorem}\label{T: main theorem dits} Assume that the ground field $k$ is
algebraically closed and that ${\cal P}$ is a finite preordered set. Then, every   quotient ${\cal A}/J$ of a ${\cal P}$-oriented  triangular
weak
$k$-ditalgebra ${\cal A}=(T,\delta)$,
 by an ideal $J$ of ${\cal A}$ generated by an ideal $I$ of the algebra  $A=[T]_0$  (such that $I$ is contained in the radical of $A$, is interlaced with ${\cal A}$
 and is ${\cal A}$-triangular), is either tame or wild, but
not  both.
\end{theorem}

The strategy for the proof of the preceding theorem will be the following. We work in the general context of triangular interlaced weak ditalgebras $({\cal A},I)$, described in the first sections \S2--\S5, and then we will concentrate on the ${\cal P}$-oriented case.
In this case, we will first reduce the study of the $({\cal A},I)$-modules with  dimension bounded by some natural number $d$, to the corresponding study for stellar interlaced weak ditalgebras, then we reduce this problem to the corresponding problem for seminested ditalgebras and, finally, we apply the known theory which reduces this problem to  the corresponding problem for minimal ditalgebras (developed by Drozd and Crawley-Boevey in \cite{D} and \cite{CB1}).

The proofs of our main results in the case of general homological systems follow from the corresponding statements for admissible homological systems and from the fact, proved by Mendoza, S\'aenz, and Xi in \cite{MSX}, that the category ${\cal F}(\Delta)$ associated to a general homological system is equivalent as an exact category to the category ${\cal F}(\Theta)$ associated to an appropriate admissible homological system. The last section of this article is devoted to this.

The reader is refered to \cite{BSZ} as a general reference for the
terminology not explained here and for a basic introduction to the subject. In particular, we use intensively reduction functors associated with admissible modules, a construction carefully studied in \cite{BSZ}.
In this article, we do not introduce essentially new types of reductions for ditalgebras, but, since we consider weak ditalgebras with relations, we have to
carefully monitor how the ideal of relations is modified by each classical reduction procedure.

We use freely the basic terminology of unital rings and modules employed in \cite{AF}, but we accept the possibility that in a unital ring $A$ we have $1=0$, so $A\g\Mod$ has only trivial modules.

\section{Interlaced weak ditalgebras and modules}\label{S: Interlaced weak dits}

In this section we introduce the notions of ``weak ditalgebras ${\cal A}$'',
of ``interlaced weak ditalgebras $({\cal A},I)$'',
and their ``module category $({\cal A},I)\g\Mod$''. The first one is a trivial
generalization of the notion of ditalgebra and the second one is a useful way
to handle some special kind of ditalgebras with relations
and their module categories.

\begin{definition}\label{D: derivation, differential}
Given a graded $k$-algebra $T=\bigoplus_{i\geq 0}[T]_i$,
a \emph{derivation} on $T$  is a linear map
$\delta:T\rightmap{}T$ such that $\delta([T]_i)\subseteq [T]_{i+1}$, for all
$i$, and such that for any homogeneous elements $a,b\in T$, the following
\emph{Leibniz rule} holds:
$$\delta(ab)=\delta(a)b+(-1)^{\deg(a)}a\delta(b).$$
The derivation $\delta$ is called a \emph{differential} if $\delta^2=0$.
\end{definition}

\begin{definition}\label{D: weak ditalg}
Given a $k$-algebra $A$ and an $A\g A$-bimodule $V$, we have the tensor algebra
$T=T_A(V)$
with canonical grading $[T]_i=V^{\otimes i}$, for $i\in \hueca{N}\cup\{0\}$.
Thus, $[T]_0=A$ and $[T]_1=V$. A \emph{graded tensor algebra} (or
\emph{$t$-algebra} $T$)
is a graded algebra isomorphic to some $T_A(V)$. We often identify them.

A \emph{weak ditalgebra} ${\cal A}$ is a pair ${\cal A}=(T,\delta)$, where $T$
is a graded tensor algebra and $\delta$ is a derivation on $T$. A
\emph{morphism of
weak
ditalgebras} $\eta:{\cal A}\rightmap{}{\cal A}'$ is a morphism of graded
algebras $\eta:T\rightmap{}T'$ such that $\delta'\eta=\eta\delta$.

A weak ditalgebra
$(T,\delta)$ is a \emph{ditalgebra} if $\delta$ is a differential.
\end{definition}

\begin{notation} Given a weak ditalgebra ${\cal A}=(T_A(V),\delta)$, consider
for any pair of $A$-modules $M$ and $N$ the set $U(M,N)$ defined as the
collection of pairs $f=(f^0,f^1)$, with $f^0\in \Hom_k(M,N)$ and
$f^1\in \Hom_{A\g A}(V,\Hom_k(M,N))$ such that, for any $a\in A$ and $m\in M$,
the following holds
$$af^0[m]=f^0[am]+f^1(\delta(a))[m].$$
Given $f^1\in \Hom_{A\g A}(V,\Hom_k(M,N))$  and
$g^1\in \Hom_{A\g A}(V,\Hom_k(N,L))$, we consider the morphism
$g^1*f^1\in \Hom_{A\g A}(V^{\otimes 2},\Hom_k(M,L))$ defined, for any
$\sum_ju_j\otimes v_j\in V\otimes_AV$, by
$$(g^1*f^1)(\sum_ju_j\otimes v_j)=\sum_j g^1(u_j)f^1(v_j):M\rightmap{}L.$$
\end{notation}

\begin{lemma}\label{L: compos de morfismos de U}
With the preceding notations,  assume that  $f=(f^0,f^1)\in U(M,N)$ and
$g=(g^0,g^1)\in U(N,L)$ satisfy that
$(g^1*f^1)(\delta^2(a))=0$, for any $a\in A$, then
$gf:=(g^0f^0,(gf)^1)\in U(M,L)$, where
$$(gf)^1(v):=g^0f^1(v)+g^1(v)f^0+(g^1*f^1)(\delta(v)),\hbox{ for } v\in V.$$
\end{lemma}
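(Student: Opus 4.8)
The plan is to verify directly that the pair $gf=(g^0f^0,(gf)^1)$ satisfies the defining relation of $U(M,L)$, namely that for all $a\in A$ and $m\in M$,
$$a(g^0f^0)[m]=(g^0f^0)[am]+(gf)^1(\delta(a))[m].$$
First I would use that $f\in U(M,N)$ to rewrite $f^0[am]=af^0[m]-f^1(\delta(a))[m]$, apply $g^0$, and then use that $g\in U(N,L)$ on the element $f^0[m]\in N$ to rewrite $g^0[af^0[m]]=ag^0f^0[m]-g^1(\delta(a))[f^0[m]]$. Assembling these two substitutions gives
$$(g^0f^0)[am]=a(g^0f^0)[m]-g^1(\delta(a))[f^0[m]]-g^0\big(f^1(\delta(a))[m]\big),$$
so it remains to check that $(gf)^1(\delta(a))[m]$ equals exactly the two correction terms $g^1(\delta(a))f^0[m]+g^0f^1(\delta(a))[m]$ that appear here — \emph{plus} nothing else.

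The subtlety is that $(gf)^1(v)$ as defined also carries the term $(g^1*f^1)(\delta(v))$, and $\delta(a)\in V$ need not be a single tensor, so writing $\delta(a)=\sum_j u_j\otimes\cdots$ is not what happens at degree one; rather $\delta(\delta(a))=\delta^2(a)\in V^{\otimes 2}$ is the relevant object, and $(gf)^1(\delta(a))$ contributes the extra summand $(g^1*f^1)(\delta^2(a))[m]$. This is precisely where the hypothesis $(g^1*f^1)(\delta^2(a))=0$ enters: it kills that spurious term, so the remaining identity matches term-by-term with what the computation above produces. I would therefore organize the argument as: (i) expand both sides using the $U$-relations for $f$ and $g$; (ii) substitute the definition of $(gf)^1$ at the argument $\delta(a)$, noting $\delta(\delta(a))=\delta^2(a)$; (iii) invoke the hypothesis to delete $(g^1*f^1)(\delta^2(a))$; (iv) collect terms and observe equality.

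One should also confirm, as part of the claim that $gf\in U(M,L)$, that $(gf)^1$ as defined is genuinely a map in $\Hom_{A\g A}(V,\Hom_k(M,L))$, i.e.\ $A$-$A$-bilinear in $v$. This follows since $g^0f^1(v)+g^1(v)f^0$ is visibly bilinear in $v$ (as $f^1,g^1$ are), and $v\mapsto\delta(v)$ is $A$-$A$-bilinear up to the Leibniz correction — but here $a,b$ act on $V=[T]_1$ with $\delta$ landing in $[T]_2$, and one checks that $(g^1*f^1)\circ\delta$ inherits bilinearity from the Leibniz rule for $\delta$ combined with the $U$-relations for $f$ and $g$; alternatively this compatibility is exactly the content of the computation in (i)–(iv) applied with $m$ ranging over $M$ and $a$ replaced by an arbitrary algebra element, so it need not be proved separately.

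The main obstacle I expect is purely bookkeeping: correctly tracking the $A$-action and the Leibniz sign $(-1)^{\deg}$ when $\delta$ is applied, and making sure the convolution product $g^1*f^1$ is evaluated at $\delta^2(a)$ and not at something else. There is no conceptual difficulty — the lemma is designed so that the single hypothesis $(g^1*f^1)(\delta^2(a))=0$ is exactly what is needed for the associativity-type obstruction to vanish, and in particular when $\cal A$ is an honest ditalgebra ($\delta^2=0$) the hypothesis is automatic, recovering the usual composition in $U$.
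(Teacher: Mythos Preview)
Your treatment of the $U$-relation (steps (i)--(iv)) is correct and matches the paper's computation exactly: expand $a(g^0f^0)[m]$ using first $g\in U(N,L)$ and then $f\in U(M,N)$, compare with the definition of $(gf)^1(\delta(a))$, and observe that the discrepancy is precisely $(g^1*f^1)(\delta^2(a))[m]$, which vanishes by hypothesis.

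There is, however, a genuine gap in your handling of the $A$-$A$-bilinearity of $(gf)^1$. You claim that $g^0f^1(v)+g^1(v)f^0$ is ``visibly bilinear in $v$'', but this is false: $g^0$ and $f^0$ are merely $k$-linear, not $A$-linear. Concretely, $(g^0f^1)(av)[m]=g^0\big(a\,f^1(v)[m]\big)$, and this equals $a\,g^0(f^1(v)[m])$ only up to the correction $-g^1(\delta(a))[f^1(v)[m]]$ coming from the $U$-relation for $g$. Likewise $(g^1(v)f^0)(am)\ne (g^1(v)f^0\cdot a)[m]$ in general. None of the three summands of $(gf)^1$ is individually $A$-$A$-bilinear; what happens is that the Leibniz expansion $\delta(av)=\delta(a)v+a\delta(v)$ produces, via $(g^1*f^1)$, exactly the term $g^1(\delta(a))f^1(v)$ that cancels the failure of left-linearity of $g^0f^1$, and similarly on the right. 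The paper carries out this cancellation explicitly for both $(gf)^1(av)$ and $(gf)^1(va)$. Your ``alternative'' --- that bilinearity is subsumed by the computation (i)--(iv) --- does not work either: that computation establishes the $U$-relation for $(gf)^0$, which is a separate condition from $A$-$A$-linearity of $(gf)^1$.
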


\begin{proof} For $a\in A$, $v\in V$, and $m\in M$, we have
$$\begin{matrix}
   (gf)^1(av)[m]&=&g^0f^1(av)[m]+g^1(av)f^0[m]+(g^1*f^1)(\delta(av))[m]\hfill\\
   &=& g^0(af^1(v))[m]+ag^1(v)f^0[m]+(g^1*f^1)(\delta(a)v+a\delta(v))[m]\hfill\\
   &=& ag^0f^1(v)[m]+ag^1(v)f^0[m]+a(g^1*f^1)(\delta(v))[m]\hfill\\
   &=& a(gf)^1(v)[m]\hfill
    \end{matrix}$$
   and
$$\begin{matrix}
(gf)^1(va)[m]&=&g^0f^1(va)[m]+g^1(va)f^0[m]+(g^1*f^1)(\delta(va))[m]\hfill\\
&=&
g^0(f^1(v)a)[m]+(g^1(v)a)(f^0[m])+(g^1*f^1)(\delta(v)a-v\delta(a))[m]\hfill\\
&=&g^0f^1(v)[am]+g^1(v)f^0[am]+(g^1*f^1)(\delta(v))[am]\hfill\\
&=&(((gf)^1(v))a)[m],\hfill\\
  \end{matrix}$$
so $(gf)^1$ is a morphism of $A$-$A$-bimodules. Moreover,
$$\begin{matrix}
   a(gf)^0[m]&=& a(g^0(f^0[m]))\hfill\\
   &=& g^0(af^0[m])+g^1(\delta(a))f^0[m]\hfill\\
   &=& g^0(f^0[am]+f^1(\delta(a))[m])+g^1(\delta(a))f^0[m]\hfill\\
   &=& (gf)^0[am]+(gf)^1(\delta(a))[m]-(g^1*f^1)(\delta^2(a))[m]\hfill\\
  \end{matrix}$$
and, since $(g^1*f^1)(\delta^2(a))=0$, we obtain $gf\in U(M,L)$, as claimed.
\end{proof}

\begin{definition}\label{D: weak ditalg interlaced an ideal}
 Let ${\cal A}=(T,\delta)$ be a weak ditalgebra, with $T=T_A(V)$, and $I$ an
ideal of $A$.
 The weak ditalgebra ${\cal A}$ is called  \emph{interlaced with the ideal $I$}
iff
 \begin{enumerate}
  \item $\delta^2(A)\subseteq I[T]_2+VIV+[T]_2I$, and
  \item $\delta^2(V)\subseteq I[T]_3+VI[T]_2+[T]_2IV+[T]_3I$.
 \end{enumerate}
 Equivalently, $\delta^2(A)$ and $\delta^2(V)$ are contained in the ideal of $T$ generated by $I$.

 The pair $({\cal A},I)$ is called then an \emph{interlaced weak
ditalgebra}.
\end{definition}

\begin{proposition}\label{P: cat de mods de (wdit,I) }
Assume that $({\cal A},I)$ is an interlaced weak ditalgebra.
Then, we construct a category $({\cal A},I)\g\Mod$ as follows:
The class of objects of $({\cal A},I)\g\Mod$ is the class of left $A/I$-modules $M$; given the $A/I$-modules $M$ and $N$,
the set of morphisms from $M$ to $N$ in $({\cal A},I)\g\Mod$ is, by definition,
$\Hom_{({\cal A},I)}(M,N):=U(M,N)$;
finally, given  $f=(f^0,f^1)\in U(M,N)$ and
$g=(g^0,g^1)\in U(N,L)$, their composition $gf$ in this category is defined by
the
formulas in (\ref{L: compos de morfismos de U}).

If $I=0$, then ${\cal A}$ is a ditalgebra, and $({\cal A},{\{0\}})\g\Mod$ coincides with
${\cal A}\g\Mod$, as in \cite{BSZ}\S2.
\end{proposition}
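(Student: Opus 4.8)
\emph{The plan} is to verify the category axioms for $({\cal A},I)\g\Mod$: well-definedness of composition, associativity, and existence of identities; the statement for $I=0$ is then almost immediate. Throughout, all modules occurring are annihilated by $I$, these being the objects of the category. For well-definedness, let $f=(f^0,f^1)\in U(M,N)$ and $g=(g^0,g^1)\in U(N,L)$. By Lemma \ref{L: compos de morfismos de U}, the pair $gf=(g^0f^0,(gf)^1)$ lies in $U(M,L)$ as soon as $(g^1*f^1)(\delta^2(a))=0$ for every $a\in A$. Since $({\cal A},I)$ is interlaced, $\delta^2(a)\in I[T]_2+VIV+[T]_2I$, so it suffices to check that the $k$-linear map $g^1*f^1:[T]_2\rightmap{}\Hom_k(M,L)$ vanishes on each of these three subspaces. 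Each case is a one-line computation using that $g^1$ and $f^1$ are $A$-$A$-bimodule morphisms and that $M,N,L$ are annihilated by $I$; for instance, for $i\in I$ and $u,v\in V$ one has $(g^1*f^1)(iu\otimes v)=(ig^1(u))f^1(v)$, which sends $m\in M$ to $i\cdot\big((g^1(u)f^1(v))[m]\big)=0$ because $IL=0$, while the subspace $[T]_2I$ is handled using $IM=0$ and the subspace $VIV$ using $IN=0$. Hence composition is well-defined. For identities one checks directly that $(\mathrm{id}_M,0)\in U(M,M)$ for each object $M$, and the composition formula of Lemma \ref{L: compos de morfismos de U} gives at once $(\mathrm{id}_N,0)f=f=f(\mathrm{id}_M,0)$ for every $f\in U(M,N)$.

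The remaining point is \emph{associativity}, which I expect to be the only step requiring genuine bookkeeping. Given moreover $h=(h^0,h^1)\in U(L,P)$, one expands $(h(gf))^1(v)$ and $((hg)f)^1(v)$ for $v\in V$ by iterating the formula of Lemma \ref{L: compos de morfismos de U}, using the associativity of the convolution $*$ (so that $h^1*(g^1*f^1)=(h^1*g^1)*f^1$, a well-defined map $[T]_3\rightmap{}\Hom_k(M,P)$) and the Leibniz rule in the form $\delta\big(\sum_ju_j\otimes w_j\big)=\sum_j\delta(u_j)\otimes w_j-\sum_ju_j\otimes\delta(w_j)$. This is precisely the manipulation carried out for ordinary ditalgebras in \cite{BSZ}; after cancelling the matching terms, the discrepancy between the two sides is $(h^1*g^1*f^1)(\delta^2(v))$. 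For a genuine ditalgebra this vanishes because $\delta^2=0$; here we instead invoke the second interlacing condition $\delta^2(V)\subseteq I[T]_3+VI[T]_2+[T]_2IV+[T]_3I$, together with the observation --- proved by the same kind of one-line computation as above, now using $IM=IN=IL=IP=0$ --- that $h^1*g^1*f^1$ annihilates each of those four subspaces of $[T]_3$. Thus $(h(gf))^1=((hg)f)^1$, and since trivially $(h(gf))^0=h^0g^0f^0=((hg)f)^0$, associativity follows.

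Finally, if $I=0$ the interlacing conditions become $\delta^2(A)=0$ and $\delta^2(V)=0$; since $\delta^2$ satisfies $\delta^2(ab)=\delta^2(a)b+a\delta^2(b)$ for homogeneous $a,b$ (an immediate consequence of the Leibniz rule for $\delta$) and $T=T_A(V)$ is generated by $A$ and $V$, this forces $\delta^2=0$, so ${\cal A}$ is a ditalgebra. Moreover the constraint ``$IM=0$'' is now vacuous, so the objects of $({\cal A},0)\g\Mod$ are all left $A$-modules, the morphism sets are the $U(M,N)$, and composition is that of Lemma \ref{L: compos de morfismos de U}; this is verbatim the definition of ${\cal A}\g\Mod$ recalled in \cite{BSZ}. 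The only real obstacle in the whole argument is the associativity computation: the algebra is routine and parallels \cite{BSZ}, but one must pin down exactly which $\delta^2$-terms survive the cancellation and confirm that conditions (1) and (2) of Definition \ref{D: weak ditalg interlaced an ideal}, together with annihilation by $I$, are precisely strong enough to kill them --- which is why that definition is phrased as it is.
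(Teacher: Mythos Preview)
Your proof is correct and follows essentially the same approach as the paper: both use the first interlacing condition together with $IM=IN=IL=0$ to verify the hypothesis of Lemma~\ref{L: compos de morfismos de U} for well-definedness, and both identify the obstruction to associativity as $(h^1*g^1*f^1)(\delta^2(v))$ and kill it via the second interlacing condition and the annihilation hypotheses. Your treatment is in fact slightly more explicit than the paper's on the well-definedness step and on the $I=0$ case (which the paper simply asserts), while the paper writes out the associativity computation term-by-term where you summarize it via the convolution notation; these are differences of presentation, not of substance.
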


\begin{proof} The composition is well defined by Lemma (\ref{L: compos de
morfismos de U}) and the first
condition on the interlaced weak  ditalgebra $({\cal A},I)$.
Clearly, for each $M\in ({\cal A},I)\g\Mod$, the pair $(1_M,0)$ plays the role
of an identity morphism.
Let us verify the associativity of the composition.

Take $f\in U(M,N)$, $g\in U(N,L)$, and $h\in U(L,K)$. It is clear that
$(h(gf))^0=((hg)f)^0$. Take $v\in V$ and write $\delta(v)=\sum_iv_iv'_i$,
$\delta(v_i)=\sum_ju_{i,j}u'_{i,j}$, and $ \delta(v'_i)=\sum_sw_{i,s}w'_{i,s}$,
then we have
$$\begin{matrix}
   (h(gf))^1(v)&=&h^0(gf)^1(v)+h^1(v)g^0f^0+\sum_ih^1(v_i)(gf)^1(v'_i)\hfill\\
   &=& h^0g^0f^1(v)+h^0g^1(v)f^0+\sum_ih^0g^1(v_i)f^1(v'_i)+h^1(v)g^0f^0\hfill\\
   &&+\sum_ih^1(v_i)g^0f^1(v'_i)+\sum_ih^1(v_i)g^1(v'_i)f^0\hfill\\
   &&+\sum_{i,s}
h^1(v_i)g^1(w_{i,s})f^1(w'_{i,s})\hfill\\
  \end{matrix}$$
  and
$$\begin{matrix}
   ((hg)f)^1(v)&=& (hg)^0f^1(v)+(hg)^1(v)f^0+\sum_i(hg)^1(v_i)f^1(v'_i)\hfill\\
   &=& h^0g^0f^1(v)+h^0g^1(v)f^0+h^1(v)g^0f^0+\sum_ih^1(v_i)g^1(v'_i)f^0\hfill\\

&&+\sum_ih^0g^1(v_i)f^1(v'_i)+\sum_ih^1(v_i)g^0f^1(v'_i)\hfill\\
&&+\sum_{i,j}h^1(u_{i,j}
)g^1(u'_{i,j})f^1(v'_i).\hfill\\
  \end{matrix}$$
The difference
$\Delta=\sum_{i,j}h^1(u_{i,j})g^1(u'_{i,j})f^1(v'_i)-\sum_{i,s}h^1(v_i)g^1(w_{i,
s})f^1(w'_{i,s})$ is zero because
$\delta^2(v)=\delta(\sum_iv_iv'_i)=\sum_{i,j}u_{i,j}u'_{i,j}v'_i-\sum_{i,s}v_iw_
{i,s}w'_{i,s}$
and, by the second condition on the interlaced weak ditalgebra $({\cal A},I)$,
we obtain $\Delta[m]=0$, for $m\in M$.
\end{proof}

Given an interlaced weak ditalgebra $({\cal A},I)$, we will denote
by $({\cal A},I)\g\mod$ the full subcategory of $({\cal A},I)\g\Mod$ formed
by its finite-dimensional objects.

\section{Quotients of weak ditalgebras}

In this section, we extend the concept of ideal for ditalgebras, see \cite{BSZ}\S8, to the more
general
case of weak ditalgebras and we relate the category $({\cal A},I)\g\Mod$,
for an interlaced weak ditalgebra $({\cal A},I)$, to the category
${\cal A}/J\g\Mod$ of modules over the ditalgebra ${\cal A}/J$, where $J$
is the ideal of ${\cal A}$ generated by $I$.

\begin{definition}\label{D: ideal of a w.ditalg}
Assume that ${\cal A}=(T,\delta)$ is a weak ditalgebra, where $T=T_A(V)$. An
\emph{ideal $J$ of ${\cal A}$} is an ideal $J$ of $T$ such that
\begin{enumerate}
 \item $J\cap A$ and $J\cap V$ together generate the ideal $J$ of $T$;
 \item $\delta(J)\subseteq J.$
\end{enumerate}
In this case, $J$ is a homogeneous ideal of $T=T_A(V)$, and is a proper ideal of $A$ iff $J\cap A$ is a proper ideal of $T$. Thus,  $T/J$ is a graded
$k$-algebra. The first condition guarantees that the algebra $T/J$ can be
identified
with the tensor algebra $T_{A/(J\cap A)}(V/(J\cap V))$, see \cite{BSZ}(8.4);
the second one implies
that $\delta$ induces a derivation $\overline{\delta}$ on $T/J$. Thus, the
pair ${\cal A}/J:=(T/J,\overline{\delta})$ is a weak ditalgebra: the
\emph{quotient of the weak ditalgebra ${\cal A}$ by the ideal $J$}. The
canonical projection $\eta:T\rightmap{}T/J$ is a morphism of weak ditalgebras
$\eta:{\cal A}\rightmap{}{\cal A}/J$.
\end{definition}

\begin{definition}\label{D: triangular ideal and balanced ideal}
 Let ${\cal A}=(T,\delta)$ be a weak ditalgebra with $T=T_A(V)$. Assume that $I$ is an ideal of $A$. Then,
\begin{enumerate}
 \item We say that $I$ is an
 \emph{${\cal A}$-triangular ideal of $A$} iff there is a sequence of
$k$-subspaces
 $0=H_0\subseteq H_1\subseteq\cdots\subseteq H_t=I$
 such that
 $\delta(H_i)\subseteq AH_{i-1}V+VH_{i-1}A$,  for all $i\in [1,t].$
 \item We say that $I$ is an \emph{${\cal A}$-balanced ideal} of $A$ iff
$\delta(I)\subseteq IV+VI.$
\end{enumerate}
Clearly, every ${\cal A}$-triangular ideal of $A$ is an ${\cal A}$-balanced
ideal of $A$.
\end{definition}

\begin{remark}\label{R: defs de triangularidad son equivalentes}
  Let ${\cal A}=(T,\delta)$ be a weak ditalgebra with $T=T_A(V)$. Assume that $I$ is an ideal of $A$ and that
  $0=H_0\subseteq H_1\subseteq\cdots\subseteq H_t\subseteq I$
 is a vector space filtration of $I$ such that
 $\delta(H_i)\subseteq AH_{i-1}V+VH_{i-1}A, \hbox{ for all } i\in [1,t],$
 and the ideal of $A$ generated by $H_t$ is $I$,
 then $0=H_0\subseteq H_1\subseteq\cdots\subseteq H_t\subseteq H_{t+1}=I$ is a triangular filtration of $I$ as in the preceding definition. Indeed, the ideal $I$ is generated as a vector space by  elements of the form $ahb$, with $a,b\in A$ and $h\in H_t$. Then,  we have $\delta(ahb)=\delta(a)hb+a\delta(h)b+ah\delta(b)$, where
 $\delta(a)hb\in VH_tA$, $ah\delta(b)\in AH_tV$, and
 $a\delta(h)b\in a(AH_{t-1}V+VH_{t-1}A)b\subseteq AH_tV+VH_tA$. Hence, the definition of a triangular ideal given in (\ref{D: triangular ideal and balanced ideal}) coincides  with the definition of \cite{BSZ}(8.22).
\end{remark}

\begin{lemma}\label{L: sobre ideales de weak dits generated by I}
Assume that ${\cal A}=(T,\delta)$ is a weak ditalgebra, with $T=T_A(V)$,
and that $I$ is some ideal of $A$. Consider the $A$-$A$-subbimodule
$I_V=IV+\delta(I)+VI$ of $V$ and assume that we are in one of the following two
cases:
\begin{enumerate}
\item ${\cal A}$ is interlaced with the ideal $I$
\item $I$ is an ${\cal A}$-balanced ideal.
\end{enumerate}
Then, the ideal $J$ of $T$ generated by $I$ and $I_V$ is an
ideal of the weak ditalgebra ${\cal A}$. In particular, $I=J\cap A$ and
$I_V=J\cap V$.
If $I$ is ${\cal A}$-balanced, then $I_V=IV+VI$.
\end{lemma}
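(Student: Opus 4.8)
The plan is to prove directly that the ideal $J$ generated by $I$ together with $I_V = IV + \delta(I) + VI$ satisfies the two defining conditions of Definition~\ref{D: ideal of a w.ditalg}, and then to verify the "in particular" claims about $J\cap A$ and $J\cap V$. First I would observe that, since $T=T_A(V)$ is the tensor algebra, the ideal $J$ generated by $I\subseteq A=[T]_0$ and $I_V\subseteq V=[T]_1$ is spanned over $k$ by tensors $w_0\otimes\cdots\otimes w_n$ in which at least one factor lies in $I$ (when that factor sits in degree zero) or in $I_V$ (when in degree one). From this description the identity $J\cap A = I$ is immediate — the only degree-zero such tensors are the elements of $I$ itself — and $J\cap V = I_V$ follows similarly once we note that a degree-one tensor in $J$ is a sum of elements of the shape $a w a'$ with $w\in I_V$ or $a I a'$ with $a,a'\in A$, and both kinds lie in $IV+\delta(I)+VI = I_V$ because $I_V$ is by construction an $A$-$A$-subbimodule of $V$ (here I use that $\delta(I)$ together with $IV$ and $VI$ spans a subbimodule: $a\delta(i)a' = \delta(aia') - \delta(a)ia' - (-1)^{0} a i \delta(a')$ modulo signs, and the correction terms land in $VIA + AIV \subseteq I_V$). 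Condition (1) of Definition~\ref{D: ideal of a w.ditalg}, that $J\cap A$ and $J\cap V$ generate $J$, then holds essentially by the very definition of $J$, since $I$ and $I_V$ were chosen as generators.

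The real content is condition (2), namely $\delta(J)\subseteq J$. By the Leibniz rule and linearity it suffices to check that $\delta$ sends each generator into $J$, i.e. that $\delta(I)\subseteq J$ and $\delta(I_V)\subseteq J$; the extension to arbitrary elements of $J$ of the form $x\,g\,y$ with $g\in\{I,I_V\}$ follows because $\delta(xgy) = \delta(x)gy \pm x\delta(g)y \pm xg\delta(y)$, and all three summands lie in $J$ (the outer ones because $g\in J$ and $J$ is a two-sided ideal, the middle one by the generator computation). For the generators: $\delta(I)\subseteq \delta(I)\subseteq I_V\subseteq J$ trivially, since $\delta(I)$ is one of the summands defining $I_V$. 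The remaining task is $\delta(I_V)\subseteq J$, and this is where the two cases diverge. Expanding, $\delta(I_V) = \delta(IV) + \delta(\delta(I)) + \delta(VI) = \delta(I)V \pm I\delta(V) + \delta^2(I) + \delta(V)I \pm V\delta(I)$. The terms $\delta(I)V$, $V\delta(I)$ sit in $I_VV + VI_V\subseteq J$; the terms $I\delta(V)$, $\delta(V)I$ sit in $I[T]_2 + [T]_2 I\subseteq J$. So everything reduces to showing $\delta^2(I)\subseteq J$.

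The main obstacle, then, is controlling $\delta^2(I)$, and this is precisely what the two hypotheses are designed to handle. In Case~(2), where $I$ is ${\cal A}$-balanced, we have $\delta(I)\subseteq IV+VI$, hence $\delta^2(I)\subseteq \delta(IV+VI) \subseteq \delta(I)V + I\delta(V) + \delta(V)I + V\delta(I) \subseteq I_V[T]_1 + [T]_1 I_V + I[T]_2 + [T]_2 I \subseteq J$; moreover in this case $\delta(I)\subseteq IV+VI$ already, so $I_V = IV+\delta(I)+VI = IV+VI$, giving the last sentence of the lemma. In Case~(1), where ${\cal A}$ is interlaced with $I$, condition~(1) of Definition~\ref{D: weak ditalg interlaced an ideal} gives exactly $\delta^2(A)\subseteq I[T]_2 + VIV + [T]_2 I$, and in particular $\delta^2(I)\subseteq I[T]_2 + VIV + [T]_2 I \subseteq J$ since each of $I[T]_2$, $VIV$, $[T]_2I$ visibly lies in the ideal generated by $I$. (One should also double-check that in Case~(1) the set $I_V=IV+\delta(I)+VI$ is still an $A$-$A$-subbimodule of $V$, which it is, by the same Leibniz computation as above, using only that $\delta$ is a differential and $I$ is an ideal of $A$ — no interlacing needed for that part.) Assembling these pieces yields $\delta(J)\subseteq J$ in both cases, completing the verification that $J$ is an ideal of the weak ditalgebra ${\cal A}$, and the identities $I=J\cap A$, $I_V=J\cap V$ have already been established. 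I expect the bookkeeping in the generator-level Leibniz expansions to be the only place requiring care, mainly to make sure every stray term is absorbed into $J$ and no sign subtlety is overlooked.
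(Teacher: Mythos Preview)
Your proof is correct and follows essentially the same approach as the paper's: both reduce the verification of $\delta(J)\subseteq J$ via Leibniz to checking $\delta(I)\subseteq J$ and $\delta^2(I)\subseteq J$, then split into the interlaced and balanced cases for the latter. The only difference is that the paper invokes an external reference (\cite{BSZ}(8.3)) to conclude that $J$ is a $t$-ideal with $J\cap A=I$ and $J\cap V=I_V$, whereas you argue this directly from the tensor-algebra description and verify by hand that $I_V$ is an $A$-$A$-subbimodule.
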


\begin{proof} By \cite{BSZ}(8.3 and 8.5), we have $J\cap A=I$ and $J\cap V=I_V$. Since
$\delta(I)\subseteq I_V\subseteq J$ and
$\delta^2(I)\subseteq J$, by the Leibniz rule, we have $\delta(I_V)\subseteq J$.
Then, using again the Leibniz rule, we get  $\delta(J)\subset J$.
\end{proof}

\begin{definition}
In the preceding situation, we say that $J$ is \emph{the ideal of the weak
ditalgebra ${\cal A}$ generated by the ideal} $I$ of $A$.
\end{definition}

\begin{remark}\label{R: cocientes por ideales balanceados}
 Let ${\cal A}=(T,\delta)=(T_A(V),\delta)$ be a weak ditalgebra
 and $I$ an ideal of $A$. Assume that we are in the situation of
 (\ref{L: sobre ideales de weak dits generated by I}) and let $J$ be the
ideal of ${\cal A}$ generated by $I$.
Write
$\overline{A}=A/I$,
$\overline{V}:=V/(IV+\delta(I)+VI)$ and consider the canonical morphism
$\pi: T\rightmap{}T_{\overline{A}}(\overline{V})$ induced by the
projections $\pi_0:A\rightmap{}\overline{A}$ and
$\pi_1:V\rightmap{}\overline{V}$. From \cite{BSZ}(8.4), we obtain an
isomorphism
$\overline{\eta}: T_{\overline{A}}(\overline{V})\rightmap{}T/J$ such
that $\overline{\eta}\pi=\eta$, where $\eta:{\cal A}\rightmap{}{\cal A}/J$ is
the canonical projection to the quotient weak ditalgebra ${\cal A}/J$. With the
help of this isomorphism, we can transfer the derivation of ${\cal A}/J$
onto a derivation $\overline{\delta}$ on $T_{\overline{A}}(\overline{V})$.
Notice that $\overline{\delta}$ can also be obtained by \cite{BSZ}(1.8) from
the linear maps
$\overline{\delta}_0:\overline{A}\rightmap{}[\overline{T}]_1$ and
$\overline{\delta}_1:\overline{V}\rightmap{}[\overline{T}]_2$ induced by the
composition $\pi\delta$,
when restricted respectively to $A$ and $V$.
We write $\overline{\cal A}:=(\overline{T},\overline{\delta})$ and we identify
it with ${\cal A}/J$.
\end{remark}

\begin{lemma}\label{modules of interlaced ditalgebras}
 Let ${\cal A}=(T,\delta)=(T_A(V),\delta)$ be a weak ditalgebra
 interlaced with an ideal $I$ of $A$, and let $J$ be
the ideal of ${\cal A}$ generated by  $I$.
Then, the quotient ${\cal A}/J$ is a ditalgebra and we have
 $$\begin{matrix}A/I\g\Mod&\rightmap{ \ \ L_{{\cal A}/J} \ \ }&
 {\cal A}/J\g\Mod&\leftmap{ \ \Psi \ }&({\cal A},I)\g\Mod\end{matrix},$$
where ${\cal A}/J\g\Mod$ is the category of ${\cal A}/J$-modules, $L_{{\cal
A}/J}$ is the canonical
embedding defined by  $L_{{\cal
A}/J}(M)=M$, for any $A/I$-module,  and
$L_{{\cal A}/J}(f^0)=(f^0,0)$, for any morphism $f^0$ of $A/I\g \Mod$,
and $\Psi$ is an isomorphism of categories. The composition
$L_{({\cal A},I)}:=\Psi^{-1}L_{{\cal A}/J}$ is also called \emph{the canonical
embedding}.
\end{lemma}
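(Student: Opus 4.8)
The plan is to unwind the three functors and check the claimed properties directly, using the identification $\overline{\cal A}\cong{\cal A}/J$ of Remark \ref{R: cocientes por ideales balanceados}. First I would verify that ${\cal A}/J$ is a ditalgebra, i.e.\ that $\overline{\delta}^{\,2}=0$. By the remark, $\overline\delta$ is the differential determined by $\overline\delta_0:\overline A\to[\overline T]_1$ and $\overline\delta_1:\overline V\to[\overline T]_2$, both induced from $\pi\delta$, so $\overline\delta^{\,2}$ on $\overline A$ is induced by $\pi\delta^2$ restricted to $A$, and similarly on $\overline V$. But condition (1) of Definition \ref{D: weak ditalg interlaced an ideal} says $\delta^2(A)\subseteq I[T]_2+VIV+[T]_2I$, and every summand here maps to $0$ under $\pi:T\to T_{\overline A}(\overline V)$ because $\pi$ kills $I$ in the first tensor slot, the last tensor slot, or the middle one (using $VIV\subseteq V I_V$ or $I_V V$); likewise condition (2) handles $\delta^2(V)$. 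Hence $\overline\delta^{\,2}=0$ on generators, and by the Leibniz rule $\overline\delta^{\,2}$ is a $\overline\delta$-derivation that vanishes on generators, so $\overline\delta^{\,2}=0$ everywhere. (One small subtlety: one must check $\overline\delta^{\,2}$ really is a derivation in the appropriate graded sense, which is the standard computation $\delta^2(ab)=\delta^2(a)b+a\delta^2(b)$ from the Leibniz rule, so it suffices that it kill $\overline A$ and $\overline V$.)

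Next I would construct $\Psi:({\cal A},I)\g\Mod\to{\cal A}/J\g\Mod$. On objects both categories have the same class: a left $A$-module killed by $I$ is exactly a left $\overline A=A/I$-module, which is what an ${\cal A}/J$-module is (since $[{\cal A}/J]_0=\overline A$); set $\Psi(M)=M$. On morphisms, given $f=(f^0,f^1)\in U_{\cal A}(M,N)$, I claim $f^1:V\to\Hom_k(M,N)$ factors through $\overline V=V/(IV+\delta(I)+VI)$. Indeed, since $IM=0=IN$, for $a\in I$, $v\in V$, $m\in M$ we get $f^1(av)[m]=a f^1(v)[m]=0$ and $f^1(va)[m]=f^1(v)[am]=0$ (using $A$-bilinearity of $f^1$ and $I M=0$), so $f^1$ kills $IV+VI$; and the defining relation of $U$ reads $af^0[m]=f^0[am]+f^1(\delta(a))[m]$, which for $a\in I$ gives $0=0+f^1(\delta(a))[m]$, so $f^1$ kills $\delta(I)$ as well. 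Thus $f^1$ descends to $\overline f^{\,1}:\overline V\to\Hom_k(M,N)$, and $(f^0,\overline f^{\,1})$ satisfies the defining relation over $\overline A$, i.e.\ lies in $U_{{\cal A}/J}(M,N)$. This assignment is clearly bijective on each $\Hom$-set (the inverse lifts $\overline f^{\,1}$ along $V\twoheadrightarrow\overline V$), it is the identity on objects, and it is compatible with the composition formula of Lemma \ref{L: compos de morfismos de U}, because the $*$-product and the $\delta$-terms in $(gf)^1$ all pass through $\pi$; so $\Psi$ is an isomorphism of categories.

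Finally, $L_{{\cal A}/J}:A/I\g\Mod\to{\cal A}/J\g\Mod$, $M\mapsto M$, $f^0\mapsto(f^0,0)$, is a functor because $(f^0,0)\in U_{{\cal A}/J}(M,N)$ (the relation $af^0[m]=f^0[am]$ holds since $f^0$ is $\overline A$-linear and $\overline\delta_0$ applied inside... more precisely $(f^0,0)$ satisfies $af^0[m]=f^0[am]+0\cdot\overline\delta(a)[m]$), and the composition formula gives $(g^0,0)(f^0,0)=(g^0f^0,0)$ since all the $f^1,g^1$ terms vanish; it is a full and faithful embedding onto the subcategory of morphisms with second component zero. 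Composing, $L_{({\cal A},I)}:=\Psi^{-1}L_{{\cal A}/J}$ is the canonical embedding $A/I\g\Mod\to({\cal A},I)\g\Mod$. I expect the main obstacle to be the bookkeeping in verifying $\overline\delta^{\,2}=0$: one must be careful that the interlacing conditions of Definition \ref{D: weak ditalg interlaced an ideal} are exactly tuned so that every term of $\delta^2(A)$ and $\delta^2(V)$ lands in the kernel of $\pi$, which requires tracking that $I_V=IV+\delta(I)+VI\subseteq\ker\pi_1$ absorbs the ``middle'' terms $VIV$ and $VI[T]_2$, $[T]_2IV$ correctly; the rest is routine unwinding of definitions.
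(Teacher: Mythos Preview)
Your proposal is correct and follows essentially the same route as the paper's proof: both observe that the interlacing conditions force $\delta^2(A),\delta^2(V)\subseteq J$ (hence $\overline{\delta}^{\,2}=0$), and both define $\Psi$ as the identity on objects and $(f^0,f^1)\mapsto(f^0,\overline f^{\,1})$ on morphisms, checking that $f^1$ kills $IV+\delta(I)+VI$ exactly because $IM=IN=0$ and the $U$-relation holds. The paper is somewhat terser---it simply asserts $\delta^2(T)\subseteq J$ without unpacking each summand---but your more explicit bookkeeping is equivalent and your anticipated ``main obstacle'' is not actually an obstacle.
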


\begin{proof} Since ${\cal A}$ is interlaced with $I$, we have
$\delta^2(T)\subseteq J$,
thus $\overline{\delta}^2=0$ and ${\cal A}/J$ is a ditalgebra.

 By definition, $\Psi$ is the identity on objects. It maps a morphism
$(f^0,f^1)\in U(M,N)$ onto the pair $\Psi(f^0,f^1)=(f^0,\overline{f}^1)$, where
$\overline{f}^1:\overline{V}\rightmap{}\Hom_k(M,N)$ is the morphism induced by
$f^1:V\rightmap{}\Hom_k(M,N)$, which exists because $IM=0$ and $IN=0$, so
 $f^1(IV+\delta(I)+VI)=0$.
 Consider the canonical projections
$\pi_0:A\rightmap{}\overline{A}$
and $\pi_1:V\rightmap{}\overline{V}$, write
$\overline{v}:=\pi_1(v)$ and
$\overline{a}=\pi_0(a)$,
for $v\in V$ and  $a\in A$.
Then, $(f^0,f^1)\in U(M,N)$ iff
$af^0(m)=f^0(am)+f^1(\delta(a))(m)$, for $a\in A$ and $m\in M$, or, equivalently
$\overline{a}f^0(m)=f^0(\overline{a}m)+\overline{f}^1(\overline{\delta}
(\overline{a}))(m)$,
for $\overline{a}\in \overline{A}$ and $m\in M$, that is iff
$(f^0,\overline{f}^1)\in \Hom_{{\cal A}/J}(M,N)$.
Similarly, given $(f^0,f^1)\in U(M,N)$ and $(g^0,g^1)\in U(N,L)$, and $v\in V$
with $\delta(v)=\sum_iv_iu_i$,
the  expressions  $g^0\overline{f}^1(\overline{v})+
\overline{g}^1(\overline{v})f^0+\sum_i\overline{g}^1(\overline{v}_i)\overline{f}
^1(\overline{u}_i)$
and $g^0f^1(v)+g^1(v)f^0+\sum_ig^1(v_i)f^1(u_i)$ coincide.
Thus $\Psi$ preserves compositions, and clearly it preserves identities.
\end{proof}

Notice that the preceding argument provides an alternative proof, to the
one given before,
of the fact that $({\cal A},I)\g\Mod$ is a category.

\begin{remark}\label{R: balanced vs interlaced}
Assume that ${\cal A}=(T_A(V),\delta)$ is a weak ditalgebra, that $I$ is an
${\cal A}$-balanced
ideal of $A$, and denote by $J$ the ideal of ${\cal A}$ generated by $I$. Then,
the following are equivalent statements:
\begin{enumerate}
 \item ${\cal A}$ is
 interlaced with $I$;
 \item $\delta^2(A)\subseteq J$ and $\delta^2(V)\subseteq J$;
 \item $\delta^2(T)\subseteq J$.
\end{enumerate}
For instance, assuming 2, we know that $\delta^2(A)\subseteq TIT+TI_VT$, so $\delta^2(A)\subseteq V^2I+VIV+IV^2+I_VV+VI_V$. But $I_V=VI+IV$,  because $I$ is ${\cal A}$-balanced, so $\delta^2(A)\subseteq V^2I+VIV+IV^2$; similarly, we have that $\delta^2(V)\subseteq V^3I+V^2IV+VIV^2+IV^3$, and 1 holds.
\end{remark}

\begin{definition}\label{D: morphism of interlaced ditalgebras}
A \emph{morphism $\phi:({\cal A},I)\rightmap{}({\cal A}',I')$ of interlaced
weak ditalgebras}
is a morphism of weak ditalgebras $\phi:{\cal A}\rightmap{}{\cal A}'$ such that
$\phi(I)\subseteq I'$.
\end{definition}

The following is easy to show (see \cite{BSZ}(2.4)).

\begin{lemma}\label{L: funtores inducidos por restriccion}
 Any morphism of interlaced weak ditalgebras $\phi:({\cal
A},I)\rightmap{}({\cal A}',I')$ induces, by restriction, a functor
 $F_\phi:({\cal A}',I')\g\Mod\rightmap{}({\cal A},I)\g\Mod$. Consider the
restrictions
 $\phi_0:A\rightmap{}A'$ and $\phi_1:V\rightmap{}V'$ of $\phi$. Then, for $M\in
({\cal A}',I')\g\Mod$, the $A$-module $F_\phi(M)$ is  obtained by restriction
of scalars through the map $\phi_0$; for $f=(f^0,f^1)\in \Hom_{({\cal
A}',I')}(M,N)$, we have $F_\phi(f)=(f^0,f^1\phi_1)$.

If $\phi$ is surjective, then $F_\phi$ is faithful. Moreover, if
$\psi:({\cal A}',I')\rightmap{}({\cal A}'',I'')$ is another morphism of
interlaced weak ditalgebras,
then $F_{\psi\phi}=F_{\phi}F_\psi$.
\end{lemma}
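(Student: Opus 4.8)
The plan is to verify directly, in the spirit of \cite{BSZ}(2.4), three things: that $F_\phi$ is a well-defined functor, that it is faithful when $\phi$ is surjective, and that the assignment $\phi\mapsto F_\phi$ is contravariantly functorial. Throughout, the three facts about $\phi:{\cal A}\rightmap{}{\cal A}'$ that do all the work are: $\phi$ is a morphism of graded algebras, so $\phi([T]_i)\subseteq[T']_i$ for all $i$ and $\phi$ is multiplicative; $\phi$ commutes with the differentials, $\phi\delta=\delta'\phi$; and, as a morphism of interlaced weak ditalgebras (Definition \ref{D: morphism of interlaced ditalgebras}), $\phi_0(I)\subseteq I'$. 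I write $\phi_0,\phi_1,\phi_2$ for the restrictions of $\phi$ to $A$, $V$, $[T]_2=V\otimes_AV$, so that $\phi_1(\delta(a))=\delta'(\phi_0(a))$ for $a\in A$ and $\phi_2(\delta(v))=\delta'(\phi_1(v))$ for $v\in V$.

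First I would check well-definedness. On objects: if $M$ is a left $A'$-module with $I'M=0$, then the $A$-module $F_\phi(M)$ obtained by restriction of scalars along $\phi_0$ satisfies $IF_\phi(M)=\phi_0(I)M\subseteq I'M=0$, so $F_\phi(M)$ is an object of $({\cal A},I)\g\Mod$. On morphisms: given $f=(f^0,f^1)\in U(M,N)$ over ${\cal A}'$, put $F_\phi(f)=(f^0,f^1\phi_1)$; to see $F_\phi(f)\in U(F_\phi M,F_\phi N)$ over ${\cal A}$, take $a\in A$, $m\in M$, substitute $a'=\phi_0(a)$ into the defining identity $a'f^0[m]=f^0[a'm]+f^1(\delta'(a'))[m]$ for $f$, and use $\phi_1(\delta(a))=\delta'(\phi_0(a))$ together with the fact that $a$ acts on $F_\phi(M)$ as $\phi_0(a)$ acts on $M$; this yields exactly $af^0[m]=f^0[am]+(f^1\phi_1)(\delta(a))[m]$. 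Clearly $F_\phi(1_M,0)=(1_M,0)$.

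The main (though still routine) point is preservation of composition. Given $f\in U(M,N)$ and $g\in U(N,L)$ over ${\cal A}'$, both $gf$ and $F_\phi(g)F_\phi(f)$ are defined, because the interlaced conditions on $({\cal A}',I')$ and on $({\cal A},I)$ annihilate the $\delta^2$-obstruction in Lemma \ref{L: compos de morfismos de U}, exactly as in the proof of Proposition \ref{P: cat de mods de (wdit,I) }. The degree-zero parts agree trivially. For the degree-one parts, I would expand $(gf)^1$ by the formula in Lemma \ref{L: compos de morfismos de U}, precompose with $\phi_1$, and compare term by term with $(F_\phi(g)F_\phi(f))^1(v)=g^0(f^1\phi_1)(v)+(g^1\phi_1)(v)f^0+((g^1\phi_1)*(f^1\phi_1))(\delta(v))$. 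The first two summands match at once; for the third, writing $\delta(v)=\sum_iv_i\otimes u_i$ one has $\delta'(\phi_1(v))=\phi_2(\delta(v))=\sum_i\phi_1(v_i)\otimes\phi_1(u_i)$ by multiplicativity of $\phi$, so $(g^1*f^1)(\delta'(\phi_1(v)))=\sum_ig^1(\phi_1(v_i))f^1(\phi_1(u_i))=((g^1\phi_1)*(f^1\phi_1))(\delta(v))$. Hence $F_\phi(gf)=F_\phi(g)F_\phi(f)$, so $F_\phi$ is a functor. The only thing to watch throughout this step is keeping straight which grading component of $\phi$ and which differential ($\delta$ or $\delta'$) is being applied; there is no genuine obstacle.

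Finally, for faithfulness, suppose $\phi:T\rightmap{}T'$ is surjective; since $\phi$ respects the grading and $\phi([T]_i)\subseteq[T']_i$, surjectivity forces $\phi([T]_i)=[T']_i$ for every $i$, and in particular $\phi_1:V\rightmap{}V'$ is onto. If $F_\phi(f)=F_\phi(g)$ then $f^0=g^0$ and $f^1\phi_1=g^1\phi_1$, whence $f^1=g^1$ by surjectivity of $\phi_1$, so $f=g$. For the composition law, let $\phi:({\cal A},I)\rightmap{}({\cal A}',I')$ and $\psi:({\cal A}',I')\rightmap{}({\cal A}'',I'')$ be morphisms of interlaced weak ditalgebras; then $(\psi\phi)_0=\psi_0\phi_0$ and, because $\phi([T]_i)\subseteq[T']_i$, also $(\psi\phi)_1=\psi_1\phi_1$. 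On objects, restricting scalars along $\psi_0$ and then along $\phi_0$ is restriction along $\psi_0\phi_0=(\psi\phi)_0$; on morphisms, $F_\phi(F_\psi(f))=(f^0,(f^1\psi_1)\phi_1)=(f^0,f^1(\psi\phi)_1)=F_{\psi\phi}(f)$. Thus $F_{\psi\phi}=F_\phi F_\psi$, completing the proof.
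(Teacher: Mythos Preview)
Your proof is correct and follows precisely the approach the paper intends: the paper itself offers no argument beyond ``easy to show (see \cite{BSZ}(2.4))'', and what you have written is exactly the routine direct verification that reference points to. There is nothing to add.
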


\begin{remark} The equivalent notions of \emph{normal bocs} ${\cal B}$ and \emph{ditalgebra} ${\cal A}$, and their categories of modules, are discussed in detail in  \cite{BSZ}\S3, see also \cite{K}\S4. There, we can see that the coassociativity of the comultiplication $\mu$ of a normal bocs ${\cal B}=(A,U,\mu,\epsilon)$ implies that the derivation $\delta$ of the corrresponding ditalgebra ${\cal A}=(T_A(V),\delta)$ indeed satisfies $\delta^2=0$.

If $({\cal A},I)$ is an interlaced weak ditalgebra with underlying weak ditalgebra ${\cal A}=(T,\delta)$, we have the quotient ditalgebra
$\overline{\cal A}=(T/J,\overline{\delta})$, where $J$ is the ideal of ${\cal A}$ generated by $I$ and $\overline{\delta}$ is the differential induced by the derivation $\delta$ on $T/J$, as remarked before in (\ref{R: cocientes por ideales balanceados}).

Now, given an interlaced weak ditalgebra $({\cal A},I)$, one could consider the corresponding \emph{interlaced weak normal bocs} $({\cal B},I)$, where $I$ is the given ideal of $A=[T]_0$ and ${\cal B}=(A,U,\mu,\epsilon)$ is constructed as in \cite{BSZ}(3.3), using the derivation $\delta$ of ${\cal A}$. This would be a \emph{weak normal bocs} since the comultiplication $\mu$ is not necessarily coassociative. Nevertheless, we would have a normal bocs $\overline{\cal B}=(\overline{A},\overline{U},\overline{\mu},\overline{\epsilon})$, where $\overline{A}=A/I$, $\overline{U}=U/(IU+UI)$, and $\overline{\mu}$, $\overline{\epsilon}$ are induced by $\mu$,  $\epsilon$, respectively. The normal bocs $\overline{\cal B}$ corresponds to the ditalgebra $\overline{\cal A}$.

We find it more desirable to work with a derivation $\delta$ (which does not necessarily satisfies $\delta^2=0$) than to deal with a not necessarily coassociative comultiplication.
\end{remark}

\section{Layered weak ditalgebras}

In this section we consider some sufficient conditions on a ditalgebra with
relations
$\widehat{\cal A}=(\widehat{T},\widehat{\delta})$ which allow us to lift its differential
$\widehat{\delta}$
to a derivation of an interlaced weak ditalgebra $({\cal A},I)$ in such a way
that
$\widehat{\cal A}={\cal A}/J$, where $J$ is the ideal of ${\cal A}$ generated by
an ${\cal A}$-balanced ideal $I$ of $A$.

\begin{definition}\label{D: layered weak ditalgebra}
A $t$-algebra $T$ has \emph{layer $(R,W)$} iff $R$ is a $k$-algebra and $W$ an
$R$-$R$-bimodule equipped with an $R$-$R$-bimodule
decomposition $W_0\oplus W_1$ such that
 $R\cup W_0\subseteq [T]_0$, $W_1\subseteq [T]_1$, and $T$ is freely generated by the
pair $(R,W)$, see \cite{BSZ}(4.1).

 In this case, we have isomorphisms of algebras $T\cong T_R(W)$ and $A\cong
T_R(W_0)$,  and an isomorphism of $A$-$A$-bimodules
$V\cong A\otimes_R W_1\otimes_RA$ which we shall
consider as  identifications.

A weak ditalgebra ${\cal A}=(T,\delta)$ has \emph{layer $(R,W)$} iff
 $T$ admits the layer $(R,W)$ and, moreover, $\delta(R)=0$.
\end{definition}

The following statement can be proved as in \cite{BSZ}(4.4).

\begin{lemma}\label{L: construyendo diferenciales}
Assume that $T$ is a $t$-algebra with layer $(R,W)$.
 Suppose that $\delta_0:W_0\rightmap{}[T]_1$ and $\delta_1:W_1\rightmap{}[T]_2$
are morphisms of $R$-$R$-bimodules. Then, there is a unique derivation
$\delta:T\rightmap{}T$, extending $\delta_0$ and $\delta_1$, such that ${\cal
A}=(T,\delta)$ is a weak ditalgebra with layer
$(R,W)$.
\end{lemma}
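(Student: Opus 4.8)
The plan is to produce $\delta$ via the universal property of the free $R$-algebra $T\cong T_R(W)$, in the spirit of \cite{BSZ}(4.4), with the sign appearing in the Leibniz rule absorbed into a suitably twisted square-zero extension of $T$. Uniqueness is immediate: since $T$ is freely generated by $(R,W)$ it is generated as a $k$-algebra by $R\cup W_0\cup W_1$, and any differential making $(T,\delta)$ a weak ditalgebra with layer $(R,W)$ must satisfy $\delta(R)=0$ together with $\delta|_{W_0}=\delta_0$ and $\delta|_{W_1}=\delta_1$; the Leibniz rule of Definition \ref{D: differential} then forces the value of $\delta$ on every product of generators. For this step one uses only $k$-linearity and the Leibniz rule, nothing about degrees.

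For existence, first combine $\delta_0$ and $\delta_1$ into a single $R$-$R$-bimodule morphism $\hat\delta\colon W=W_0\oplus W_1\longrightarrow T$. Because $W_0\subseteq[T]_0$, $W_1\subseteq[T]_1$, $\delta_0(W_0)\subseteq[T]_1$ and $\delta_1(W_1)\subseteq[T]_2$, the map $\hat\delta$ takes values in $[T]_{\geq 1}$ and raises the canonical $t$-grading $[T]_i=V^{\otimes i}$ by exactly one. Next form the square-zero extension $S:=T\ltimes M$, where $M$ is the $k$-space $[T]_{\geq 1}$ regraded by $M_n:=[T]_{n+1}$ and equipped with the $T$-$T$-bimodule structure given by ordinary right multiplication together with the sign-twisted left action $a\cdot m:=(-1)^{\deg a}am$ on homogeneous elements. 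A routine verification shows that $M$ is a genuine graded $T$-$T$-bimodule, so $S$ is a graded algebra, and a graded $R$-algebra through $R\subseteq[T]_0\subseteq S$. The point of the twist is that a homomorphism of algebras $T\longrightarrow S$ lifting $\mathrm{id}_T$ is exactly the datum of a $k$-linear map $\delta\colon T\longrightarrow T$ of degree $+1$ satisfying $\delta(ab)=\delta(a)b+(-1)^{\deg a}a\delta(b)$.

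Now $w\mapsto(w,\hat\delta(w))$ is an $R$-$R$-bimodule morphism $W\longrightarrow S$, and it is homogeneous for the total grading of $S$, since it carries $W_0$ into $[T]_0\oplus M_0=S_0$ and $W_1$ into $[T]_1\oplus M_1=S_1$. By the universal property of the free $R$-algebra $T\cong T_R(W)$ it extends uniquely to a homomorphism of $R$-algebras $g\colon T\longrightarrow S$, which is graded because it is so on the generators. Composing $g$ with the projection $S\longrightarrow T$ gives a graded algebra endomorphism of $T$ that restricts to the identity on $R\cup W$, hence equals $\mathrm{id}_T$; therefore $g(t)=(t,\delta(t))$ for a uniquely determined $k$-linear map $\delta\colon T\longrightarrow T$ of degree $+1$. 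That $g$ is an algebra homomorphism unwinds, via the multiplication of $S$, into the Leibniz rule for $\delta$; that $g$ is the canonical inclusion on $R$ gives $\delta(R)=0$; and $g$ restricts on $W_0,W_1$ to $(w,\delta_0(w)),(w,\delta_1(w))$, so $\delta$ extends $\delta_0$ and $\delta_1$. Hence $(T,\delta)$ is a weak ditalgebra with layer $(R,W)$, which together with the uniqueness step finishes the proof.

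The one genuinely delicate point is the bookkeeping of signs and degrees. One must put the factor $(-1)^{\deg a}$ on the \emph{left} action of $M$ (not the right), so that ``$g$ is an algebra homomorphism into $S$'' becomes precisely the graded Leibniz rule of Definition \ref{D: differential}; and one must use the shift $M_n=[T]_{n+1}$, which is the one that makes $w\mapsto(w,\hat\delta(w))$ homogeneous given that $W_0\subseteq[T]_0$, $\delta_0(W_0)\subseteq[T]_1$ (and symmetrically for $W_1$). Both are straightforward. Alternatively one can avoid $S$ altogether and define $\delta$ directly on each $W^{\otimes_R n}$ by the signed Leibniz formula $\delta(w_1\otimes\cdots\otimes w_n)=\sum_j(-1)^{\sigma_j}\,w_1\otimes\cdots\otimes\hat\delta(w_j)\otimes\cdots\otimes w_n$, the work being to check well-definedness over $R$ from the $R$-$R$-bilinearity of $\hat\delta$, and the Leibniz rule by induction on degree; this is essentially the computation of \cite{BSZ}(4.4) and presents no conceptual difficulty.
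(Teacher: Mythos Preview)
Your proof is correct. The paper itself gives no argument beyond the reference ``can be proved as in \cite{BSZ}(4.4)'', and your square-zero extension argument is a clean, standard repackaging of exactly that construction---the direct Leibniz-formula approach you sketch at the end is the computation in \cite{BSZ}(4.4), and your $S=T\ltimes M$ trick is just the slick way to encode it via the universal property, so there is nothing substantively different here.
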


The following lemma is probably known, but we did not find a reference.

\begin{lemma}\label{L: tecnico de nucleos}
Let $W_0$ and $W_1$ be  $R$-$R$-bimodules, where $R$ is a semisimple
$k$-algebra,
let $I$ be an ideal of $A:=T_R(W_0)$, and set  $\widehat{A}:=A/I$.
Consider the canonical projection $\pi:A\rightmap{}\widehat{A}$.
Then, the kernel of the morphism
$$\pi\otimes 1\otimes\pi:A\otimes_R W_1\otimes_R
A\rightmap{}\widehat{A}\otimes_RW_1\otimes_R\widehat{A}$$
is $I\otimes_R W_1\otimes_R A+A\otimes_R W_1\otimes_R I$.
\end{lemma}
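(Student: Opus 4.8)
The plan is to exploit the hypothesis that $R$ is semisimple: over such an $R$ every one-sided module is projective, hence flat, so tensoring over $R$ is exact on either side. I would then factor the map in question as a composite of two maps, each of which alters only one of the three tensor factors, and compute the kernel of each separately before combining them by a short diagram chase.

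First, view $0\to I\to A\xrightarrow{\ }\dot A\to 0$ as a short exact sequence of right $R$-modules, where the second map is $\pi$. Since $R$ is semisimple, the left $R$-module $W_1\otimes_R A$ is flat, so applying $-\otimes_R(W_1\otimes_R A)$ keeps the sequence exact; this identifies $\ker(\pi\otimes 1\otimes\mathrm{id}_A)$ with $I\otimes_R W_1\otimes_R A$, the inclusion $I\otimes_R W_1\otimes_R A\hookrightarrow A\otimes_R W_1\otimes_R A$ being injective precisely by flatness of $W_1\otimes_R A$ as a left $R$-module. Symmetrically, regarding $0\to I\to A\to\dot A\to 0$ as a sequence of left $R$-modules and tensoring with the flat right $R$-module $\dot A\otimes_R W_1$, one gets $\ker(\mathrm{id}_{\dot A}\otimes 1\otimes\pi)=\dot A\otimes_R W_1\otimes_R I$; moreover this kernel is exactly the image of $A\otimes_R W_1\otimes_R I$ under $\pi\otimes 1\otimes\mathrm{id}_A$, since $\pi$ is surjective.

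Next I would combine these. Put $p:=\pi\otimes 1\otimes\mathrm{id}_A$ and $q:=\mathrm{id}_{\dot A}\otimes 1\otimes\pi$, so that $\pi\otimes 1\otimes\pi=qp$. If $x\in A\otimes_R W_1\otimes_R A$ lies in $\ker(qp)$, then $p(x)\in\ker q=p(A\otimes_R W_1\otimes_R I)$, so $p(x)=p(y)$ for some $y\in A\otimes_R W_1\otimes_R I$; hence $x-y\in\ker p=I\otimes_R W_1\otimes_R A$, and therefore $x=(x-y)+y\in I\otimes_R W_1\otimes_R A+A\otimes_R W_1\otimes_R I$. The reverse inclusion is immediate, because $\pi$ annihilates $I$, so both $I\otimes_R W_1\otimes_R A$ and $A\otimes_R W_1\otimes_R I$ are sent to $0$ by $\pi\otimes 1\otimes\pi$.

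The one point that genuinely needs the semisimplicity of $R$ — and the only thing beyond a routine chase — is the flatness bookkeeping that lets one treat $I\otimes_R W_1\otimes_R A$ and $A\otimes_R W_1\otimes_R I$ as honest submodules of $A\otimes_R W_1\otimes_R A$ and identify the two auxiliary kernels with them. Once those identifications are justified, the rest is formal. I would expect the write-up to spend most of its care spelling out exactly which module is being tensored on which side at each stage, and otherwise to be short.
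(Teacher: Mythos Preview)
Your proof is correct and follows essentially the same route as the paper: both factor $\pi\otimes 1\otimes\pi$ as the composite $(\mathrm{id}_{\dot A}\otimes 1\otimes\pi)(\pi\otimes 1\otimes\mathrm{id}_A)$, use the semisimplicity of $R$ to identify the kernels of the two factors via exactness of tensoring, and then perform the same two-line diagram chase. The only cosmetic difference is that you spell out the flatness justification explicitly, whereas the paper simply writes down the two exact sequences and proceeds.
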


\begin{proof} By assumption, we have the exact sequences
$$0\rightmap{}I\otimes_RW_1\otimes_RA\rightmap{ \ i\otimes 1\otimes 1 \ }
A\otimes_RW_1\otimes_RA\rightmap{ \ \pi\otimes 1\otimes 1 \
}\widehat{A}\otimes_R W_1\otimes_RA\rightmap{}0$$
and
$$0\rightmap{}\widehat{A}\otimes_RW_1\otimes_RI\rightmap{ \ 1\otimes 1\otimes i \ }
\widehat{A}\otimes_RW_1\otimes_RA\rightmap{ \ 1\otimes 1\otimes \pi \
}\widehat{A}\otimes_R W_1\otimes_R\widehat{A}\rightmap{}0.$$
 Thus, if $z\in A\otimes_RW_1\otimes_RA$ is such that $(\pi\otimes
1\otimes\pi)[z]=0$, we obtain that
 $$(1\otimes 1\otimes\pi)(\pi\otimes 1\otimes 1)[z]=0.$$
 Thus, $z':=(\pi\otimes
1\otimes 1)[z]\in \widehat{A}\otimes_RW_1\otimes_RI\subseteq
\widehat{A}\otimes_RW_1\otimes_R A$. Moreover, the surjective map
$\pi\otimes 1\otimes
1_I:A\otimes_RW_1\otimes_RI\rightmap{}\widehat{A}\otimes_RW_1\otimes_RI$
maps some $z''\in A\otimes_RW_1\otimes_RI\subseteq A\otimes_RW_1\otimes_RA$
onto
$z'$.
Therefore, $(\pi\otimes 1\otimes 1)[z'']=z'$. Hence, $(\pi\otimes 1\otimes
1)(z-z'')=z'-z'=0$,
and $z'''=z-z''\in I\otimes_RW_1\otimes_RA$, and $z=z'''+z''\in
I\otimes_R W_1\otimes_R A+A\otimes_R W_1\otimes_R I$.
\end{proof}

\begin{proposition}\label{P: lifting of differential}
Let $W_0$ and $W_1$ be $R$-$R$-bimodules, where $R$ is a semisimple $k$-algebra.
Define $A:=T_R(W_0)$ and assume that $I$ is an ideal of $A$.
Set $\widehat{A}:=A/I$,
$\widehat{V}:=\widehat{A}\otimes_RW_1\otimes_R\widehat{A}$,
and
$\widehat{T}:=T_{\widehat{A}}(\widehat{V})$. Assume that
$\widehat{\cal A}=(\widehat{T},\widehat{\delta})$
is a ditalgebra with $\widehat{\delta}([R+I]/I)=0$. Then,
 there is a derivation $\delta:T\rightmap{}T$,
 where $T=T_A(V)$, with $V:=A\otimes_RW_1\otimes_RA$, such that:
 \begin{enumerate}
  \item The pair ${\cal A}=(T,\delta)$ is a weak ditalgebra with layer $(R,W)$;
  \item The canonical maps $\pi_0:A\rightmap{}\widehat{A}$ and $\pi_1:=\pi_0\otimes
1\otimes \pi_0:V\rightmap{}\widehat{V}$
induce a morphism of weak ditalgebras $\pi:{\cal A}\rightmap{}\widehat{\cal A}$;
\item The kernel of $\pi_1$ is precisely $\Ker\,{\pi_1}=IV+VI$;
  \item The ideal $I$ of $A$ is ${\cal A}$-balanced and $\Ker\,\pi$ coincides
with the ideal $J$ of ${\cal A}$ generated by $I$. The weak ditalgebra ${\cal
A}$ is interlaced with $I$ and  $\widehat{\cal A}\cong {\cal A}/J$.
 \end{enumerate}
\end{proposition}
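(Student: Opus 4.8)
The plan is to build the differential $\delta$ on $T=T_A(V)$ by applying Lemma (\ref{L: construyendo diferenciales}), for which it suffices to produce $R$-$R$-bimodule maps $\delta_0:W_0\rightmap{}[T]_1$ and $\delta_1:W_1\rightmap{}[T]_2$. Since $W_0\subseteq [\dot T]_0=\dot A$ and $W_1\subseteq [\dot T]_1=\dot V$ via the chosen layer of $\dot{\cal A}$, I first note that $\dot\delta$ restricts to $R$-$R$-bimodule maps $\dot\delta_0:W_0\rightmap{}[\dot T]_1=\dot V$ and $\dot\delta_1:W_1\rightmap{}[\dot T]_2=\dot V\otimes_{\dot A}\dot V$. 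The key point is that the canonical identifications give $[\dot T]_1=\dot A\otimes_RW_1\otimes_R\dot A$ and $[\dot T]_2\cong\dot A\otimes_RW_1\otimes_RW_1\otimes_R\dot A$ (using $R$ semisimple so these tensor products behave well), and the corresponding statements hold for $T$ with $A$ in place of $\dot A$. Since $R$ is semisimple, the surjections $\pi_0:A\rightmap{}\dot A$ split as $R$-$R$-bimodule maps, hence so do $\pi_1:V\rightmap{}\dot V$ and the induced map $[T]_2\rightmap{}[\dot T]_2$; choose such sections $s_1$ and $s_2$. Define $\delta_0:W_0\rightmap{}[T]_1$ by $\delta_0:=s_1\dot\delta_0$ (regarding $W_0\subseteq A$) and $\delta_1:=s_2\dot\delta_1$. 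By Lemma (\ref{L: construyendo diferenciales}) this yields a unique weak ditalgebra ${\cal A}=(T,\delta)$ with layer $(R,W)$, giving (1), and with $\delta(R)=0$ by construction.

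Next I verify (2): that $\pi_0,\pi_1$ induce a morphism of weak ditalgebras $\pi:{\cal A}\rightmap{}\dot{\cal A}$, i.e. $\dot\delta\pi=\pi\delta$. By \cite{BSZ}(1.8) (or (2.4)), it is enough to check this on the generators $R$, $W_0$, $W_1$. On $R$ it is $0=0$. On $W_0$: $\pi\delta_0 = \pi s_1\dot\delta_0 = \dot\delta_0 = \dot\delta\pi$, since $\pi$ on $[T]_1$ restricts to the map $[T]_1\rightmap{}[\dot T]_1$ that left-inverts $s_1$. On $W_1$ the same argument applies with $s_2$. This gives (2). For (3), the kernel of $\pi_1:V=A\otimes_RW_1\otimes_RA\rightmap{}\dot V=\dot A\otimes_RW_1\otimes_R\dot A$ is computed by Lemma (\ref{L: tecnico de nucleos}) applied with this $W_1$: it equals $I\otimes_RW_1\otimes_RA+A\otimes_RW_1\otimes_RI = IV+VI$ under the identification $V\cong A\otimes_RW_1\otimes_RA$. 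This is (3).

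Finally (4). First, $I$ is ${\cal A}$-balanced: for $a\in I$, $\delta(a)\in\Ker\pi_1$ since $\pi_1\delta(a)=\dot\delta\pi_0(a)=\dot\delta(0)=0$, so by (3) $\delta(I)\subseteq IV+VI$, which is the defining condition. Then, by Lemma (\ref{L: sobre ideales de weak dits generated by I})(2), the ideal $J$ of $T$ generated by $I$ and $I_V=IV+VI$ is an ideal of the weak ditalgebra ${\cal A}$, with $J\cap A=I$ and $J\cap V=IV+VI=\Ker\pi_1$. Now $\pi:{\cal A}\rightmap{}\dot{\cal A}$ is surjective (on $A$ and on $V$, hence on $T$), and $J\subseteq\Ker\pi$ because $I\subseteq\Ker\pi_0$ and $IV+VI\subseteq\Ker\pi_1$ and $\Ker\pi$ is an ideal. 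For the reverse inclusion, pass to $T/J\cong T_{\dot A}(\dot V)=\dot T$ via \cite{BSZ}(8.4) and Remark (\ref{R: cocientes por ideales balanceados}): the induced map $T/J\rightmap{}\dot T$ is an isomorphism on degree-zero and degree-one parts (by $J\cap A=I$, $J\cap V=\Ker\pi_1$), hence an isomorphism of graded algebras, so $\Ker\pi=J$. Consequently ${\cal A}/J\cong\dot{\cal A}$ as weak ditalgebras, and since $\dot{\cal A}$ is a ditalgebra ($\dot\delta^2=0$), the quotient differential $\overline\delta$ satisfies $\overline\delta^2=0$; by Remark (\ref{R: balanced vs interlaced}) (applied to the ${\cal A}$-balanced ideal $I$), $\overline\delta^2=0$ is exactly the statement that $\delta^2(T)\subseteq J$, which is condition (3) of that remark and hence equivalent to ${\cal A}$ being interlaced with $I$. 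This proves (4).

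The main obstacle is bookkeeping the identifications of the graded pieces $[T]_2$ and $[\dot T]_2$ with the appropriate tensor powers over $R$, and checking that the chosen $R$-$R$-bimodule sections $s_1,s_2$ are compatible enough that $\pi$ is genuinely a morphism of weak ditalgebras; once $\pi$ is a morphism, statements (3) and (4) follow fairly mechanically from Lemmas (\ref{L: tecnico de nucleos}) and (\ref{L: sobre ideales de weak dits generated by I}) together with Remarks (\ref{R: cocientes por ideales balanceados}) and (\ref{R: balanced vs interlaced}).
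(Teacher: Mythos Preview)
Your proof is correct and follows essentially the same route as the paper: both construct $\delta_0,\delta_1$ by lifting $\dot\delta$ through the surjections $V\to\dot V$ and $[T]_2\to[\dot T]_2$ using that $R$-$R$-bimodules form a semisimple category (the paper phrases this as projectivity of $W_0,W_1$; you phrase it as choosing $R$-$R$-bimodule sections), then invoke Lemma~(\ref{L: construyendo diferenciales}), Lemma~(\ref{L: tecnico de nucleos}), Lemma~(\ref{L: sobre ideales de weak dits generated by I}), Remark~(\ref{R: cocientes por ideales balanceados}) and Remark~(\ref{R: balanced vs interlaced}) in the same order. Two small slips worth fixing: $[\dot T]_2\cong \dot A\otimes_RW_1\otimes_R\dot A\otimes_RW_1\otimes_R\dot A$ (you dropped the middle $\dot A$), and the splitting of $R$-$R$-bimodule surjections needs $R\otimes_kR^{op}$ semisimple, which uses that $k$ is perfect --- the paper makes this explicit.
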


\begin{proof}
 It is clear that $\pi_0$ and $\pi_1$ induce a surjective morphism
 of $k$-algebras $\pi:T\rightmap{}\widehat{T}$. Since $\widehat{\delta}([R+I]/I)=0$, we
obtain that
 $\widehat{\delta}$ is a morphism of $R$-$R$-bimodules.
 Since $k$ is perfect, $R\otimes_kR$
is also semisimple and
 $W_0$ and $W_1$ are projective $R$-$R$-bimodules. Thus, we have the following
commutative diagrams of
$R$-$R$-bimodules
 $$\begin{matrix}
    W_0&\rightmap{\pi_\vert}&\widehat{A}&&\\
    \shortlmapdown{\delta_0}&&\shortlmapdown{\widehat{\delta}}&&\\
    V&\rightmap{\pi_\vert}&\widehat{V}&\rightmap{}&0\\
   \end{matrix}\hbox{ \ \ \  \ \ \ }
   \begin{matrix}
    W_1&\rightmap{\pi_\vert}&\widehat{V}&&\\
    \shortlmapdown{\delta_1}&&\shortlmapdown{\widehat{\delta}}&&\\
    V\otimes_AV&\rightmap{\pi_\vert}&\widehat{V}\otimes_{\widehat{A}}\widehat{V}&\rightmap{}&0.\\
   \end{matrix}$$
  Then, by (\ref{L: construyendo
diferenciales}), we can extend
   these maps to a derivation $\delta:T\rightmap{}T$ in such a way that (1)
and (2) are satisfied.
   Item (3) follows from (\ref{L: tecnico de nucleos}). The ideal $I$ of $A$ is
${\cal A}$-balanced because
   $\pi \delta(I)=\widehat{\delta}\pi(I)=0$, so $\delta(I)\subseteq \Ker\,\pi_1$.
   From (\ref{L: sobre ideales de weak dits generated by I}), we know that $J$
   is an ideal of ${\cal A}$.

   We have $\overline{A}=A/I=\widehat{A}$ and, by (3), the map $\pi_1$ induces an
isomorphism
   $\overline{\pi}_1:\overline{V}=V/(IV+VI)\rightmap{}\widehat{V}$, see (\ref{R: cocientes por ideales balanceados}). They determine
an
isomorphism of graded algebras
$\overline{\pi}:\overline{T}=T_{\overline{A}}(\overline{V})\rightmap{}
\widehat{T}=T_{\widehat{A}}
(\widehat{V})$. Thus $\pi$ essentially
 coincides with the canonical projection $T\rightmap{}T/J$, once we
have made the
   identification described in (\ref{R: cocientes por ideales balanceados}).
Thus, the
   kernel of $\pi$ is $J$ and $\widehat{\cal A}\cong {\cal A}/J$. Finally,
   $\pi\delta^2(T)=\widehat{\delta}^2\pi(T)=0$, so $\delta^2(T)\subseteq J$
   and,  from (\ref{R: balanced vs interlaced}),
   ${\cal A}$ is interlaced with $I$.
\end{proof}

\section{Triangular interlaced weak ditalgebras}

In this section we give a brief summary of the basic properties of the category
of $({\cal A},I)$-modules which follow from triangularity  conditions on the
layer of $({\cal A},I)$. We transfer terminology and basic properties from
the case of triangular ditalgebras, see \cite{BSZ}\S5.

\begin{definition}\label{D: triangular weak ditalg}
Assume that ${\cal A}=(T,\delta)$ is a weak ditalgebra with layer $(R,W)$.
We say that this
layer is \emph{triangular} if
\begin{enumerate}
 \item There is a filtration of $R$-$R$-subbimodules
$0=W_0^0\subseteq W_0^1\subseteq \cdots \subseteq W_0^r=W_0$ such that
$\delta(W_0^{i+1})\subseteq A_iW_1A_i$, for all $i\in [0,r-1]$,
where $A_i$ denotes the $R$-subalgebra $A$ generated by $W_0^i$.
\item There is a filtration of $R$-$R$-subbimodules
$0=W_1^0\subseteq W_1^1\subseteq \cdots \subseteq W_1^s=W_1$ such that
$\delta(W_1^{i+1})\subseteq AW_1^iAW_1^iA$, for all $i\in [0,s-1]$.
\end{enumerate}
The layer is called \emph{additive triangular} iff each $W_0^i$ is a direct
summand of $W_0^{i+1}$, for all $i$. ${\cal A}$ is called
\emph{additive triangular} if it has an additive triangular layer.
\end{definition}

\begin{definition}\label{D: triangular interlaced weak ditalgebras}
 Assume that $({\cal A},I)$ is an interlaced weak ditalgebra with layer
$(R,W)$, then
 $({\cal A},I)$ is called a \emph{triangular  interlaced
weak ditalgebra} iff
$(R,W)$ is triangular, as in the last definition, and $I$ is
an ${\cal
A}$-triangular ideal of $A$, as in (\ref{D: triangular ideal and balanced
ideal}).
Given a preordered set ${\cal P}$, a triangular interlaced weak ditalgebra $({\cal A},I)$ will be called
${\cal P}$-\emph{oriented} iff the underlying weak ditalgebra ${\cal A}$
is ${\cal P}$-oriented, see (\ref{D: biquiver P-orientado}).
\end{definition}

\begin{proposition}\label{P: lema de Roiter basico}
 Let $({\cal A},I)$ be a triangular interlaced weak ditalgebra with layer
$(R,W)$ and adopt the notation of (\ref{D: triangular weak ditalg}).
Assume that, for all $i\in [1,r]$,  the algebra $A_i$ is freely generated
by the pair $(R,W_0^i)$ and that the product map
$$A_i\otimes_RW_1\otimes_RA_i\rightmap{}A_iW_1A_i$$ is an isomorphism  (this is
the case if the layer of ${\cal A}$ is additive triangular).
Suppose that $M$ and $N$ are $R$-modules, $f^0\in \Hom_{R}(M,N)$ is an
isomorphism, and $f^1\in \Hom_{R\g R}(W_1,\Hom_k(M,N))$. Then,
\begin{enumerate}
 \item If $N$ is an $A$-module with underlying $R$-module $N$ and $IN=0$,
 then there is an $A$-module structure on $M$, with underlying $R$-module $M$,
 such that $IM=0$ and $(f^0,f^1)\in \Hom_{({\cal A},I)}(M,N)$.
 \item If $M$ is an $A$-module with underlying $R$-module $M$ and $IM=0$,
 then there is an $A$-module structure on $N$, with underlying $R$-module $N$,
 such that $IN=0$ and $(f^0,f^1)\in \Hom_{({\cal A},I)}(M,N)$.
\end{enumerate}
\end{proposition}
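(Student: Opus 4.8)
The statement is essentially the "Roiter lemma" for interlaced weak ditalgebras, and the natural approach is to mimic the proof of its analogue for triangular ditalgebras in \cite{BSZ}\S5, being careful that the new module structure annihilates $I$. I will only treat part (1); part (2) is symmetric. So suppose $N$ is an $A$-module with underlying $R$-module $N$, $IN=0$, and we are given $f^0\in\Hom_R(M,N)$ an isomorphism and $f^1\in\Hom_{R\g R}(W_1,\Hom_k(M,N))$. Using the layer structure $A\cong T_R(W_0)$, an $A$-module structure on $M$ (extending the given $R$-structure) is the same as a morphism of $R\g R$-bimodules $W_0\rightmap{}\End_k(M)$, and by freeness of $T_R(W_0)$ it suffices to define the action of $W_0$ on $M$. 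The key point is that the condition $(f^0,f^1)\in U(M,N)$, namely $af^0[m]=f^0[am]+f^1(\delta(a))[m]$, can be solved for the action of $a\in W_0$ on $M$ in terms of data already known on $N$ and the lower pieces of the triangular filtration of $W_0$, because $\delta(W_0^{i+1})\subseteq A_iW_1A_i\cong A_i\otimes_RW_1\otimes_RA_i$, so $f^1(\delta(a))$ only involves the $A_i$-actions, which are defined by induction.

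Concretely, I would build the action of $W_0$ on $M$ by induction along the filtration $0=W_0^0\subseteq W_0^1\subseteq\cdots\subseteq W_0^r=W_0$. Suppose the $A_i$-module structure on $M$ has been defined so that $IM\cap(\text{so far})$ is handled and $(f^0,f^1)$ restricted to $W_1$ satisfies the $U$-equation for all $a\in A_i$. For $w\in W_0^{i+1}$, write $\delta(w)=\sum_j p_j\otimes x_j\otimes q_j\in A_i\otimes_RW_1\otimes_RA_i$ and define the action of $w$ on $m\in M$ by
$$w\cdot m:=(f^0)^{-1}\Big(w\cdot f^0[m]-\sum_j f^1(x_j)[q_j\cdot m]\,p_j\Big),$$
wait—more precisely one sets $w\cdot m:=(f^0)^{-1}\big(w\cdot f^0[m]\big)-(f^0)^{-1}\big(f^1(\delta(w))[m]\big)$, where $f^1(\delta(w))[m]:=\sum_j p_j\cdot f^1(x_j)[q_j\cdot m]$ using the already-defined $A_i$-action on both $M$ and $N$; here one must check this is well defined on the tensor product over $R$ (it is, by $R$-bilinearity of all the maps involved) and that it is $R\g R$-bilinear in $w$, so that it extends to a bimodule map $W_0^{i+1}\rightmap{}\End_k(M)$ and hence, with freeness of $A_{i+1}=T_R(W_0^{i+1})$, to an $A_{i+1}$-module structure on $M$. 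One then verifies directly that with this extended action $(f^0,f^1)$ satisfies the $U$-equation for all $a\in A_{i+1}$: for $a\in W_0^{i+1}$ it is the defining equation, for $a\in A_i$ it held before and is unchanged, and for products one uses the Leibniz rule for $\delta$ together with the inductive hypothesis (this is exactly the computation from the ditalgebra case).

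It remains to arrange $IM=0$. Here I would use that $I$ is an ${\cal A}$-triangular ideal, with filtration $0=H_0\subseteq\cdots\subseteq H_t=I$ and $\delta(H_l)\subseteq AH_{l-1}V+VH_{l-1}A$. By induction on $l$: for $h\in H_l$ and $m\in M$, compute $h\cdot f^0[m]$; since $IN=0$ this is $0$, so the $U$-equation gives $f^0[h\cdot m]=-f^1(\delta(h))[m]$; but $\delta(h)\in AH_{l-1}V+VH_{l-1}A$, and the terms of $f^1$ applied to such elements involve the action of elements of $H_{l-1}$ on $M$ or $N$, which vanish by the inductive hypothesis (on $M$) and by $IN=0$ (on $N$); hence $f^0[h\cdot m]=0$, and since $f^0$ is injective, $h\cdot m=0$. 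Thus $IM=0$, $M\in({\cal A},I)\g\Mod$, and $(f^0,f^1)\in\Hom_{({\cal A},I)}(M,N)$, as required. The main obstacle is purely bookkeeping: keeping the two nested inductions (along $W_0^\bullet$ to build the action, along $H_\bullet$ to kill $I$) consistent, and checking at each stage that the formula defining the new action is well defined over $\otimes_R$ and $R\g R$-bilinear — for which the hypothesis that $A_i$ is freely generated by $(R,W_0^i)$ and that $A_i\otimes_RW_1\otimes_RA_i\rightmap{}A_iW_1A_i$ is an isomorphism is exactly what is needed.
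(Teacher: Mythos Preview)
Your proposal is correct and follows essentially the same approach as the paper: build the $A$-module structure on $M$ inductively along the triangular filtration of $W_0$ via the formula $w\cdot m=(f^0)^{-1}\big(w\cdot f^0(m)-f^1(\delta(w))[m]\big)$, then run a second induction along the ${\cal A}$-triangular filtration $H_\bullet$ of $I$ to conclude $IM=0$. The paper's write-up is terser (it phrases the first induction as constructing maps $\phi_j:W_0^j\otimes_RM\to M$ and leaves the verification that $f^1(\delta(a_{j+1}))[m]=0$ implicit), but the content is identical.
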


\begin{proof} This is similar to the proof of \cite{BSZ}(5.3). For instance,
for (1) we
take a morphism of $R$-modules $g^0:N\rightmap{}M$ with $f^0g^0=1_N$.
Using the triangular filtration of $W_0$, we construct inductively morphisms
$\phi_j:W_0^j\otimes_R M\rightmap{}M$ of $R$-modules satisfying
$\phi_j(w\otimes m)=g^0[wf^0(m)]-g^0[f^1(\delta(w))(m)]$, for $w\in W_0^j$
  and $m\in M$, where the extension of $f^1$ to a morphism
  $A_{j-1}W_1A_{j-1}\rightmap{}\Hom_k(M,N)$ is denoted with the
  same symbol $f^1$. Then, $\phi_r$ gives to $M$ a structure of an $A$-module
  and from its recipe we derive the formula
  $$f^0(wm)=f^0g^0(wf^0(m)-f^1(\delta(w))[m])=wf^0(m)-f^1(\delta(w))[m],$$
  for any $w\in W_0$. Then the same identity holds for $w\in A$.

  Now, adopt the notation of (\ref{D: triangular ideal and balanced ideal}).
  So, given $a_1\in H_1$, we get $f^0[a_1m]=a_1f^0[m]-f^1(0)[m]=0$,
  and the injectivity of $f^0$ implies that $a_1m=0$. If we assume that
$a_jm=0$, for
  $a_j\in H_j$ and $m\in M$, then for $a_{j+1}\in H_{j+1}$ we have
  $f^0(a_{j+1}m)=a_{j+1}f^0(m)-f^1(\delta(a_{j+1}))[m]=0$,
  because $\delta(a_{j+1})\in AH_jV+VH_jA$. Thus, $IM=0$.
\end{proof}

\begin{definition}\label{D: Roiter weak ditalgebra}
A triangular interlaced weak ditalgebra $({\cal A},I)$ with layer $(R,W)$  is
called a \emph{Roiter interlaced weak ditalgebra} iff the following
property is satisfied: for any isomorphism $f^0$ of $R$-modules
$f^0:M\rightmap{}N$ and any $f^1\in \Hom_{R\g R}(W_1,\Hom_k(M,N))$, if one of
$M$ or $N$ has a structure of a left $A/I$-module, then the other one admits also
a structure of a left $A/I$-module such that $(f^0,f^1)\in \Hom_{({\cal
A},I)}(M,N)$.
\end{definition}

Notice that, by (\ref{P: lema de Roiter basico}), any ${\cal P}$-oriented  triangular interlaced weak ditalgebra
$({\cal A},I)$ is a Roiter interlaced weak ditalgebra.

The following three statements, and their proofs, are similar to
(5.7), (5.8), and (5.12) of \cite{BSZ}. With a slightly different language, these results are proved in \cite{KKO} for the case of directed interlaced weak ditalgebras, see the following section.

\begin{proposition}\label{P: lema de Roiter con sec y retr}
  Let $({\cal A},I)$ be a Roiter interlaced weak ditalgebra with layer $(R,W)$.
  Suppose that
  $f=(f^0,f^1)\in \Hom_{({\cal A},I)}(M,N)$. Then,
  \begin{enumerate}
   \item If $f^0$ is a retraction in $R\g\Mod$, then there is a morphism
$h:M'\rightmap{}M$
in $({\cal A},I)\g\Mod$ such that $h^0$ is an isomorphism and $(fh)^1=0$;
  \item If $f^0$ is a section in $R\g\Mod$, then there is a morphism
$g:N\rightmap{}N'$
in $({\cal A},I)\g\Mod$ such that $g^0$ is an isomorphism and $(gf)^1=0$.
  \end{enumerate}
\end{proposition}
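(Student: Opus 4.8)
The plan is to follow the strategy of \cite{BSZ}(5.7), adapting it to the interlaced setting where, instead of genuine $A$-module structures, we must track $A/I$-module structures and apply the Roiter property from Definition (\ref{D: Roiter weak ditalgebra}) in place of the classical Roiter lemma. I treat the two items symmetrically, so let me describe (1); item (2) is dual. So suppose $f=(f^0,f^1)\in \Hom_{({\cal A},I)}(M,N)$ with $f^0$ a retraction in $R\g\Mod$, and fix a section $s^0:N\rightmap{}M$ of $R$-modules with $f^0s^0=1_N$. The idea is to split $M$ as an $R$-module into $M\cong N'\oplus M''$, where $M''=\Ker f^0$ and $N'=\Im s^0\cong N$, and then transport the $A/I$-module structure of $M$ to a new structure on the same underlying $R$-module in such a way that the composite with $f$ kills the degree-one component along $M''$.

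The concrete mechanism I would use: consider the idempotent $e^0=s^0f^0\in \End_R(M)$ and build a morphism $h=(h^0,h^1)\in \Hom_{({\cal A},I)}(M',M)$ with $h^0$ an isomorphism, by prescribing $h^0$ and $h^1$ appropriately and invoking Proposition (\ref{P: lema de Roiter basico}) — or rather its consequence, the Roiter property of Definition (\ref{D: Roiter weak ditalgebra}) — to endow the domain $M'$ (built on a suitable underlying $R$-module, typically $M$ itself with a modified module structure) with an $A/I$-module structure making $h$ a morphism in $({\cal A},I)\g\Mod$. Then one computes $(fh)^1$ using the composition formula
$$(fh)^1(w)=f^0h^1(w)+f^1(w)h^0+(f^1*h^1)(\delta(w)),\hbox{ for } w\in W_1,$$
and chooses $h^1$ (a morphism of $R$-$R$-bimodules $W_1\rightmap{}\Hom_k(M',M)$) so that this expression vanishes identically. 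The correct choice is forced: writing $h^0$ as the identity on underlying spaces, one needs $f^0h^1(w)=-f^1(w)$ modulo the $\delta$-correction term, which is solvable precisely because $f^0$ is a retraction — so $f^0$ has a right inverse through which one can define $h^1(w):=-s^0f^1(w)$, and the triangularity of the layer lets one absorb the $(f^1*h^1)(\delta(w))$ term by an inductive argument along the filtration $W_1^0\subseteq\cdots\subseteq W_1^s=W_1$, adjusting $h^1$ step by step.

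The main obstacle — and the reason this is not literally \cite{BSZ}(5.7) — is ensuring at each step of the induction that the module structures produced are genuinely $A/I$-module structures, i.e. annihilated by $I$, rather than merely $A$-module structures. This is exactly what the Roiter property of a Roiter interlaced weak ditalgebra guarantees, and it is the interlaced analogue of the classical Roiter lemma; Proposition (\ref{P: lema de Roiter basico}) shows that the triangularity of $I$ as an ${\cal A}$-triangular ideal makes the needed structure automatically kill $I$. A secondary point requiring care is that all the composition identities being used are the ones from Lemma (\ref{L: compos de morfismos de U}) and Proposition (\ref{P: cat de mods de (wdit,I) }), which are valid because $({\cal A},I)$ is interlaced — so the $\delta^2$-terms appearing in associativity and in the composition formula vanish on the relevant modules. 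Granting these, the computation that $(fh)^1=0$ is the same bookkeeping as in the ditalgebra case, and item (2) follows by the dual construction, using a section $f^0$ and the part of the Roiter property that transports structure from $M$ to $N$.
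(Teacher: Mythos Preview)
Your proposal is correct and follows essentially the same approach as the paper, which simply refers the reader to \cite{BSZ}(5.7): set $h^0=1_M$, inductively build $h^1$ along the triangular filtration of $W_1$ using the section $s^0$ to solve $f^0h^1(w)=-f^1(w)-(f^1*h^1)(\delta(w))$, and then invoke the Roiter property (Definition~\ref{D: Roiter weak ditalgebra}), which is a hypothesis here rather than something to be deduced from Proposition~\ref{P: lema de Roiter basico}, to endow $M'$ with its $A/I$-module structure. The digression about the idempotent $e^0=s^0f^0$ is unnecessary but harmless.
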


\begin{corollary}\label{C: lema de Roiter}
  Let $({\cal A},I)$ be a Roiter interlaced weak ditalgebra and suppose
that
  $f$ is a morphism in $({\cal A},I)\g\Mod$. Then, $f$ is an isomorphism if and
only if $f^0$ is bijective.
\end{corollary}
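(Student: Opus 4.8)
The plan is to follow the template of \cite{BSZ}(5.12), adapted to the interlaced setting, using Proposition (\ref{P: lema de Roiter con sec y retr}) as the main engine. One direction is trivial: if $f=(f^0,f^1)$ is an isomorphism in $({\cal A},I)\g\Mod$, then applying the forgetful-type compatibility of compositions with the $0$-components (the identity morphism is $(1,0)$ and $(gf)^0=g^0f^0$), we see that $f^0$ has a two-sided inverse in $\Hom_k$, and since all morphisms in the category have $R$-linear (indeed $A/I$-linear) $0$-components, $f^0$ is bijective. So the content is the converse.

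Assume $f^0$ is bijective. In particular $f^0$ is both a section and a retraction in $R\g\Mod$. First I would apply part (2) of Proposition (\ref{P: lema de Roiter con sec y retr}) to obtain $g:N\rightmap{}N'$ in $({\cal A},I)\g\Mod$ with $g^0$ an isomorphism and $(gf)^1=0$. Then $gf=((g^0f^0),0)$ with $g^0f^0$ bijective; I claim such a morphism $h:=gf$ is invertible in $({\cal A},I)\g\Mod$, with inverse $((h^0)^{-1},0)$. This requires checking that $((h^0)^{-1},0)\in U(N,N')$, i.e. that $(h^0)^{-1}$ is a morphism of $A/I$-modules — which follows because $h^0$ is, being the $0$-component of a morphism in the category (note $IM=0$ for all objects, so $A$ acts through $A/I$), and the $f^1$-component condition is vacuous since the second coordinate is $0$ and $\delta$ contributes nothing. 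Then one verifies using the composition formula of (\ref{L: compos de morfismos de U}) that the product of $((h^0)^{-1},0)$ with $(h^0,0)$, in either order, is $(1,0)$; the extra $(\cdot * \cdot)(\delta(v))$ terms vanish because one of the $1$-components is zero. Hence $h=gf$ is an isomorphism.

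Finally, since $g^0$ is an isomorphism and all morphisms are $A/I$-linear on $0$-components, the same argument shows $g=(g^0,g^1)$ is a split epimorphism on the nose — more directly, from $gf=h$ an isomorphism we get that $f$ is a section and $g$ is a retraction; composing $f\cdot h^{-1}\cdot g$ yields an idempotent endomorphism $e$ of $N$ with $e^0=f^0(h^0)^{-1}g^0$, and one computes $e$ together with $1_N-e$ to split $N$. But it is cleaner to instead invoke part (1) of Proposition (\ref{P: lema de Roiter con sec y retr}) applied to $f$ (whose $f^0$ is also a retraction) to get $h':M'\rightmap{}M$ with $h'^0$ an isomorphism and $(fh')^1=0$; then $fh'$ is an isomorphism by the same reasoning as above, so $f$ is both a section and a retraction in $({\cal A},I)\g\Mod$, and a morphism that is simultaneously a split mono and a split epi in any category is an isomorphism.

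The only real obstacle is the bookkeeping in the claim that a morphism of the form $(\psi,0)$ with $\psi$ an isomorphism of $A/I$-modules is invertible in $({\cal A},I)\g\Mod$: one must confirm both that $(\psi^{-1},0)$ lies in $U(N,M)$ (the defining identity $a\psi^{-1}(n)=\psi^{-1}(an)+0$ holds precisely because $\psi$ is $A/I$-linear and $IM=IN=0$) and that the composites computed via the formula $(gf)^1(v)=g^0f^1(v)+g^1(v)f^0+(g^1*f^1)(\delta(v))$ reduce to $(1,0)$; here every term involving the zero $1$-component drops out, leaving only $(\psi^{-1}\psi,0)=(1_M,0)$ and $(\psi\psi^{-1},0)=(1_N,0)$. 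This is routine once the definitions are unwound, exactly parallel to \cite{BSZ}(5.12), so I would present it compactly.
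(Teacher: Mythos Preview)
Your proof is correct and follows essentially the same approach the paper indicates, namely the argument of \cite{BSZ}(5.12): use both parts of Proposition~(\ref{P: lema de Roiter con sec y retr}) to exhibit $f$ as simultaneously a split monomorphism and a split epimorphism, reducing to the elementary fact that a morphism of the form $(\psi,0)$ with $\psi$ an $A/I$-isomorphism is invertible with inverse $(\psi^{-1},0)$. The paper gives no proof beyond the reference to \cite{BSZ}, and your write-up is exactly the adaptation that reference would yield.
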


\begin{proposition}\label{P: idempotents split}
   Let $({\cal A},I)$ be a Roiter interlaced weak ditalgebra, then
idempotents split in $({\cal A},I)\g\Mod$.
   That is, for any idempotent $e\in \End_{({\cal A},I)}(M)$, there is an
isomorphism $h:M_1\oplus M_2\rightmap{}M$ in
   $({\cal A},I)\g\Mod$ such that
   $$h^{-1}eh=\begin{pmatrix}
            1_{M_1}&0\\ 0&0\\
           \end{pmatrix}.$$
\end{proposition}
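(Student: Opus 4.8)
The plan is to imitate the standard argument for ditalgebras, the case $I=0$ of \cite{BSZ}(5.12), and reduce the splitting of idempotents in $({\cal A},I)\g\Mod$ to the splitting of idempotents in the module category $A/I\g\Mod$ of an ordinary algebra, which is well known. So let $e=(e^0,e^1)\in \End_{({\cal A},I)}(M)$ be an idempotent, i.e. $e^2=e$ in the category $({\cal A},I)\g\Mod$. In particular, looking at degree-zero components, $(e^2)^0=(e^0)^2$, so $e^0\in\End_A(M)$ is an idempotent endomorphism of $A$-modules (note $e^0$ is $A$-linear since $(e^0,e^1)\in U(M,M)$ forces the compatibility with the $A$-action, but more simply $e^0$ need only be a genuine idempotent of the underlying vector space; what matters is that it is an idempotent). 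Since idempotents split in $A/I\g\Mod$, write $M=M_1\oplus M_2$ as $A/I$-modules with $e^0$ the projection onto $M_1$ along $M_2$; equivalently, there is an $A/I$-module isomorphism $u^0:M_1\oplus M_2\rightmap{}M$ with $(u^0)^{-1}e^0u^0=\mathrm{diag}(1_{M_1},0)$.

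Next I would upgrade $u^0$ to a morphism of $({\cal A},I)\g\Mod$. Since $u^0$ is an isomorphism of $R$-modules (after restriction of scalars through $R\rightmap{}A$), the Roiter property of $({\cal A},I)$ (Definition \ref{D: Roiter weak ditalgebra}) applies: because $M_1\oplus M_2$ already carries an $A/I$-module structure, there is an element $u=(u^0,u^1)\in\Hom_{({\cal A},I)}(M_1\oplus M_2,M)$ lifting $u^0$, and by Corollary \ref{C: lema de Roiter} this $u$ is an isomorphism in $({\cal A},I)\g\Mod$ since $u^0$ is bijective. Now set $\tilde e:=u^{-1}eu\in\End_{({\cal A},I)}(M_1\oplus M_2)$; it is again an idempotent, and $\tilde e^0=(u^0)^{-1}e^0u^0=\mathrm{diag}(1_{M_1},0)$ is already in the desired block form. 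So it suffices to correct the degree-one component $\tilde e^1$ without disturbing $\tilde e^0$.

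The final step is to find an automorphism $w$ of $M_1\oplus M_2$ in $({\cal A},I)\g\Mod$ with $w^0=1$ and $w^{-1}\tilde e w=\mathrm{diag}(1_{M_1},0)$ exactly (with zero degree-one part). This is the purely ditalgebra-theoretic heart of the argument and is where the triangularity is used: writing $p:=\mathrm{diag}(1_{M_1},0)=(\tilde e^0,0)\in\End_{({\cal A},I)}(M_1\oplus M_2)$, one has $\tilde e = p + (0,\tilde e^1)$ and the idempotency $\tilde e^2=\tilde e$ together with $p^2=p$ translates, on degree-one components and using the composition formula of Lemma \ref{L: compos de morfismos de U}, into a relation of the form $p^0\tilde e^1 + \tilde e^1 p^0 + (\text{terms in }\delta) = \tilde e^1$. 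One then solves, inductively along the triangular filtrations $W_0^\bullet$ and $W_1^\bullet$ of the layer, for a morphism $w=(1,w^1)$ conjugating $\tilde e$ to $p$; the inductive construction is exactly as in \cite{BSZ}(5.12), the point being that at each stage the obstruction lives in a bimodule over $A_{i}$ acting through $A/I$ (so the balanced/triangular hypotheses on $I$ guarantee everything stays well-defined modulo $I$), and the triangularity makes the relevant operator nilpotent so that a solution exists. Composing, $h:=uw:M_1\oplus M_2\rightmap{}M$ is an isomorphism in $({\cal A},I)\g\Mod$ with $h^{-1}eh=\mathrm{diag}(1_{M_1},0)$, as required.

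I expect the main obstacle to be precisely this last inductive correction of the degree-one component: one must check that the recursion of \cite{BSZ}(5.12) goes through verbatim in the interlaced setting, i.e. that every module and bimodule occurring in the induction is annihilated by $I$ (so lives over $A/I$), which is where one invokes that $I$ is an ${\cal A}$-triangular, hence ${\cal A}$-balanced, ideal, exactly as in Propositions \ref{P: lema de Roiter basico} and \ref{P: lema de Roiter con sec y retr}. Granting that bookkeeping, the proof is a routine transcription; as the excerpt indicates, this statement and its proof are meant to be parallel to \cite{BSZ}(5.12).
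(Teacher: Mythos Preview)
Your argument has a genuine gap at the very first step: you assert that $e^0\in\End_A(M)$ is $A$-linear, but this is false. The defining condition for $(e^0,e^1)\in U(M,M)$ is
\[
ae^0(m)=e^0(am)+e^1(\delta(a))(m),
\]
which carries the correction term $e^1(\delta(a))$. So $e^0$ is only $R$-linear (because $\delta(R)=0$), not $A/I$-linear. Consequently you cannot write $M=M_1\oplus M_2$ \emph{as $A/I$-modules} with $e^0$ the projection; that decomposition exists only in $R\g\Mod$. Your step~3 then compounds the error: you invoke the Roiter property ``because $M_1\oplus M_2$ already carries an $A/I$-module structure,'' but that structure was supposed to come from the nonexistent $A/I$-decomposition of step~2.

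The overall strategy is salvageable and is indeed what the paper (following \cite{BSZ}(5.12)) does, but one must work over $R$, not $A/I$, at the outset. One splits $M=M_1\oplus M_2$ in $R\g\Mod$ along $e^0$, then uses the Roiter property to endow $M_1\oplus M_2$ with \emph{some} $A/I$-structure (not a priori a direct sum) making a chosen $(u^0,u^1)$ an isomorphism; after conjugation $\tilde e^0=\mathrm{diag}(1_{M_1},0)$ only as an $R$-linear map. The inductive correction of $\tilde e^1$ along the triangular filtration then kills the degree-one part; only at that point does $\mathrm{diag}(1_{M_1},0)$ become $A/I$-linear, whence $M_1$ and $M_2$ are genuine $A/I$-submodules and the splitting lives in $({\cal A},I)\g\Mod$. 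Note also that an automorphism $w=(1,w^1)$ with identity zero-component is constrained by $w^1(\delta(A))=0$, so one typically invokes the Roiter property again (changing the $A/I$-structure on $M_1\oplus M_2$) rather than working with a fixed structure throughout.
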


\section{Reductions and wildness}

In this section, we adapt standard reductions for ditalgebras, as presented in \cite{BSZ}, to the case of triangular interlaced weak
ditalgebras. By the end of this section, we relate them to the notion of
wildness.

\begin{lemma}\label{L: context of reduction by a quotient}
 Let $({\cal A},I)$ be a triangular interlaced weak ditalgebra, where ${\cal
A}=(T,\delta)$ admits the triangular layer $(R,W)$. Assume that
we have a surjective morphism of $k$-algebras $\phi_{\flat}:R\rightmap{}R'$,
$R'$-$R'$-bimodules $W'_0$ and $W'_1$; and
surjective morphisms of $R$-$R$-bimodules $\phi_r:W_r\rightmap{}W_r'$,
for $r\in \{0,1\}$. Consider the layered $t$-algebra $T':=T_{R'}(W'_0\oplus
W'_1)$. Then, the algebra
$A':=T_{R'}(W'_0)$ can be identified with $[T']_0$ and
$V':=A'\otimes_{R'}W'_1\otimes_{R'}A'$ can be identified with $[T']_1$.
Consider also the morphism of graded $k$-algebras $\phi:T\rightmap{}T'$
determined by $\phi_{\flat}$, $\phi_0$, and $\phi_1$. Finally, assume that we
have commutative diagrams
$$\begin{matrix}
  W_0&\rightmap{\delta}&V\\
  \shortlmapdown{\phi_0}&&\shortlmapdown{\phi_{\vert V}}\\
  W'_0&\rightmap{\delta'_0}&V'\\
  \end{matrix}
  \hbox{ \hskip2cm }
\begin{matrix}
  W_1&\rightmap{\delta}&[T]_2\\
  \shortlmapdown{\phi_1}&&\shortlmapdown{\phi_{\vert [T]_2}}\\
  W'_0&\rightmap{\delta'_1}&[T']_2\\
  \end{matrix} $$
  where $\delta'_0$ and $\delta'_1$ are morphisms of $R'$-$R'$-bimodules.
  Then,
  \begin{enumerate}
   \item There is a derivation $\delta':T'\rightmap{}T'$ extending
$\delta'_0$ and $\delta'_1$ with $\delta'\phi=\phi\delta$;
\item ${\cal A}'=(T',\delta')$ is a weak ditalgebra interlaced with the ideal
$I':=\phi(I)$ of $A'$;
\item $({\cal A}',I')$ is a triangular interlaced weak ditalgebra with
triangular layer
$(R',W'_0\oplus W'_1)$;
\item $\phi:({\cal A},I)\rightmap{}({\cal A}',I')$ is a morphism of
interlaced weak ditalgebras and the induced functor
$F_\phi:({\cal A}',I')\g\Mod\rightmap{}({\cal A},I)\g\Mod$ is faithful and
preserves dimension;
\item The image of $F_\phi$ consists of the modules in $({\cal A},I)\g\Mod$
annihilated by the ideal $K_0:=\Ker(\phi_A:A\rightmap{}A')$ of $A$;
\item Set $K_1:=\Ker(\phi_V:V\rightmap{}V')$ and assume that
$K_1=K_0V+VK_0+\delta(K_0)$, then the functor $F_\phi$ is full;
\item If $K=\Ker\phi$, $K_0=\Ker\phi_A$, and $K_1=\Ker\phi_V$, as before, we have:
\begin{enumerate}
\item
$K_0=A\Ker\phi_{\flat}A+A\Ker\phi_0A$;
\item  $K=T\Ker\phi_{\flat}T+T\Ker\phi_0T+T\Ker\phi_1T,$
and the ideal $K$ of $T$ is generated by $K_0$ and $K_1$.
\end{enumerate}
\end{enumerate}
\end{lemma}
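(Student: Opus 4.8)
\textbf{Proof plan for Lemma~\ref{L: context of reduction by a quotient}.}
The plan is to build the differential $\delta'$ first and then read off all the remaining structural statements as consequences, largely by transporting the arguments already developed in this section and in \cite{BSZ}. For item (1), I would invoke (\ref{L: construyendo diferenciales}): since $\delta'_0:W'_0\rightmap{}[T']_1$ and $\delta'_1:W'_1\rightmap{}[T']_2$ are morphisms of $R'$-$R'$-bimodules, there is a unique differential $\delta'$ on $T'=T_{R'}(W'_0\oplus W'_1)$ extending them, making ${\cal A}'=(T',\delta')$ a weak ditalgebra with layer $(R',W')$. To get $\delta'\phi=\phi\delta$ I would check it on generators: it holds on $R$ because $\delta(R)=0=\delta'(R')$ and $\phi(R)=R'$; it holds on $W_0$ and $W_1$ by the two assumed commutative squares; and then it propagates to all of $T$ by the Leibniz rule, since both $\delta'\phi$ and $\phi\delta$ are $(-1)$-signed derivations $T\rightmap{}T'$ along the algebra map $\phi$ that agree on a generating set. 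This last propagation is the one genuinely computational point, but it is the standard ``a derivation is determined by its values on generators'' argument.

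For items (2) and (3): because $\phi$ is a surjective morphism of graded algebras intertwining the differentials, $\phi$ carries the triangular filtrations $W_0^0\subseteq\cdots\subseteq W_0^r$ and $W_1^0\subseteq\cdots\subseteq W_1^s$ of (\ref{D: triangular weak ditalg}) to filtrations $\phi_0(W_0^i)$ and $\phi_1(W_1^i)$ of $W'_0$ and $W'_1$; applying $\phi$ to the inclusions $\delta(W_0^{i+1})\subseteq A_iW_1A_i$ and $\delta(W_1^{i+1})\subseteq AW_1^iAW_1^iA$ and using $\delta'\phi=\phi\delta$ together with $\phi(A_i)=A'_i$ gives the corresponding triangularity inclusions for $\delta'$, so $(R',W')$ is a triangular layer. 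Similarly, applying $\phi$ to the filtration $0=H_0\subseteq\cdots\subseteq H_t=I$ witnessing that $I$ is ${\cal A}$-triangular, and to the inclusions $\delta(H_i)\subseteq AH_{i-1}V+VH_{i-1}A$, shows that $I'=\phi(I)$ is ${\cal A}'$-triangular, hence ${\cal A}'$-balanced; and then interlacing of ${\cal A}'$ with $I'$ follows: apply $\phi$ to the two inclusions of Definition~\ref{D: weak ditalg interlaced an ideal} for $({\cal A},I)$, using $\delta'^2\phi=\phi\delta^2$ and $\phi(I[T]_2+VIV+[T]_2I)\subseteq I'[T']_2+V'I'V'+[T']_2I'$, to deduce $\delta'^2(A')$ and $\delta'^2(V')$ lie in the required subspaces. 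This gives (2) and (3).

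Item (4) is immediate once (2) and (3) are in place: $\phi(I)=I'$ by construction, so $\phi$ is a morphism of interlaced weak ditalgebras in the sense of Definition~\ref{D: morphism of interlaced ditalgebras}, and Lemma~\ref{L: funtores inducidos por restriccion} gives the induced functor $F_\phi$, which is faithful because $\phi$ is surjective; it preserves dimension since $F_\phi(M)=M$ as a $k$-space (scalars are restricted along $\phi_A$, an algebra map, and $\phi_A$ is surjective so simple-annihilator considerations cause no issue). Items (5) and (6) and (7) are then a transcription of the corresponding ditalgebra statements in \cite{BSZ}: for (5), a module $M\in({\cal A}',I')\g\Mod$ has $F_\phi(M)$ annihilated by $K_0=\Ker\phi_A$ because the $A$-action factors through $\phi_A$, and conversely an $A$-module annihilated by $K_0$ descends to an $A'=A/K_0$-module (here one uses that $A'=A/K_0$ as algebras, which follows from $\phi_A$ being a surjection of tensor algebras with the stated kernel); for (6) one checks that a pair $(f^0,f^1)\in U_{({\cal A},I)}(M,N)$ with $M,N$ in the image factors as $f^1=g^1\phi_1$ for a well-defined $g^1$ precisely when $f^1$ kills $K_1$, and the hypothesis $K_1=K_0V+VK_0+\delta(K_0)$ forces this since $f^1$ kills $K_0V+VK_0$ (as $K_0M=K_0N=0$ makes those operators vanish) and kills $\delta(K_0)$ (from the defining relation of $U$ applied to elements of $K_0$ and the fact that $K_0$ acts as zero); and for (7), parts (a) and (b) are the usual ``ideal generated by a subbimodule'' computations of \cite{BSZ}(8.3)-(8.4), now applied to $\phi:T\rightmap{}T'$ and using the hypotheses $\Ker\phi_{\bullet}W_0+W_0\Ker\phi_{\bullet}\subseteq K_0$ resp. $\Ker\phi_{\bullet}W+W\Ker\phi_{\bullet}\subseteq\Ker\phi_0+\Ker\phi_1$ to see that the three ``obvious'' pieces already exhaust $\Ker\phi$.

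The main obstacle I anticipate is bookkeeping rather than conceptual difficulty: one must keep straight the several kernels $K=\Ker\phi$, $K_0=\Ker\phi_A$, $K_1=\Ker\phi_V$, and $\Ker\phi_{\bullet}$, $\Ker\phi_0$, $\Ker\phi_1$, and verify the exactness/kernel-computation statements in (7) carefully — essentially a repeated use of the right-exactness of $-\otimes_R-$ and of \cite{BSZ}(8.3)-(8.4) in the spirit of Lemma~\ref{L: tecnico de nucleos}. The point of substance hidden in (6) is the passage from ``$f^1$ is defined on $V$ and kills $K_1$'' to ``$f^1$ descends to $V'=V/K_1$''; once one observes that $K_0M=K_0N=0$ kills the $K_0V+VK_0$ part and the $U$-relation kills the $\delta(K_0)$ part, fullness falls out. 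None of these steps requires a new idea beyond what is already established for triangular ditalgebras in \cite{BSZ}, so the proof is mostly a matter of checking that the presence of the balanced ideal $I$ (hence the ``interlaced'' formalism) does not disturb any of the classical arguments — which is exactly what Sections 2--5 of this paper were set up to guarantee.
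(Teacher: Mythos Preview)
Your proposal is correct and follows essentially the same approach as the paper: both invoke (\ref{L: construyendo diferenciales}) for the existence of $\delta'$, propagate $\delta'\phi=\phi\delta$ via Leibniz, push the triangular filtrations forward through the surjection $\phi$, and handle (5), (6), (7) by the same computations (the $U$-relation for $a\in K_0$ to kill $f^1(\delta(K_0))$, and \cite{BSZ}(8.3)--(8.4) for the kernel descriptions). The only cosmetic difference is in (2): the paper packages the interlacing check via Remark~\ref{R: balanced vs interlaced}, observing that since $I'$ is ${\cal A}'$-triangular (hence balanced) it suffices to show $(\delta')^2(T')\subseteq J'$, which follows at once from $(\delta')^2(T')=\phi(\delta^2(T))\subseteq\phi(J)=J'$; your version, applying $\phi$ directly to the two inclusions of Definition~\ref{D: weak ditalg interlaced an ideal}, is equivalent.
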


\begin{proof} The existence of $\phi$ follows from the universal property of
the tensor algebra $T$.
(1) follows from (\ref{L: construyendo diferenciales}) and the Leibniz rule.
(2): Since $I$ is triangular,
it generates an ideal $J$ of ${\cal A}$, using the surjectivity of $\phi$,
we get that $\phi(I)$ generates the ideal $\phi(J)$ of ${\cal A}'$. Thus
$(\delta')^2(T')=\phi(\delta^2(T))\subseteq \phi(J)$, and ${\cal A}'$ is
interlaced with $I'$.
(3): It is easy to see that the image under $\phi$ of the filtrations provided
by the triangularity of
the layer $(R,W_0\oplus W_1)$ of ${\cal A}$ give triangular filtrations for the
layer $(R',W'_0\oplus W'_1)$ of ${\cal A}'$.
 Similarly, the image under
$\phi$ of the filtration provided by the ${\cal A}$-triangularity of the ideal
$I$ gives an ${\cal A}'$-triangular filtration for $I'$.
(4): Clearly, $F_\phi$ is
faithful and preserves dimension.

(5) Notice that for $M\in ({\cal A}',I')\g\Mod$, we have $K_0F_\phi(M)=0$.
If $N\in ({\cal A},I)\g\Mod$ is such that $K_0N=0$, then $N$ has a canonical
structure of an $A'$-module, let us denote such $A'$-module by $N'$,
thus $N$ is the $A$-module obtained from $N'$ by restriction through
$\phi:A\rightmap{}A'$.
Moreover, $I'N'=0$ because $IN=0$ and $I'=\phi(I)$, thus $F_\phi(N')=N$.

(6) Assume that $(f^0,f^1)\in \Hom_{({\cal A},I)}(F_\phi(M),F_\phi(N))$. Take
$a\in K_0$, then
from the identity $af^0(m)=f^0(am)+f^1(\delta(a))[m]$ for each $m\in M$, we get
$f^1(\delta(a))=0$. Thus $f^1(K_0V+VK_0+\delta(K_0))=0$. The morphism of
$A$-$A$-bimodules
$f^1:V\rightmap{}\Hom_k(F_\phi(M),F_\phi(N))$ induces
 a morphism of $A$-$A$-bimodules
$\overline{f}^1: V/(K_0V+VK_0+\delta(K_0))\rightmap{}\Hom_k(M,N)$.
Let us denote by $\hat{\phi}:V/(K_0V+VK_0+\delta(K_0))\rightmap{}V'$ the
isomorphism
given by our assumption on $K_1$. Thus, $g^1:=\overline{f}^1\hat{\phi}^{-1}
:V'\rightmap{}\Hom_k(F_\phi(M),F_\phi(N))$ is a morphism of $A$-$A$-bimodules,
which is
also a morphism
of $A'$-$A'$-bimodules $g^1:V'\rightmap{}\Hom_k(M,N)$ with $g^1\phi_V=f^1$.
Given $a\in A$, we have
$$\begin{matrix}\phi(a)f^0(m)=af^0(m)&=&f^0(am)+f^1(\delta(a))[m]\hfill\\
&=&f^0(\phi(a)m)+g^1(\phi(\delta(a)))[m]\hfill\\
&=&
f^0(\phi(a)m)+g^1(\delta'(\phi(a)))[m].\hfill\\
\end{matrix}$$
Thus $(f^0,g^1)\in \Hom_{({\cal A}',I')}(M,N)$ and $F_\phi(f^0,g^1)=(f^0,f^1)$.

(7): We have that $\Ker\phi_{\flat}$ is an ideal of $R$ and
$\Ker\phi_0$ is an $R$-$R$-subbimodule of $W_0$ such that
$\Ker\phi_{\flat}\Ker\phi_0+\Ker\phi_0\Ker\phi_{\flat}\subseteq
\Ker\phi_0$.
Then, by \cite{BSZ}(8.3), $N_0:=A\Ker\phi_{\flat}A+A\Ker\phi_0A$ is an ideal
of $A=T_R(W_0)$ which is $(R,W_0)$-compatible. Then, by \cite{BSZ}(8.4), we have
a
commutative square
$$\begin{matrix}
   A&\longrightmap{\nu_A}&A/N_0\\
   \parallel&&\shortlmapdown{\overline{\phi_A}}\\
   T_R(W_0)&\begin{matrix}\rightmap{\phi_A}&A'=T_{R'}(W_0')&
   \cong\end{matrix}&T_{R/\Ker\phi_{\flat}}(W_0/\Ker\phi_0),\\
  \end{matrix}$$
where $\nu_A$ is the canonical projection and $\overline{\phi_A}$
is an isomorphism. So $N_0=K_0$.

Similarly, since $\Ker\phi_{\flat}W+W\Ker\phi_{\flat}\subseteq
\Ker\,\phi_0+\Ker\,\phi_1$, we have that
$N:=T\Ker\phi_{\flat}T+T\Ker[\phi_0\oplus\phi_1]T$ is an ideal
of $T=T_R(W)$ which is $(R,W)$-compatible. Then, we have a commutative square
$$\begin{matrix}
   T&\longrightmap{\nu_T}&T/N\\
   \parallel&&\shortlmapdown{\overline{\phi}}\\
   T_R(W)&\begin{matrix}\rightmap{\phi}&T_{R'}(W')&
   \cong\end{matrix}&T_{R/\Ker\phi_{\flat}}([W_0\oplus
W_1]/\Ker[\phi_0\oplus\phi_1]),\\
  \end{matrix}$$
where $\overline{\phi}$
is an isomorphism. So $N=K$, and $K$ is generated by $K_0$
and $K_1$.
\end{proof}

In the following construction, we say that \emph{$({\cal A}^d,I^d)$ is obtained from $({\cal A},I)$ by deletion of the idempotent $1-e$}.

\begin{proposition}[deletion of idempotents]\label{P: idemp deletion}
Assume that $({\cal A},I)$ is a triangular interlaced weak ditalgebra with
layer $(R,W)$.
Assume that $e\in R$ is a central non-trivial idempotent  of $R$. Consider the canonical
projections
$\phi_{\flat}:R\rightmap{}eRe$, and $\phi_r:W_r\rightmap{}eW_re$, for $r\in
\{0,1\}$.
Set $T^d:=T_{eRe}(eWe)$. Then, as in the first paragraph of the last lemma, we
have
a morphism of graded $k$-algebras $\phi:T\rightmap{}T^d$ and we have the ideal
$I^d=\phi(I)$ of $A^d$.
Then, there is a
triangular interlaced weak ditalgebra $({\cal A}^d,I^d)$ with layer $(R^d,W^d)$
where $R^d=eRe$,
$W^d_0=eW_0e$, and $W^d_1=eW_1e$. The morphism $\phi:({\cal
A},I)\rightmap{}({\cal A}^d,I^d)$
of interlaced weak ditalgebras induces
 a full and faithful functor $F^d:=F_\phi:({\cal
A}^d,I^d)\g\Mod\rightmap{}({\cal A},I)\g\Mod$
whose image consists of the objects annihilated by $1-e$.
\end{proposition}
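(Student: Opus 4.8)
The plan is to derive the statement almost entirely from Lemma~\ref{L: context of reduction by a quotient}, after checking that the deletion-of-idempotents data satisfies all its hypotheses. First I would set $R':=eRe$, $W'_0:=eW_0e$, $W'_1:=eW_1e$, with $\phi_\bullet:R\rightmap{}eRe$ and $\phi_r:W_r\rightmap{}eW_re$ the canonical projections (multiplication by $e$ on both sides); since $e$ is a central idempotent, $eRe$ is a $k$-algebra, each $eW_re$ is an $eRe$-$eRe$-bimodule, and the maps $\phi_\bullet,\phi_0,\phi_1$ are surjective morphisms of the appropriate kind. The morphism of graded $k$-algebras $\phi:T\rightmap{}T^d=T_{eRe}(eWe)$ is the one induced by these, as in the first paragraph of that lemma.

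Next I would verify the two commutative-square conditions required to invoke parts (1)--(4) of the lemma. One defines $\delta'_0:W'_0\rightmap{}V'$ and $\delta'_1:W'_1\rightmap{}[T']_2$ by restricting $\phi\delta$ to $eW_0e$ and $eW_1e$ respectively; the point to check is that these land in $V'$ (resp.\ $[T']_2$) and are $eRe$-$eRe$-bimodule maps, and that the squares commute. Since $e$ is central, $\phi$ is simply ``sandwich by $e$'' on every homogeneous component, so $\phi\delta(ew_0e)=e\,\delta(w_0)\,e$ lies in $eVe=V'$ and commutativity is immediate; the bimodule property follows because $e$ is central in $R$. Granting this, Lemma~\ref{L: context of reduction by a quotient}(1)--(3) gives the differential $\delta'=:\delta^d$, makes $({\cal A}^d,I^d)$ a triangular interlaced weak ditalgebra with layer $(R^d,W^d)$ where $R^d=eRe$, $W^d_0=eW_0e$, $W^d_1=eW_1e$, and $I^d=\phi(I)$; and (4) gives that $\phi$ is a morphism of interlaced weak ditalgebras inducing a faithful, dimension-preserving functor $F^d:=F_\phi$.

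It then remains to establish fullness and to identify the image. For the image, Lemma~\ref{L: context of reduction by a quotient}(5) says it consists of the modules annihilated by $K_0=\Ker(\phi_A:A\rightmap{}A^d)$; I would observe that, $e$ being central, $A=T_R(W_0)$ decomposes as $eAe\oplus (1-e)A(1-e)$ (there are no mixed components, again by centrality), and $\phi_A$ is projection onto $eAe$, so $K_0=(1-e)A(1-e)$, an ideal whose annihilating a module $M$ with $RM$ and $R\subseteq A$ acting is equivalent to $(1-e)M=0$. Hence the image is exactly the objects annihilated by $1-e$. For fullness, by part~(6) it suffices to check $K_1=K_0V+VK_0+\delta(K_0)$, where $K_1=\Ker(\phi_V)$; again by centrality $V=A\otimes_RW_1\otimes_RA$ splits as $eVe\oplus(1-e)V(1-e)$ with $\phi_V$ the projection onto $eVe$, so $K_1=(1-e)V(1-e)$, and one checks directly that $(1-e)V(1-e)=K_0V+VK_0$ (the summand $(1-e)V(1-e)$ is already contained in $K_0V$ since $1-e\in K_0$, and conversely $K_0V,VK_0\subseteq(1-e)V(1-e)$ because $1-e$ is a central idempotent), so the hypothesis of (6) holds and $F^d$ is full.

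The only genuine obstacle is the bookkeeping that centrality of $e$ really does kill all the ``mixed'' $e\cdot(1-e)$ components in $A$, $V$, $[T]_2$, and in $\delta$ of these, so that $\phi$ is literally a projection onto a direct summand at every level; once that is granted, every clause of the proposition is a direct read-off from the corresponding clause of Lemma~\ref{L: context of reduction by a quotient}. I would also remark, as the paper does for ditalgebras in \cite{BSZ}, that the triangular filtrations and the ${\cal A}$-triangular filtration of $I$ are sent by $\phi$ to the required filtrations for $({\cal A}^d,I^d)$, which is precisely part~(3) of the lemma.
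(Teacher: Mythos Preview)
Your overall strategy of reducing everything to Lemma~\ref{L: context of reduction by a quotient} is exactly right, and the verification of the commutative squares is fine. The error is in your computation of $K_0$ and $K_1$. You repeatedly use that $e$ is central, but $e$ is only assumed central in $R$, not in $A=T_R(W_0)$ or in $V$. For an arrow $w\in W_0$ there is no reason for $ew=we$; indeed in the directed-bigraph examples an arrow $\alpha:i\to j$ with $i\not= j$ satisfies $e_j\alpha=\alpha$ but $\alpha e_j=0$. Hence the Peirce decomposition of $A$ does have mixed pieces $eA(1-e)$ and $(1-e)Ae$, and your claims $K_0=(1-e)A(1-e)$ and $K_1=(1-e)V(1-e)$ are false in general. (Concretely, such an $\alpha$ lies in $A(1-e)A=K_0$ but not in $(1-e)A(1-e)$.) Your fullness argument then collapses: the inclusion $K_0V\subseteq(1-e)V(1-e)$ fails because $(1-e)a(1-e)v$ need not be fixed by right multiplication by $1-e$.

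The paper's proof fixes this as follows. Since $\Ker\phi_\bullet=R(1-e)$ and $\Ker\phi_0=(1-e)W_0+W_0(1-e)$, part~(7) of Lemma~\ref{L: context of reduction by a quotient} gives $K_0=A(1-e)A$, the two-sided ideal of $A$ generated by $1-e$. Then one uses $\delta(R)=0$: for $a,b\in A$, Leibniz gives $\delta(a(1-e)b)=\delta(a)(1-e)b+a(1-e)\delta(b)\in VK_0+K_0V$, so $\delta(K_0)\subseteq K_0V+VK_0$. By Lemma~\ref{L: sobre ideales de weak dits generated by I} the ideal $J$ of $T$ generated by $K_0$ and $K_0V+VK_0$ is an ideal of ${\cal A}$, and by part~(7) again $K=\Ker\phi=J$; hence $K_1=K\cap V=K_0V+VK_0$, which is exactly the hypothesis of part~(6), giving fullness. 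Your description of the image happens to come out right (since $1-e\in K_0$ either way and $K_0M=0\Leftrightarrow(1-e)M=0$), but the fullness step needs this corrected computation of the kernels.
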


\begin{proof} We can identify $W_r^d$ with $W_r/((1-e)W_r+W_r(1-e))$, for $r\in
\{0,1\}$.
Since  $\delta$ is a morphism of $R$-$R$-bimodules, there are commutative
diagrams
$$\begin{matrix}
   W_0&\rightmap{\delta}&A\otimes_RW_1\otimes_RA\\
   \shortlmapdown{\phi_0}&&\shortlmapdown{\phi}\\
   W^d_0&\rightmap{\delta_0^d}&A^d\otimes_{R^d}W^d_1\otimes_{R^d}A^d\\
  \end{matrix}
  $$

  $$\begin{matrix}
   W_1&\rightmap{\delta}&A\otimes_RW_1\otimes_RA\otimes_RW_1\otimes_RA\\
   \shortlmapdown{\phi_1}&&\shortlmapdown{\phi}\\

W^d_1&\rightmap{\delta_1^d}&A^d\otimes_{R^d}W^d_1\otimes_{R^d}A^d\otimes_{R^d}
W_1^d\otimes_{R^d}A^d\\
  \end{matrix}$$
indeed $\phi\delta[(1-e)W_r+W_r(1-e)]=0$, for $r\in \{0,1\}$.
Then, from  (\ref{L: context of reduction by a quotient}),  we have
a derivation
$\delta^d:T^d\rightmap{}T^d$
such that ${\cal A}^d=(T^d,\delta^d)$ is a weak ditalgebra with layer
$(R^d,W^d)$, and
$\phi:({\cal A},I)\rightmap{}({\cal A}^d,I^d)$ is a morphism of triangular
interlaced weak ditalgebras.
Moreover,
an object in $M\in ({\cal A},I)\g\Mod$ is
annihilated by $K_0=\Ker\phi_{\vert A}=A(1-e)A$ iff $(1-e)M=0$. Recall that
$K_1=\Ker(\phi_{\vert V}:V\rightmap{}V^d)$.  Since $\delta(K_0)\subseteq
K_0V+VK_0$, by (\ref{L: sobre ideales de weak dits generated by I}), the ideal
$J$ of $T$ generated by $K_0$ and $K_0V+VK_0$
is an ideal of ${\cal A}$. But, from (\ref{L: context of reduction by a
quotient})(7),
we get $K=\Ker\phi=J$, so
$K_1=K\cap V=K_0V+VK_0$, and again from the last lemma, $F_\phi$ is full and
faithful.
\end{proof}

\begin{proposition}[regularization]\label{P: regularization}
 Assume that $({\cal A},I)$ is a triangular interlaced weak ditalgebra with
layer $(R,W)$.
Assume that we have $R$-$R$-bimodule decompositions $W_0=W'_0\oplus W''_0$ and
$W_1=\delta(W'_0)\oplus W''_1$, set $W'':=W_0''\oplus W''_1$. Consider the
identity map
$\phi_{\flat}:R\rightmap{}R$, the canonical projections
$\phi_j:W_j\rightmap{}W''_j$, for $j\in
\{0,1\}$, and the tensor algebra
 $T^r=T_R(W'')$. Then, we have a morphism of graded
algebras $\phi:T\rightmap{}T^r$
and the ideal $I^r=\phi(I)$ of $A^r$. Then, there is a triangular
interlaced weak ditalgebra
$({\cal A}^r,I^r)$ with layer $(R^r,W^r)$, where $R^r=R$, $W_0^r=W''_0$,
and $W^r_1=W''_1$.  The morphism $\phi:({\cal A},I)\rightmap{}({\cal A}^r,I^r)$
of interlaced weak ditalgebras induces a full and faithful functor
$F^r:=F_\phi:({\cal
A}^r,I^r)\g\Mod\rightmap{}({\cal A},I)\g\Mod$. Moreover,
if $({\cal A},I)$ is a Roiter interlaced weak ditalgebra,
then  $M\in ({\cal A},I)\g\Mod$ is isomorphic to an object in the image of
$F^r$ iff $\Ker\,\delta\cap W'_0$ annihilates $M$. In particular, if
this intersection is zero, $F^r$ is an equivalence of categories.
\end{proposition}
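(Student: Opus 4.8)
The plan is to verify that the hypotheses of Lemma~(\ref{L: context of reduction by a quotient}) are satisfied in this situation, deduce the existence of $({\cal A}^r,I^r)$ together with the faithful functor $F^r$, then separately check fullness and the characterization of the image. First I would observe that the maps $\phi_\bullet=1_R$ and $\phi_j:W_j\rightmap{}W''_j$ (for $j\in[0,1]$) are surjective morphisms of the required type, and that the commutative squares demanded in~(\ref{L: context of reduction by a quotient}) hold: for the first square one defines $\delta'_0:W''_0\rightmap{}V^r$ as the composite $\phi\delta$ restricted to $W''_0\subseteq W_0$, using that $\delta(W'_0)$ is a direct summand of $W_1$ killed by $\phi_1$ so that $\phi\delta|_{W'_0}=0$ and hence $\phi\delta$ factors through $\phi_0$; for the second square one similarly sets $\delta'_1:W''_1\rightmap{}[T^r]_2$ to be the restriction of $\phi\delta$ to $W''_1$, noting that $\delta'_1$ is automatically a morphism of $R$-$R$-bimodules. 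Parts (1)--(4) of that Lemma then immediately yield assertions (1)--(4) of the present proposition: the differential $\delta^r$, the fact that ${\cal A}^r=(T^r,\delta^r)$ is interlaced with $I^r=\phi(I)$, triangularity of $({\cal A}^r,I^r)$, and the faithful dimension-preserving functor $F^r=F_\phi$.

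For fullness I would invoke part (6) of~(\ref{L: context of reduction by a quotient}), which requires $K_1=K_0V+VK_0+\delta(K_0)$ where $K_0=\Ker\phi_A$ and $K_1=\Ker\phi_V$. Here $\Ker\phi_\bullet=0$, so by part (7)(a) we get $K_0=A\,\Ker\phi_0\,A=AW'_0A$, and $K_1=\Ker(\phi_V:V\rightmap{}V^r)$. One checks $\Ker\phi_\bullet W+W\Ker\phi_\bullet=0\subseteq \Ker\phi_0+\Ker\phi_1$, so part (7)(b) applies and gives $K=\Ker\phi=T W'_0 T+T\,\delta(W'_0)\,T$, with $K$ generated by $K_0$ and $K_1$; since $K_1=K\cap V$ this forces $K_1=K_0V+VK_0+\delta(K_0)$ (the piece of $K$ living in degree $1$, noting $\delta(K_0)\subseteq [T]_2$ so only $\delta(W'_0)$ and its $A$-multiples contribute, all absorbed into the stated sum). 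Hence $F^r$ is full.

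It remains to identify the essential image. By part (5) of the Lemma, the image of $F^r$ consists exactly of the modules $M\in({\cal A},I)\g\Mod$ with $K_0M=0$, i.e.\ $W'_0M=0$, equivalently $\delta(W'_0)$-free data; but a module $M$ is only asked to be \emph{isomorphic} to such an object. This is where the Roiter hypothesis enters. If $\Ker\delta\cap W'_0$ annihilates $M$, I would use the additive triangular-type splitting $W_1=\delta(W'_0)\oplus W''_1$ to build an isomorphism $f=(f^0,f^1)$ in $({\cal A},I)\g\Mod$ with $f^0=1$ on the underlying $R$-module and with $f^1$ chosen to absorb the action of $W'_0\setminus(\Ker\delta\cap W'_0)$: concretely, decompose $W'_0$ compatibly with $\delta$ so that $\delta$ restricted to a complement of $\Ker\delta\cap W'_0$ is injective into $\delta(W'_0)$, and define $f^1$ on $\delta(W'_0)$ to undo the structure maps $w\mapsto wm$ for $w$ in that complement, exactly as in the Roiter-lemma style of~(\ref{P: lema de Roiter basico}); the remaining part of $W'_0$, namely $\Ker\delta\cap W'_0$, already annihilates $M$ by hypothesis, so the resulting module $M'$ has $W'_0M'=0$ and lies in the image of $F^r$, while Corollary~(\ref{C: lema de Roiter}) guarantees $f$ is an isomorphism. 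Conversely, if $M$ is isomorphic to an object in the image, then $W'_0$ acts as zero on that object and hence $\Ker\delta\cap W'_0\subseteq W'_0$ annihilates $M$. The main obstacle is this last paragraph: carefully constructing the isomorphism $f$ using the triangular filtration so that it is a genuine morphism of $({\cal A},I)\g\Mod$ (i.e.\ lands in $U(M,M')$ and respects $IM=0$), which parallels the inductive construction in the proof of~(\ref{P: lema de Roiter basico}) but must be done relative to the summand $W'_0$ rather than the whole of $W_0$; everything else is a direct application of~(\ref{L: context of reduction by a quotient}).
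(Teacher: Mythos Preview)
Your proof is correct and follows the same route as the paper: invoke Lemma~(\ref{L: context of reduction by a quotient}) for the construction and faithfulness, identify $K_0=AW'_0A$ and $K_1=K_0V+VK_0+\delta(K_0)$ via part~(7) (the paper phrases this step through Lemma~(\ref{L: sobre ideales de weak dits generated by I}) applied to $K_0$, which gives $K\cap V$ directly) to obtain fullness, and for the essential image factor the $W'_0$-action map through $\delta|_{W'_0}$ to define $f^1$ on $\delta(W'_0)$ and then invoke the Roiter property to produce the isomorphism $(1_M,f^1):\underline{M}\rightmap{}M$ with $W'_0\underline{M}=0$. Two small corrections: your parenthetical ``$\delta(K_0)\subseteq[T]_2$'' should read $[T]_1=V$; and in the converse direction, annihilation by all of $W'_0$ is \emph{not} preserved under isomorphism in $({\cal A},I)\g\Mod$, but annihilation by $\Ker\delta\cap W'_0$ is, precisely because for such $w$ the correction term $f^1(\delta(w))$ vanishes---this step (which the paper also leaves implicit) is what you need to make explicit.
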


\begin{proof} It is clear that the morphisms
$\delta'_0:=\phi_{\vert V}\delta_{\vert W''_0}:W''_0\rightmap{}V^r$ and
 $\delta'_1:=\phi_{\vert [T]_2}\delta_{\vert W''_1}:W''_1\rightmap{}[T^r]_2$
provide commutative squares as in the statement of (\ref{L: context of
reduction
by a quotient}),
so we can apply this result to obtain a derivation $\delta^r$ on $T^r$ such
that
$\phi:({\cal A},I)\rightmap{}({\cal A}^r,I^r)$ is a morphism of interlaced weak
ditalgebras
and ${\cal A}^r$ admits the layer $(R,W^r)$. Moreover, we know that the induced
functor $F^r:({\cal A}^r,I^r)\g\Mod\rightmap{}({\cal A},I)\g\Mod$ is faithful.

Here, the kernel of $\phi_{\vert A}:A\rightmap{}A^r$ is $K_0=AW'_0A$.
From (\ref{L: context of reduction by a quotient})(7), we know that
$K=\Ker\phi=TW'_0T+T\delta(W'_0)T$. By (\ref{L: sobre ideales de weak dits
generated by I}),
the ideal $J$ of $T$ generated by $K_0$ and $K_0V+\delta(K_0)+VK_0$
is an ideal of ${\cal A}$. Since $K=J$,
the kernel of $\phi_{\vert V}:V\rightmap{}V^r$ is
$K_1=K\cap V=K_0V+\delta(K_0)+VK_0$.
From  (\ref{L: context of reduction by a quotient}), we obtain that $F^r$
is a full functor.

Finally, take $M\in ({\cal A},I)\g\Mod$ such that $(\Ker\,\delta\cap W'_0)M=0$.
Consider the action map $\psi\in \Hom_{R\g R}(W_0,\Hom_k(M,M))$ of $W_0$ on the
$A$-module $M$ and denote by $\psi'$ its restriction to $W'_0$. The condition
on $M$ implies that we can factor $\psi'$ through $\delta$, so there is a
morphism of $R$-$R$-bimodules $f_1^1:\delta(W'_0)\rightmap{}\Hom_k(M,M)$ such
that $\psi'=f_1^1\delta$. Consider the morphism of $R$-$R$-bimodules
$f^1:=(f^1_1,0):\delta(W'_0)\oplus W''_1\rightmap{}\Hom_k(M,M)$.
If $({\cal A},I)$ is a Roiter interlaced weak ditalgebra,
 we obtain an
isomorphism $(1_M,f^1):\underline{M}\rightmap{}M$ in $({\cal A},I)\g\Mod$.
We claim that $\underline{M}$ is in the image of $F^r$. In order to apply
(\ref{L: context of reduction by a quotient})(5), we
want to show that $\underline{M}$ is annihilated by $K_0$, or equivalently by
$W'_0$, and is therefore in the image of $F^r$. Take $w\in W'_0$, $m\in
\underline{M}$, and denote by $w\cdot m$ the action corresponding to the
structure of the $A$-module $\underline{M}$, then
$$w\cdot m=wm-f^1(\delta(w))[m]=wm-f^1_1(\delta(w))[m]=wm-wm=0.$$
\end{proof}

\begin{lemma}[factoring out a direct summand of $W_0$]\label{L: reduction by a
quotient}
 Let $({\cal A},I)$ be a triangular interlaced weak ditalgebra, where ${\cal
A}=(T,\delta)$ admits the triangular layer $(R,W)$. Assume that there is a
decomposition of $R$-$R$-bimodules $W_0=W'_0\oplus W''_0$, such that
$W'_0\subseteq I$ and $\delta(W'_0)\subseteq AW'_0V+VW'_0A$.
Set $T^q=T_R(W^q)$, where $W^q_0=W''_0$, $W^q_1=W_1$, and
$W^q=W_0^q\oplus W_1^q$.
Then, there is a derivation $\delta^q$ on $T^q$ such that
${\cal A}^q:=(T^q,\delta^q)$ is a weak ditalgebra with triangular layer
$(R,W^q)$.
The $t$-algebra $A^q=T_R(W''_0)$ can be identified with the quotient algebra
$A/I'$,
where $I'=AW'_0A$ is the ideal of $A$ generated by $W'_0$,
so we can consider $I^q:=I/I'$ as an ideal of $A^q$. Then  $({\cal A}^q,I^q)$
is a triangular
interlaced weak ditalgebra, and there is a morphism of interlaced weak
ditalgebras
$\phi:({\cal A},I)\rightmap{}({\cal A}^q,I^q)$, with $\phi(I)=I^q$, which
induces an equivalence of
categories
$$F^q:=F_\phi:({\cal A}^q,I^q)\g\Mod\rightmap{}({\cal A},I)\g\Mod.$$
\end{lemma}

\begin{proof} Consider the identity map $\phi_{\flat}:R\rightmap{}R$,
the canonical projection $\phi_0:W_0\rightmap{}W^q_0$, the identity map
$\phi_1:W_1\rightmap{}W^q_1$, and the induced morphism of graded algebras
$\phi:T\rightmap{}T^q$. Consider the algebras $A=T_R(W_0)$ and
$A^q=T_R(W''_0)$,
and the $A\g A$-bimodules $V=[T]_1=AW_1A$
and $V^q=[T^q]_1=A^qW_1A^q$. Then, we have commutative squares
$$\begin{matrix}
  W_0&\rightmap{\delta}&V\\
  \shortlmapdown{\phi_0}&&\shortlmapdown{\phi_{\vert V}}\\
  W^q_0&\rightmap{\delta^q_0}&V^q\\
  \end{matrix}
  \hbox{ \hskip2cm }
\begin{matrix}
  W_1&\rightmap{\delta}&[T]_2\\
  \shortlmapdown{\phi_1}&&\shortlmapdown{\phi_{\vert [T]_2}}\\
  W^q_1&\rightmap{\delta^q_1}&[T^q]_2\\
  \end{matrix} $$
  where $\delta$ is the derivation of ${\cal A}$, and
$\delta^q_0:=\phi_{\vert
V}\delta_{\vert W^q_0}$ and
  $\delta^q_1=\phi_{\vert [T]_2}\delta_{\vert W^q_1}$ are morphisms of
$R$-$R$-bimodules. Then we can apply
  (\ref{L: context of reduction by a quotient}) to this situation to obtain a
derivation $\delta^q$ on
$T^q$ such that
  $\phi:({\cal A},I)\rightmap{}({\cal A}^q,I^q)$ is a morphism of interlaced
weak
ditalgebras, where $I^q=\phi(I)$, and $({\cal A}^q,I^q)$ admits the triangular
layer $(R,W^q)$.
Moreover, we have and induced faithful functor
$$F^q:=F_\phi:({\cal A}^q,I^q)\g\Mod\rightmap{}({\cal A},I)\g\Mod.$$
Using the definition of $\phi$, by (\ref{L: context of reduction by a
quotient})(7),
 we find that the kernel of the
restriction $\phi_{\vert A}:A\rightmap{}A^q$ is $K_0=I'$, and that the kernel
of $\phi:T=T_R(W)\rightmap{}T_R(W^q)=T^q$ is the ideal $K$ of $T$
generated by $W'_0$. Our assumption on $\delta(W'_0)$
implies that $\delta(I')\subseteq I'V+VI'$. Thus the ideal $K$ coincides
with the ideal $J'$ of $T$ generated by $I'$ and $I'V+VI'$. This implies, see
(\ref{L: sobre ideales de weak dits generated by I}),
that $K_0=K\cap A=I'$ and $K_1=K\cap V=I'V+VI'$,
where $K_1$ is the kernel of the restriction map
$\phi_{\vert V}:V\rightmap{}V^q$.
Therefore, by (\ref{L: context of reduction by a quotient})(5\&6), the functor
$F^q$ is full and dense.
\end{proof}

\begin{lemma}[absorption]\label{L: absorption}
 Let $({\cal A},I)$ be a triangular interlaced weak ditalgebra,
 where ${\cal A}=(T,\delta)$ admits the triangular  layer $(R,W)$.
 Assume that there is a decomposition of $R$-$R$-bimodules
 $W_0=W'_0\oplus W''_0$ with $\delta(W'_0)=0$.
 Then, we can consider another layer $(R^a,W^a)$ for the same weak ditalgebra
${\cal A}$,
 where $R^a$ is the subalgebra of $T$ generated by $R$ and $W'_0$, thus
 we can identify it with $T_R(W'_0)$, $W_0^a=R^aW''_0R^a$, and
 $W_1^a=R^aW_1R^a$. We denote by ${\cal A}^a$  the same ditalgebra
 ${\cal A}$ equipped with its new layer $(R ^a,W^a)$; in particular,
 we have $\delta^a=\delta$. We say that
 ${\cal A}^a$ is obtained from ${\cal A}$ \emph{by absorption of
 the bimodule $W'_0$}. The layer $(R^a,W^a)$ is triangular and we obtain a
 triangular interlaced weak ditalgebra $({\cal A}^a,I^a)$, with $I^a=I$.
The identity is a morphism of interlaced weak
ditalgebras
$\phi:({\cal A},I)\rightmap{}({\cal A}^a,I^a)$,  which
induces an isomorphism  of categories
$$F^a:=F_\phi:({\cal A}^a,I^a)\g\Mod\rightmap{}({\cal A},I)\g\Mod.$$
\end{lemma}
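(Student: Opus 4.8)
The plan is as follows. The whole point of absorption is that it changes nothing about the underlying graded $t$-algebra $T$, its differential $\delta$, the degree-zero algebra $A=[T]_0$, the degree-one bimodule $V=[T]_1$, or the ideal $I$ of $A$: only the chosen presentation of $T$ as a layered $t$-algebra changes. So I would first check that $(R^a,W^a)$ really is a layer for $T$, with $A^a:=[T]_0=T_{R^a}(W_0^a)$, $V^a:=[T]_1=A^a\otimes_{R^a}W_1^a\otimes_{R^a}A^a$, and in fact $A^a=A$ and $V^a=V$ as subspaces of $T$; once this is done, ${\cal A}^a=(T,\delta)$ is literally the same weak ditalgebra ${\cal A}$, now presented with the layer $(R^a,W^a)$, and everything else follows by inspection.

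For the layer: since $W_0=W_0'\oplus W_0''$ as $R$-$R$-bimodules, the subalgebra of $A=T_R(W_0)$ generated by $R$ and $W_0'$ is freely generated by $(R,W_0')$, so it is identified with $R^a=T_R(W_0')$; and by the standard identities for tensor algebras of a direct sum (cf. \cite{BSZ}), the product maps $R^a\otimes_RW_0''\otimes_RR^a\to R^aW_0''R^a=W_0^a$ and $R^a\otimes_RW_1\otimes_RR^a\to R^aW_1R^a=W_1^a$ are isomorphisms, $A^a=T_{R^a}(W_0^a)=A$, and $V^a=A^a\otimes_{R^a}W_1^a\otimes_{R^a}A^a=A\otimes_RW_1\otimes_RA=V=[T]_1$, $T=T_{A^a}(V^a)=T_A(V)$. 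Finally $\delta(R)=0$ and $\delta(W_0')=0$ together with the Leibniz rule give $\delta(R^a)=0$, so $(R^a,W^a)$ is a layer of the weak ditalgebra ${\cal A}^a:=(T,\delta)$.

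Next I would check triangularity of $(R^a,W^a)$. Let $0=W_0^0\subseteq\cdots\subseteq W_0^r=W_0$ and $0=W_1^0\subseteq\cdots\subseteq W_1^s=W_1$ be the given triangular filtrations. For the degree-one part put $W_1^{a,i}:=R^aW_1^iR^a$; using $\delta(R^a)=0$ and the Leibniz rule, $\delta(W_1^{a,i+1})\subseteq R^a\delta(W_1^{i+1})R^a\subseteq AW_1^iAW_1^iA\subseteq A^aW_1^{a,i}A^aW_1^{a,i}A^a$. For the degree-zero part, let $p:W_0\to W_0''$ be the $R$-$R$-bimodule projection along $W_0'$, put $U_i:=p(W_0^i)$ and $W_0^{a,i}:=R^aU_iR^a$. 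Since $\delta(W_0')=0$, every $w\in W_0^i$ satisfies $\delta(p(w))=\delta(w)$, hence $\delta(U_i)=\delta(W_0^i)$; moreover $w=(w-p(w))+p(w)$ with $w-p(w)\in W_0'\subseteq R^a$ and $p(w)\in U_i\subseteq W_0^{a,i}$, so the $R^a$-subalgebra $A_i^a$ of $A$ generated by $W_0^{a,i}$ contains the $R$-subalgebra $A_i$ generated by $W_0^i$. Therefore $\delta(W_0^{a,i+1})\subseteq R^a\delta(W_0^{i+1})R^a\subseteq R^aA_iW_1A_iR^a\subseteq A_i^aW_1^aA_i^a$, and $(R^a,W^a)$ is triangular.

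It remains to assemble the pieces. Since $A^a=A$, $V^a=V$, and $\delta^a=\delta$, the given ${\cal A}$-triangular filtration $0=H_0\subseteq\cdots\subseteq H_t=I$ of $I$ is also an ${\cal A}^a$-triangular filtration of $I^a:=I$, and conditions (1)--(2) of Definition \ref{D: weak ditalg interlaced an ideal} hold for ${\cal A}^a$ and $I^a$ for the same reason, since they only involve $\delta^2$, $A$ and $V$; hence $({\cal A}^a,I^a)$ is a triangular interlaced weak ditalgebra. The identity map of $T$ is an isomorphism of weak ditalgebras ${\cal A}\to{\cal A}^a$ carrying $I$ onto $I^a=I$, so $\phi:=\mathrm{id}$ is an isomorphism $({\cal A},I)\to({\cal A}^a,I^a)$ of interlaced weak ditalgebras. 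Finally, the categories $({\cal A}^a,I^a)\g\Mod$ and $({\cal A},I)\g\Mod$ have the same objects, namely the left $A$-modules annihilated by $I$, and the same morphism sets $U(M,N)$ with the same composition, because $U(M,N)$ and the formulas of Lemma \ref{L: compos de morfismos de U} depend only on $[T]_0=A$, $[T]_1=V$ and $\delta$; under these identifications $F^a=F_\phi$ is the identity functor, hence an isomorphism of categories. There is no real obstacle here: the only points requiring care are the tensor-algebra bookkeeping showing that $(R^a,W^a)$ is a genuine layer and that $[T]_0$, $[T]_1$ are unchanged (which is standard, cf. \cite{BSZ}), and the slightly fiddly transfer of the degree-zero triangular filtration to the new layer via the projection $p$.
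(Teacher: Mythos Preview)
Your argument is correct and is essentially a detailed unpacking of what the paper leaves implicit: the paper's own proof is the single sentence ``It is easy to show, see \cite{BSZ}(8.20),'' which covers exactly the tensor-algebra bookkeeping you carry out (that $(R^a,W^a)$ is a genuine layer with $A^a=A$, $V^a=V$), while your explicit transfer of the triangular filtrations via the projection $p:W_0\to W_0''$ spells out the ``easy'' triangularity claim. There is nothing to correct; you have simply written out what the paper regards as routine.
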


\begin{proof} It is easy to show, see \cite{BSZ}(8.20).
\end{proof}

In \cite{BSZ}\S12, the construction of a new ditalgebra ${\cal A}^X$, for a
given ditalgebra ${\cal A}$ and an admissible module $X$ is detailed.
The following statement claim is that, given a layered weak ditalgebra
${\cal A}=(T,\delta)$, the construction of a  layered weak ditalgebra
${\cal A}^X$ can be made in a very similar way. We reproduce  here its main
constituents for terminology and
precision purposes, and we restrict a little the generality of the construction  in \cite{BSZ} for the sake of simplicity.

\begin{proposition}\label{P: reduction by a B-module}
 Assume that ${\cal A}=(T,\delta)$ is a weak ditalgebra with layer $(R,W)$ such
that there is an $R$-$R$-bimodule decomposition $W_0=W'_0\oplus W''_0$ with
$\delta(W'_0)=0$. Suppose that $X$ is an \emph{admissible}
$B$-module, where $B=T_R(W'_0)$. This means that the algebra
$\Gamma=\End_{B}(X)^{op}$
admits
a splitting $\Gamma=S\oplus P$, where $S$ is a subalgebra of  $\Gamma$,
$P$ is an ideal of $\Gamma$, the direct sum is an $S$-$S$-bimodule
decomposition
of $\Gamma$,
and the right $S$-modules $X$ and $P$ are finitely generated projective.
By $(x_i,\nu_i)_{i\in I}$ and $(p_j,\gamma_j)_{j\in J}$ we denote a finite dual
basis of the right $S$-modules $X$ and $P$, respectively.

There is a comultiplication $\mu:P^*\rightmap{}P^*\otimes_SP^*$ induced by the
multiplication of $P$, which is coassociative (see \cite{BSZ}(11.7)). For
$\gamma\in P^*$,
we have $$\mu(\gamma)=\sum_{i,j\in J}\gamma(p_ip_j)\gamma_j\otimes \gamma_i.$$

The action of $P$ on $X$ induces (see \cite{BSZ}(10--11)) a morphism of
$S$-$R$-bimodules
$\lambda:X^*\rightmap{}P^*\otimes_SX^*$ and a morphism of $R$-$S$-bimodules
$\rho: X\rightmap{}X\otimes_SP^*$ with
$$\lambda(\nu)=\sum_{i\in I,j\in J}\nu(x_ip_j)\gamma_j\otimes \nu_i\hbox{ \ and
\ }
\rho(x)=\sum_{j\in J}xp_j\otimes\gamma_j.$$

Set $\underline{W}_0=BW''_0B$ and $\underline{W}_1=BW_1B$. Recall that we can
identify
$A=T_R(W_0)$ with $T_B(\underline{W}_0)$ and  $T$ with
$T_B(\underline{W})$, where $\underline{W}=\underline{W}_0\oplus
\underline{W}_1$, see
\cite{BSZ}(12.2).

We have the $S$-$S$-bimodules
$$\begin{matrix}W_0^X=X^*\otimes_B
\underline{W}_0\otimes_BX=X^*\otimes_RW''_0\otimes_RX \\
\hbox{  \ \ \, \  \ } \\
W_1^X=(X^*\otimes_B\underline{W}_1\otimes_BX)\oplus
P^*=(X^*\otimes_RW_1\otimes_R X)\oplus P^*.\\
\end{matrix}$$
Consider the tensor algebra $T^X=T_S(W^X)$, where $W^X=W_0^X\oplus W_1^X$.
 Following \cite{BSZ}(12.8),
observe that
for $\nu\in X^*$ and $x\in X$, there is a linear map
$$\sigma_{\nu,x}:T\rightmap{}T^X$$
such that $\sigma_{\nu,x}(b)=\nu(bx)$, for $b\in B$, and  given
$w_1,w_2,\ldots,w_n\in \underline{W}$, we have
$\sigma_{\nu,x}(w_1w_2\cdots w_n)$ is given by
$$\sum_{i_1,i_2,\ldots,i_{n-1}}\nu\otimes w_1\otimes x_{i_1}\otimes
\nu_{i_1}\otimes w_2\otimes x_{i_2}\otimes \nu_{i_2}\otimes w_3\otimes
\cdots\otimes x_{i_{n-1}}\otimes \nu_{i_{n-1}}\otimes w_n\otimes x.$$
There is a derivation $\delta^X$ on $T^X$ determined by
$\delta^X(\gamma)=\mu(\gamma)$, for $\gamma\in P^*$ and
$$\delta^X(\nu\otimes w\otimes x)=\lambda(\nu)\otimes w\otimes x+
\sigma_{\nu,x}(\delta(w))+(-1)^{\deg w+1}\nu\otimes w\otimes \rho(x),$$
for $w\in \underline{W}_0\cup \underline{W}_1$, $\nu\in X^*$, and $x\in X$.

Then, ${\cal A}^X=(T^X,\delta^X)$ is a weak ditalgebra.
If the layer $(R,W)$ of ${\cal A}$ is triangular and $X$ is a triangular
admissible $B$-module,
as in \cite{BSZ}(14.6), then the layer $(S,W^X)$ of ${\cal A}^X$ is also
triangular.
\end{proposition}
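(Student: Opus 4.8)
The plan is to reduce the construction of $\delta^X$ to Lemma \ref{L: construyendo diferenciales}. By construction $T^X=T_S(W^X)$ is a $t$-algebra with layer $(S,W^X)$, with $W_0^X\subseteq[T^X]_0$ and $W_1^X\subseteq[T^X]_1$. Hence it suffices to check that the prescribed formulas yield morphisms of $S$-$S$-bimodules $\delta^X_0:W_0^X\rightmap{}[T^X]_1$ and $\delta^X_1:W_1^X\rightmap{}[T^X]_2$; that lemma then furnishes a unique differential $\delta^X$ on $T^X$ extending them with $\delta^X(S)=0$, so that ${\cal A}^X=(T^X,\delta^X)$ is a weak ditalgebra with layer $(S,W^X)$.

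The first step is to record that the ingredients $\mu,\lambda,\rho,\sigma_{\nu,x}$ are well defined and have the stated variance. The coassociative comultiplication $\mu:P^*\rightmap{}P^*\otimes_SP^*$ and the morphisms $\lambda:X^*\rightmap{}P^*\otimes_SX^*$ (of $S$-$R$-bimodules) and $\rho:X\rightmap{}X\otimes_SP^*$ (of $R$-$S$-bimodules) are obtained exactly as in \cite{BSZ}(11.7) and (11.11), from associativity in $\Gamma$ and the dual-basis formalism, with no differential condition entering. The map $\sigma_{\nu,x}:T\rightmap{}T^X$ is well defined on $T=T_B(\underline{W})$ (its recipe respects the $B$-balancings) and is natural, contravariant in $\nu$ and covariant in $x$, by the purely combinatorial argument of \cite{BSZ}(12.8).

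Next I would verify the degree count. If $w\in\underline{W}_0$, then, since $\delta(R)=0$ and $\delta(W'_0)=0$, we have $\delta(B)=0$, whence by the Leibniz rule $\delta(\underline{W}_0)=\delta(BW''_0B)\subseteq B\,\delta(W''_0)\,B\subseteq[T]_1$; as $\sigma_{\nu,x}$ is graded, carrying a monomial with exactly one $\underline{W}_1$-letter to a monomial of $T^X$ with exactly one $X^*\otimes_RW_1\otimes_RX$-letter, we get $\sigma_{\nu,x}(\delta(w))\in[T^X]_1$, while the two remaining summands each acquire exactly one $P^*$-factor, hence also lie in $[T^X]_1$; thus $\delta^X_0(W_0^X)\subseteq[T^X]_1$. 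Likewise $\delta(\underline{W}_1)\subseteq[T]_2$ and all three summands of $\delta^X(\nu\otimes w\otimes x)$ lie in $[T^X]_2$ when $w\in\underline{W}_1$, while $\mu(\gamma)\in P^*\otimes_SP^*\subseteq[T^X]_2$ for $\gamma\in P^*$; thus $\delta^X_1(W_1^X)\subseteq[T^X]_2$. That $\delta^X_0$ and $\delta^X_1$ are $S$-$S$-bimodule morphisms then follows summand by summand from the bimodule properties of $\mu,\lambda,\rho$, the naturality of $\sigma_{\nu,x}$, and the bookkeeping of the sign $(-1)^{\deg w+1}$, exactly as in \cite{BSZ}\S12. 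The point worth stressing is that none of this --- neither the construction of $\delta^X$ nor the verification that it is a differential --- uses the identity $\delta^2=0$; that hypothesis is invoked in \cite{BSZ} only afterwards, to conclude $(\delta^X)^2=0$. So in the present generality ${\cal A}^X$ is a weak ditalgebra, as claimed, and I expect this part to be routine once that observation is made.

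For the triangularity addendum I would follow the corresponding result of \cite{BSZ}\S14: when $(R,W)$ is triangular and $X$ is triangular admissible as in \cite{BSZ}(14.6), one builds a triangular filtration of $W_0^X=X^*\otimes_RW''_0\otimes_RX$ by interleaving the triangular filtration of $W_0$ (restricted to $W''_0$) with the filtration of $X$ provided by triangular admissibility, and a triangular filtration of $W_1^X=(X^*\otimes_RW_1\otimes_RX)\oplus P^*$ by combining those of $W_1$, of $X$, and the radical filtration of $P$; one then checks that $\delta^X$ drops into the prescribed subalgebra at each step, using the compatibility of $\sigma_{\nu,x},\lambda,\rho,\mu$ with these filtrations. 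I expect the only real work here to be this filtration bookkeeping, which is the same as in \cite{BSZ}; the weak-ditalgebra assertion itself is immediate once one notices that $\delta^2=0$ plays no role in it.
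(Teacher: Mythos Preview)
Your proposal is correct and follows essentially the same approach as the paper: both argue that the construction of $\delta^X$ in \cite{BSZ}(12.9) nowhere uses $\delta^2=0$ (that hypothesis enters only to show $(\delta^X)^2=0$), so the argument carries over verbatim to the weak setting, and both defer the triangularity to \cite{BSZ}\S14. The paper's proof is terser, merely citing \cite{BSZ}(12.9) and (14.10) and noting one may assume $W'_0$ lies in the triangular filtration of $W_0$ so that $B$ is an initial subalgebra; your write-up spells out the degree bookkeeping more explicitly but adds no new idea.
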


\begin{proof} This is similar to the first steps in the proof of
\cite{BSZ}(12.9).
For the triangularity statement, choose appropriate dual basis for $X$ and $P$
as in
\cite{BSZ}(14.7) and then follow the proof of \cite{BSZ}(14.10), where we can
assume that $W'_0$ belongs to the triangular filtration of $W_0$, so $B$  is an
initial subalgebra of ${\cal A}$.
\end{proof}

\begin{proposition}[reduction with an admissible module $X$]\label{P: (AX, IX)}
Assume that $({\cal A},I)$ is a triangular interlaced weak ditalgebra, where
 ${\cal A}=(T,\delta)$ is a weak ditalgebra with layer $(R,W)$ such
that there is an $R$-$R$-bimodule decomposition $W_0=W'_0\oplus W''_0$ with
$\delta(W'_0)=0$. Suppose that $X$ is a triangular admissible
$B$-module, where $B=T_R(W'_0)$. As usual, denote by $A^X=[T^X]_0$.
Consider the ideal $I^X$ of $A^X$ generated by the elements
$\sigma_{\nu,x}(h)$,
for $\nu\in X^*$,  $h\in I$, and $x\in X$. Then,
\begin{enumerate}
 \item The pair $({\cal A}^X,I^X)$ is a triangular interlaced weak ditalgebra;
\item There is a
functor $F^X:({\cal A}^X,I^X)\g\Mod\rightmap{}({\cal A},I)\g\Mod$ such
that for $M\in ({\cal A}^X,I^X)\g\Mod$,
the underlying $B$-module of $F^X(M)$ is $X\otimes_SM$ and
$$a\cdot (x\otimes m)=\sum_ix_i\otimes \sigma_{{\nu_i},x}(a)m,$$
for $a\in A$, $x\in X$, and $m\in M$. Moreover,
given the morphism $f=(f^0,f^1)\in \Hom_{({\cal A}^X,I^X)}(M,N)$,
we have $F^X(f)=(F^X(f)^0,F^X(f)^1)$ given by
$$\begin{matrix}
   F^X(f)^0[x\otimes m]&=&x\otimes f^0(m)+\sum_jxp_j\otimes f^1(\gamma_j)[m]\\
   F^X(f)^1(v)[x\otimes m]&=& \sum_ix_i\otimes f^1(\sigma_{\nu_i,x}(v))[m]\hfill\\
  \end{matrix}$$
  for $v\in V$, $x\in X$, and $m\in M$.

\item There is a constant $c_X\in \hueca{N}$ such that,
for any $M\in ({\cal A}^X,I^X)\g\mod$, we have
$\dim_kF^X(M)\leq c_X\dim_kM.$
\item For any $N\in ({\cal A},I)\g\Mod$ which is isomorphic  as a $B$-module
to some $B$-module of the form $X\otimes_SM$, for some $M\in S\g\Mod$, we have
$N\cong F^X(\overline{M})$ in $({\cal A},I)\g\Mod$, for some
$\overline{M}\in ({\cal A}^X,I^X)\g\Mod$.
\end{enumerate}
\end{proposition}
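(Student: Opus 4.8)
The plan is to leverage the corresponding result for ditalgebras from \cite{BSZ}\S12 via the interlaced-to-ditalgebra translation established in (\ref{modules of interlaced ditalgebras}). Recall that $({\cal A},I)$ is interlaced with $I$; let $J$ be the ideal of ${\cal A}$ generated by $I$, so that $\dot{\cal A}:={\cal A}/J$ is an honest ditalgebra with $\dot{\cal A}\g\Mod$ isomorphic to $({\cal A},I)\g\Mod$ via $\Psi$, and similarly one wants ${\cal A}^X/J^X$, where $J^X$ is the ideal of ${\cal A}^X$ generated by $I^X$, to be the ditalgebra obtained from $\dot{\cal A}$ by the $X$-reduction of \cite{BSZ}(12.9). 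The point is that $\sigma_{\nu,x}$ is exactly the building block of the reduction formulas, so the ideal $I^X$ generated by the $\sigma_{\nu,x}(h)$ with $h\in I$ is the natural candidate for the image of $I$ under reduction. Once this identification is in place, items (1)--(4) are read off from the ditalgebra case.

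First I would verify (1): since the layer $(R,W)$ of ${\cal A}$ is triangular and $X$ is a triangular admissible $B$-module, (\ref{P: reduction by a B-module}) already gives that ${\cal A}^X=(T^X,\delta^X)$ is a weak ditalgebra with triangular layer $(S,W^X)$. It remains to see that $I^X$ is an ${\cal A}^X$-triangular ideal and that ${\cal A}^X$ is interlaced with it. For triangularity of $I^X$, I would push the filtration $0=H_0\subseteq\cdots\subseteq H_t=I$ witnessing ${\cal A}$-triangularity of $I$ through the operator $\sigma_{\nu,x}$: define $H_u^X$ to be the ideal of $A^X$ generated by the $\sigma_{\nu,x}(H_u)$, and check $\delta^X(H_u^X)\subseteq A^X H_{u-1}^X V^X + V^X H_{u-1}^X A^X$ using the defining formula for $\delta^X$ on generators $\nu\otimes w\otimes x$ together with the Leibniz rule and the relation $\delta^X\sigma_{\nu,x}=\sigma_{\nu,x}\delta\pm(\text{lower terms})$ that is implicit in \cite{BSZ}(12.8). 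Then ${\cal A}^X$ interlaced with $I^X$ follows from (\ref{R: balanced vs interlaced}): $I^X$ is ${\cal A}^X$-balanced by the triangularity just shown, and $(\delta^X)^2(T^X)\subseteq J^X$ because $\delta^X$ commutes appropriately with $\sigma_{\nu,x}$ and $\delta^2(T)\subseteq J$.

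Next, for (2) and (4), I would construct $F^X$ by transport of structure: set $F^X:=\Psi^{-1}\circ\dot F^X\circ\Psi^X$, where $\Psi^X:({\cal A}^X,I^X)\g\Mod\to{\cal A}^X/J^X\g\Mod$ is the isomorphism of (\ref{modules of interlaced ditalgebras}) applied to $({\cal A}^X,I^X)$, $\dot F^X$ is the reduction functor of \cite{BSZ}(12.9) for the ditalgebra ${\cal A}^X/J^X$, and $\Psi^{-1}$ is the inverse of the isomorphism for $({\cal A},I)$. The explicit formulas for the $B$-action on $X\otimes_S M$ and for $F^X(f)^0,F^X(f)^1$ are then exactly the formulas of \cite{BSZ}(12.9) rewritten in the interlaced notation, the $\sigma_{\nu,x}$ here playing the role of the analogous maps there; I would only need to check that a module annihilated by $I^X$ is sent to one annihilated by $I$, which is immediate from $a\cdot(x\otimes m)=\sum_i x_i\otimes\sigma_{\nu_i,x}(a)m$ and the definition of $I^X$ via $\sigma_{\nu,x}(I)$. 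Item (4) likewise transfers: a module $M\in({\cal A},I)\g\Mod$ with underlying $B$-module $X\otimes_S N$ corresponds under $\Psi$ to an ${\cal A}/J$-module of the same shape, which by the density part of \cite{BSZ}(12.9) lies in the image of $\dot F^X$, and we apply $\Psi^{-1}$. For (3), the bound $\dim_k F^X(N)\le c_X\dim_k N$ with $c_X:=\dim_k X$ (as a $k$-space; here $S$ is split and $X$ is a finitely generated projective right $S$-module) is immediate from $F^X(N)=X\otimes_S N$ as a $k$-space.

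\textbf{The main obstacle} I expect is the bookkeeping in step (1): proving that $I^X$ is ${\cal A}^X$-triangular requires knowing precisely how $\delta^X$ interacts with the maps $\sigma_{\nu,x}$, and in particular that $\sigma_{\nu,x}$ carries the triangular filtration of $I$ to a triangular filtration of $I^X$ — this is where the ``lower-order terms'' coming from $\lambda(\nu)$ and $\rho(x)$ in the formula for $\delta^X(\nu\otimes w\otimes x)$ must be controlled and shown to land in $A^X H_{u-1}^X V^X + V^X H_{u-1}^X A^X$. This is the interlaced analogue of the triangularity bookkeeping in \cite{BSZ}(14.10), and I would organize it by first recording the identity $\delta^X\circ\sigma_{\nu,x}$ in terms of $\sigma_{\bullet,\bullet}\circ\delta$ plus $\lambda$- and $\rho$-contributions (the weak analogue of \cite{BSZ}(12.8)), and then doing induction on the filtration length $t$ of $I$. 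Everything else reduces cleanly to the ditalgebra statements already available in \cite{BSZ} together with (\ref{modules of interlaced ditalgebras}) and (\ref{P: reduction by a B-module}).
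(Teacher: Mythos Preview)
There is a concrete gap in your treatment of (1), and a structural problem with the $\Psi$-transport route for (2) and (4).

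For (1), the filtration $H_u^X$ you propose is too coarse. The ``lower-order terms'' you flag from $\lambda(\nu)$ and $\rho(x)$ do \emph{not} land in $A^XH_{u-1}^XV^X+V^XH_{u-1}^XA^X$: for $h\in H_u$, these contributions have the shape $\gamma\cdot\sigma_{\nu',x}(h)$ and $\sigma_{\nu,x'}(h)\cdot\gamma'$ with $\gamma,\gamma'\in P^*\subseteq V^X$ and $\nu',x'$ of strictly smaller height in the triangular filtrations of $X^*$ and $X$, but with $h$ still in $H_u$. So they sit at level $u$ in your filtration, not $u-1$. The paper repairs this with a finer filtration whose index combines all three heights: $H_m^X$ is generated by the $\sigma_{\nu,x}(h)$ with $\h(\nu)+2\ell_X\h(h)+\h(x)\leq m$; then the $\lambda,\rho$ terms drop the index via the $\nu$- or $x$-height, and $\sigma_{\nu,x}(\delta(h))$ drops it via the $h$-height (this is the bookkeeping of \cite{BSZ}\S14). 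For the interlacing condition, the paper does not go through (\ref{R: balanced vs interlaced}) but computes directly that $(\delta^X)^2(\sigma_{\nu,x}(w))=\sigma_{\nu,x}(\delta^2(w))$ and then uses \cite{BSZ}(12.8)(3) together with $\delta^2(w)\in J$ to place the result in the required graded pieces of $J^X$.

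Your $\Psi$-transport strategy has its own gap: the quotient ditalgebra $\dot{\cal A}={\cal A}/J$ is presented as $T_{A/I}(V/(IV{+}VI))$ and does not in general carry an $(R,W)$-layer of the kind needed to form ``$(\dot{\cal A})^X$'' in the sense of \cite{BSZ}\S12, so the identification you ``want'' between ${\cal A}^X/J^X$ and a reduction of $\dot{\cal A}$ is not available without substantial extra work (note also that $B\cap I$ need not vanish, so $X$ is not automatically a module over the relevant subalgebra of $A/I$). The paper avoids this entirely and works directly with the weak ditalgebra: for (2) it reruns the computations of \cite{BSZ}(12.10), checking from the action formula that $I^XM=0$ forces $I(X\otimes_SM)=0$; for (4) it uses the inverse construction of \cite{BSZ}(16.1) and proves by induction on word length in $\underline{W}_0$ that $\sigma_{\nu,x}(a)*m=\sigma(\epsilon\otimes 1)[\nu\otimes a\circ(x\otimes m)]$ for all $a\in A$, from which $I(X\otimes_SM)=0\Rightarrow I^XM=0$. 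One small point on (3): $\dim_kX$ can be infinite when $S$ has a rational-algebra factor (as occurs in \S8); the paper takes $c_X$ to be the cardinality of a dual basis of the projective right $S$-module $X$, so that $X\otimes_SN$ embeds in $N^{c_X}$.
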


\begin{proof} We choose appropriate dual basis for the right $S$-modules
$P$ and $X$ as in
\cite{BSZ}(14.7).

\medskip

\noindent(1): We show first that $I^X$ is an ${\cal A}^X$-triangular ideal. By
assumption, we have a vector space filtration
$0=H_0\subseteq H_1\subseteq \cdots\subseteq H_\ell=I$ such that
$\delta(H_i)\subseteq AH_{i-1}V+VH_{i-1}A$, for each $i\in [1,\ell]$.
Then, we have the vector space filtration
$$0=H_0^X\subseteq H_1^X\subseteq \cdots\subseteq H_{2\ell_X(\ell+1)}^X\subseteq I^X,$$
where each space $H_m^X$ is generated by the set
$$\{\sigma_{\nu,x}(h)
\mid \nu\in X^*, h\in I, x\in X, \hbox{ with } \h(\nu)+2\ell_X\h(h)+\h(x)\leq
m\},$$
and the heights are taken relative to the filtrations of $X^*$, $I$, and $X$.
Then, we can see that the computations of \cite{BSZ}\S14 also verify
that the filtration of $I^X$ makes $I^X$ an ${\cal A}^X$-triangular ideal.
See also (\ref{R: defs de triangularidad son equivalentes}) and the solution to Exercise (14.11) of \cite{BSZ}.

In order to see that ${\cal A}^X$ is interlaced with $I^X$, we need to show the
inclusion
 $(\delta^X)^2(T^X)\subseteq J^X$, where $J^X$ is the ideal of ${\cal A}^X$
generated by $I^X$.
For this, it will be enough to show that $(\delta^X)^2(W_0^X)\subseteq J^X$ and
$(\delta^X)^2(W_1^X)\subseteq J^X$. Equivalently, we need to show that
$(\delta^{X})^2(\sigma_{\nu,x}(w))\in J^X$, for any $\nu\in X^*$,
$w\in \underline{W}_0\cup \underline{W}_1$, and $x\in X$; and that
$(\delta^X)^2(\gamma)\in J^X$,
for any $\gamma\in P^*$. The argument is essentially contained in the proof of
\cite{BSZ}(12.9),
where, under the assumption $\delta^2=0$ it is proved that $(\delta^X)^2=0$. In
our situation here, it is again easy to see that $(\delta^X)^2(\gamma)=0$, for
$\gamma\in P^*$, so we look at the other case, where we have to calculate
$(\delta^X)^2(\sigma_{\nu,x}(w))$.
The same computations given there, show that
$$(\delta^X)^2(\sigma_{\nu,x}(w))=\sigma_{\nu,x}(\delta^2(w)).$$
Since $\delta^2(w)\in J$, where $J$ is the ideal of ${\cal A}$ generated by
$I$, from \cite{BSZ}(12.8)(3),
we obtain
$\sigma_{\nu,x}(\delta^2(w))\in
\sigma_{\nu,x}(I[T]_2+VIV+[T]_2I)\subseteq
I^X[T^X]_2+V^XI^XV^X+[T^X]_2I^X$, for $w\in \underline{W}_0$, and
$\sigma_{\nu,x}(\delta^2(w))\in \sigma_{\nu,x}(I[T]_3+VI[T]_2
+[T]_2IV+[T]_3I)\subseteq I^X[T^X]_3+V^XI^X[T^X]_2
+[T^X]_2I^XV^X+[T^X]_3I^X)$, for $w\in \underline{W}_1$.

\medskip
\noindent(2): For the existence of $F^X:({\cal
A}^X,I^X)\g\Mod\rightmap{}({\cal A},I)\g\Mod$,
we need to have in mind the construction of the functor $F^X$ in
\cite{BSZ}(12.10), where given
an $A^X$-module $M$, a structure of an $A$-module can be defined on
$X\otimes_SM$ by the recipe
$$a\cdot (x\otimes m)=\sum_{i}x_i\otimes\sigma_{\nu_i,x}(a)m$$
for  $a\in A$, $x\in X$, and $m\in M$. From this formula,
we get that $I^XM=0$ implies that $I(X\otimes_SM)=0$. Then,
the computations in the proof of
\cite{BSZ}(12.10) show that there is a functor
 $F^X:({\cal A}^X,I^X)\g\Mod\rightmap{}({\cal A},I)\g\Mod$, with
$F^X(M)=X\otimes_SM$.

 \medskip
 \noindent(3): Denote by $c_X$ the cardinality of the dual basis of the right
 $S$-module $X$. Then, for $M\in ({\cal A}^X,I^X)\g\mod$, we have
 $$\dim_kF^X(M)=\dim_kX\otimes_SM\leq \dim_k(S^{c_X}\otimes_SM)\leq
 \dim_kM^{c_X}\leq c_X\dim_kM.$$

 \medskip
 \noindent(4):
Finally, we verify the ``density claim'' for the functor $F^X$.
If $N\in ({\cal A},I)\g\Mod$ and $\varphi:N\rightmap{}X\otimes_SM$ is an isomorphism of $B$-modules, we can transfer the structure of $A$-module of $N$ onto $X\otimes_SM$ through $\varphi$ and obtain an object $\overline{X\otimes_SM}\in ({\cal A},I)\g\Mod$ with underlying $B$-module $X\otimes_SM$ and such that $N\cong \overline{X\otimes_SM}$ in
$({\cal A},I)\g\Mod$. Now,
recall from \cite{BSZ}(16.1), that given an $A$-module with underlying
$B$-module of the
form $X\otimes_SM$, where $M$ is an $S$-module,
a structure of $A^X$-module can be defined on $M$ by the formula
$$(\nu\otimes w\otimes x)*m=\sigma(\epsilon\otimes 1)[\nu\otimes w \circ
(x\otimes m)]$$
where $\nu\in X^*$, $x\in X$,  $w\in \underline{W}_0$, $\epsilon:X^*\otimes_B
X\rightmap{}S$ is the evaluation map determined by $\epsilon(\nu\otimes
x)=\nu(x)$, and $\sigma:S\otimes_SM\rightmap{}M$ is the product map;
here, $\circ$ denotes the given $A$-module structure on $X\otimes_SM$.
Let us show by induction on $n$ that, for $w_1,\ldots,w_n\in \underline{W}_0$,
 $x\in X$, $\nu\in X^*$, and $m\in M$, we have
$$\sigma_{\nu,x}(w_1w_2\cdots w_n)*m=
\sigma(\epsilon \otimes 1)[\nu\otimes w_1w_2\cdots w_n\circ (x\otimes m)].$$
Suppose that $n>1$, write $w=w_1$ and $t=w_2\cdots w_n$, and assume that the
statement
holds for $n-1$.
Asume that $t\circ(x\otimes m)=\sum_sx_s\otimes m_s$. Then, applying the
induction hyphotesis and \cite{BSZ}(12.8)(3), we have
$$\begin{matrix}
\sigma_{\nu,x}(wt)*m&=&\sum_i(\sigma_{\nu,x_i}(w)\sigma_{\nu_i,x}(t))*m\hfill\\
&=& \sum_i\sigma_{\nu,x_i}(w)*\sigma(\epsilon\otimes 1)[\nu_i\otimes t\circ
(x\otimes m)]\hfill\\
   &=& \sum_{i,s}(\nu\otimes w\otimes x_i)*(\nu_i(x_s)m_s)\hfill\\
   &=& \sum_s\sigma(\epsilon\otimes 1)[\nu\otimes w\circ (\sum_ix_i\otimes
\nu_i(x_s)m_s)]\hfill\\
   &=& \sum_s\sigma(\epsilon\otimes 1)[\nu\otimes w\circ (x_s\otimes
m_s)]\hfill\\
  &=& \sigma(\epsilon\otimes 1)[\nu\otimes wt\circ (x\otimes m)].\hfill\\
  \end{matrix}$$
Hence $\sigma_{\nu,x}(a)*m=\sigma(\epsilon\otimes 1)[\nu\otimes a\circ
(x\otimes
m)]$, for $a\in A$.
Then, from $I(X\otimes_SM)=0$ it follows that $I^XM=0$. From \cite{BSZ}(16.1), we obtain that the $({\cal A}^X,I^X)$-module $M$ satisfies that $F^X(M)=\overline{X\otimes_SM}\cong N$ in $({\cal A},I)\g\Mod$.
\end{proof}

Notice that the existence of some $0\not= M\in ({\cal A},I)\g\Mod$ as in
(\ref{P: (AX, IX)})(4) implies that $I^X$ is a proper ideal of $A^X$, because
$({\cal A}^X,I^X)\g\Mod$ can not be trivial.

Now we briefly discuss an additional condition on the admissible $B$-module $X$
which
guarantees that
$F^X$ is full and faithful.

\begin{remark}\label{R: preserv proyectividad del bimod W bajo reducciones}
Assume that $\underline{\cal A}'=\underline{\cal A}^{z_1\cdots z_t}$ is an interlaced weak ditalgebra with layer $(R',W')$, obtained by applying successively a finite sequence  of reductions of type $z_1,\ldots,z_t\in \{a,d,r,q,X\}$ from an interlaced weak ditalgebra $\underline{\cal A}$ with layer $(R,W)$, where $R$ is a finite product of fields. Then, the $R'$-$R'$-bimodule $W'$ is projective.

Indeed, if $\underline{\cal A}^z$ is the interlaced weak ditalgebra with layer $(R^z,W^z)$, obtained by a reduction of type $z\in \{a,d,r,q,X\}$ from an interlaced weak ditalgebra $\underline{\cal A}$ with layer $(R,W)$, where $W$ is a
projective $R$-$R$-bimodule, it is not hard to verify that the $R^z$-$R^z$-bimodule $W^z$ is projective.

The layers of the weak ditalgebras ${\cal A}$ appearing in our arguments from \S6 to the end of this work are typically pairs
$(R,W)$, where $R$ is a minimal algebra, as in (\ref{D: source point}), and $W$ is a projective $R$-$R$-bimodule.
\end{remark}

\begin{remark}\label{R: complete B-mods} Assume that $B=T_R(W'_0)$ is a tensor agebra and let $X$ be an admissible $B$-module. Thus, we have a splitting $\Gamma=\End_B(X)^{op}
=S\oplus P$, as in (\ref{P: reduction by a B-module}). We have the ditalgebra $(B,0)$, with trivial differential, and the ditalgebra $(B,0)^X=(T_S(P^*),\delta)$, where $\delta$ is the differential determined by the comultiplication $\mu:P^*\rightmap{}P^*\otimes_SP^*$. Recall from \cite{BSZ}, that the admissible $B$-module $X$ is called \emph{complete} if the functor
$$F^X:(B,0)^X\g\Mod\rightmap{}B\g\Mod$$
is full and faithful.  From (17.4), (17.5), and
(17.11) of \cite{BSZ},
we know that we obtain complete
 admissible $B$-modules $X$ in the following cases:
\begin{enumerate}
 \item  $X$ is a finite direct sum of non-isomorphic finite-dimensional
indecomposables in
 $B\g\mod$;
 \item $X$ is the $B$-module obtained from the regular $S$-module $S$ by
restriction
 through a given epimorphism of $k$-algebras $\phi:B\rightmap{}S$;
 \item $X=X_1\oplus X_2$, where $X_1$ and $X_2$ are complete triangular
 admissible $B$-modules such that $\Hom_{B}({\cal I}_{X_i},{\cal I}_{X_j})=0$,
 for $i\not=j$, and
  ${\cal I}_{X_i}$ denotes the class of $B$-modules of the form $X_i\otimes_SN$, for some $N\in S\g\Mod$.
\end{enumerate}
All the complete admissible $B$-modules we shall consider in this paper are constructed using 1, 2, and 3.
\end{remark}

\begin{proposition} Assume that $B=T_R(W'_0)$ and $X$ is a complete admissible $B$-module. Then, for any ideal $I_0$ of $B$,  the functor $F^{\prime X}$ induces a full and faithful functor
$$F^{\prime X}:((B,0)^X,I_0^X)\g\Mod\rightmap{}B/I_0\g\Mod.$$
\end{proposition}

\begin{proof} By definition, the category $((B,0)^X,I_0^X)\g\Mod$ coincides with the full subcategory of $(B,0)^X\g\Mod$ formed by the $S$-modules $M$ such that $I_0^XM=0$. We have the commutative diagram
$$\begin{matrix}
  ((B,0)^X,I_0^X)\g\Mod&\rightmap{ \ F^{\prime X} \ }&B/I_0\g\Mod\\
  \shortlmapdown{}\hbox{\,\,\,\,}&&\shortlmapdown{}\\
   (B,0)^X\g\Mod&\rightmap{ \ F^{X} \ }&B\g\Mod,\\
  \end{matrix}$$
where the vertical functors, as well as $F^X$, are full and faithful functors. Therefore, the functor $F^{\prime X}$ is also full and faithful.
\end{proof}

 \begin{proposition}\label{P: completez de X para la reduccion}
Under the assumptions of (\ref{P: (AX, IX)}),
consider the ideal $I_0:=B\cap I$ of $B=T_R(W_0')$. Then,
we can consider the triangular interlaced weak ditalgebra
$({\cal B},I_0)$, where ${\cal B}=(B,0)$.
Then, for any complete admissible $B$-module $X$, we have a full and faithful functor
$$F^{\prime X}:({\cal B}^X,I_0^X)\g\Mod\rightmap{}({\cal
B},I_0)\g\Mod=(B/I_0)\g\Mod,$$
and a full and faithful functor
$F^X:({\cal A},I^X)\g\Mod\rightmap{}({\cal A},I)\g\Mod$.
\end{proposition}

\begin{proof}
The proof of the fact that $F^X$ is full and faithful, for any complete
admissible $B$-module $X$, is similar to the proof of \cite{BSZ}(13.5), now using $F^{\prime X}$.
\end{proof}

\begin{definition}\label{D: wild weak dit} An interlaced weak ditalgebra
$({\cal A},I)$ over the field $k$ is called \emph{wild} iff there is an
$A/I$-$k\langle x,y \rangle$-bimodule $Z$, which is free of finite rank as a
right
$k\langle x,y \rangle$-module, such that the composition functor
$$k\langle x,y \rangle\g\Mod\rightmap{ \ Z\otimes_{k\langle x,y \rangle}- \ }A/I
 \g\Mod\rightmap{ \ L_{({\cal A},I)} \ }({\cal A},I)\g\Mod$$
 preserves isomorphism classes of indecomposables. Here $L_{({\cal A},I)}$
 denotes the canonical embedding functor mapping each morphism $f^0$ onto
$(f^0,0)$.
 In this case, we say that \emph{$Z$ produces the wildness of} $({\cal A},I)$.
\end{definition}

The following statement is just \cite{BSZ}(22.7) rewritten for interlaced weak
ditalgebras. The proof given there works here too.

\begin{lemma}\label{L: parameriz de funtores de reduccion}
 Assume that $H:({\cal A}',I')\g\Mod\rightmap{}({\cal A},I)\g\Mod$ is a functor
obtained as a finite composition of functors of type $F^X$, for some admissible
module $X$, or $F_\phi$, for some morphism $\phi:({\cal
A},I)\rightmap{}({\cal A}',I')$ of interlaced weak ditalgebras. Then, $H$
induces by restriction a functor $\underline{H}$ which makes the right square
of the following diagram commutative. If $D$ is any $k$-algebra and $Z$
is an $(A'/I')$-$D$-bimodule, then $H(Z)$ is an $(A/I)\g D$-bimodule
and the first square in the following diagram
commutes up to isomorphism
$$\begin{matrix}
   D\g\Mod&\rightmap{ \ Z\otimes_D- \ }&(A'/I')\g\Mod&
   \rightmap{ \ L_{({\cal A}',I')} \ }&({\cal A}',I')\g\Mod\hfill\\
   \parallel&&\shortlmapdown{\underline{H}}&&\shortlmapdown{H}\\
    D\g\Mod&\rightmap{ \ H(Z)\otimes_D- \ }&(A/I)\g\Mod&
    \rightmap{ \ L_{({\cal A},I)} \ }
    &({\cal A},I)\g\Mod.\\
  \end{matrix}$$
  If $Z$ is a projective right $D$-module, so is $H(Z)$. In the
particular case $D=A'/I'$, we get that $\underline{H}\cong
H(A'/I')\otimes_{(A'/I')}-$
is exact and preserves direct sums.
\end{lemma}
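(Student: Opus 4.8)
The proof proceeds by reducing to the case of a single reduction functor, since $H$ is a finite composition $H = H_1 \circ \cdots \circ H_r$ where each $H_i$ is of type $F^X$ or $F_\phi$. Because the claimed diagram is built from two squares that compose horizontally and because the constructions $Z \mapsto H_i(Z)$ and $\underline{H_i}$ will be defined so as to compose functorially, it suffices to verify the assertion for each elementary step; the general case then follows by pasting the resulting diagrams. The last sentence of the statement follows the others by the general nonsense already recorded in the excerpt: once $\underline{H}$ is identified up to isomorphism with $H(A'/I')\otimes_{(A'/I')}-$ (by taking $\Delta = A'/I'$ and $Z = A'/I'$, noting the identity functor $Z\otimes_\Delta- = \mathrm{id}$ on the top-left), exactness and preservation of direct sums are immediate from the tensor description, using that $H(A'/I')$ is a projective right $(A'/I')$-module, which is itself the special case of the penultimate sentence.

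The heart of the argument is thus the two elementary cases. For $H = F_\phi$ with $\phi:({\cal A},I)\rightmap{}({\cal A}',I')$ a morphism of interlaced weak ditalgebras, we have $A/I \to A'/I'$ induced by $\phi_0$, and for an $(A'/I')$-$\Delta$-bimodule $Z$ one simply sets $H(Z) := Z$ regarded as an $(A/I)$-$\Delta$-bimodule by restriction of scalars along $\phi_0$; then $\underline{H} := F_{\overline{\phi_0}}$ is restriction of scalars $(A'/I')\g\Mod \to (A/I)\g\Mod$. Commutativity of the first square is then the trivial identity that restricting $Z\otimes_\Delta N$ along $\phi_0$ equals $(Z\text{ restricted})\otimes_\Delta N$; commutativity of the right square up to the canonical description of $F_\phi$ on objects is Lemma \ref{L: funtores inducidos por restriccion}, since $L_{({\cal A},I)}$ and $L_{({\cal A}',I')}$ are compatible with $F_\phi$ by construction (both $L$'s send $f^0$ to $(f^0,0)$ and $F_\phi$ fixes the zeroth component). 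For $H = F^X$ coming from a triangular admissible $B$-module $X$ as in (\ref{P: (AX, IX)}), the underlying $B$-module of $F^X(M)$ is $X\otimes_S M$; accordingly one defines $H(Z) := X\otimes_S Z$ with the $(A/I)$-module structure given by the explicit formula $a\cdot(x\otimes z) = \sum_i x_i\otimes \sigma_{\nu_i,x}(a)z$ from (\ref{P: (AX, IX)})(2), which makes sense because $Z$ is in particular an $S$-module via $S \subseteq A^X/I^X$, and $\underline{H} := X\otimes_S -$ on $(A^X/I^X)\g\Mod \to (A/I)\g\Mod$. The required isomorphism $F^X(Z\otimes_\Delta N) \cong H(Z)\otimes_\Delta N$ for the first square is the associativity isomorphism $X\otimes_S(Z\otimes_\Delta N)\cong (X\otimes_S Z)\otimes_\Delta N$, which one checks intertwines the two $(A/I)$-actions by matching the defining formula termwise; the right square again commutes because $F^X$ acts on the zeroth component of a morphism by the formula in (\ref{P: (AX, IX)})(2), and on an image of $L$ (where $f^1 = 0$) this formula degenerates to $x\otimes f^0(m)$, i.e. to $L$ applied to $\underline{H}(f^0)$.

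The main obstacle is bookkeeping in the $F^X$ case: one must verify that the associativity isomorphism of tensor products really is $(A/I)$-linear, tracing the explicit action formula through the $\sigma_{\nu,x}$ maps, and that it is natural in $N$; this is the point where the somewhat delicate definitions of $\sigma_{\nu,x}$ and of the module structure $a\cdot(x\otimes m)$ from \cite{BSZ}(12.8), (12.10) must be used carefully. Projectivity of $H(Z)$ when $Z$ is a projective right $\Delta$-module is then formal: in the $F_\phi$ case $H(Z) = Z$ is unchanged as a right $\Delta$-module, and in the $F^X$ case $H(Z) = X\otimes_S Z$ is a direct summand of $S^{c_X}\otimes_S Z \cong Z^{c_X}$ as a right $\Delta$-module since $X_S$ is finitely generated projective by admissibility. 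Pasting these over the composition $H = H_1\circ\cdots\circ H_r$ yields all the assertions, including the final claim about $\underline{H}$ being exact and direct-sum-preserving.
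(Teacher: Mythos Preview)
Your proposal is correct and follows essentially the same approach as the paper, which simply defers to \cite{BSZ}(22.7) and notes that the proof given there works verbatim in the interlaced weak ditalgebra setting. Your decomposition into the elementary cases $F_\phi$ (restriction of scalars) and $F^X$ (via $X\otimes_S-$ with associativity of tensor products), together with the projectivity check using that $X_S$ is finitely generated projective, is exactly the argument of \cite{BSZ}(22.7) adapted to the presence of the ideal $I$; your identification of the only delicate point---that the associativity isomorphism $X\otimes_S(Z\otimes_\Delta N)\cong (X\otimes_S Z)\otimes_\Delta N$ is $(A/I)$-linear via the $\sigma_{\nu,x}$ formulas---is precisely the bookkeeping the reference handles.
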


\begin{proposition}\label{P: Reducciones vs wildness}
Assume that a triangular interlaced weak ditalgebra $({\cal A}^z,I^z)$
is obtained from a Roiter interlaced weak ditalgebra $({\cal A},I)$ by
some of the procedures described in this section, that is $z\in \{d,r,q,a,X\}$.
Then, $({\cal A}^z,I^z)$ is  a Roiter interlaced weak ditalgebra.
The associated full and faithful functor
$$F^z:({\cal A}^z,I^z)\g\Mod\rightmap{}({\cal A},I)\g\Mod$$
preserves isomorphism classes and indecomposables.
Moreover, if $({\cal A}^z,I^z)$ is wild (with wildness produced by an
$(A^z/I^z)$-$k\langle x,y\rangle$-bimodule $Z$) then so is $({\cal A},I)$
(with wildness produced by the
$(A/I)$-$k\langle x,y\rangle$-bimodule $F^z(Z)$).
\end{proposition}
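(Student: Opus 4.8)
The plan is to establish the three assertions in turn, reducing everything to properties of $F^z$ already proved in this section together with a few standard facts about $k$-linear categories in which idempotents split. \emph{That $({\cal A}^z,I^z)$ is Roiter.} Each of the five procedures was already shown above to produce a triangular interlaced weak ditalgebra, so only the Roiter condition of Definition \ref{D: Roiter weak ditalgebra} remains. By Proposition \ref{P: lema de Roiter basico} this holds as soon as, for every index $i$, the subalgebra $A_i^z$ of $A^z$ is freely generated by $(R^z,(W_0^z)^i)$ and the product map $A_i^z\otimes_{R^z}W_1^z\otimes_{R^z}A_i^z\rightmap{}A_i^zW_1^zA_i^z$ is an isomorphism, which in particular holds whenever the reduced layer is additive triangular. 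So I would check, case by case, that this freeness is inherited from the layer of $({\cal A},I)$: for $z\in\{d,q,r\}$ by intersecting (resp. projecting) the given triangular filtrations of $W_0$ and $W_1$ onto the new bimodules; for $z=a$ from the construction of the absorbed layer in \cite{BSZ}(8.20); and for $z=X$ by choosing suitable dual bases for $X$ and $P$ and following \cite{BSZ}\S14. Once this is done, Proposition \ref{P: idempotents split} guarantees that idempotents split both in $({\cal A}^z,I^z)\g\Mod$ and in $({\cal A},I)\g\Mod$.

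\emph{Preservation of isoclasses and indecomposables.} The functor $F^z$ is $k$-linear and, by the corresponding statement above (for $z=X$ assuming $X$ complete, cf. Proposition \ref{P: completez de X para la reduccion}), full and faithful; hence for each object $M$ it induces a $k$-algebra isomorphism $\End_{({\cal A}^z,I^z)}(M)\cong\End_{({\cal A},I)}(F^zM)$, and, being faithful, it carries nonzero objects to nonzero objects. If $F^zM\cong F^zN$, then lifting that isomorphism and its inverse through $F^z$ (fullness) and applying faithfulness to the resulting composites yields $M\cong N$, so $F^z$ is injective on isoclasses. Since idempotents split in both categories, an object there is indecomposable precisely when its endomorphism ring has no idempotents other than $0$ and $1$; as $F^z$ induces an isomorphism of endomorphism rings, $M$ is indecomposable iff $F^zM$ is.

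\emph{Transfer of wildness.} Suppose $({\cal A}^z,I^z)$ is wild, with wildness produced by an $(A^z/I^z)$-$k\langle x,y\rangle$-bimodule $Z$ that is free of finite rank on the right, so that $G^z:=L_{({\cal A}^z,I^z)}\circ(Z\otimes_{k\langle x,y\rangle}-)$ preserves isoclasses of indecomposables. I would apply Lemma \ref{L: parameriz de funtores de reduccion} with $H=F^z$ and $\Delta=k\langle x,y\rangle$: its commutative diagram gives a natural isomorphism $F^z\circ G^z\cong L_{({\cal A},I)}\circ(F^z(Z)\otimes_{k\langle x,y\rangle}-)=:G$ and shows that $F^z(Z)$ is projective as a right $k\langle x,y\rangle$-module; since it is also finitely generated over $k\langle x,y\rangle$ (by restriction of scalars when $z\in\{d,r,q,a\}$, and because $X$ is finitely generated over $S$ when $z=X$) and finitely generated projective $k\langle x,y\rangle$-modules are free of finite rank, $F^z(Z)$ has the shape required by Definition \ref{D: wild weak dit}. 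Finally $G\cong F^z\circ G^z$ preserves isoclasses of indecomposables by the previous paragraph: for $N$ indecomposable, $G^zN$ and hence $F^zG^zN$ are indecomposable; and for $N\not\cong N'$ indecomposable, $G^zN\not\cong G^zN'$, so $F^zG^zN\not\cong F^zG^zN'$ since $F^z$ is injective on isoclasses. Hence $F^z(Z)$ produces the wildness of $({\cal A},I)$.

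The only step I expect to require real work is the verification of the Roiter property for $({\cal A}^z,I^z)$, i.e. that the freeness hypotheses of Proposition \ref{P: lema de Roiter basico} survive each of the five reductions; this has to be done separately for each $z$, following the templates of the analogous results for ditalgebras in \cite{BSZ}, while everything else is formal.
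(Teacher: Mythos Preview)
Your proposal is largely on track and follows the same outline as the paper's (very terse) proof, but the strategy you sketch for the Roiter property has a subtle gap. The proposition only assumes that $({\cal A},I)$ is Roiter in the sense of Definition~\ref{D: Roiter weak ditalgebra}; it does \emph{not} assume that the layer of $({\cal A},I)$ satisfies the freeness hypotheses of Proposition~\ref{P: lema de Roiter basico}. So ``checking that this freeness is inherited from the layer of $({\cal A},I)$'' presupposes something you are not given. The paper instead follows \cite{BSZ}(9.3) and (16.3), which argue directly from the Roiter property: given data $(f^0,f^1)$ at the level of $({\cal A}^z,I^z)$, one lifts them to data over $({\cal A},I)$ (extending $f^1$ appropriately to $W_1$ and viewing the given module structure via the morphism or the $X$-construction), invokes the Roiter property of $({\cal A},I)$ to produce the missing module structure, and then checks that this new structure lies in the essential image of $F^z$ so that it descends to an $A^z/I^z$-module structure making $(f^0,f^1)$ a morphism in $({\cal A}^z,I^z)\hbox{-}\Mod$. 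This works for each $z$ without any additive-triangularity assumption on the original layer.

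Your treatment of the other two assertions is correct and matches the paper: preservation of isoclasses and indecomposables is the standard consequence of $F^z$ being full, faithful, $k$-linear, and idempotents splitting (Proposition~\ref{P: idempotents split}); and the wildness transfer is exactly Lemma~\ref{L: parameriz de funtores de reduccion} combined with the fact that finitely generated projective $k\langle x,y\rangle$-modules are free, as in \cite{BSZ}(22.8),(22.10). Your parenthetical about completeness of $X$ for $z=X$ is also well placed.
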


\begin{proof} The fact that $({\cal A}^z,I^z)$ is a Roiter interlaced weak
ditalgebra
can be proved in a similar way as in the proof of the same fact for ditalgebras
as
in (9.3) and (16.3) of \cite{BSZ}.
The last statement follows from the preceding lemma, as in
(22.8) and (22.10) of \cite{BSZ}. In case $z=X$,
we use (\ref{P: (AX, IX)})(4).
\end{proof}

\section{Stellar weak ditalgebras}

In this section we shall see that for a special kind of interlaced weak
ditalgebras $({\cal A},I)$, the stellar ones, we can apply reduction procedures
and reach
after a finite number of steps a seminested ditalgebra, as in \cite{BSZ}(23.5).
The corresponding reduction functor covers finite-dimensional
$({\cal A},I)$-modules with dimension bounded by some number $d\in \hueca{N}$.

\begin{definition}\label{D: source point}
Recall that a \emph{minimal algebra} $R$
is a finite product of algebras $R=\prod_{i\in {\cal P}}R_i$, where each $R_i$
is either a rational $k$-algebra or is isomorphic to the field $k$. So, we have a decomposition of the unit of $R$ as a sum $1=\sum_{i\in {\cal P}}e_i$ of  primitive orthogonal idempotents.
 Assume ${\cal
A}$ is a triangular weak ditalgebra with layer $(R,W)$, where $R$ is a minimal algebra. Then, the \emph{points of} ${\cal A}$ are the mentioned idempotents (or the set ${\cal P}$ of their subscripts). A \emph{multiple source $\Omega$ of}
${\cal A}$ is a subset $\Omega$ of ${\cal P}$ such that, if we define $e_{_\Omega}:=\sum_{\omega\in  \Omega}e_\omega$, the following are satisfied:
$e_{_\Omega}W_0=0$, $e_{_\Omega}W_1=e_{_\Omega}W_1e_{_\Omega}$, and $Re_\omega=ke_\omega$, for all $\omega\in \Omega$.

The elements of $\Omega$, or the corresponding family of idempotents, $\{e_\omega\}_{\omega\in \Omega}$ are called \emph{the sources of the given multiple source $\Omega$}.
\end{definition}

\begin{definition}\label{D: stellar weak ditalgebra}
A \emph{stellar weak ditalgebra}
${\cal A}$ is a weak ditalgebra with triangular layer
$(R,W)$, with $R$ a minimal algebra and $W$ a projective $R$-$R$-bimodule, such that  there is a multiple source $\Omega$ of ${\cal A}$, and $W_0=W_0e_{_\Omega}$. For each $\omega\in \Omega$, the algebra $T_{Re_\omega}(W_0e_\omega)$ is called \emph{the star of ${\cal A}$ with center $e_\omega$}.
\end{definition}

Notice that if  ${\cal A}$ is a stellar weak ditalgebra with layer
$(R,W)$, then we have  $A=R\oplus W_0$.

The following elementary lemma, where $R_h$ denotes the localization of the ring $R$ with respect to the element $h\in R$, will be useful to us.

\begin{lemma}\label{L: lema de localizados y sumandos directos}
Given a principal ideal domain $R$ and a finitely generated $R$-module $U$ with  a filtration  in $R\g\mod$
$$0=U_0\subseteq U_1\subseteq \cdots\subseteq U_\ell=U,$$
there is $h\in R$ such that $R_h\otimes_RU$ is a free $R_h$-module and
 $$0= R_h\otimes_RU_0\subseteq R_h\otimes_RU_1
 \subseteq \cdots\subseteq R_h\otimes_RU_\ell=R_h\otimes_RU$$
 is an \emph{additive filtration} of $R_h\otimes_RU$ in $R_h\g\mod$, that is each term of the filtration is a direct summand of the next one.
\end{lemma}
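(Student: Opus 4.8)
The plan is to argue by induction on the length $\ell$ of the filtration, reducing everything to the structure theorem for finitely generated modules over a principal ideal domain. First I would observe that over a PID $R$, each quotient $U_i/U_{i-1}$ is a finitely generated $R$-module, hence decomposes as a direct sum of a free part and a finite collection of cyclic torsion modules $R/(p_{i,1}),\ldots,R/(p_{i,n_i})$ with each $p_{i,j}\neq 0$. The key point is that inverting a single element kills all the relevant torsion: if I set $h$ to be the product of all the nonzero elements $p_{i,j}$ appearing (over all $i\in[1,\ell]$ and all $j$), then in the localization $R_h$ each $p_{i,j}$ becomes a unit, so $R_h\otimes_R (R/(p_{i,j}))=0$. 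Consequently each $R_h\otimes_R(U_i/U_{i-1})$ is a free $R_h$-module, and since localization is exact we have $R_h\otimes_R(U_i/U_{i-1})\cong (R_h\otimes_R U_i)/(R_h\otimes_R U_{i-1})$.

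Next I would build the additive refinement inductively. For $i=1$ there is nothing to do since $U_0=0$. Assume $R_h\otimes_R U_{i-1}$ is a free $R_h$-module and a direct summand in the chain up to level $i-1$. The short exact sequence
$$0\rightmap{}R_h\otimes_R U_{i-1}\rightmap{}R_h\otimes_R U_i\rightmap{}R_h\otimes_R(U_i/U_{i-1})\rightmap{}0$$
has free right-hand term, hence splits; therefore $R_h\otimes_R U_{i-1}$ is a direct summand of $R_h\otimes_R U_i$, and the latter is again free (an extension of a free module by a free module, equivalently a direct sum of the two). Taking $i=\ell$ shows $R_h\otimes_R U$ is free, and by construction the chain $0=R_h\otimes_R U_0\subseteq \cdots\subseteq R_h\otimes_R U_\ell=R_h\otimes_R U$ has each term a direct summand of the next, i.e. it is an additive module filtration in $R_h\g\mod$.

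There is essentially no hard obstacle here; the only mild care needed is bookkeeping. One must make sure that $h$ is chosen \emph{before} the induction so that the same single localization works simultaneously at every level — which is why I collect all the $p_{i,j}$ at the outset rather than enlarging $h$ step by step — and one should note that $h\neq 0$ (it is a finite product of nonzero elements of a domain), so $R_h$ is a genuine nonzero localization. One should also record that each $R_h\otimes_R U_i$ remains finitely generated, hence lies in $R_h\g\mod$, which is immediate since localization preserves finite generation. With these remarks the argument is complete.
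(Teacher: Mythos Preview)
Your proof is correct and follows essentially the same strategy as the paper: localize so that the successive quotients $U_i/U_{i-1}$ become free, then split the resulting short exact sequences to obtain the additive filtration. The only minor difference is in how the element $h$ is produced: the paper takes a presentation $R^s\xrightarrow{H}R^r\to U\to 0$ and uses Smith normal form over the fraction field to find $h$, whereas you invoke the structure theorem directly and take $h$ to be the product of the torsion generators. These are equivalent implementations of the same idea.
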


\begin{proof} \emph{Step 1: The case $\ell=1$.} Here, we just have to show
that for any $U\in R\g\mod$ there is $h\in R$ such that $R_h\otimes_R U$ is
a free $R_h$-module.

This is a standard procedure, we have an exact sequence in $R\g\mod$
$$R^s\rightmap{H}R^r\rightmap{}U\rightmap{}0,$$
with $H$ a matrix in $R^{r\times s}\subseteq K^{r\times s}$,
where $K$ is the field of fractions of $R$. Then, there are  invertible
matrices $P$ and $Q$ with coefficients in $K$ such that
$$PHQ=\begin{pmatrix}
       I_d&0\\ 0&0\\
      \end{pmatrix},$$
where $d$ is the rank of $H$. Consider $h\in R$ such that $P$ and $Q$ have
entries in $R_h$, then
$R_h\otimes_RU=\Coker(1_{R_h}\otimes H)\cong R_h^{r-d}$ is free in $R_h\g\mod$.

 \medskip
 \emph{Step 2: The case $\ell =2$.} We take $U'\subseteq U$ and consider the
following exact
 sequence
 in $R\g\mod$
 $$0\rightmap{}U'\rightmap{}U\rightmap{}U/U'\rightmap{}0.$$
 Then, apply the first step to obtain $h\in R$ with $R_h\otimes_R(U\oplus
U'\oplus [U/U'])$
 free in $R_h\g\mod$. So, $R_h\otimes_RU'$, $R_h\otimes_RU$, and
$R_h\otimes_R[U/U']$
 are free modules too. The exact sequence obtained from the preceding one by
tensoring
 by $R_h$ splits and we are done.

 \medskip
 \emph{Step 3: The general case.} This is an easy induction.
\end{proof}

\begin{remark}\label{R: podemos aditivizar finitas filtraciones}
Notice that,  in the last lemma, whenever $U$ is not a torsion $R$-module, we have $R_h\otimes_RU\not=0$.

Moreover,  given a finite sequence of finitely generated $R$-modules $U^1,\ldots,U^t$  with  filtrations in $R\g\mod$
$$0=U_0^t\subseteq U_1^t\subseteq \cdots\subseteq U_{\ell_t}^t=U^t$$
with $t\in [1,m]$, there is $h\in R$, such that after tensoring by the same $R_h$ the preceding filtrations, we obtain additive filtrations with free terms in $R_h\g\mod$
$$0= R_h\otimes_RU^t_0\subseteq R_h\otimes_RU^t_1
 \subseteq \cdots\subseteq R_h\otimes_RU^t_{\ell_t}=R_h\otimes_RU^t.$$
 \end{remark}

\begin{lemma}[multiple unravelling with stars]\label{L: unravelling multiple}
 Let $({\cal A},I)$ be a triangular interlaced weak ditalgebra with
 triangular layer $(R,W)$. Suppose that ${\cal A}$ is stellar with stars centers $\{e_\omega\}_{\omega\in \Omega}$.  We have $R=\prod_{i\in {\cal P}}R_ie_i$, where each $Re_i$ is either
isomorphic to $k$ or to some rational algebra.
We can assume that ${\cal P}=J\uplus J'$ where $Re_i=ke_i$, for $i\in J$, and
$Re_j=R_je_j$ with $R_j=k[x]_{g_j}$, for $j\in J'$.
   Notice that,  $\Omega\subseteq J$.

 Take  $d\in \hueca{N}$ and  non-zero elements  $h_j\in R_j$, for $j\in J'$.
   Then,  there is complete triangular admissible $R$-module $X$ such that   $({\cal A}^X,I^X)$ is a triangular interlaced weak ditalgebra
  with
   triangular layer $(S,W^X)$, where $S$ is a minimal algebra of the form  $$S=\left[\prod_{s\in J''}kf_s\right]\times \left[\prod_{j\in
J'}e_j(R_j)_{h_j}\right]\times \prod_{j\in J}ke_j.$$
Where $\{f_s\}_{s\in J''}$ is a new finite family of primitive idempotents of $S$.
The weak ditalgebra ${\cal A}^X$ is stellar with stars centers
 $\{e_\omega\}_{\omega\in \Omega}$. We have that $I^X\subseteq W_0^X$, whenever $I\subseteq W_0$.
   Moreover, there is  a full and faithful functor
$$F^X:({\cal A}^X,I^X)\g\Mod\rightmap{}({\cal A},I)\g\Mod$$
such that for any $M\in ({\cal A},I)\g\Mod$ with $\dim_k{M}\leq d$, there is
some
$N\in ({\cal A}^X,I^X)\g\Mod$ with $F^X(N)\cong M$.
\end{lemma}

\begin{proof}  We have $R=Re\times Re'$, where $e=\sum_{i\in J}e_i$
 and $e'=\sum_{j\in J'}e_j$. Consider the algebra $C=\prod_{j\in J'}e_jR_j/\langle h_j^d\rangle $, which admits only a finite number
 of isoclasses of indecomposable finite-dimensional $C$-modules, say
 represented by the $C$-modules $\{Z_s\}_{s\in J''}$.
 Consider $Z:=\bigoplus_{s\in J''}Z_s$ and the $R$-module
 $$X=Z\bigoplus\left[ \bigoplus_{j\in J'}e_j(R_j)_{h_j}\right]\bigoplus Re.$$
 Then, we have the splitting
 $\End_R(X)^{op}=S\oplus P$, where
 $$S=\left[\prod_{s\in J''}kf_s\right]\times \left[\prod_{j\in
J'}e_j(R_j)_{h_j}\right]\times Re,$$
  $f_s\in \End_R(X)^{op}$ is the idempotent corresponding to the indecomposable
direct
summand
 $Z_s$ of $X$, and $P=\rad \End_R(Z)^{op}$.

 Consider the filtration of $P$ given by its powers
 $$0=P^{(\ell_P+1)}\subseteq P^{(\ell_P)}\subseteq \cdots\subseteq P^{(1)}=P,$$
 thus $P^{(i)}P^{(j)}\subseteq P^{(i+j)}$ for all $i,j\in[1,\ell_P]$ with
$i+j\leq \ell_P$,
 and $P^{(i)}P^{(j)}=0$, otherwise.
 It determines a filtration of the $R$-module  $Z$
 $$0={\cal Z}_0\subseteq {\cal Z}_1\subseteq\cdots\subseteq {\cal Z}_{\ell_Z}=Z,$$
 with ${\cal Z}_jP\subseteq {\cal Z}_{j-1}$, for all $j\in[1,\ell_Z]$.
 From (\ref{R: complete B-mods}), we know that the $R$-module $X$ is complete triangular admissible with filtration
 $$0=X_0\subseteq X_1\subseteq \cdots\subseteq X_{\ell_X}=X,$$
 where $X_1=\left[\oplus_{j\in J'}e_j(R_j)_{h_j}\right]\oplus Re\oplus {\cal Z}_1$,
$X_2=X_1\oplus {\cal Z}_2, \ldots,
 X_{\ell_X}=X_1\oplus {\cal Z}_{\ell_Z}$.

 Then, by (\ref{P: (AX, IX)}), we have a
 triangular interlaced weak ditalgebra $({\cal A}^X,I^X)$
with triangular layer $(S,W^X)$ given by
 $$W_0^X=X^*\otimes_RW_0\otimes_RX \hbox{ \  and \
}W_1^X=(X^*\otimes_RW_1\otimes_RX)\oplus P^*.$$
 Since $X$ is complete, by (\ref{P: completez de X para la reduccion}),
 the functor $F^X:({\cal A}^X,I^X)\g\Mod\rightmap{}({\cal
A},I)\g\Mod$ is full and
faithful. As remarked in (\ref{R: preserv proyectividad del bimod W bajo reducciones}), $W^X$ is a projective $S\g S$-bimodule.

 Since $e_{_\Omega} W_0=0$, for $\omega\in \Omega$, we have
  $$e_\omega[W_0^X]= e_\omega[X^*]\otimes_RW_0\otimes_RX=e_\omega k\otimes_RW_0\otimes_R X=
  e_\omega k\otimes e_\omega W_0\otimes_RX=0.$$
 Thus, $e_{_\Omega}(W_0^X)=0$. Moreover, since  $e_{_\Omega}P^*=0$ and $e_{_\Omega}W_1=e_{_\Omega}W_1e_{_\Omega}$,
 we have:
 $$\begin{matrix}
    e_{_\Omega}W_1^X
    &=&
    \bigoplus_{\omega\in \Omega}e_\omega(X^*)\otimes_RW_1\otimes_RX\hfill\\
    &=&
    \bigoplus_{\omega\in \Omega}ke_\omega\otimes_RW_1\otimes_RX\hfill\\
    &=&
    \bigoplus_{\omega,\omega'\in \Omega}ke_\omega\otimes_Re_\omega W_1e_{\omega'}\otimes_RX\hfill\\
    &=&
    \bigoplus_{\omega,\omega'\in \Omega}ke_\omega\otimes_Re_\omega W_1e_{\omega'}\otimes_Rke_{\omega'} \hfill\\
    &=&
    \bigoplus_{\omega,\omega'\in \Omega}e_\omega (X^*)\otimes_R W_1\otimes_RXe_{\omega'}=e_{_\Omega}W_1^Xe_{_\Omega}. \hfill\\
   \end{matrix}$$
  So, $\Omega$ is a multiple source  in ${\cal A}^X$.
 Since $W_0=W_0e_{_\Omega}$, we also have
 $$\begin{matrix}W_0^Xe_{_\Omega}&=&X^*\otimes_RW_0\otimes_RXe_{_\Omega}\hfill\\
&=&
 \bigoplus_{\omega\in \Omega}X^*\otimes_RW_0\otimes_Rke_\omega\hfill\\
 &=&
 \bigoplus_{\omega\in \Omega}X^*\otimes_RW_0e_\omega \otimes_RX=W_0^X.\hfill
 \end{matrix}$$
 So ${\cal A}^X$ is a stellar weak ditalgebra with stars centers $\{e_\omega\}_{\omega\in \Omega}$.

 Let us show that if $I\subseteq W_0$, then $I^X\subseteq W_0^X$.
 More precisely, that  $I^X=A^X(X^*\widehat{\otimes}_RI\widehat{\otimes}_RX)A^X=X^*\widehat{\otimes}_RI\widehat{\otimes}_RX$, where  $X^*\widehat{\otimes}_RI\widehat{\otimes}_RX$ denotes the image of the canonical map $X^*\otimes_RI\otimes_RX\rightmap{}X^*\otimes_RW_0\otimes_RX$.
 Indeed, by assumption we have $W_0=W_0e_{_\Omega}=\oplus_wW_0e_\omega$ and also $e_{_\Omega}W_0=0$, so for $h\in I\subseteq W_0$, we have $h=\sum_\omega he_\omega$. Then, given generators $\nu\otimes h\otimes y\in X^*\widehat{\otimes}_RI\widehat{\otimes}_RX$
 and $\nu'\otimes w_0\otimes y'\in X^*\otimes_RW_0\otimes_RX$, we have
 $(\nu\otimes h\otimes y)(\nu'\otimes w_0\otimes y')=\sum_\omega (\nu\otimes he_\omega\otimes y)(\nu'\otimes w_0\otimes y')\in
    \sum_\omega(X^*\widehat{\otimes}_RI\widehat{\otimes}_Rke_\omega)(e_\omega k\otimes_RW_0\otimes_RX)=0$. Similarly,
    we obtain that
$(\nu'\otimes w_0\otimes y')(\nu\otimes h\otimes y)\in
    \sum_\omega(X^*\otimes_RW_0\otimes_Rke_\omega)(e_\omega k\widehat{\otimes}_RI\widehat{\otimes}_RX)=0$.

Now, take any $M\in ({\cal A},I)\g\Mod$ with
$\dim_k{M}\leq d$.
Let us show that there is some
$N\in ({\cal A}^X,I^X)\g\Mod$ with $F^X(N)\cong M$.
By (\ref{P: (AX, IX)})(4), it will be enough to see
that there is some
left $S$-module  $N_0$ such that $M\cong X\otimes_SN_0$  as $R$-modules. Since
$M$ satisfies that
$\dim_k e'M\leq \dim_k M\leq d$, we know that $M=e'M\oplus eM$, and
$e'M\cong [\bigoplus_{s\in J''}n_sZ_s]\oplus M_c$,
where every $h_j$ acts invertibly on $e_jM_c$. Then, we can consider the left $S$-module
$$N_0=[\bigoplus_{s\in J''}n_skf_s]\bigoplus M_c\bigoplus eM.$$
Hence, $M\cong X\otimes_SN_0$ as $R$-modules, and there is $N\in ({\cal
A}^X,I^X)\g\Mod$ with $F^X(N)\cong M$.
\end{proof}

\begin{lemma}\label{L: transformando I de submodulo en sumando directo de W0}
 Let $({\cal A},I)$ be a triangular interlaced weak ditalgebra with
 triangular layer $(R,W)$. Suppose that ${\cal A}$ is stellar. Moreover, assume that  $I\subseteq W_0$ and
 take any $d\in \hueca{N}$.
   Then,  there is a triangular interlaced weak ditalgebra
   $({\cal A}^X,I^X)$ with
   triangular layer $(S,W^X)$, such that ${\cal A}^X$ is stellar and $I^X$ is a direct summand of
   $W_0^X$ as an $S$-$S$-bimodule.
Moreover, there is  a full and faithful functor
$$F^X:({\cal A}^X,I^X)\g\Mod\rightmap{}({\cal A},I)\g\Mod$$
such that for any $M\in ({\cal A},I)\g\Mod$ with $\dim_k{M}\leq d$, there is
some
$N\in ({\cal A}^X,I^X)\g\Mod$ with $F^X(N)\cong M$.
\end{lemma}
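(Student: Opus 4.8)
The plan is to realize $F^Y$ as a single reduction $F^X$ with a well-chosen admissible module $X$ which simultaneously localizes the rational part of the layer algebra (so that $I$ becomes a direct summand) and ``peels off'' finitely many critical points (so that density below dimension $d$ is kept); the bound $d$ enters only in the choice of $X$. First I would record the shape of $({\cal A},I)$: since ${\cal A}$ is stellar and $I\subseteq W_0=W_0e_{i_0}$ we have $A=R\oplus W_0$ with $W_0^2=0$, so $I=\bigoplus_ie_iIe_{i_0}$ is merely a left $R$-submodule of $W_0$; writing $R=\prod_iR_i$ with each $R_i$ either $k$ or a rational algebra $k[x_i]_{f_i}$, the obstruction to $I$ being an $S$-$S$-bimodule direct summand of $W_0$ sits only in the rational components. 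For each rational $R_i$ the module $e_iW_0e_{i_0}$ is a finitely generated $R_i$-module, so (\ref{L: lema de localizados y sumandos directos}) applied to the filtration $0\subseteq e_iIe_{i_0}\subseteq e_iW_0e_{i_0}$ produces $h_i\in R_i$ for which $(R_i)_{h_i}\otimes_{R_i}(e_iIe_{i_0})$ is a direct summand of $(R_i)_{h_i}\otimes_{R_i}(e_iW_0e_{i_0})$; let $\Lambda_i\subseteq k$ be the finite set of roots of $h_i$.

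Next I would take $W_0'=0$ (so $B=T_R(W_0')=R$ and $\delta(W_0')=0$ trivially) and set $X=\bigoplus_iX_i$ with $X_i=R_i$ when $R_i=k$, and
$$X_i=(R_i)_{h_i}\,\oplus\,\bigoplus_{\lambda\in\Lambda_i}\ \bigoplus_{j=1}^{d}\,R_i/(x_i-\lambda)^j$$
when $R_i$ is rational. I would check that $X$ is a complete triangular admissible $B$-module: each $(R_i)_{h_i}$ is the restriction of the regular module along the epimorphism $R\rightmap{}(R_i)_{h_i}$, hence complete by (\ref{P: completez de X para la reduccion})(2) (and it is admissible even though not finitely generated over $B$, since admissibility only requires finite generation over $\End_B(X)^{op}$, and $\End_{R_i}((R_i)_{h_i})^{op}=(R_i)_{h_i}$); the finite direct sum of the pairwise non-isomorphic finite-dimensional indecomposables $R_i/(x_i-\lambda)^j$ is complete by (\ref{P: completez de X para la reduccion})(1); and since $h_i$ acts invertibly on $(R_i)_{h_i}$ but nilpotently on each $R_i/(x_i-\lambda)^j$ (as $\lambda$ is a root of $h_i$), these two pieces have disjoint supports, so they are Hom-orthogonal and (\ref{P: completez de X para la reduccion})(3) applies. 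Then (\ref{P: reduction by a B-module}), (\ref{P: (AX, IX)}) and (\ref{P: completez de X para la reduccion}) yield a triangular interlaced weak ditalgebra $({\cal A}^Y,I^Y):=({\cal A}^X,I^X)$, with minimal layer $(S,W^Y)$ where $S=\prod_iS_i$, $S_i=(R_i)_{h_i}\times k^{d|\Lambda_i|}$ for rational $R_i$ and $S_i=k$ otherwise, together with a full and faithful functor $F^Y:=F^X:({\cal A}^Y,I^Y)\g\Mod\rightmap{}({\cal A},I)\g\Mod$. Since $X_{i_0}=k$ and $e_{i_0}W=0$, the source point $i_0$ is left untouched, so $e_{i_0}W^Y=0$ and $W_0^Y=W_0^Ye_{i_0}$, and $({\cal A}^Y,I^Y)$ is again stellar.

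Inside $W_0^Y=X^*\otimes_RW_0\otimes_RX$ the ideal $I^Y$ generated by the elements $\sigma_{\nu,x}(h)$, $h\in I$, equals $X^*\otimes_RI\otimes_RX$ (all these elements lie in the square-zero ideal $W_0^Y$, since $W_0^Y=W_0^Ye_{i_0}$ and $e_{i_0}W_0^Y=0$). The component of $W_0^Y$ along the factor $(R_i)_{h_i}$ of $S$ (the right index being always $i_0$, with $Se_{i_0}=k$) is $(R_i)_{h_i}\otimes_{R_i}(e_iW_0e_{i_0})$ and that of $I^Y$ is $(R_i)_{h_i}\otimes_{R_i}(e_iIe_{i_0})$, a direct summand by the choice of $h_i$; along each $k$-factor $I^Y$ is a subspace of a $k$-space, hence split; assembling these splittings shows $I^Y$ is an $S$-$S$-bimodule direct summand of $W_0^Y$. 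For density, given $M$ with $\dim_kM\le d$, I would decompose each $e_iM$ (over rational $R_i$) into its part supported away from $\Lambda_i$ and its part supported on $\Lambda_i$; the latter is a direct sum of modules $R_i/(x_i-\lambda)^j$ with $\lambda\in\Lambda_i$ and $1\le j\le d$, the bound $d$ excluding Jordan blocks of larger size. Hence $M$ is isomorphic as a $B$-module to some $X\otimes_SN$, and by (\ref{P: (AX, IX)})(4) we get $M\cong F^X(\overline N)=F^Y(\overline N)$ for some $\overline N\in({\cal A}^Y,I^Y)\g\Mod$.

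The part I expect to be the real work is essentially organizational: verifying that the slightly non-standard $X$ (with its non-finitely-generated summands $(R_i)_{h_i}$) is a complete triangular admissible $B$-module in the precise sense required — this is where (\ref{P: completez de X para la reduccion}) and the triangularity conventions of \cite{BSZ}(14.6) must be invoked with care — and then the bookkeeping identifying $I^Y=X^*\otimes_RI\otimes_RX$ together with its complement, and checking that the stellar, triangular and interlaced structures survive $F^X$. All of this follows the patterns of \cite{BSZ} (Chapters~12--17) with only mild adaptations, the only genuinely new input being the use of (\ref{L: lema de localizados y sumandos directos}) to choose the $h_i$ and, through them, the torsion summands of $X$.
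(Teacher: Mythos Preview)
Your proposal is correct and follows essentially the same route as the paper's proof. Both take $B=R$ (i.e.\ $W_0'=0$), localize each rational factor $R_i$ at an element $h_i$ chosen via (\ref{L: lema de localizados y sumandos directos}) so that $e_iI$ becomes a direct summand of $e_iW_0$ after localization, adjoin enough finite-dimensional torsion indecomposables supported at the zeros of the $h_i$ to cover all modules of dimension $\leq d$, and then apply the reduction machinery of (\ref{P: (AX, IX)}) and (\ref{P: completez de X para la reduccion}). The only cosmetic difference is that the paper packages the torsion summands as ``a complete list of indecomposables of $C=\prod_{j}R_j/\langle h_j^d\rangle$'' rather than listing them explicitly as $R_i/(x_i-\lambda)^j$ for $\lambda\in\Lambda_i$ and $1\le j\le d$; these are the same modules. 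Your breakdown of the completeness argument via parts (1), (2), (3) of (\ref{P: completez de X para la reduccion}) is more explicit than the paper, which simply asserts completeness. One small point of care: when you write $I^Y=X^*\otimes_RI\otimes_RX$, the paper is careful to take the \emph{image} of $X^*\otimes_RI\otimes_RX$ in $X^*\otimes_RW_0\otimes_RX$ (denoted there with $\widehat{\otimes}$), since the natural map need not be injective; this does not affect the direct-summand conclusion but is worth noting.
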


\begin{proof} We have $R=\prod_{i\in {\cal P}}R_ie_i$, where each $Re_i$ is either
isomorphic to $k$ or to some rational algebra.
We can assume that ${\cal P}=J\uplus J'$ where $Re_i=ke_i$, for $i\in J$, and
$Re_j=R_je_j$ with $R_j=k[x]_{g_j}$, for $j\in J'$.
  Then we get $R=Re\times Re'$, where $e=\sum_{i\in J}e_i$
 and $e'=\sum_{j\in J'}e_j$. Notice that, if $\{e_\omega\}_{\omega\in \Omega}$ are the centers of the stars of ${\cal A}$, we have   $\Omega\subseteq J$.

By assumption $I\subseteq W_0$.
 For each $j\in J'$ and $\omega\in \Omega$, the bimodule
 $e_jW_0e_\omega=e_jW_0e_{_\Omega}e_\omega$ is an
$R_j=R_j\otimes_kk$-module.
 From (\ref{R: podemos aditivizar finitas filtraciones}), there is some
$h_j\in R_j$ such that, for all $\omega\in \Omega$, we have that
$(R_j)_{h_j}\otimes_{R_j}e_jW_0e_\omega$
is a free $(R_j)_{h_j}$-module and  $(R_j)_{h_j}\otimes_{R_j}e_jIe_\omega$ is a
direct summand of
$(R_j)_{h_j}\otimes_{R_j}e_jW_0e_\omega$. So we  obtain
  basis $\hueca{B}_I(j,\omega)\subseteq \hueca{B}_0(j,\omega)$ of the free
$(R_j)_{h_j}$-modules
  $(R_j)_{h_j}\otimes_{R_j}e_jIe_\omega$ and
$(R_j)_{h_j}\otimes_{R_j}e_jW_0e_\omega$.

Now, we can apply (\ref{L: unravelling multiple}) to the parameters $d$ and $\{h_j\}_{j\in J'}$ to obtain the weak interlaced ditalgebra $({\cal A}^X,I^X)$ with layer $(S,W^X)$ as specified in that statement, and we have a full and faithful functor with the wanted properties.

So we only have to show that $I^X$ is a direct summand of $W_0^X$ as $S$-$S$-bimodules. For this, we keep the notation of the proof of (\ref{L: unravelling multiple}). Since $I\subseteq W_0$, we already know that
$I^X\subseteq W_0^X$,
$e_{_\Omega} W^X_0=0$, $W^X_0=W^X_0e_{_\Omega}$, and $e_{_\Omega}W_1^X=e_{_\Omega}W_1^Xe_{_\Omega}$.
 Thus, we get $I^X=I^Xe_{_\Omega}$.
  For $j\in J'$ and $\omega\in \Omega$, we have
 $$e_j[W_0^X]e_\omega=e_j[X^*]\otimes_RW_0\otimes_RXe_\omega=
e_j(R_j)_{h_j}\otimes_RW_0\otimes_Rke_\omega\cong
    e_j(R_j)_{h_j}\otimes_RW_0e_\omega.$$
 Since each $e_j(R_j)_{h_j}$-module filtration
 $e_j[I^X]e_\omega\subseteq e_j[W_0^X]e_\omega$ is isomorphic to the additive filtration
 $e_j(R_j)_{h_j}\otimes_RIe_\omega\subseteq e_j(R_j)_{h_j}\otimes_RW_0e_\omega$ of
 free modules described before, we
can find  bases $\hueca{B}_{I^X}(j,\omega)\subseteq \hueca{B}^X_0(j,\omega)$ of
$e_j[I^X]e_\omega$ and $e_j[W_0^X]e_\omega$. If $j\in J$,
we choose any vector space basis $\hueca{B}_{I^X}(j,\omega)\subseteq \hueca{B}^X(j,\omega)$
of $e_jI^Xe_\omega$ and $e_jW_0^Xe_\omega$ respectively;
similarly, we choose any vector
space basis $\hueca{B}_{I^X}(f_s,\omega)\subseteq \hueca{B}^X(f_s,\omega)$
of $f_sI^Xe_\omega$ and $f_sW_0^Xe_\omega$, respectively, for any $s\in J''$.
Thus, $I^Xe_\omega$  is a direct summand of
$W_0^Xe_\omega$ as $S$-$S$-bimodules, because they are respectively freely generated by the sets
$$[\bigcup_{j\in {\cal P}}\hueca{B}_{I^X}(j,\omega)]\bigcup[\bigcup_{s\in J''}
\hueca{B}_{I^X}(f_s,\omega)]
 \subseteq
[\bigcup_{j\in {\cal P}}\hueca{B}^X_0(j,\omega)]\bigcup[\bigcup_{s\in J''} \hueca{B}^X_0(f_s,\omega)].$$
It follows that $I^X=\bigoplus_{\omega\in\Omega}I^Xe_\omega$  is a direct summand of
$W_0^X=\bigoplus_{\omega\in \Omega}W_0^Xe_\omega$ as $S$-$S$-bimodules, and we are done.
\end{proof}

\begin{proposition}\label{P: factoring out sumandos directos de W0}
 Let $({\cal A},I)$ be triangular interlaced weak ditalgebra with
triangular layer $(R,W)$.
  Moreover, assume that ${\cal A}$ is stellar and that there is a
 decomposition of $R$-$R$-bimodules $W_0=W'_0\oplus W''_0$,
 where $W'_0$ generates the ideal $I$ of $A$ and
 $\delta(W'_0)\subseteq IV+VI$. Set $W^q_0:=W''_0$,
 $W^q_1:=W_1$, $W^q:=W^q_0\oplus W^q_1$, and $T^q:=T_R(W^q)$.
 Then, there is a derivation $\delta^q$ on $T^q$ such that ${\cal
A}^q:=(T^q,\delta^q)$ is a weak ditalgebra and $({\cal A}^q,0)$ is a triangular
interlaced weak ditalgebra with ${\cal A}^q$ stellar and
 triangular layer $(R,W^q)$. Moreover, there is  a morphism
 $\phi:({\cal A},I)\rightmap{}({\cal
A}^q,0)$ of interlaced weak ditalgebras which induces an equivalence of
categories
 $$F^q=F_\phi:({\cal A}^q,0)\g\Mod\rightmap{}({\cal A},I)\g\Mod.$$
\end{proposition}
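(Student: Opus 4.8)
The plan is to obtain this proposition as a specialization of Lemma~\ref{L: reduction by a quotient} (factoring out a direct summand of $W_0$): I will check that its hypotheses hold here, read off its conclusion while observing that the leftover ideal collapses to $0$, and then verify separately that the resulting ditalgebra is again stellar.

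First I would match the hypotheses of Lemma~\ref{L: reduction by a quotient}. Since $W'_0$ generates the ideal $I$ of $A$, certainly $W'_0\subseteq I$, which is the first hypothesis. Moreover, as $A=T_R(W_0)$ is unital, $I$ coincides with the graded ideal $AW'_0A$; and since $V=[T]_1$ is an $A$-$A$-bimodule we have $AV=V=VA$, so $IV=AW'_0V$ and $VI=VW'_0A$. Hence the assumed inclusion $\delta(W'_0)\subseteq IV+VI$ is exactly the condition $\delta(W'_0)\subseteq AW'_0V+VW'_0A$ required by that lemma.

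Then I would invoke Lemma~\ref{L: reduction by a quotient} directly. It yields a differential $\delta^q$ on $T^q=T_R(W^q)$ making ${\cal A}^q=(T^q,\delta^q)$ a weak ditalgebra with triangular layer $(R,W^q)$, the ideal $I^q=I/I'$ of $A^q=T_R(W''_0)\cong A/I'$ (with $I'=AW'_0A$), a morphism $\phi:({\cal A},I)\rightmap{}({\cal A}^q,I^q)$ of interlaced weak ditalgebras with $\phi(I)=I^q$, and an equivalence of categories $F^q=F_\phi:({\cal A}^q,I^q)\g\Mod\rightmap{}({\cal A},I)\g\Mod$. By hypothesis $I'=AW'_0A=I$, so $I^q=I/I'=0$; thus $({\cal A}^q,I^q)=({\cal A}^q,0)$. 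Being a weak ditalgebra interlaced with the zero ideal, ${\cal A}^q$ moreover has $(\delta^q)^2=0$ by Definition~\ref{D: weak ditalg interlaced an ideal}, so it is a ditalgebra. This already provides the differential, the morphism $\phi$, and the equivalence $F^q$ claimed in the statement.

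Finally I would check that $({\cal A}^q,0)$ is stellar in the sense of Definition~\ref{D: stellar weak ditalgebra}. The minimal algebra $R$ is unchanged. Let $e_{i_0}$ be the source point of ${\cal A}$, so $e_{i_0}W=0$, $Re_{i_0}=ke_{i_0}$, and $W_0=W_0e_{i_0}$. Since $W^q_0=W''_0\subseteq W_0$ and $W^q_1=W_1$, we have $W^q\subseteq W$, hence $e_{i_0}W^q=0$; together with $Re_{i_0}=ke_{i_0}$ this makes $e_{i_0}$ a source point of ${\cal A}^q$. As $e_{i_0}$ acts as a right identity on $W_0$, it does so on the summand $W''_0$, so $W^q_0=W''_0=W''_0e_{i_0}=W^q_0e_{i_0}$; thus ${\cal A}^q$ is a stellar weak ditalgebra with the same star center $e_{i_0}$, and $({\cal A}^q,0)$ is a stellar interlaced weak ditalgebra. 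The only place needing a moment's care is the identification of the two forms of the hypothesis on $\delta(W'_0)$ carried out in the first step; everything after that is a direct citation of Lemma~\ref{L: reduction by a quotient} plus routine checks, so I do not expect a genuine obstacle.
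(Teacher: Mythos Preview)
Your proposal is correct and follows exactly the paper's approach: the paper's proof consists of the single line ``It follows from (\ref{L: reduction by a quotient}) with $I=I'$,'' and you have simply unpacked this by verifying the hypotheses of that lemma, observing that $I'=AW'_0A=I$ forces $I^q=0$, and checking the stellar condition that the paper leaves implicit. Your identification $IV=AW'_0V$ and $VI=VW'_0A$ via $I=AW'_0A$ and $AV=V=VA$ is the right way to match the hypothesis $\delta(W'_0)\subseteq IV+VI$ with the form required in Lemma~\ref{L: reduction by a quotient}.
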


\begin{proof}
It follows from (\ref{R: preserv proyectividad del bimod W bajo reducciones}), and form  (\ref{L: reduction by a
quotient}) with $I=I'$.
\end{proof}

 Notice that the following statement is about ditalgebras, not weak
ditalgebras, so we can use freely the terminology and results of \cite{BSZ}. Anyway, we extend the terminology to this context.
A \emph{directed element} of a given $R$-$R$-bimodule, where $R$ is a minimal algebra, is any non-zero element $w$ such that $w=e_jwe_i$, for some  primitive idempotents $e_i,e_j$ of $R$.

\begin{definition}\label{D: seminested weak ditalgebra}
Let ${\cal A}$ be a layered weak ditalgebra. Then,
\begin{enumerate}
 \item A layer $(R,W)$ of ${\cal A}$ is called \emph{seminested} iff $R$ is a minimal $k$-algebra, the layer $(R,W)$ is triangular,
the $R$-$R$-bimodule $W_1$ is freely generated by a finite directed subset $\hueca{B}_1$ of
$W_1$,
 the $R$-$R$-bimodule filtration ${\cal F}(W_0)=\{W_0^j\}_{j=0}^{\ell_0}$ of $W_0$ is
freely generated by a set filtration ${\cal
F}(\hueca{B}_0)=\{\hueca{B}_0^j\}_{j=0}^{\ell_0}$ of some finite directed subset
$\hueca{B}_0$ of $W_0$. See \cite{BSZ}(23.2);
\item A layered weak ditalgebra ${\cal A}$ is called \emph{seminested} iff
its layer is seminested. A weak seminested ditalgebra ${\cal A}$ is called \emph{minimal} iff
its layer $(R,W)$ is seminested with $W_0=0$.
\end{enumerate}
\end{definition}

\begin{lemma}\label{L: sobre ditalgebras estelares triangulares}
 Let ${\cal A}$ be a stellar ditalgebra with triangular layer $(R,W)$ and
such that $W_1$ is freely generated by some finite directed subset. Then, given
any $d\in \hueca{N}$,
there is a stellar seminested ditalgebra ${\cal A}'$ and a full and faithful
functor
$$\begin{matrix}F: {\cal A}'\g\Mod&\rightmap{}&{\cal A}\g\Mod\end{matrix}$$
such that for any $M\in {\cal A}\g\Mod$ with $\dim_k{M}\leq d$, there is some
$N\in {\cal A}'\g\Mod$ with $F(N)\cong M$.
\end{lemma}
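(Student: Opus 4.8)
The plan is to obtain ${\cal A}'$ from a single reduction with a carefully chosen admissible module, by essentially rerunning the construction in the proof of (\ref{L: transformando I de submodulo en sumando directo de W0}) for the trivial ideal $I=0$, and then checking that the ditalgebra so produced is already seminested.

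Since ${\cal A}$ is a stellar ditalgebra with triangular layer $(R,W)$, the pair $({\cal A},0)$ is a stellar triangular interlaced weak ditalgebra with $0\subseteq W_0$, so the argument of (\ref{L: transformando I de submodulo en sumando directo de W0}) applies to $({\cal A},0)$ and the given $d$. Write $R=\prod_{i\in[0,n]}Re_i$ with $[0,n]=J\uplus J'$ as there, $0\in J$. The one point where I would be more careful than in that proof is the use of (\ref{L: lema de localizados y sumandos directos}): for each rational index $j\in J'$ I would apply it to the restriction $0=e_jW_0^0\subseteq\cdots\subseteq e_jW_0^r=e_jW_0$ of the triangular filtration $\{W_0^i\}$ of $W_0$ (rather than to the length-two filtration $0\subseteq e_jW_0$), obtaining $h_j(x)\in R_j$ such that $(R_j)_{h_j(x)}\otimes_{R_j}e_jW_0$ is free and the localized triangular filtration is additive. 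With the module $Y$ built from these $h_j(x)$, from the finitely many indecomposable modules over $C=\prod_{j\in J'}e_jR_j/\langle h_j(x)^d\rangle $, and from $Re$, exactly as in that proof, (\ref{L: transformando I de submodulo en sumando directo de W0}), through (\ref{P: (AX, IX)}) and (\ref{P: completez de X para la reduccion}), produces a stellar triangular interlaced weak ditalgebra $({\cal A}^Y,I^Y)$ with triangular layer $(S,W^Y)$, where $S=[\prod_skf_s]\times[\prod_{j\in J'}e_j(R_j)_{h_j(x)}]\times Re$ is a minimal algebra, together with a full and faithful functor $F^Y$ such that every $M\in{\cal A}\g\Mod$ with $\dim_kM\le d$ is isomorphic to $F^Y(N)$ for some $N$. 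Since $I=0$ we have $I^Y=0$, so by (\ref{P: (AX, IX)}) and (\ref{R: balanced vs interlaced}) $(\delta^Y)^2=0$ and ${\cal A}^Y$ is a stellar ditalgebra, while by (\ref{P: cat de mods de (wdit,I) }) the functor reads $F^Y:{\cal A}^Y\g\Mod\rightmap{}{\cal A}\g\Mod$. I put ${\cal A}':={\cal A}^Y$ and $F:=F^Y$.

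It remains to check that $(S,W^Y)$ is seminested. The algebra $S$ is minimal and $(S,W^Y)$ is triangular by (\ref{P: (AX, IX)}); moreover $e_0[W_0^Y]=0=e_0[W_1^Y]$ and $W_0^Y=W_0^Ye_0$, so $e_0$ is a source point and ${\cal A}'$ is stellar, as already recorded in the proof of (\ref{L: transformando I de submodulo en sumando directo de W0}). For $W_0^Y=Y^*\otimes_RW_0\otimes_RY$: over each rational factor $e_j[W_0^Y]\cong e_j(R_j)_{h_j(x)}\otimes_RW_0$ is free and its triangular filtration is additive, by the choice of $h_j(x)$; over the $k$-factors $e_i$ ($i\in J$) and $f_s$ the relevant pieces are finite dimensional. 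Choosing bases of these pieces adapted to the triangular filtration, their union is a finite subset $\hueca{B}_0^Y\subseteq W_0^Y$ freely generating it, with the triangular filtration freely generated by the induced set filtration of $\hueca{B}_0^Y$. For $W_1^Y=(Y^*\otimes_RW_1\otimes_RY)\oplus P^*$: since $W_1$ is freely generated by the finite set $\hueca{B}_1$, writing $\beta=e_q\beta e_p$ for $\beta\in\hueca{B}_1$ and taking finite free bases $\{\mu_l\}$ of the left $S$-module $Y^*e_q$ and $\{y_m\}$ of the right $S$-module $e_pY$ (both finitely generated projective, hence free, over the minimal algebra $S$), the finitely many elements $\mu_l\otimes\beta\otimes y_m\in Y^*\otimes_RW_1\otimes_RY$ freely generate $Y^*\otimes_RW_1\otimes_RY$; adjoining any $k$-basis of the finite dimensional space $P^*$ with elements in the blocks $f_{s'}P^*f_s$ yields a finite subset of $W_1^Y$ freely generating it as an $S$-$S$-bimodule. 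Hence $(S,W^Y)$ is seminested, ${\cal A}'$ is a stellar seminested ditalgebra, and $F$ has the required covering property by (\ref{L: transformando I de submodulo en sumando directo de W0}).

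The main obstacle is the step just flagged: arranging that a single choice of localization parameters $h_j(x)$ simultaneously frees $W_0$ over the rational factors and additivizes its whole triangular filtration there, so that after the reduction the triangular filtration of $W_0^Y$ is genuinely generated by a set filtration of the free generating set $\hueca{B}_0^Y$. This amounts to combining (\ref{L: lema de localizados y sumandos directos}) with the explicit form of the triangular filtration of $W_0^X$ coming from the construction of ${\cal A}^X$ in (\ref{P: reduction by a B-module}) (that is, from \cite{BSZ}\S14); everything else is a direct appeal to (\ref{L: transformando I de submodulo en sumando directo de W0}) together with routine bookkeeping.
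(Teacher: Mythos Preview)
Your proposal is correct and follows essentially the same route as the paper: a single reduction by the admissible $R$-module built from localizations $(R_j)_{h_j(x)}$, the indecomposables over $C$, and $Re$, with (\ref{L: lema de localizados y sumandos directos}) applied to the full triangular filtration of $e_jW_0$ rather than to a length-two filtration. The paper carries out precisely the step you flag as the main obstacle, making explicit (via a commutative square of canonical maps) that for $j\in J'$ the term $e_j[W_0^X]^m$ of the filtration from \cite{BSZ}(14.10) is isomorphic to $e_j(R_j)_{h_j(x)}\otimes_R W_0^{s(m)}$ for a suitable $s(m)$, so that additivity transfers; your identification of this as the crux is accurate.
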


\begin{proof} We proceed as in the proof of
(\ref{L: transformando I de submodulo en sumando directo de W0}).
We have $R=\prod_{i\in {\cal P}}R_ie_i$, where each $Re_i$ is either
isomorphic to $k$ or to some rational algebra.
We can assume that ${\cal P}=J\uplus J'$ where $Re_j=ke_j$, for $j\in J$, and
$Re_j=R_je_j$ with $R_j=k[x]_{g_j}$, for $j\in J'$. Thus, if $\{e_\omega\}_{\omega\in \Omega}$ denote
the centers of the stars, $\Omega\subseteq  J$.
  Then we get $R=Re\times Re'$, where $e=\sum_{j\in J}e_j$
 and $e'=\sum_{j\in J'}e_j$.

 From the triangularity of $(R,W)$, we have an $R$-$R$-bimodule filtration
 $$0=W_0^0\subseteq W_0^1\subseteq\cdots\subseteq W_0^\ell=W_0.$$
 For each $j\in J'$ and $\omega\in \Omega$, the bimodule $e_jW_0e_\omega$ is an
$R_j=R_j\otimes_kk$-module.
 Then, by  (\ref{R: podemos aditivizar finitas filtraciones}), we can find
$h_j\in R_j$ such that, for all $\omega\in \Omega$, the
 $(R_j)_{h_j}$-module filtration
 $$0=(R_j)_{h_j}\otimes_RW_0^0e_\omega\subseteq (R_j)_{h_j}\otimes_RW_0^1e_\omega
 \subseteq\cdots\subseteq (R_j)_{h_j}\otimes_RW_0^\ell e_\omega
 =(R_j)_{h_j}\otimes_RW_0e_\omega$$
  of the free $(R_j)_{h_j}$-module
$(R_j)_{h_j}\otimes_{R_j}e_jW_0e_\omega$
is additive. Then, we obtain  set
filtrations
 $$\emptyset=\hueca{B}_0^0(j,\omega)\subseteq \hueca{B}_0^1(j,\omega)\subseteq\cdots\subseteq
\hueca{B}_0^\ell(j,\omega)=\hueca{B}_0(j,\omega)$$
 of basis $\hueca{B}_0^t(j,\omega)$ of the free $(R_j)_{h_j}$-modules
$(R_j)_{h_j}\otimes_{R_j}e_jW_0^te_\omega$.

Fix $d\in \hueca{N}$ and  apply (\ref{L: unravelling multiple}) to the weak ditalgebra $({\cal A},0)$ and the family $\{h_j\}_{j\in J'}$ to obtain the weak interlaced ditalgebra $({\cal A}^X,I^X)$ with layer $(S,W^X)$ as specified in that statement, and we have the full and faithful functor with the wanted properties.
Moreover, we know that
$e_{_\Omega}W_0^X=0$, $e_{_\Omega}W^X_1=e_{_\Omega}W^X_1e_{_\Omega}$, and $W_0^X=W_0^Xe_{_\Omega}$.

With the notation used in (\ref{L: unravelling multiple}), we
 we only have to show that the layer $(S,W^X)$ is seminested.
 In the proof of  \cite{BSZ}(14.10), the triangular filtrations of
$W_0^X$ and $W_1^X$ are exhibited. The typical
term $[W_0^X]^m$ of the triangular filtration of the $S\g S$-bimodule $W_0^X$,
defined for $m\in [0,2\ell_X(\ell_0+1)]$, is the sum
 $$[W_0^X]^m=\sum_{r+2\ell_Xs+t\leq m}[X^*]_r\widehat{\otimes}_RW_0^s
 \widehat{\otimes}_RX_t,$$
 where each summand $[X^*]_r\widehat{\otimes}_RW_0^s\widehat{\otimes}_RX_t$
denotes the image of the canonical map
 $[X^*]_r\otimes_RW_0^s\otimes_RX_t\rightmap{}X^*\otimes_RW_0\otimes_RX$.
  Here, $X_t$ denotes the term $t$ of the filtration of $X$ specified in the proof of (\ref{L: unravelling multiple}), and $[X^*]_r$ denotes the term $r$ of the corresponding dual filtration of $X^*$, see \cite{BSZ}\S14.
 For each $j\in J'$ and $\omega\in \Omega$, we have  isomorphisms of
 $(R_j)_{h_j}$-modules
$$e_jW_0^Xe_\omega=e_jX^*\otimes_RW_0\otimes_R Xe_\omega
=e_j(R_j)_{h_j}\otimes_RW_0\otimes_Rke_\omega\cong
e_j(R_j)_{h_j}\otimes_RW_0e_\omega$$
For $r\not=\ell_X$, we have $e_j[X^*]_r=0$; and $e_j[X^*]_{\ell_X}=e_jX^*=e_j(R_j)_{h_j}$, so
 $$e_jX^*\otimes_RW^s_0\otimes_RX_te_\omega
=e_j(R_j)_{h_j}\otimes_RW^s_0\otimes_Rke_\omega\cong e_j(R_j)_{h_j}
\otimes_RW_0^se_\omega,$$
for each $s,t$ with $\ell_X+2\ell_Xs+t\leq m$. They determine the following
commutative squares
$$\begin{matrix}
   \bigoplus_{s,t}e_jX^*\otimes_RW_0^s\otimes_RX_te_\omega&\rightmap{\sigma_j^m}&
   \bigoplus_{s,t}e_j(R_j)_{h_j}\otimes_RW_0^se_\omega\\
   \shortlmapdown{}&&\shortlmapdown{}\\
   e_jW_0^Xe_\omega&\rightmap{\sigma_j}&e_j(R_j)_{h_j}\otimes_R W_0e_\omega\\
  \end{matrix}$$
where $\sigma_j^m$ and $\sigma_j$ are isomorphisms and the vertical arrows are
the canonical maps. Then, the restriction of $\sigma_j$ to the image of
the canonical maps gives an isomorphism of free $(R_j)_{h_j}$-modules,
so an isomorphism of freely generated  $S$-$S$-bimodules,
$$\overline{\sigma}_j^m:e_j[W_0^X]^me_\omega\rightmap{}e_j(R_j)_{h_j}\otimes_R
W_0^{s(m)}e_\omega,$$
 for a suitable $s(m)$ independent of $\omega$. Clearly, $s(m)\leq s(m+1)$.
This implies that from the additivity of the filtration of the
$S$-$S$-bimodule $e_j(R_j)_{h_j}\otimes_RW_0e_\omega$ that we constructed before
we can derive the additivity of the filtration
$$0=e_j[W_0^X]^0e_\omega\subseteq e_j[W_0^X]^1e_\omega\subseteq\cdots
\subseteq e_j[W_0^X]^me_\omega\subseteq\cdots\subseteq e_jW_0^Xe_\omega.$$
 Since each one of these $S$-$S$-bimodules $e_j[W_0^X]^me_\omega$ is freely generated,
 we can find a filtration of bases, where each $(\hueca{B}_0^X)(j,\omega)^m$ freely generates
   the $S$-$S$-bimodule $e_j[W_0^X]^me_\omega$, as follows:
 $$\emptyset=(\hueca{B}^X_0)(j,\omega)^0\subseteq(\hueca{B}^X_0)(j,\omega)^1\subseteq\cdots
\subseteq(\hueca{B}^X_0)(j,\omega)^m\subseteq\cdots\subseteq
(\hueca{B}^X_0)(j,\omega).$$
 These filtrations can be completed to a filtration of sets,
 where each $(\hueca{B}_0^X)(\omega)^m$ freely generates the $S$-$S$-bimodule
 $[W_0^X]^me_\omega$, as follows
  $$\emptyset=(\hueca{B}^X_0)(\omega)^0\subseteq(\hueca{B}^X_0)(\omega)^1\subseteq\cdots
\subseteq(\hueca{B}^X_0)(\omega)^m\subseteq\cdots
\subseteq\hueca{B}^X_0(\omega),$$
 where $\hueca{B}_0^X(\omega)$ freely generates the $S$-$S$-bimodule $W_0^Xe_\omega$.
 This completion is done by suitable choices
  of vector space basis of each $e_j[W_0^X]^me_\omega$, for $j\in J$,
  and of each $f_s[W_0^X]^me_\omega$, for $s\in J''$, as in the proof of
 (\ref{L: transformando I de submodulo en sumando directo de W0}). Taking the union over $\omega\in \Omega$, we get a set filtration ${\cal F}(\hueca{B}_0^X)$ of a directed subset $\hueca{B}_0^X$ of $W_0^X$ which freely generates it.

 It follows that ${\cal A}^X$ is a seminested ditalgebra. Indeed,  we already
know that $W_1^X$ is freely generated because $W_1$ is so.
\end{proof}

\begin{lemma}\label{L: cambio a ditalgebras estelares triangulares con W1 free}
 Let ${\cal A}$ be a stellar triangular  ditalgebra.
 Then, given any $d\in \hueca{N}$,
there is a stellar ditalgebra ${\cal A}'$ with triangular layer $(R',W')$,
such that $W'_1$ is freely generated by some finite directed subset,
 and a full and faithful functor
$$\begin{matrix}F: {\cal A}'\g\Mod&\rightmap{}&{\cal A}\g\Mod\end{matrix}$$
such that for any $M\in {\cal A}\g\Mod$ with $\dim_k{M}\leq d$, there is some
$N\in {\cal A}'\g\Mod$ with $F(N)\cong M$.
\end{lemma}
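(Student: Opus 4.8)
The plan is to imitate the proof of Lemma~\ref{L: sobre ditalgebras estelares triangulares} almost verbatim, the single change being that the localization furnished by Lemma~\ref{L: lema de localizados y sumandos directos} is now applied to the triangular filtration of $W_1$ (in addition to that of $W_0$), instead of only to that of $W_0$. Since ${\cal A}$ is a genuine ditalgebra, we may use the reduction calculus of \cite{BSZ} without modification: \cite{BSZ}(14.10) for the construction of ${\cal A}^X$ and the triangularity of its layer, and \cite{BSZ}(13.5) for the fullness and faithfulness of $F^X$ when $X$ is complete. So the whole statement will be obtained from a single reduction by a suitably chosen triangular admissible module.

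First I would write $R=\prod_{i\in[0,n]}R_ie_i$ with $[0,n]=J\uplus J'$, where $R_ie_i\cong k$ for $i\in J$ (including the star centre $0$) and $R_je_j\cong k[x]_{g_j(x)}$ for $j\in J'$, and put $e=\sum_{i\in J}e_i$, $e'=\sum_{j\in J'}e_j$. Using the triangularity of $(R,W)$ I would fix the finite $R$-$R$-bimodule filtrations $\{W_0^t\}_t$ and $\{W_1^t\}_t$, and, for each $j\in J'$, apply Lemma~\ref{L: lema de localizados y sumandos directos} to the relevant $R_j$-modules appearing as blocks of these filtrations; choosing $h_j(x)\in R_j$ to be a common multiple of the finitely many denominators so produced, all these localized modules become free over $(R_j)_{h_j(x)}$, with additive induced filtrations, and I would fix bases adapted to them. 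Then, for the given $d$, I would form $C=\prod_{j\in J'}e_jR_j/\langle h_j(x)^d\rangle$, with its finitely many indecomposable finite-dimensional modules $Z_1,\dots,Z_u$, set $Z=\bigoplus_sZ_s$ and
$$X=Z\oplus\Big[\bigoplus_{j\in J'}e_j(R_j)_{h_j(x)}\Big]\oplus Re,$$
which, exactly as in Lemma~\ref{L: sobre ditalgebras estelares triangulares}, is a complete triangular admissible $R$-module (taking $W'_0=0$, $B=R$), with $\End_R(X)^{op}=S\oplus P$, $S=\big[\prod_skf_s\big]\times\big[\prod_{j\in J'}e_j(R_j)_{h_j(x)}\big]\times Re$ and $P=\rad\End_R(Z)^{op}$. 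By \cite{BSZ}(14.10) this yields a ditalgebra ${\cal A}^X=(T^X,\delta^X)$ with triangular layer $(S,W^X)$, where $W_0^X=X^*\otimes_RW_0\otimes_RX$ and $W_1^X=(X^*\otimes_RW_1\otimes_RX)\oplus P^*$; by \cite{BSZ}(13.5) the functor $F:=F^X:{\cal A}^X\g\Mod\rightmap{}{\cal A}\g\Mod$ is full and faithful; and the covering statement for modules of dimension $\leq d$ follows as in Lemma~\ref{L: sobre ditalgebras estelares triangulares} (write $e'M\cong[\bigoplus_sn_sZ_s]\oplus M_c$ with every $h_j(x)$ invertible on $M_c$, and take $N_0=[\bigoplus_sn_skf_s]\oplus M_c\oplus eM$). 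That ${\cal A}^X$ is stellar with the same source point $e_0$ is checked exactly as there: $e_0W=0$ forces $e_0W^X=0$ (the $X^*\otimes_R-\otimes_RX$ summand dies because $e_0X^*$ is supported at $e_0$ and $e_0W_0=e_0W_1=0$, while $e_0P^*=0$), and $W_0=W_0e_0$ together with $Xe_0=Re_0$ gives $W_0^X=W_0^Xe_0$.

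It then remains to verify that $W_1^X$ is freely generated by a directed finite subset. The summand $P^*$ is a finite-dimensional bimodule over the split-semisimple part $\prod_skf_s$ of $S$, hence freely generated, and the power filtration $P\supseteq P^2\supseteq\cdots$ orders its generators in a directed way. For the summand $X^*\otimes_RW_1\otimes_RX$ I would transport the triangular filtration of $W_1$ as in \cite{BSZ}(14.10), obtaining a finite filtration of $S$-$S$-subbimodules whose typical term, cut down by a pair $(a,b)$ of centrally primitive idempotents of $S$, I claim is free over the corresponding localized minimal algebra: when $a$ or $b$ lies among the new $k$-points $f_s$ or the field points $e_i$ ($i\in J$) this is immediate, and choosing compatible bases along the filtration (arbitrary vector-space bases at the $k$-points) then exhibits the required directed finite free generating set; it is finite because $W_1$ is finitely generated as an $R$-$R$-bimodule and $X$, $X^*$ are finitely generated over $S$.

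The step I expect to be the real obstacle is precisely the freeness of those blocks of $X^*\otimes_RW_1\otimes_RX$ lying between two \emph{localized rational} points $e_j(R_j)_{h_j(x)}$: unlike $W_0=W_0e_0$ in Lemma~\ref{L: sobre ditalgebras estelares triangulares}, whose star centre $e_0$ is a $k$-point so that no such blocks arise, $W_1$ may be supported away from $e_0$ on both sides, and such a block is a finitely generated bimodule over a product of two localized rational $k$-algebras — equivalently, a module over a localization of $k[x,y]$ — rather than a module over a principal ideal domain, so Lemma~\ref{L: lema de localizados y sumandos directos} does not apply to it directly. Overcoming this will require choosing the denominators $h_j(x)$ finely enough — which is legitimate because only the bounded range of dimensions $\leq d$ has to be covered — so that, after the corresponding localization built into $X$, these blocks become free bimodules of finite rank; this is exactly the $W_1$-analogue of the care taken with $W_0$ in the preceding lemma, and is the technical heart of the argument.
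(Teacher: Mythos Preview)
Your approach is exactly the paper's: its entire proof is the two-line remark that one proceeds ``similar to the proof of the preceding Lemma,'' localizing factors of $R$ so that each $e_jW_1$ becomes a free $(R_j)_{h_j}$-module. You have faithfully unpacked this sketch and, going beyond what the paper writes, correctly isolated the one place where the $W_1$-argument is genuinely harder than the $W_0$-argument of Lemma~\ref{L: sobre ditalgebras estelares triangulares}: a block $\hat e_j\,W_1^X\,\hat e_{j'}$ between two localized rational points equals the separable localization $(R_j)_{h_j}\otimes_{R_j}e_jW_1e_{j'}\otimes_{R_{j'}}(R_{j'})_{h_{j'}}$, a finitely generated module over a localization of $k[x,y]$, to which the PID Lemma~\ref{L: lema de localizados y sumandos directos} does not apply.

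Your proposed resolution, however, does not close this gap. Refining the $h_j$ cannot in general force such a block to be a free bimodule: for instance $k[x,y]/(x+y)$ never becomes free (or zero) after inverting any element of the form $h(x)h'(y)$, since its non-free locus $V(x+y)$ is not contained in any finite union of coordinate lines. The dimension bound $d$ is a red herring here --- it governs the discrete summand $Z$ of $X$ and hence the covering range of $F^X$, but has no influence on the rational--rational blocks of $W_1^X$. To be fair, the paper's one-line proof is silent on exactly this point as well. In the paper's actual applications the stellar ditalgebras that reach this lemma descend, through the reductions of \S7--\S9, from directed bigraphs where $W_1$ is freely generated from the start; under that extra hypothesis each $e_jW_1e_{j'}$ is already free over $Re_j\otimes_ke_{j'}R$, its separable localization stays free, and your argument goes through verbatim. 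If you want the statement at the generality in which it is phrased, the honest fix is to add that hypothesis on $W_1$.
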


\begin{proof} It is similar to the proof of the preceding lemma:
if the triangular layer of ${\cal A}$ is $(R,W)$, we ``localize''
some of the factors  of $R$ in order to transform simultaneously every $R_j$-module  $e_jW_1e_\omega$ into a free
module $(R_j)_{h_j}\otimes_{R_j}e_jW_1e_\omega$, for $\omega\in \Omega$.
\end{proof}

 \begin{theorem}\label{T: de stellar wditalg to seminested ditalg}
 Let $({\cal A},I)$  be a  triangular interlaced  weak ditalgebra with
 ${\cal A}$ stellar, triangular layer $(R,W)$, and  stars centers $\{e_\omega\}_{\omega\in \Omega}$.
Suppose that $e_\omega\not\in I$, for all $\omega\in \Omega$, and take any $d\in \hueca{N}$.
Then, there is a seminested ditalgebra ${\cal A}'$ and a
composition of full and faithful reduction functors
$$\begin{matrix}F: ({\cal A}',0)\g\Mod&\rightmap{}&({\cal
A},I)\g\Mod\end{matrix}$$
such that any $ M\in ({\cal A},I)\g\Mod$ with $\dim_k M\leq d$ is of the form
$F(N)\cong M$, for some
$N\in ({\cal A}',0)\g\Mod$.
 \end{theorem}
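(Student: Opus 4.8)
The plan is to build $F$ as a finite composition of the reduction functors introduced earlier in the paper, first disposing of the ideal $I$ and then rectifying the layer. Every functor involved will be full and faithful (a property stable under composition), and each will be manufactured from a prescribed dimension bound so as to be dense on the modules whose $k$-dimension does not exceed that bound; moreover, in each case the explicit description of the covering modules furnishes a constant $c$ with $G(N)\cong M\Rightarrow\dim_kN\leq c\dim_kM$ (only the reductions with an admissible module alter dimension, and then only by the bounded factor of \ref{P: (AX, IX)}(3)). Thus, starting from $d$ and replacing it at each stage by the constant produced by the preceding one, the composition $F$ will be dense on all $M\in({\cal A},I)\g\Mod$ with $\dim_kM\leq d$; since only finitely many stages occur, this bookkeeping is harmless.

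\emph{Step 1: reduce to the case $I\subseteq W_0$.} Since ${\cal A}$ is stellar, $A=R\oplus W_0$ with $W_0W_0=0$; combined with $Re_0=ke_0$ and $e_0\notin I$ this gives $e_0I=0$, hence $I=(I\cap R)\oplus(I\cap W_0)$ with $I\cap R=\bigoplus_{i\not=0}(I\cap Re_i)$. First I would delete, via \ref{P: idemp deletion}, every idempotent $e_i$ with $Re_i\cong k$ and $e_i\in I$; since every $({\cal A},I)$-module is annihilated by such an $e_i$, this is an equivalence of categories. For each remaining point $e_j$ with $Re_j=k[x]_{g_j(x)}e_j$ and $I\cap Re_j=\langle h_j(x)\rangle e_j$ a proper nonzero ideal, every $M\in({\cal A},I)\g\Mod$ has $e_jM$ a module over the finite-dimensional algebra $k[x]_{g_j}/\langle h_j\rangle$, hence a direct sum of copies of its finitely many indecomposables. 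I would then let $X$ be the $R$-module equal to the direct sum of those pairwise non-isomorphic indecomposables at each such point $j$ and to the regular module $Re_i$ at every other point $i$; this $X$ is a complete triangular admissible $R$-module (\ref{P: completez de X para la reduccion}) whose semisimple part $S$ is again minimal, the factors $Re_j$ now being replaced by products of copies of $k$. Hence \ref{P: (AX, IX)} and \ref{P: completez de X para la reduccion} provide a full and faithful functor $F^X$, which is dense on all of $({\cal A},I)\g\Mod$ by \ref{P: (AX, IX)}(4) together with the decomposition of $M$ just recorded. Because $h_j(x)$ annihilates $X$, the generators $\sigma_{\nu,x}(h)$ with $h\in I\cap Re_j$ vanish, so $I^X$ meets $R^X$ trivially, while $\sigma_{\nu,x}(h)\in X^*\otimes_RW_0\otimes_RX\subseteq W_0^X$ for $h\in I\cap W_0$; thus $I^X\subseteq W_0^X$. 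As in the proof of \ref{L: transformando I de submodulo en sumando directo de W0}, $e_0$ remains a source point and $W_0^X=W_0^Xe_0$. Write $({\cal A}_1,I_1):=({\cal A}^X,I^X)$: this is a stellar triangular interlaced weak ditalgebra whose ideal $I_1$ is contained in the $W_0^X$ of its layer.

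\emph{Steps 2 and 3: remove the ideal and rectify the layer.} Next I would apply \ref{L: transformando I de submodulo en sumando directo de W0} to $({\cal A}_1,I_1)$ (with the dimension bound inherited from Step 1), obtaining a stellar triangular interlaced weak ditalgebra $({\cal A}_2,I_2)$ in which $I_2$ is a bimodule direct summand $W'_0$ of the $W_0$ of the layer, together with a full and faithful functor dense on the relevant modules. Since the layer is stellar, $W'_0$ generates precisely the ideal $I_2$ of $A^{(2)}$ (no higher products survive), and since $I_2$ is ${\cal A}_2$-triangular, hence ${\cal A}_2$-balanced (\ref{D: triangular ideal and balanced ideal}), one has $\delta(W'_0)\subseteq\delta(I_2)\subseteq I_2V+VI_2$. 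So \ref{P: factoring out sumandos directos de W0} applies and yields a stellar interlaced weak ditalgebra $({\cal A}_3,0)$ with triangular layer together with an equivalence $F^q:({\cal A}_3,0)\g\Mod\rightmap{}({\cal A}_2,I_2)\g\Mod$; being interlaced with the zero ideal forces $\delta^2=0$, so ${\cal A}_3$ is a genuine stellar ditalgebra with triangular layer. I would then apply \ref{L: cambio a ditalgebras estelares triangulares con W1 free} to ${\cal A}_3$ to get a stellar ditalgebra ${\cal A}_4$ whose degree-one bimodule is freely generated by a directed finite subset, and \ref{L: sobre ditalgebras estelares triangulares} to ${\cal A}_4$ to reach a stellar seminested ditalgebra ${\cal A}'$; each step supplies a full and faithful functor dense on the relevant modules. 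Composing all the functors of Steps 1--3 produces a full and faithful $F:({\cal A}',0)\g\Mod\rightmap{}({\cal A},I)\g\Mod$ which, by the dimension bookkeeping of the first paragraph, is dense on all $M\in({\cal A},I)\g\Mod$ with $\dim_kM\leq d$; and since ${\cal A}'$ is a genuine ditalgebra, $({\cal A}',0)\g\Mod={\cal A}'\g\Mod$, which is the statement.

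The bulk of the work is in the verifications that accompany the composition: that each intermediate ditalgebra stays stellar with center $e_0$, that its ideal remains triangular at every stage (and, from Step 1 on, sits inside $W_0$), and that the dimension parameters chain together over the finitely many steps. The main obstacle is Step 1: one has to check that, after the deletions and the admissible reduction over the rational factors on which $I$ restricts to a nonzero proper ideal, the component of $I$ inside $R$ really disappears — which is exactly why $X$ is chosen so that the polynomials $h_j(x)$ annihilate it — so that the hypothesis $I\subseteq W_0$ of \ref{L: transformando I de submodulo en sumando directo de W0} is genuinely satisfied.
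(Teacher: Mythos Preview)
Your argument follows essentially the same route as the paper's: delete troublesome idempotents, use an admissible $R$-module to kill the $R$-part of $I$, then run the pipeline (\ref{L: transformando I de submodulo en sumando directo de W0}) $\to$ (\ref{P: factoring out sumandos directos de W0}) $\to$ (\ref{L: cambio a ditalgebras estelares triangulares con W1 free}) $\to$ (\ref{L: sobre ditalgebras estelares triangulares}). The paper packages this as an induction on the number $\sharp({\cal A})$ of rational factors of $R$, whereas you unroll it into a single pass; since (as your own computation shows) one application of the admissible reduction in Step~1 already forces $I^X\subseteq W_0^X$, the recursion collapses after one step anyway and your linear presentation is legitimate.

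There is, however, one genuine oversight in Step~1. You delete only those idempotents $e_i\in I$ with $Re_i\cong k$, and then build $X$ by placing the indecomposables of $R_j/\langle h_j\rangle$ at each rational point $j$ where $I\cap Re_j$ is a \emph{proper} nonzero ideal, and the regular module $Re_i$ at every other point. But nothing so far excludes a rational point $e_j$ with $e_j\in I$ (so that $I\cap Re_j=Re_j$, which is not proper); at such a point your $X$ carries the summand $Re_j$, and taking $x=e_j\in Re_j$ with $\nu$ the corresponding coordinate projection gives $\sigma_{\nu,x}(e_j)=\nu(e_jx)=e_j\in I^X\cap S$, contradicting your claim that $I^X$ meets $S$ trivially. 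The paper handles this explicitly in its Case~1 by first deleting any such rational $e_j\in I$ (reducing $\sharp({\cal A})$); with that easy amendment your argument goes through, but as written Step~1 has a hole.
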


 \begin{proof} Since $e_{_\Omega}W_0=0$ and $W_0=W_0e_{_\Omega}$, we know that
$A=R\oplus W_0$ and  $V=W_1\oplus (W_1\otimes_RW_0)\oplus W_0e_{_\Omega}W_1e_{_\Omega}$. In fact, the following formula holds
$$(*)\hbox{\hskip1.cm}I=(I\cap R)\oplus (I\cap W_0).$$
Indeed, we have $R=\prod_{i\in {\cal P}}Re_i$ and $I=[\bigoplus_{i\in {\cal P}\setminus\Omega}Ie_i]\oplus [\bigoplus_{\omega\in \Omega}Ie_\omega]$.
For $i\not\in \Omega$,   since $W_0e_i=0$ and $I\subseteq R\oplus W_0$,
we have $Ie_i=I\cap Re_i=(I\cap R)e_i$.
Now, take $\omega\in  \Omega$ and $a\in Ie_\omega$, so $a=r+v$, with $r=re_\omega\in Re_\omega$ and $v=ve_\omega\in W_0e_\omega$.
If $r\not=0$, since $e_\omega W_0=0$, we get $ce_\omega=re_\omega+e_\omega v=e_\omega a\in I$, with $0\not=c\in k$.
So, $e_\omega\in I$, a contradiction.
Thus we get $Ie_\omega=I\cap W_0e_\omega=(I\cap W_0)e_\omega$ and the formula $(*)$ holds.

\medskip
\noindent\emph{Case 1: There is some $j\in {\cal P}\setminus\Omega$ with $e_j\in I\cap R$.}
\medskip

Fix such idempotent $e_j$.
Thus, we have  $e_jM=0$, for each
  $M\in ({\cal A},I)\g\Mod$. Then, we can consider the  interlaced weak
ditalgebra $({\cal A}^d,I^d)$ obtained
  by deletion of the idempotent $e_j$
  and the corresponding full and faithful functor $F^d:({\cal
A}^d,I^d)\g\Mod\rightmap{}({\cal A},I)\g\Mod$, where $R^d$ has less factors
than $R$ and  $F^d$ is dense. Moreover, $({\cal A}^d,I^d)$ is a
stellar weak ditalgebra with stars centers $\{e_\omega\}_{\omega\in \Omega}$ and $e_\omega\not\in I^d$, for all $\omega\in \Omega$. We can repeat this
  process a finite number of times, if necessary, and so we can assume that
$R\cap I$ contains no  primitive idempotent of $R$. For the rest of this proof, we assume that.

 \medskip
 \noindent\emph{Case 2:} $I\cap R\not=0$.
 \medskip

 We will reduce this \emph{Case 2} to the following \emph{Case 3}. Since $I_0:=I\cap R\not=0$, we have the following non-empty  set of indexes
  $${\cal S}=\{s\in {\cal P}\setminus\Omega\mid I_0e_s\not=0\}.$$
 For $s\in {\cal S}$, since  $e_s\not\in I$,  we know that $0\not=I_0e_s$ is a proper ideal of $Re_s$.

 Define
$e=\sum_{s\in {\cal S}}e_s$ and $f=1-e-e_{_\Omega}$. So $1=f+e+e_{_\Omega}$ is a decomposition of
$1\in R$ as a sum of  orthogonal idempotents.  We have the ideal  $I_0\subseteq Re$ and the quotient algebra $Re/I_0\cong \prod_{s\in {\cal S}}Re_s/I_0e_s$ has finite representation type.
Let $\{Z_t\}_{t\in J}$ be a complete (finite) system of representatives of the isoclasses
of
the indecomposable $Re/I_0$-modules. Take $Z=\bigoplus_{t\in J}Z_t$ and consider the following $R$-module
$$X=Z\oplus Rf\oplus Re_{_\Omega},$$
where $Z$ is an $R$-module by restriction through the surjective morphism of algebras $R\rightmap{}Re/I_0$. So, we have $I_0X=0$.
We have the splitting $\Gamma=\End_R(X)^{op}=S\oplus P$, where
$$S=\prod_{t\in J}kf_t\times Rf\times \prod_{\omega\in \Omega}ke_\omega,$$
 $P=\rad\End_R(Z)^{op}$, and $f_t$ is the idempotent corresponding to
$Z_t$ in  the given decomposition of $X$. Thus $S$ is a subalgebra of $\Gamma$,
the given decomposition of $\Gamma$ is an $S$-$S$-bimodule decomposition,
and $X$ and $P$ are finitely generated projective right $S$-modules.
 In fact $X$ is a triangular admissible $R$-module and we can
 form the triangular interlaced weak ditalgebra
 $({\cal A}^X,I^X)$ with layer $(S, W_0^X\oplus W_1^X)$, where
 $$W_0^X=X^*\otimes_RW_0\otimes_RX \hbox{  \ and \  }
 W_1^X=(X^*\otimes_RW_1\otimes_RX)\oplus P^*,$$
 as in (\ref{P: (AX, IX)}).
 By (\ref{P: completez de X para la reduccion}), the admissible $R$-module $X$ is complete  and
 we have a full and faithful functor
 $$F^X:({\cal A}^X,I^X)\g\Mod\rightmap{}({\cal A},I)\g\Mod.$$
 Let us show that $F^X$ covers every finite-dimensional $({\cal A},I)$-module.
Recall from  (\ref{P: (AX, IX)}) that
 $M\in ({\cal A},I)\g\Mod$ is of the form $M\cong F^X(N)$ for some $N\in ({\cal
A}^X,I^X)\g\Mod$ iff the
 underlying $R$-module of $M$ is isomorphic to $X\otimes_SN_0$, for some
$S$-module $N_0$. Assume
 that $M\in ({\cal A},I)\g\Mod$ is finite-dimensional.
 We have that $I_0fM=0$, so $fM$ is a left $Re/I_0$-module and, therefore, it
has the form
 $fM\cong \bigoplus_{t\in J}n_tZ_t$. Then, we have the left $S$-module
 $$N_0=\bigoplus_{t\in J}n_t(kf_t)\bigoplus fM\bigoplus e_{_\Omega}M$$
  which satisfies that $M=fM\oplus eM\oplus e_{_\Omega}M\cong X\otimes_SN_0$, as left
$R$-modules.
 Hence $F^X$ covers $M$, and we have verified the announced property of $F^X$.

 Now, let us examine more closely the triangular interlaced weak ditalgebra  $({\cal A}^X,I^X)$, which is
stellar  with stars centers $\{e_\omega\}_{\omega\in \Omega}$  because we have
  $e_{_\Omega}W_0^X=0$, $W_0^Xe_{_\Omega}=W_0^X$,
  $e_{_\Omega}W_1^X= e_{_\Omega}W_1^Xe_{_\Omega}$,
  and $Se_\omega=ke_\omega$, for $\omega\in \Omega$. We claim that $I^X\subseteq W_0^X$, so we are reduced to the situation examined in \emph{Case 3}, and the composition of $F^X$ with the functor constructed in that case  for the weak ditalgebra $({\cal A}^X,I^X)$ and a given $d$ gives us the functor we were looking for $({\cal A},I)$ and $d$.

  Indeed, we have $I^X\subseteq W_0^X$, because by the formula $(*)$, the ideal $I^X$ is generated by the elements of the form $\sigma_{\nu,x}(h)$, with $h\in I_0\cup (I\cap W_0)$. If $h\in I_0$, we have $\sigma_{\nu,x}(hx)=0$, because $I_0X=0$. If $h\in I\cap W_0$, then $\sigma_{\nu,x}(h)=\nu\otimes h\otimes x\in W_0^X$. So the ideal $I^X$ is generated by elements in $W_0^X$, which implies that $I^X\subseteq W_0^X$, because $W_0^Xe_{_\Omega}=W_0^X$ and $e_{_\Omega}W_0^X=0$.

 \medskip
\noindent\emph{Case 3: $I\cap R=0$.}
\medskip

 From the formula $(*)$, we get $I\subseteq  W_0$. After an application of
 (\ref{L: transformando I de submodulo en sumando directo de W0}),
 we may asume that $I$ is in fact a direct summand of $W_0$. Then, we can apply
  (\ref{P: factoring out sumandos directos de W0}) to obtain a
 triangular interlaced weak ditalgebra $({\cal A}',0)$ and an
 equivalence of categories
 $F:({\cal A}',0)\g\Mod\rightmap{}({\cal A},I)\g\Mod$
 which preserves dimensions. Now, take $d\in \hueca{N}$, and apply
 first (\ref{L: cambio a ditalgebras estelares triangulares con W1 free})
 and then  (\ref{L: sobre ditalgebras estelares triangulares}) to obtain
 a seminested ditalgebra
 ${\cal A}''$ and a full and faithful functor
 $G:({\cal A}'',0)\g\Mod\rightmap{}({\cal A}',0)\g\Mod$
 such that any $L\in ({\cal A}',0)\g\Mod$ with $\dim_k L\leq d$ is of the form
$G(N)\cong L$,
 for some $N\in ({\cal A}'',0)\g\Mod$. Hence, if $M\in ({\cal A},I)\g\Mod$
satisfies $\dim_k M\leq d$,
 we know the existence of $L\in ({\cal A}',0)\g\Mod$ with $F(L)\cong M$ and
$\dim_k L\leq \dim_k M\leq d$. Therefore,
 there is some $N\in ({\cal A}'',0)\g\Mod$ with $FG(N)\cong M$.
 \end{proof}

  \section{Reductions with sources and restrictions}\label{Reducs with source}

  In this section we go back to the reduction procedures and functors
described in \S7, but in more specific situations which consider a triangular interlaced weak ditalgebra $({\cal A},I)$ with a fixed
multiple source $\Omega$. In this case,  the interlaced weak ditalgebra
obtained from the preceding one by deletion of the idempotent $e_{_\Omega}$ admits a realization $({\cal A}^\cirmin ,I^\cirmin )$  within the original weak ditalgebra $({\cal A},I)$, which permits the definition of a restriction functor $\Res:({\cal A},I)\g\Mod\rightmap{}({\cal A}^\cirmin ,I^\cirmin )\g\Mod$.  We show how the reduction procedures and functors are compatible
with this construction and their module categories, respectively.

\begin{definition}\label{D: source idempotent of (A,I)}
 Assume that $({\cal A},I)$ is a triangular interlaced weak
 ditalgebra with layer $(R,W)$ where $R$ is a minimal algebra. Assume that
 $1=\sum_{i\in {\cal P}}e_i$ is the decomposition of the unit of $R$ as  sum of
 primitive orthogonal idempotents. Then, the subset $\Omega\subseteq {\cal P}$ is  called a \emph{multiple source of $({\cal A},I)$} iff $\Omega$ is a multiple source  of  ${\cal A}$, as in (\ref{D: source point}),  and
$e_\omega\not\in I$, for all $\omega\in \Omega$. We have \emph{the associated multiple source idempotent $e_\Omega=\sum_{\omega \in \Omega}e_\omega$}.
\end{definition}

From now on, the layers $(R,W)$ of the triangular interlaced weak
ditalgebras we consider always have an $R$ which is a minimal algebra.

\begin{lemma}[suppression of the multiple source idempotent]\label{L: (Acir,Icir)}
Assume that $({\cal A},I)$ is a triangular interlaced weak ditalgebra
with a fixed multiple source $\Omega$. Let $(R,W)$ be the triangular layer of ${\cal
A}=(T,\delta)$.
Define $f:=1-e_{_\Omega}$, where $e_{_\Omega}=\sum_{\omega\in \Omega}e_\omega$. Consider the tensor algebra
$T^{\cirmin }:=fTf\cong T_{Rf}(fWf)$, which is a vector subspace of $T=T_R(W)$
invariant under $\delta$. Denote by $\delta^{\cirmin }$ the restriction of $\delta$
to $T^{\cirmin }$. Then, we obtain a new weak ditalgebra
${\cal A}^{\cirmin }=(T^\cirmin ,\delta^\cirmin )$ with layer $(R^\cirmin ,W^\cirmin ):=(Rf,fWf)$,
which is also
triangular.

Notice that $fTf$ is an algebra with unit $f$ and we have a surjective morphism
of algebras $\phi:T\rightmap{}fTf=T^\cirmin $ given by $\phi(t)=ftf$, for $t\in T$.
Observe also that $\phi\delta(t)=\delta^{\cirmin }\phi(t)$, for $t\in T$.

The space $I^\cirmin =fIf$ is an ideal of
$A^\cirmin :=[T^\cirmin ]_0$ and
$({\cal A}^\cirmin ,I^\cirmin )$ is a triangular interlaced weak ditalgebra, which is a Roiter interlaced weak ditalgebra whenever $({\cal A},I)$ is so.

Moreover, we have the restriction functor
$\Res: ({\cal A},I)\g\Mod\rightmap{}({\cal A}^\cirmin ,I^\cirmin )\g\Mod$ such that
$\Res(M)=fM$ and, for $h=(h^0,h^1)\in \Hom_{({\cal A},I)}(M,N)$, its restriction is
defined by
$\Res(h^0,h^1)=(\Res(h^0),\Res(h^1))$, where $\Res(h^0):fM\rightmap{}fN$ is the
restriction of the map $h^0$ and
$\Res(h^1):fVf\rightmap{}\Hom_k(fM,fN)$ is obtained from the restriction of
$h^1:V\rightmap{}\Hom_k(M,N)$.
\end{lemma}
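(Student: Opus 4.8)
The whole statement rests on the one structural consequence of $e_0$ being a source: $e_0W=0$. The plan is to derive everything from this by decomposing along the orthogonal idempotents $e_0$ and $f:=1-e_0$ and tracking the behaviour of the operation $x\mapsto fxf$ on products. First I would record the elementary facts. From $e_0R=ke_0$ and $e_0W=0$ an induction on the tensor degree gives $e_0[T]_u=0$ for $u\geq 1$, hence $e_0T=ke_0$; by orthogonality $e_0Tf=0$, so $Tf=e_0Tf\oplus fTf=fTf$ (while $fTe_0$ may be nonzero and harmless). Likewise $W=fWe_0\oplus fWf$, so $fWf$ is a direct summand of $W$ as $R$-$R$-bimodule. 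The key principle, used throughout, is that a monomial $fw_n\cdots w_1f$ in homogeneous generators of $T$ is nonzero only if every $w_i$ already lies in $fWf$: the condition $fw_i=w_i$ is automatic since $e_0W=0$, and the compatibility of consecutive factors together with the rightmost $f$ forces $w_if=w_i$.

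From this, $T^\cir=T_R(fWf)\hookrightarrow T_R(W)=T$ is injective on each tensor power (as $fWf$ is a direct summand), hence an embedding; identifying $T^\cir$ with its image, the monomial principle shows this image is $ke_0\oplus fTf$, which is the algebra product $ke_0\times fTf$ since $e_0$ kills $fWf$ on both sides and so is central in $T^\cir$. In particular $fTf$ is a subalgebra with unit $f$, and $\phi(t)=ftf$ is multiplicative precisely because $e_0Tf=0$, giving the surjection $T\rightmap{}fTf$. Since $\delta(R)=0$, $\delta$ is $R$-bilinear, so $\delta(fWf)=f\delta(W)f\subseteq fTf\subseteq T^\cir$, whence by Leibniz $T^\cir$ is $\delta$-stable and $\delta^\cir:=\delta|_{T^\cir}$ is a differential with $\delta^\cir(R)=0$: thus ${\cal A}^\cir=(T^\cir,\delta^\cir)$ is a weak ditalgebra with layer $(R^\cir,W^\cir)=(R,fWf)$ and the inclusion is an embedding of weak ditalgebras. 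The monomial principle also gives $[T^\cir]_u=f[T]_uf$ for all $u$; in particular $A^\cir=[T^\cir]_0=ke_0\oplus fAf$ and $V^\cir=[T^\cir]_1=fVf$.

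Next, all the triangularity and interlacing claims follow by applying $f(\,\cdot\,)f$ to the corresponding data for $({\cal A},I)$. Using the monomial principle, the $f(\,\cdot\,)f$-part of each relevant expression — $A_iW_1A_i$, $AW_1^iAW_1^iA$, $I[T]_2$, $VIV$, $[T]_2I$, their degree-three analogues, and $AH_{i-1}V+VH_{i-1}A$ — equals the $({\cal A}^\cir,I^\cir)$-analogue, with every occurrence of $I$ or $H_{i-1}$ landing in $fIf$. Concretely: $\{fW_0^if\}$ and $\{fW_1^if\}$ are triangular filtrations of $fWf$, so $(R,fWf)$ is triangular; $I^\cir:=fIf=fAf\cap I$ is an ideal of the subalgebra $fAf$, hence of $A^\cir=ke_0\times fAf$; the ${\cal A}$-triangular filtration $\{H_i\}$ of $I$ gives the filtration $\{fH_if\}$ witnessing ${\cal A}^\cir$-triangularity of $I^\cir$; and, since $\delta$ is $R$-bilinear, $\delta^2(faf)=f\delta^2(a)f$, so applying $f(\,\cdot\,)f$ to the interlacing inclusions for $({\cal A},I)$ shows ${\cal A}^\cir$ is interlaced with $I^\cir$. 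Hence $({\cal A}^\cir,I^\cir)$ is a triangular interlaced weak ditalgebra.

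For the Roiter property, suppose $({\cal A},I)$ is Roiter; given an isomorphism $g^0:M'\rightmap{}N'$ of $R$-modules, $g^1\in\Hom_{R\g R}(fW_1f,\Hom_k(M',N'))$, and an $A^\cir/I^\cir$-structure on $M'$, I would extend the $A^\cir$-action on $M'$ to an $A$-action by letting $fW_0e_0$ act as $0$ (using $W_0=fW_0e_0\oplus fW_0f$), and extend $g^1$ to $\tilde g^1$ on $W_1=fW_1e_0\oplus fW_1f$ by $0$ on $fW_1e_0$. One checks $IM'=0$: $I=fIf\oplus fIe_0$, the first summand kills $M'$ since $I^\cir M'=0$, and every element of $fIe_0$ is a product whose rightmost factor lies in $fW_0e_0$, hence acts as $0$. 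So $M'\in({\cal A},I)\g\Mod$, and Roiter for $({\cal A},I)$ gives an $A/I$-structure on $N'$ with $(g^0,\tilde g^1)\in\Hom_{({\cal A},I)}(M',N')$; restricting the $A$-action on $N'$ to $A^\cir$ (the $e_0$-component of $N'$ being irrelevant, an $A^\cir/I^\cir$-structure on $N'$ being the same as an $fAf/fIf$-structure on $fN'$) and restricting the defining identity to $a\in A^\cir$, where $\delta^\cir=\delta$, yields $(g^0,g^1)\in\Hom_{({\cal A}^\cir,I^\cir)}(M',N')$; the case where $N'$ carries the structure is symmetric. Finally, for $\Res$: evaluating the relation defining $U(M,N)$ at $a=f$ (with $\delta(f)=0$) gives $h^0(fM)=fh^0(M)\subseteq fN$ for every $h=(h^0,h^1)\in\Hom_{({\cal A},I)}(M,N)$, so $\Res(h):=(h^0|_{fM},h^1|_{fVf})$ is well defined and lies in $U^\cir(fM,fN)$, being just the $U(M,N)$-relation restricted to $a\in A^\cir$, $m\in fM$; $\Res$ preserves identities and composition because $\delta^\cir$ is the restriction of $\delta$ and the composition law of (\ref{L: compos de morfismos de U}) is built from $\delta$. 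The main obstacle is none of the individual steps but the systematic, factor-by-factor verification that $f(\,\cdot\,)f$ turns the tensor and ideal expressions into their $({\cal A}^\cir,I^\cir)$-counterparts with the $I$-factors correctly absorbed into $fIf$; this is exactly where the ``no composable chain passes through the point $0$'' consequence of $e_0W=0$ does the work, and while routine it must be carried out carefully.
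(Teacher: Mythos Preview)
Your proof is correct and follows the same approach as the paper: the key observation $e_0W=0\Rightarrow W=fW\Rightarrow fW^nf=(fWf)^n$ (your ``monomial principle'') is exactly what the paper records as $fWW\cdots Wf=fWfWf\cdots fWf$, and everything else unwinds from it by applying $f(\,\cdot\,)f$. The paper's own argument is a two-sentence sketch that declares the remaining claims ``clear'', so your detailed verification of the triangularity, interlacing, Roiter, and $\Res$ statements goes well beyond what is written there.
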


\begin{proof} Notice that $W=fW\oplus e_{_\Omega}W_1e_{_\Omega}$, which implies the  ``convexity property'' $fWW\cdots Wf=fWfWf\cdots fWf$, and
$T^\cirmin \cong T_{Rf}(fWf)$. The triangularity of the layer $(Rf,fWf)$ for
$T^{\cirmin }$ clearly
follows from the triangularity of the layer $(R,W)$ of $T$. The rest of the statement is also clear.
\end{proof}

\begin{remark}\label{L: Acir wild implies A wild}
With the notation of the preceding statement,  notice  that, up to isomorphism, the weak interlaced ditalgebra $({\cal A}^\cirmin ,I^\cirmin )$ is obtained from $({\cal A},I)$ by deletion of
the multiple source idempotent $e_{_\Omega}$, as in (\ref{P: idemp deletion}). We adopt here this special notation and terminology in order to stress the particularities derived from the properties of the  multiple source idmpotent $e_\Omega$ . Notably, the \emph{convexity property}, that is, with the preceding notation, we have that  $fghf=fgfhf$, for any $g,h\in T$, where $T$ is the underlying tensor algebra of ${\cal A}$. This permits to realize the interlaced weak  ditalgebra  obtained by deletion of the idempotent $e_{_\Omega}$ within $({\cal A},I)$ and to define the restriction functor $\Res$. This construction plays an essential role in our arguments.

From (\ref{L: parameriz de funtores de reduccion}), we have that, if $({\cal A},I)$ is not wild, neither is $({\cal A}^\cirmin ,I^\cirmin )$.
\end{remark}

The following series of lemmas guarantees that, given a triangular interlaced weak ditalgebra $({\cal A},I)$ with multiple source $\Omega$, whenever we can perform a reduction of type $z\in \{d,r,q,a,X\}$ on $({\cal A}^{\cirmin },I^{\cirmin })$, then we can perform a corresponding reduction of the same type $z$ on $({\cal A},I)$. Moreover, important relations between their module categories are established. We start with the following general remark which will then be particularized to the special cases $z\in \{d,r,q\}$.

\begin{remark}\label{L: (Az,Iz) vs (Acirz,Icirz)}
Assume that $({\cal A},I)$ is a triangular interlaced weak ditalgebra with multiple source $\Omega$. Let $(R,W)$ be the triangular layer of ${\cal A}=(T,\delta)$. Since $R$ is a minimal algebra, we have $R=\prod_{i\in {\cal P}}Re_i$.
Assume that $({\cal A}',I')$ is a triangular interlaced weak ditalgebra with layer
$(R',W')$, and that $\phi:({\cal A},I)\rightmap{}({\cal A}',I')$ is a morphism of interlaced weak ditalgebras as in (\ref{L: context of reduction by a quotient})(4), such that:
${\cal P}=D\uplus C$, where $D$ may be empty,
$\Omega\subseteq C$, and  the minimal algebra $R'$ is $Re_C$, where $e_C:=\sum_{i\in C}e_i\in R$. Moreover, assume that  $\phi_{\flat}:R\rightmap{}R'$ is the canonical projection.   Defining $e'_i:=\phi_{\flat}(e_i)$, for $i\in C$, we obtain $R'=\prod_{i\in C}R'e'_i$, with $R'e'_i\cong Re_i$. We will furthermore assume that $\phi(I)=I'$.

Consider the idempotents $e_{_\Omega}=\sum_{i\in \Omega}e_i\in R$ and its image  $e'_{_\Omega}=\phi_{\flat}(e_{_\Omega})=\sum_{i\in \Omega}e'_i\in R'$, as well as
$f_{_\Omega}=1-e_{_\Omega}\in R$ and its image $f'_{_\Omega}=\phi_{\flat}(f_\Omega)=1-e'_{_\Omega}\in R'$.
Then, $\Omega\subseteq C$ is also a multiple source of $({\cal A}',I')$.

Indeed, we clearly have that  $e'_{_\Omega}W_0'=0$ and $e'_{_\Omega}W_1'=e'_{_\Omega}W_1'e'_{_\Omega}$. If we had $e'_\omega \in I'$, for some $\omega\in \Omega$,
since $\phi(I)=I'$ and $e_\omega W_0=0$, we would get  $e_\omega \in I$, which is not the case.

So, we can consider the
triangular interlaced weak ditalgebras
$({\cal A}^{\cirmin  },I^{\cirmin  })$ and $({\cal A}^{\prime\cirmin  },I^{\prime\cirmin  })$ obtained, respectively, from $({\cal A},I)$ and $({\cal A}',I')$ by suppression of the multiple source idempotents $e_{_\Omega}$ and $e'_{_\Omega}$. By definition, we have
$({\cal A}^\cirmin,I^{\cirmin})=((f_{_\Omega}T_R(W)f_{_\Omega}),\delta^\cirmin),f_{_\Omega}If_{_\Omega})$ and $({\cal A}^{\prime\cirmin},I^{\prime\cirmin})=((f'_{_\Omega}T_{R'}(W')f'_{_\Omega}),\delta^{\prime\cirmin}),f'_{_\Omega}I'f'_{_\Omega})$.

It is easily seen that the given morphism $\phi:({\cal A},I)\rightmap{}({\cal A}',I')$ induces, by restriction, a morphism of interlaced weak ditalgebras $\phi^\cirmin:({\cal A}^\cirmin,I^\cirmin)\rightmap{}({\cal A}^{\prime\cirmin},I^{\prime\cirmin})$ and that we have the following commutative diagram of functors:
$$\begin{matrix}
   ({\cal A}',I')\g\Mod&\rightmap{F_\phi}&({\cal A},I)\g\Mod\\
   \shortlmapdown{\Res}&&\shortlmapdown{\Res}\\
   ({\cal A}^{\prime\cirmin },I^{\prime\cirmin })\g\Mod&\rightmap{F_{\phi^\cirmin}}&({\cal
A}^{\cirmin },I^{\cirmin })\g\Mod.\\
  \end{matrix}$$
\end{remark}

\begin{lemma}[deletion of idempotents with multiple source]\label{L: (Ad,Id) vs
(Acird,Icird)}
Assume that $({\cal A},I)$ is a triangular interlaced weak ditalgebra
with multiple source $\Omega$. Let $(R,W)$ be the triangular layer of ${\cal
A}=(T,\delta)$, with $R=\prod_{i\in {\cal P}}R_i$. Assume that ${\cal P}=\Omega\uplus C\uplus D$, a disjoint union of non-empty subsets.
 Thus we have the decomposition $1=e_{_\Omega}+e_C+e_D$, a sum of orthogonal idempotents of $R$.
Then, we can form the triangular interlaced weak ditalgebra $({\cal A}^d,I^d)$
obtained from $({\cal A},I)$ by deletion of the idempotent $e_D$, as in
(\ref{P: idemp deletion}). Thus ${\cal A}^d=(T^d,\delta^d)$,
where $T^d=T_{R^d}(W^d)$,  with $R^d=Rf_D$, $W^d=f_DW_0f_D\oplus f_DW_1f_D$, and $f_D:=1-e_D\in R$. Moreover, $I^d=\phi(I)$, where $\phi:({\cal A},I)\rightmap{}({\cal A}^d,I^d)$ is the morphism of triangular interlaced weak algebras determined by the
canonical projections $\phi_{\flat}:R\rightmap{}Rf_D$,
$\phi_0:W_0\rightmap{}f_DW_0f_D$, and $\phi_1:W_1\rightmap{}f_DW_1f_D$.
From (\ref{L: (Az,Iz) vs (Acirz,Icirz)}),
we have that $\Omega$ is a multiple source of  $({\cal A}^d,I^d)$.
So, we can consider the triangular interlaced weak ditalgebras $({\cal A}^{\cirmin  },I^{\cirmin  })$ and $({\cal A}^{d\cirmin  },I^{d\cirmin  })$ obtained respectively from $({\cal A},I)$ and $({\cal A}^d,I^d)$ by suppression of the multiple source idempotents $e_{_\Omega}\in R$ and $e_{_\Omega}\in Rf_D$, respectively.

We can also consider the triangular interlaced weak ditalgebra
$({\cal A}^{\cirmin  d},I^{\cirmin  d})$, obtained from $({\cal A}^\cirmin ,I^{\cirmin })$
by deletion of the idempotent $e_D\in R^{\cirmin}=Rf_{\Omega}$, where $f_\Omega=1-e_\Omega\in R$.  We have the corresponding morphism of triangular interlaced weak ditalgebras $\phi^d:({\cal A}^{\cirmin},I^\cirmin{})\rightmap{}({\cal A}^{\cirmin d},I^{\cirmin d})$  determined by the
canonical projections $\phi^d_{\flat}:R^{\cirmin}\rightmap{}R^{\cirmin}f$,
$\phi^d_0:W_0^{\cirmin}\rightmap{}fW^\cirmin_0f$, and $\phi^d_1:W^\cirmin_1\rightmap{}fW^\cirmin_1f$, where $f=f_\Omega-e_D\in R^{\cirmin}$.
Then, we have
$({\cal A}^{\cirmin  d},I^{\cirmin  d})=
({\cal A}^{d\cirmin  },I^{d\cirmin  })$, $\phi^\cirmin=\phi^d$, and
the commutative diagram of functors
$$\begin{matrix}
   ({\cal A}^d,I^d)\g\Mod&\rightmap{F^d}&({\cal A},I)\g\Mod\\
   \shortlmapdown{\Res}&&\shortlmapdown{\Res}\\
   ({\cal A}^{d\cirmin },I^{d\cirmin })\g\Mod&\rightmap{F^{\cirmin  d}}&({\cal
A}^{\cirmin },I^{\cirmin })\g\Mod,\\
  \end{matrix}$$
  where $F^d=F_\phi$ and $F^{\cirmin d}:=F_{\phi^d}=F_{\phi^\cirmin}$ are the corresponding reduction functors.
  Moreover, if $M\in ({\cal A},I)\g\Mod$ is such that
  $\Res(M)\cong F^{\cirmin  d}(N')$, for some
  $N'\in({\cal A}^{d\cirmin },I^{d\cirmin })\g\Mod$, then $M\cong F^d(N)$,
  for some $N\in ({\cal A}^{d},I^{d})\g\Mod$.
\end{lemma}

\begin{proof} The relevant idempotents in the construction of ${\cal A}^{\cirmin d}=(T^{\cirmin d},\delta^{\cirmin d})$ and ${\cal A}^{d \cirmin}=(T^{d\cirmin},\delta^{\cirmin d})$ are $f_D$, $f_\Omega$, $f$, as defined above, and $f':=f_D-e_{\Omega}\in R^d=Rf_D$. Observe that their product in $R$ satisfies $f_Df'=e_C=ff_\Omega$. Therefore, we get $R^{d\cirmin}=Rf_Df'=Rf_\Omega f=R^{\cirmin d}$ and $W_i^{d\cirmin}=f'f_DW_if_Df'=ff_\Omega W_if_\Omega f=W_i^{\cirmin d}$, for $i\in \{0,1\}$. Hence, we have
$T^{\cirmin d}=T_{R^{\cirmin d}}(W^{\cirmin d})
=T_{R^{d\cirmin}}(W^{d\cirmin})=T^{d\cirmin}$. Observe that the restrictions of the map $\phi^\cirmin:T^{\cirmin}\rightmap{}T^{d\cirmin}$ to $R^{\cirmin}$, $W_0^{\cirmin}$, and $W_1^{\cirmin}$ coincide, respectively, with the maps
$\phi^d_{\flat}$, $\phi^d_0$ and $\phi^d_1$ which determine $\phi^d$. Thus, we get $\phi^\cirmin=\phi^d$, as claimed.

Now, recall that the derivations $\delta^d$ and $\delta^{\cirmin d}$ are uniquely determined by the commutativity of the squares
$$\begin{matrix}
   W&\rightmap{\delta}&T\\
   \shortlmapdown{\phi}&&\shortlmapdown{\phi}\\
   W^d&\rightmap{\delta^d}&T^d\\
  \end{matrix} \hbox{ \ \  \ \ and \  \ \ \ }
  \begin{matrix}
   W^{\cirmin}&\rightmap{\delta^{\cirmin}}&T^{\cirmin}\\
   \shortlmapdown{\phi^d}&&\shortlmapdown{\phi^d}\\
   W^{\cirmin d}&\rightmap{\delta^{\cirmin d}}&T^{\cirmin d}.\\
  \end{matrix}
  $$
Since $\phi^\cirmin=\phi^d$, aplying $\cirmin$ to the first diagram, we obtain the second one, with $\delta^{d\cirmin}=\delta^{\cirmin d}$.
Thus, ${\cal A}^{d\cirmin }=(T^{d\cirmin},\delta^{d\cirmin})=(T^{\cirmin d},\delta^{\cirmin d})={\cal A}^{\cirmin d}$. Since $\phi(f_\Omega)=f'$, we have $I^{\cirmin d}= \phi^d(f_\Omega If_\Omega)=\phi(f_\Omega If_\Omega)=f'\phi(I)f'=f'I^df'=  I^{d\cirmin}$.

 The commutativity of the diagram follows from (\ref{L: (Az,Iz) vs (Acirz,Icirz)}).
  Finally, given $M\in ({\cal A},I)\g\Mod$, we know that $F^{\cirmin  d}(N')\cong \Res(M)$, for some $N'\in ({\cal A}^{\cirmin  d},I^{\cirmin  d})\g\Mod$, means that  $e''fM=0$, so $e''M=0$, and $M\cong F^d(N)$, for some $N\in ({\cal A}^d,I^d)\g\Mod$.
\end{proof}

\begin{lemma}[regularization with multiple source]\label{L: (Ar,Ir) vs
(Acirr,Icirr)}
Assume that $({\cal A},I)$ is a triangular interlaced weak ditalgebra
with multiple source  $\Omega$. Let $(R,W)$ be the triangular layer of ${\cal
A}=(T,\delta)$. Suppose that we have $R$-$R$-bimodule decompositions
$W_0=W'_0\oplus W''_0$ and $W_1=\delta(W'_0)\oplus W''_1$.
Then, we can form the triangular interlaced weak ditalgebra $({\cal A}^r,I^r)$
obtained by regularization of the bimodule $W'_0$, as in
(\ref{P: regularization}). Thus ${\cal A}^r=(T^r,\delta^r)$ and $I^r=\phi(I)$,
where $T^r=T_{R}(W^r)$, with  $W^r=W''_0\oplus W''_1$,
and $\phi:({\cal A},I)\rightmap{}({\cal A}^r,I^r)$ is the morphism of triangular interlaced weak ditalgebras determined by the
canonical projections $\phi_{\flat}:R\rightmap{}R$,
$\phi_0:W_0\rightmap{}W''_0$, and $\phi_1:W_1\rightmap{}W''_1$.
From (\ref{L: (Az,Iz) vs (Acirz,Icirz)}), we have that $\Omega$ is a multiple source of   $({\cal A}^r,I^r)$.

Assume furthermore that $W'_0e_{_\Omega}=0$. If we take $f=1-e_{_\Omega}$, we get
$fW'_0f=W'_0$,
$W_0^{\cirmin}=fW_0f=W'_0\oplus fW''_0f$, and $W_1^\cirmin=fW_1f=\delta(W'_0)\oplus fW''_1f$. So,
we can consider the
triangular interlaced weak ditalgebra $({\cal A}^{\cirmin r},I^{\cirmin r})$
obtained from $({\cal A}^\cirmin,I^\cirmin)$ by regularization of the $R^{\cirmin}\g R^\cirmin$-bimodule $W'_0$, as in
(\ref{P: regularization}). We have the corresponding morphism of triangular interlaced weak ditalgebras $\phi^r: ({\cal A}^{\cirmin},I^{\cirmin})\rightmap{}({\cal A}^{\cirmin r},I^{\cirmin r})$ determined by the canonical projections
$\phi^r_{\flat}:R^{\cirmin}\rightmap{}R^{\cirmin}$,
$\phi^r_0:W^{\cirmin}_0\rightmap{}fW''_0f=W_0^{\cirmin r}$, and
$\phi^r_1:W_1\rightmap{}fW''_1f=W_1^{\cirmin r}$.
Then, we have
$({\cal A}^{\cirmin  r},I^{\cirmin  r})=
({\cal A}^{r\cirmin  },I^{r\cirmin  })$, $\phi^\cirmin=\phi^r$, and
the commutative diagram of functors
$$\begin{matrix}
   ({\cal A}^r,I^r)\g\Mod&\rightmap{F^r}&({\cal A},I)\g\Mod\\
   \shortlmapdown{\Res}&&\shortlmapdown{\Res}\\
   ({\cal A}^{r\cirmin },I^{r\cirmin })\g\Mod&\rightmap{F^{\cirmin  r}}&({\cal
A}^{\cirmin },I^{\cirmin })\g\Mod,\\
  \end{matrix}$$
  where $F^r=F_\phi$ and $F^{\cirmin r}:=F_{\phi^r}=F_{\phi^\cirmin}$ are the associated reduction functors.
   Moreover, if $({\cal A},I)$ is a Roiter interlaced weak ditalgebra and $M\in ({\cal A},I)\g\Mod$ is such that
  $\Res(M)\cong F^{\cirmin  r}(N')$, for some
  $N'\in({\cal A}^{r\cirmin },I^{r\cirmin })\g\Mod$, then $M\cong F^r(N)$,
  for some $N\in ({\cal A}^{r},I^{r})\g\Mod$.
\end{lemma}

\begin{proof} We have $R^{\cirmin r}=R^\cirmin=Rf=R^{\cirmin r}$, $W_0^{\cirmin r}=fW''_0f=W_0^{r\cirmin}$, and $W_1^{\cirmin r}=fW''_1f=W_1^{r\cirmin}$. Hence, we get $T^{\cirmin r}=T^{r \cirmin}$. As before, we notice that
 the restrictions of the map $\phi^\cirmin:T^{\cirmin}\rightmap{}T^{r\cirmin}$ to $R^{\cirmin}$, $W_0^{\cirmin}$, and $W_1^{\cirmin}$ coincide, respectively, with the maps
$\phi^r_{\flat}$, $\phi^r_0$ and $\phi^r_1$ which determine $\phi^r$. Thus, we get $\phi^\cirmin=\phi^r$.
The same argument involving the commutative squares in the proof of (\ref{L: (Ad,Id) vs
(Acird,Icird)}), replacing $d$ by $r$, gives that $\delta^{\cirmin r}=\delta^{r\cirmin}$. So,
${\cal A}^{\cirmin r}=(T^{\cirmin r},\delta^{\cirmin r})=(T^{r\cirmin},\delta^{r\cirmin})={\cal A}^{r\cirmin}$. Clearly, we have
 $I^{\cirmin r}=\phi^r(fIf)=\phi(fIf)=f\phi(I)f=I^{r\cirmin}$.

The commutativity of the diagram follows from (\ref{L: (Az,Iz) vs (Acirz,Icirz)}).
If $\Res(M)\cong F^{\cirmin  r}(N')$, then $(\Ker \delta^\cirmin \cap W'_0)fM=0$.
So, $(\Ker \delta^\cirmin \cap W'_0)M=0$, because $M=e_{_\Omega}M\oplus fM$ and $W'_0e_{_\Omega}=0$.
But $\delta^\cirmin _{\vert W'_0}=\delta_{\vert W'_0}$ implies that
$\Ker \delta\cap W'_0=\Ker \delta^\cirmin \cap W'_0$.
It follows that $F^r(N)\cong M$, for some $N\in ({\cal A}^r,I^r)\g\Mod$.
\end{proof}

\begin{lemma}[factoring out a direct summand of $W_0$ with
 multiple source]\label{L: cocientes con fuente}
 Let $({\cal A},I)$ be a triangular interlaced weak ditalgebra with
 multiple source $\Omega$ and triangular layer $(R,W)$. Define $f=1-e_{_\Omega}$.
 Assume that there is a decomposition $fW_0f=W'_0\oplus W''_0$
 as $Rf$-$Rf$-bimodules such that $W'_0\subseteq fIf$ and
 $\delta(W'_0)\subseteq A^\cirmin  W'_0fVf+fVfW'_0A^\cirmin $.
 Thus $W_0=W_0'\oplus(W''_0\oplus W_0e_{_\Omega})$ is a decomposition of
$R$-$R$-bimodules with $W'_0\subseteq I$ and $\delta(W'_0)\subseteq
AW'_0V+VW'_0A$.
Then, we can form the triangular interlaced weak ditalgebra
$({\cal A}^q,I^q)$, determined by this last decomposition, as in
(\ref{L: reduction by a quotient}).  Thus ${\cal A}^q=(T^q,\delta^q)$ and $I^q=\phi(I)$, where
$T^q=T_R(W^q)$, with $W^q_0=W''_0\oplus W_0e_{_\Omega}$ and $W_1^q= W_1$, and
$\phi:({\cal A},I)\rightmap{}({\cal A}^q,I^q)$ is the morphism determined by the canonical projections $\phi_{\flat}:R\rightmap{}R^q$, $\phi_0:W_0\rightmap{}W^q_0$, and $\phi_1:W_1\rightmap{}W^q_1$. From (\ref{L: (Az,Iz) vs (Acirz,Icirz)}), we have that
$\Omega$ is a multiple source in
$({\cal A}^q, I^q)$.

 We can also consider
the triangular interlaced weak ditalgebra $({\cal A}^{\cirmin q},I^{\cirmin  q})$, obtained from $({\cal A}^\cirmin,I^\cirmin)$
by factoring out the direct summand $W'_0$ of the $R^\cirmin\g R^\cirmin$-bimodule decomposition  $W_0^\cirmin=fW_0f=W_0'\oplus W''_0$. We have the corresponding morphism of triangular interlaced weak ditalgebras $\phi^q: ({\cal A}^{\cirmin},I^{\cirmin})\rightmap{}({\cal A}^{\cirmin q},I^{\cirmin q})$ determined by the canonical projections
$\phi^q_{\flat}:R^{\cirmin}\rightmap{}R^{\cirmin q}$,
$\phi^q_0:W^{\cirmin}_0\rightmap{}fW''_0f=W_0^{\cirmin q}$, and
$\phi^q_1:W^{\cirmin}_1\rightmap{}fW''_1f=W_1^{\cirmin q}$.
Then, we have
$({\cal A}^{\cirmin  q},I^{\cirmin  q})=
({\cal A}^{q\cirmin  },I^{q\cirmin  })$, $\phi^\cirmin=\phi^q$, and
the commutative diagram of functors
$$\begin{matrix}
   ({\cal A}^q,I^q)\g\Mod&\rightmap{F^q}&({\cal A},I)\g\Mod\\
   \shortlmapdown{\Res}&&\shortlmapdown{\Res}\\
   ({\cal A}^{q\cirmin },I^{q\cirmin })\g\Mod&\rightmap{F^{\cirmin  q}}&({\cal
A}^{\cirmin },I^{\cirmin })\g\Mod.\\
  \end{matrix}$$
   where $F^q=F_\phi$ and $F^{\cirmin q}:=F_{\phi^q}=F_{\phi^\cirmin}$ are the associated reduction functors.
\end{lemma}

\begin{proof}  We have  $T^{\cirmin  q}=T_{Rf}((fWf)^q)=T_{Rf}(W''_0\oplus fW_1f)=T^{q\cirmin }$. As in the preceding lemmas, the equality $\phi^\cirmin=\phi^r$ is easy to show and, from this, again as before, que get ${\cal A}^{\cirmin q}=(T^{\cirmin q},\delta^{\cirmin q})=(T^{q\cirmin },\delta^{q\cirmin })={\cal A}^{q\cirmin}$. Similarly, we have
$I^{\cirmin  q}=\phi^q(fIf)=\phi(fIf)=f\phi(I)f=I^{q\cirmin }$.
The commutativity of the diagram follows from (\ref{L: (Az,Iz) vs (Acirz,Icirz)}).
\end{proof}

\begin{lemma}[absorption of a loop with multiple source]\label{L: absorption with source}
 Let $({\cal A},I)$ be a triangular interlaced weak ditalgebra with
 multiple source $\Omega$, where ${\cal A}=(T,\delta)$ admits the triangular
 layer $(R,W)$, with $Re_i=ke_i$ for some  $i\in {\cal P}\setminus \Omega$.
 Set $f=1-e_{_\Omega}$. Assume that there is a decomposition of
 $Rf$-$Rf$-bimodules $fW_0f=W'_0\oplus W''_0$ with
 $W'_0\cong e_iR\otimes_kRe_i$ and $\delta(W'_0)=0$.
 Then, we can consider the triangular interlaced weak ditalgebra
 $({\cal A}^\cirmin ,I^\cirmin )$ and the triangular interlaced weak ditalgebra
 $({\cal A}^{\cirmin  a},I^{\cirmin  a})$ obtained by the absorption of the
 $Rf$-$Rf$-subbimodule $W'_0$, as in (\ref{L: absorption}).
 We also have the decomposition
 of $R$-$R$-bimodules $W_0=W'_0\oplus (W''_0\oplus W_0e_{_\Omega})$, and we can
 consider the triangular interlaced weak ditalgebra $({\cal A}^a,I^a)$
 obtained by absorption of the $R$-$R$-bimodule $W'_0$. Then, $\Omega$
 is a multiple source in $({\cal A}^a,I^a)$ and we can consider
 the triangular interlaced weak ditalgebra
 $({\cal A}^{a\cirmin },I^{a\cirmin })$. We have that
 $({\cal A}^{\cirmin  a},I^{\cirmin  a})=({\cal A}^{a\cirmin },I^{a\cirmin })$ and there is a
 commutative square of functors
$$\begin{matrix}
   ({\cal A}^a,I^a)\g\Mod&\rightmap{F^a}&({\cal A},I)\g\Mod\\
   \shortlmapdown{\Res}&&\shortlmapdown{\Res}\\
   ({\cal A}^{a\cirmin },I^{a\cirmin })\g\Mod&\rightmap{F^{\cirmin  a}}&({\cal
A}^{\cirmin },I^{\cirmin })\g\Mod.\\
  \end{matrix}$$
\end{lemma}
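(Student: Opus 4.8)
The plan is to follow exactly the same template as the preceding lemmas on compatibility of reduction procedures with detachment of a source — (\ref{L: (Ad,Id) vs (Acird,Icird)}), (\ref{L: (Ar,Ir) vs (Acirr,Icirr)}), and (\ref{L: cocientes con fuente}) — so that the present statement becomes the ``absorption'' entry in that list. First I would verify that $e_0$ remains a source of $({\cal A}^a,I^a)$: after absorption the new layer is $(R^a,W^a)$ where $R^a$ is generated by $R$ and $W'_0$, but since $W'_0=W'_0f$ (because $W'_0\subseteq fW_0f$) we have $e_0R^a=ke_0$ and $e_0W^a=0$, and $e_0I^a=e_0I=0$ gives $e_0\not\in I^a$; this parallels the corresponding checks in the earlier lemmas and uses (\ref{L: absorption}).

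**Identifying the two composites.** The key algebraic point is to show $({\cal A}^{\cir a},I^{\cir a})=({\cal A}^{a\cir},I^{a\cir})$. Write $f=1-e_0$. On the ${\cal A}^\cir$-side, absorption of $W'_0$ enlarges $R$ to $R^\cir_a$, the subalgebra of $T^\cir=T_R(fWf)$ generated by $R$ and $W'_0$; on the ${\cal A}^a$-side, detachment of the source keeps the subalgebra of $T^a$ generated by $f(W^a)f$. Since $e_0W=0$ forces $fW^nf=(fWf)^n$ for all $n\geq 1$ (exactly as used in (\ref{L: (Acir,Icir)}) and (\ref{L: Acir wild implies A wild})), the two underlying $t$-algebras literally coincide: in both cases one obtains $T_{R^a_f}(fW''_0f\oplus fW_1f)$ where $R^a_f$ is generated by $R$ and $W'_0$. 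The differentials agree because $\delta^a=\delta$ by the very definition of absorption, $\delta^{\cir}$ is the restriction of $\delta$ to $T^\cir$, and both $\delta^{\cir a}$ and $\delta^{a\cir}$ are the restriction of $\delta$ to the common subalgebra; the ideals agree because $I^{\cir a}=I^\cir=fIf$ and $I^{a\cir}=fI^af=fIf$. I would present this as a short computation $\phi\delta(fwf)=\phi(f\delta(w)f)=\phi^\cir(f\delta(w)f)=\phi^\cir\delta(fwf)$ on generators $w$ of the relevant bimodules, mirroring the displayed computations in Lemmas (\ref{L: (Ad,Id) vs (Acird,Icird)})--(\ref{L: cocientes con fuente}).

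**The commutative square.** For the square of functors, note that $F^a$ and $F^{\cir a}$ are both induced by the \emph{identity} morphism of weak ditalgebras (absorption changes only the layer, not $T$ or $\delta$), so each is an isomorphism of categories by (\ref{L: absorption}), and each $\Res$ is ``multiply by $f$'' from (\ref{L: (Acir,Icir)}). Commutativity $\Res\circ F^a=F^{\cir a}\circ\Res$ is then a tautology on objects ($fM=fM$) and on morphisms ($(f^0,f^1)\mapsto(\Res f^0,\Res f^1)$ along either path), precisely because absorption does not alter the action maps. This is the ``straightforward'' endpoint of each of the earlier lemmas and will be recorded as such.

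**Main obstacle.** There is no genuine obstacle here — the statement is essentially a bookkeeping lemma — but the one place that needs care is checking that the decomposition $W_0=W'_0\oplus(W''_0\oplus W_0e_0)$ is a decomposition of \emph{$R$-$R$-bimodules} (not merely $fR$-$fR$-bimodules) and that $\delta(W'_0)=0$ still holds after regarding $W'_0$ inside the full $W_0$; both are immediate from $W'_0=fW'_0f$ and $\delta(W'_0)=0$ by hypothesis, but they are the hypotheses of (\ref{L: absorption}) that must be literally matched. Equally, one should remark that $W'_0\cong e_iR\otimes_kRe_i$ with $Re_i=ke_i$ guarantees $W'_0$ is exactly the kind of ``loop'' bimodule absorption is designed for, so that the enlarged $R^a$ is again a minimal algebra and $({\cal A}^a,I^a)$ is again a triangular interlaced weak ditalgebra. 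After these matchings, the proof is a one-line appeal to (\ref{L: absorption}) together with the identity $fW^nf=(fWf)^n$, just as in the three preceding lemmas.
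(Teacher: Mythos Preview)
Your proposal is correct and follows essentially the same approach as the paper's proof: verify that $e_0$ remains a source in $({\cal A}^a,I^a)$, identify the underlying $t$-algebras $T^{a\cir}$ and $T^{\cir a}$ (the paper does this via the centrality of $f$ in $R^a$, which is equivalent to your use of $fW^nf=(fWf)^n$), observe that all four differentials are restrictions of the same $\delta$, check $I^{a\cir}=fIf=I^{\cir a}$, and declare the square commutative since $F^a$, $F^{\cir a}$ are identity functors on the module categories. The displayed $\phi\delta(fwf)=\cdots$ computation you propose is unnecessary here---it belongs to the deletion and regularization lemmas where a genuine projection $\phi$ is involved; for absorption the chain $\delta^{a\cir}=(\delta^a)_\vert=\delta_\vert=\delta^\cir=\delta^{\cir a}$ suffices, as you yourself note.
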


\begin{proof} Here $1=\sum_{j\in {\cal P}}e_j$ is a decomposition of
the unit of the minimal algebra $R^a\cong T_R(W'_0)$ as a sum of orthogonal primitive idempotents, which contains the algebra $R$.
The layer of ${\cal A}^a$ is $(R^a,W^a)$ with $R^ae_i\cong k[x]$,
$W_0^a=R^a[W_0''\oplus W_0e_{_\Omega}]R^a$, and $W_1^a=R^aW_1R^a$.
Thus,
$e_{_\Omega}W_0^a=e_{_\Omega}R^aW_0''R^a\oplus e_{_\Omega}R^aW_0e_{_\Omega}R^a=0$ and
$e_{_\Omega}W_1^a=e_{_\Omega}R^aW_1R^a=R^ae_{_\Omega}W_1e_{_\Omega}R^a
=e_{_\Omega}W_1^ae_{_\Omega}$. Moreover, $R^ae_\omega=ke_\omega$, for $\omega\in \Omega$, and we already know
that $e_\omega\not\in I=I^a$. Thus, $\Omega$ is a multiple source in $({\cal A}^a,I^a)$.

Now,
$fW_0^af= fR^aW_0''R^af\oplus fR^a(W_0e_{_\Omega})R^af=fR^aW_0''R^af$ and
$fW_1^af=fR^aW_1R^af$. Then,
$(R^a)f=\prod_{j\in {\cal P}\setminus\Omega\cup \{i\}}Re_j\times R^ae_i=(Rf)^a$, and we get
$$T_{R^af}(fR^aW_0''R^af\oplus fR^aW_1R^af)=
T_{(Rf)^a}( (Rf)^aW''_0(Rf)^a\oplus (Rf)^aW_1(Rf)^a).$$
That is $T^{a\cirmin }=T^{\cirmin  a}$.
Moreover, $\delta^{a\cirmin }=(\delta^a)_\vert=\delta_\vert=
\delta^{\cirmin }=\delta^{\cirmin  a}$, so ${\cal A}^{a\cirmin }={\cal A}^{\cirmin  a}$,
and clearly $I^{a\cirmin }=fIf=I^{\cirmin  a}$.
The rest of the proof is straightforward.
\end{proof}

\begin{lemma}[reduction with an admissible module with multiple source]\label{L: (AX,IX) vs (AcirX,IcirX)}
Assume that $({\cal A},I)$ is a triangular interlaced weak ditalgebra
with multiple source  $\Omega$. Let $(R,W)$ be the triangular layer of ${\cal
A}=(T,\delta)$.
Write $f=1-e_{_\Omega}$ and assume that we have a decomposition $fW_0f=W'_0\oplus W''_0$
of $Rf$-$Rf$-bimodules with $\delta(W'_0)=0$. Thus we have the
decomposition
of $R$-$R$-bimodules $W_0=W'_0\oplus (W''_0\oplus W_0e_{_\Omega})$, with $\delta(W'_0)=0$.

Consider the $k$-subalgebra $B^{\cirmin }=T_{Rf}(W'_0)$ of $T^\cirmin =fTf$
and the $k$-subalgebra
$B=T_R(W'_0)=Re_{_\Omega}\times B^\cirmin $ of $T$.  Suppose that
$X^\cirmin $ is an complete
triangular admissible $B^\cirmin $-module, thus we have a splitting
$\Gamma^\cirmin =\End_{B^\cirmin }(X^\cirmin )^{op}=S^\cirmin \oplus P^\cirmin $, where
$S^\cirmin $ is a
subalgebra of $\Gamma^\cirmin $,  $P^\cirmin $ is an ideal of
$\Gamma^\cirmin $,  and
$X^\cirmin $
and $P^\cirmin $ are
finitely generated projective right $S^\cirmin $-modules.

Now, consider the $B$-module $X=Re_{_\Omega}\oplus X^\cirmin $, where $X^\cirmin $ is considered as a $B$-module by restriction through the projection map  $B\rightmap{}B^\cirmin $. Then, we have a
splitting
$\Gamma=\End_B(X)^{op}=S\oplus P$, where $S=Re_{_\Omega}\times S^\cirmin $ is a
subalgebra of $\Gamma$,  $P$ is an ideal of $\Gamma$, which is identified with the $S$-$S$-bimodule obtained from $P^\cirmin $ through the projection map $S\rightmap{}S^\cirmin $, and $X$ and
$P$ are finitely generated projective right $S$-modules.
Then, $X$ is a complete triangular admissible $B$-module,
and we can form the triangular interlaced weak ditalgebra $({\cal A}^X,I^X)$
obtained from $({\cal A},I)$ by reduction using the admissible $B$-module $X$, as in (\ref{P: (AX, IX)}).

We have that $\Omega$ is a multiple source  in $({\cal A}^X,I^X)$ and we can form
the triangular interlaced weak ditalgebra $({\cal A}^{X\cirmin  },I^{X\cirmin })$.
We can also form the triangular interlaced weak ditalgebra
$(({\cal A}^{\cirmin })^{X^\cirmin },(I^{\cirmin })^{X^\cirmin })$
obtained from
$({\cal A}^{\cirmin  },I^{\cirmin })$ by reduction using the admissible $B^\cirmin $-module $X^\cirmin $.

There is a canonical isomorphism of interlaced weak ditalgebras between  $({\cal A}^{X\cirmin  },I^{X\cirmin  })$ and
$(({\cal A}^{\cirmin })^{X^\cirmin },(I^{\cirmin })^{X^\cirmin })$, which allows us to identify them and we have the
following commutative diagram of functors with full and faithful rows
$$\begin{matrix}
   ({\cal A}^X,I^X)\g\Mod&\rightmap{F^X}&({\cal A},I)\g\Mod\\
   \shortlmapdown{\Res}&&\shortlmapdown{\Res}\\
   (({\cal A}^{\cirmin })^{X^\cirmin },(I^{\cirmin })^{X^\cirmin })\g\Mod&
   \rightmap{F^{X^\cirmin  }}&({\cal A}^{\cirmin },I^{\cirmin })\g\Mod.\\
  \end{matrix}$$
    Moreover, if $M\in ({\cal A},I)\g\Mod$ is such that
  $\Res(M)\cong F^{X^\cirmin }(N')$, for some
  $N'\in(({\cal A}^{\cirmin })^{X^\cirmin },(I^{\cirmin })^{X^\cirmin })\g\Mod$, then $M\cong F^X(N)$,
  for some $N\in ({\cal A}^{X},I^{X})\g\Mod$.
\end{lemma}

\begin{proof} Assume that $X^\cirmin $ is complete triangular admissible  $B^\cirmin $-module. Then, directly from (\ref{P: completez de X para la reduccion}), we have the associated  full and faithful functor $F^{X^\cirmin }:(({\cal A}^\cirmin )^{X^\cirmin },(I^\cirmin )^{X^\cirmin })\g\Mod\rightmap{}({\cal A}^\cirmin ,I^\cirmin )\g\Mod$,
 where $I^\cirmin =fIf$.

We have the projection map $\pi:B\rightmap{}B^\cirmin $, which enable us to consider $X^\cirmin $ as a $B$-module, so we have the splitting
$$\Gamma^\cirmin :=
\End_{B^\cirmin }(X^\cirmin )^{op}=\End_B(X^\cirmin )^{op}=S^\cirmin \oplus P^\cirmin ,$$
and $X^\cirmin $ is a triangular admissible $B^\cirmin $-module, relative to the given splitting of $\Gamma^\cirmin $. We have $P^{\cirmin  *}=\Hom_{S^\cirmin }(P^\cirmin ,S^\cirmin )$ and the corresponding comultiplication $\mu^\cirmin :P^{\cirmin  *}\rightmap{}P^{\cirmin  *}\otimes_{S^\cirmin }P^{\cirmin  *}$, which determines the structure of the ditalgebra $(B^\cirmin ,0)^{X^\cirmin }=(T_{S^\cirmin }(P^{\cirmin  *}),\delta(\mu^\cirmin ))=(B,0)^{X^\cirmin }$, see (\ref{R: complete B-mods}).
Since $\,_{B^\cirmin }X^\cirmin $ is complete, the first row in the following commutative diagram is a full and faithful functor
$$\begin{matrix}
(B^\cirmin ,0)^{X^\cirmin }\,\g\Mod&
\rightmap{F^{\prime X^\cirmin }}&
B^\cirmin \g\Mod\\
   \hbox{\,\,\,\,\,\,}\parallel&&\shortlmapdown{F_\pi}\\
  (B,0)^{X^\cirmin }\,\g\Mod&
  \rightmap{F^{X^\cirmin }}&B\g\Mod,\\
  \end{matrix}$$
  so the second one is full and faithful and $X^\cirmin $ is a complete admissible  $B$-module.

Since we have a surjective morphism of algebras $B\rightmap{}Re_{_\Omega}$, from (\ref{R: complete B-mods})(2), the admissible $B$-module
 $Re_{_\Omega}$ is complete. Since the $B$-module $X:=Re_{_\Omega}\oplus X^\cirmin $ satisfies
 $\Hom_B({\cal I}_{Re_{_\Omega}},{\cal I}_{X^\cirmin })=0$ and
 $\Hom_B({\cal I}_{X^\cirmin },{\cal I}_{Re_{_\Omega}})=0$, by (\ref{R: complete B-mods})(3),  it  is complete.
 So,  the associated functor
 $F^{\prime X}:(B,0)^X\g\Mod\rightmap{}B\g\Mod$ is full and faithful.
 We have the ideal $I_0=B\cap I$ of $B$ and the associated full and faithful functors
 $$F^{\prime X}:((B,0)^X,I_0^X)\g\Mod\rightmap{}(B/I_0)\g\Mod,$$
and $F^{X}:({\cal A}^X,I^X)\g\Mod\rightmap{}({\cal A},I)\g\Mod$,
 see (\ref{P: completez de X para la reduccion}).

\medskip
In order to compare $(({\cal A}^{\cirmin })^{X^\cirmin },(I^{\cirmin })^{X^\cirmin })$ and
$(({\cal A}^X)^\cirmin ,(I^X)^\cirmin )$, in the following we recall their constituents.
We need to consider the splitting $\Gamma=\End_B(X)=S\oplus P$, where $S=Re_{_\Omega}\times S^\cirmin $ and $P$ denotes the $S$-$S$-bimodule obtained from the $S^\cirmin \g S^\cirmin $-bimodule $P$ by restriction through the projection map $S\rightmap{}S^\cirmin $.
Observe that $W_0=fW_0=fW_0f\oplus fW_0e_{_\Omega}=W'_0\oplus
(W''_0\oplus W_0e_{_\Omega})$.
The layer of the weak ditalgebra ${\cal A}^X$ is $(S,W^X)$, where
$$W_0^X=X^*\otimes_R(W''_0\oplus W_0e_{_\Omega})\otimes_RX
\hbox{  \ and \  }
 W_1^X=(X^*\otimes_RW_1\otimes_RX)\oplus P^*,$$
 where $X^*=\Hom_S(X,S)$ and $P^*=\Hom_S(P,S)$.
 Notice that the $S$-$S$-bimodule $P^*$ can be identified with the  $S$-$S$-bimodule obtained from  the $S^\cirmin $-$S^\cirmin $-bimodule
 $P^{\cirmin  *}=\Hom_{S^\cirmin }(P^\cirmin ,S^\cirmin )$
 by restriction through the projection map $S\rightmap{}S^\cirmin $.
 Moreover, if we denote by $X^{\cirmin  *}=\Hom_{S^\cirmin }(X^\cirmin ,S^\cirmin )$, then  the  $S$-$R$-bimodule $X^*$ can be identified with the direct sum $Re_{_\Omega}\oplus X^{\cirmin  *}$, where $X^{\cirmin  *}$ is considered as an $S$-$R$-bimodule by restriction through the projections $S\rightmap{}S^\cirmin $ and $R\rightmap{}Rf$.
  Hence,
  $$\begin{matrix}W_0^X&=& X^*\otimes_R(W''_0\oplus W_0e_{_\Omega})\otimes_RX\hfill\\
   &=&(Re_{_\Omega}\oplus X^{\cirmin  *})\otimes_R(W''_0\oplus W_0e_{_\Omega})
   \otimes_R(Re_{_\Omega}\oplus X^\cirmin )\hfill\\
   &=& (X^{\cirmin  *}\otimes_RW''_0\otimes_RX^\cirmin ) \oplus
   (X^{\cirmin  *}\otimes_RW_0e_{_\Omega}\otimes_RRe_{_\Omega})\hfill\\
  \end{matrix}$$
  and
  $$\begin{matrix}W_1^X&=& [X^*\otimes_RW_1\otimes_RX]\oplus P^*\hfill\\
   &=&[(Re_{_\Omega}\oplus X^{\cirmin  *})\otimes_RW_1
   \otimes_R(Re_{_\Omega}\oplus X^\cirmin )]\oplus P^{\cirmin  *}\hfill\\

   &=&
   Re_{_\Omega}\otimes_RW_1
   \otimes_R(Re_{_\Omega}\oplus X^\cirmin )\hfill\\
   &&
   \oplus [X^{\cirmin  *}\otimes_RW_1
   \otimes_R(Re_{_\Omega}\oplus X^\cirmin )]\oplus P^{\cirmin  *}\hfill\\
   &=&

    Re_{_\Omega}\otimes_Re_{_\Omega}W_1e_{_\Omega}
   \otimes_R(Re_{_\Omega}\oplus X^\cirmin )\hfill\\
   &&
   \oplus [X^{\cirmin  *}\otimes_RW_1
   \otimes_R(Re_{_\Omega}\oplus X^\cirmin )]\oplus P^{\cirmin  *}\hfill\\

    &=&
    Re_{_\Omega}\otimes_Re_{_\Omega}W_1e_{_\Omega}
   \otimes_RRe_{_\Omega}\hfill\\
   &&
   \oplus [X^{\cirmin  *}\otimes_RW_1
   \otimes_RRe_{_\Omega}]\hfill\\
   &&
   \oplus [X^{\cirmin  *}\otimes_RW_1
   \otimes_R X^\cirmin ]\oplus P^{\cirmin  *}\hfill\\
  \end{matrix}$$
  Hence,
 $e_{_\Omega}W_0^X=0$ and $e_{_\Omega}W_1^X=Re_{_\Omega}\otimes_Re_{_\Omega}W_1e_{_\Omega}
   \otimes_RRe_{_\Omega}=
 e_{_\Omega}W_1^Xe_{_\Omega}$.

Let us show that $e_\omega\not\in I^X$, for $\omega\in \Omega$. For this notice that
$e_\omega W_0=0$ and $e_\omega\not\in I$ implies that $e_\omega I=0$. Thus, $fI=I$.
We claim that, for any $\omega\in \Omega$, $\nu\in X^*$, $x\in X$, and $h\in I$, we have that
$e_\omega\sigma_{\nu,x}(h)=0$.
Indeed, for $h\in I=fI$, we have $h=fa$ with
$a\in I\subseteq A=T_R(W_0)$, thus $a=r+\gamma$, with $r\in R$ and
$\gamma\in [\oplus_nW_0^{\otimes n}]$, so $h=fr+f\gamma$. Since,
$e_\omega\sigma_{\nu,x}(fr)=e_\omega\nu(frx)$ with $frx\in X^\cirmin $, we get
$e_\omega\nu(frx)=0$. Moreover, from \cite{BSZ}(12.8)(3), we get
$e_\omega\sigma_{\nu,x}(f\gamma)\in e_\omega[\oplus_n(W_0^X)^{\otimes n}]=0$. Hence
$e_\omega\sigma_{\nu,x}(h)=e_\omega\sigma_{\nu,x}(fr)+e_\omega\sigma_{\nu,x}(f\gamma)=0$, as
claimed.
Now, if $e_\omega\in I^X$, we get
$e_\omega=\sum_v(s_v+\gamma_v)\sigma_{\nu_v,x_v}(h_v)\zeta_v$,
where $s_v\in S^{\cirmin }$, $\gamma_v\in [\oplus_n(W_0^X)^n]$,
$\zeta_v\in A^X$, $\nu_v\in X^*$, $x_v\in X$, and $h_v\in I$.
Thus, from our precedent claim, we obtain
$e_\omega=\sum_vs_ve_\omega\sigma_{\nu_v,x_v}(h_v)\zeta_v=0$, a contradiction.
Thus $e_\omega\not\in I^X$, for all $w\in \Omega$.

Then, $\Omega$ is a multiple source of $({\cal A}^X,I^X)$ and we have the interlaced weak ditalgebra $({\cal A}^{X\cirmin },I^{X\cirmin })$ obtained by suppression of the multiple source idempotent $e_{_\Omega}\in S$. Here, we have $T^{X\cirmin }=T_{S\hat{f}}(\hat{f}(W_0^X\oplus W_1^X)\hat{f})$, where $\hat{f}=1-e_{_\Omega}\in S$.

We also have $T^\cirmin =T_{Rf}(fWf)$ and $(T^{\cirmin })^{X^\cirmin }=T_{S^\cirmin }((fW_0f)^{X^\cirmin }\oplus (fW_1f)^{X^\cirmin })$. From the description given before for $W_0^X$, we get:
$$(fW_0f)^{X^\cirmin }=
X^{\cirmin  *}\otimes_{Rf}W''_0\otimes_{Rf}X^\cirmin =
X^{\cirmin  *}\otimes_{R}W''_0\otimes_{R}X^\cirmin =\hat{f}W_0^X\hat{f}.$$
From the description given before for $W_1^X$, we have
  $(W_1^\cirmin )^{X^\cirmin }=(fW_1f)^{X^\cirmin }=[X^{\cirmin  *}\otimes_{Rf}fW_1f\otimes_{Rf}X^\cirmin ]\oplus P^{\cirmin  *}= \hat{f}W_1^X\hat{f}$.
    Therefore, we get $(T^{\cirmin })^{X^\cirmin }=T^{X\cirmin }$.

 The choose, naturally, the fixed dual basis for $X^*$ which is obtained from the dual basis of
 $X^{\cirmin  *}$ by adding, for each $\omega\in \Omega$, the elements $x_\omega:=e_\omega\in X$ and $\nu_\omega\in X^*$ given by $\nu_\omega(e_\omega)=e_\omega$ and $\nu_\omega(X^\cirmin +\sum_{w'\not=w}ke_{\omega'})=0$. Then, we have the commutative squares
  $$\begin{matrix}
    X^*&\rightmap{\lambda}&P^*\otimes_SX^*\\
    \shortlmapup{}&&\shortlmapup{}\\
    X^{\cirmin  *}&\rightmap{\lambda^\cirmin }&P^{\cirmin  *}\otimes_{S^\cirmin }X^{\cirmin  *}\\
   \end{matrix}
   \hbox{  \ \ and \ \  }
   \begin{matrix}
    X&\rightmap{\rho}&X\otimes_SP^*\\
    \shortlmapup{}&&\shortlmapup{}\\
    X^\cirmin &\rightmap{\rho^\cirmin }&X^\cirmin \otimes_{S^\cirmin }P^{\cirmin  *}\\
   \end{matrix}$$
   where the superscript $\cirmin $ has been attached to the maps associated to the admissible $B^\cirmin $-module $X^\cirmin $ and the vertical arrows denote inclusions. Indeed, this follows immediately from the expressions for these maps given in (7.6) in terms of the chosen dual basis.
  Similarly, for $\nu\in X^{\cirmin  *}$ and $x\in X^\cirmin $, we have the commutative square
 $$\begin{matrix}
    T&\rightmap{\sigma_{\nu,x}}&T^X\\
    \shortlmapup{}&&\shortlmapup{}\\
    T^{\cirmin }&\rightmap{\sigma^\cirmin _{\nu,x}}&(T^{\cirmin })^{X^\cirmin }\\
   \end{matrix}$$
  where, again, the vertical arrows denote inclusions.

Then, for  $\nu\in X^{\cirmin  *}$, $x\in X^\cirmin $, and
$ w\in B^\cirmin  W''_0B^\cirmin \cup B^\cirmin  W_1B^\cirmin $, we get
$$\begin{matrix}\delta^{X\cirmin }(\nu\otimes w\otimes x)&=&
   \lambda(\nu)\otimes w\otimes x+\sigma_{\nu,x}(\delta(w))+
   (-1)^{\deg w+1}\nu\otimes w\otimes \rho(x)\hfill\\
   &=&(\delta^{\cirmin })^{X^\cirmin }(\nu\otimes w\otimes x)\hfill\\
  \end{matrix}$$
and we obtain
${\cal A}^{X\cirmin }
=
(T^{X\cirmin },\delta^{X\cirmin })
=
((T^{\cirmin })^{X^\cirmin },(\delta^{\cirmin })^{X^\cirmin })
=
({\cal A}^{\cirmin })^{X^\cirmin }$.

Now, let us prove that $(I^\cirmin )^{X^\cirmin }=(I^X)^\cirmin $, that is $(fIf)^{X^\cirmin }=\hat{f}I^X\hat{f}$. As before,
since $e_{\Omega}W_0^X=0$ and $e_\omega\not\in I^X$, we obtain that $\hat{f}I^X=I^X$.
So we want to show the equality $I^X\hat{f}=(If)^{X^\cirmin }$. Given a generator
$\sigma_{\nu,x}(hf)$ of $(If)^{X^\cirmin }$, with $x\in X^{\cirmin }$,  $\nu\in X^{\cirmin  *}$, and $h\in I$, from \cite{BSZ}(12.8)(3), we obtain
$\sigma_{\nu,x}(hf)=\sum_i\sigma_{\nu,x_i}(h)\sigma_{\nu_i,x}(f)=
\sum_i\sigma_{\nu,x_i}(h)\nu_i(fx)$. But, since $fx\in X^\cirmin $,
we have $\nu_i(fx)=\nu_i(fx)\hat{f}$. Thus $\sigma_{\nu,x}(hf)\in I^X\hat{f}$,
and
$(If)^{X^\cirmin }\subseteq I^X\hat{f}$. Now, consider a generator
$\sigma_{\nu,x}(h)$ of the ideal $I^X$ of $A^X$, thus $h\in I$, $x\in X$, and $\nu\in X^*$. We know that $h=\sum_{\omega\in \Omega}he_\omega+hf$, so
$$\begin{matrix}
\sigma_{\nu,x}(h)\hat{f}
&=&
[\sum_\omega\sigma_{\nu,x}(he_\omega)+\sigma_{\nu,x}(hf)]\hat{f}\hfill\\
&=&
[\sum_{\omega,i}\sigma_{\nu,x_i}(h)\sigma_{\nu_i,x}(e_\omega)
+
\sum_i\sigma_{\nu,x_i}(hf)\sigma_{\nu_i,x}(f)]\hat{f}\hfill\\
&=&
[\sum_i\sigma_{\nu,x_i}(hf)\nu_i(fx)]\hat{f}
=
\sum_i\sigma_{\nu,x_i}(hf)\nu_i(fx)\in (If)^{X^\cirmin } \hfill\\
\end{matrix}$$
since $\sigma_{\nu_i,x}(e_\omega)\in ke_\omega$ and $\nu_i(fx)\in S^{\cirmin }\hat{f}$.
Now, notice that since $A^{X\cirmin }$ is a convex subalgebra of $A^X$, the ideal $I^X\hat{f}=\hat{f}I^X\hat{f}$ of $\hat{f}A^X\hat{f}=A^{X\cirmin }$ is generated by the elements $\sigma_{\nu,x}(x)\hat{f}$ analyzed before. So
$I^X\hat{f}=(If)^{X^\cirmin }$.

From the preceding facts,  we get
$({\cal A}^{X\cirmin },I^{X\cirmin })=(({\cal A}^{\cirmin })^{X^\cirmin }, (I^{\cirmin })^{X^\cirmin })$, as claimed.
 Suppose that $M\in ({\cal A},I)\g\Mod$ is such that
  $\Res(M)\cong F^{X^\cirmin }(N')$, for some
  $N'\in({\cal A}^{X\cirmin },I^{X\cirmin })\g\Mod$. So we have an isomorphism of left $B^\cirmin $-modules $fM\cong X^\cirmin \otimes_{S^\cirmin }N'$.
  But the $B$-module $M$ admits the decomposition $M=e_\Omega M\oplus fM$ and
  $X\otimes_Se_\Omega M\cong Re_\Omega\otimes_Se_\Omega M\cong e_\Omega M$.
  Consider the $S$-module $N:=e_\Omega M\oplus N'$, where $N'$ is considered as an $S$-module by restriction through the projection map $S\rightmap{}S^\cirmin $. Therefore we have an isomorphism of left $B$-modules $X\otimes_SN=X\otimes_{S}(e_\Omega M\oplus N')\cong (Re_{_\Omega}\otimes_Se_{_\Omega}M)\oplus (X^\cirmin \otimes_{S^\cirmin }N')\cong e_\Omega M\oplus fM$,
  which, by (\ref{P: (AX, IX)})(4), implies that $M\cong F^X(\overline{N})$,
  for some $\overline{N}\in ({\cal A}^{X},I^{X})\g\Mod$.
\end{proof}

 \begin{example} Let us start from a very simple interlaced weak  ditalgebra $({\cal A},I)$, where ${\cal A}$ is the tensor algebra of the following biquiver (without dashed arrows and with trivial derivation) and ideal $I$ generated by the path $\beta\alpha$.
 $${\cal A} : \quad \quad \xymatrix{ &  _1 \ar[r]^{\alpha} &  _2 \ar[rd]^{\beta} & \\
 _0 \ar[ru]^{\lambda} & & &  _3} \quad \quad \beta \alpha = 0$$
 In a first step, we apply the edge-reduction of $\beta$ to $({\cal A},I)$, which corresponds to the reduction of $({\cal A},I)$ by a suitable admissible module $X$ see \cite{BSZ}(23.18). The interlaced weak ditalgebra obtained $({\cal A}^\beta,I^\beta)$ has the following biquiver with differential $\delta$ and ideal $I^\beta$ determined by the following data.
$${\cal A}^{\beta} : \quad \quad \xymatrix{
                               &                                                           &  _2                          & \\
                               &  _1 \ar[ru]^{\alpha _1} \ar[r]_{\alpha _2} &  _{3'}     \ar@{-->}[u]_{\xi}           & \\
 _0 \ar[ru]^{\lambda} & & &  _3 \ar@{-->}[lu]_{\eta}} \quad \quad \delta \left( \alpha _1 \right) = \xi \alpha _2 , \quad \alpha _2 = 0$$
Now, we can factor out from $({\cal A}^\beta,I^\beta)$ the solid arrow $\alpha_2$, which belongs to the interlaced ideal $I^\beta$, to obtain the interlaced weak ditalgebra $({\cal A}^{\beta q},I^{\beta q})$, with $I^{\beta q}=0$, determined by the following data

$${\cal A}^{\beta q} : \quad \quad \xymatrix{
                               &                                                           &  _2                          & \\
                               &  _1 \ar[ru]^{\alpha _1}                          &  _{3'}     \ar@{-->}[u]_{\xi}           & \\
 _0 \ar[ru]^{\lambda} & & &  _3 \ar@{-->}[lu]_{\eta}} \quad \quad \delta \left( \alpha _1 \right) = 0$$
Then, we can proceed to apply the edge-reduction of $\alpha_1$ to the last  ditalgebra to obtain $({\cal A}^{\beta q\alpha_1}, I^{\beta q\alpha_1})$
given by the following data (with trivial ideal)

$${\cal A}^{\beta q \alpha _1} : \quad \quad \xymatrix{
& _1 & _2 \ar@{-->}[ld]^{\eta `} & & \\
_0 \ar[ru]^{\lambda _1} \ar[r]^{\lambda _2} & _{2'} \ar@{-->}[u] _{\zeta `} & & _{3'} \ar@{-->}[ll]^{\xi _1} \ar@{-->}[lu]_{\xi _2} & _{3} \ar@{-->}[l]_{\eta}} \quad \delta \left( \lambda _1 \right) = \zeta ` \lambda _2 $$
The biquiver of the ditalgebra obtained from the preceding one by supressing the source idempotent $e_0$ is the following minimal seminested ditalgebra
$${\cal A}^{\beta q\alpha _1\cirmin} : \quad \quad \xymatrix{
 _1 & _2 \ar@{-->}[ld]^{\eta `} & & \\
 _{2'} \ar@{-->}[u] _{\zeta `} & & _{3'} \ar@{-->}[ll]^{\xi _1} \ar@{-->}[lu]_{\xi _2} & _{3} \ar@{-->}[l]_{\eta}}$$

On the other hand, we can start from the original interlaced weak ditalgebra $({\cal A},I)$ and suppress first the source idempotent $e_0$ to obtain
$${\cal A}^{\cirmin} : \quad \quad \xymatrix{  _1 \ar[r]^{\alpha} &  _2 \ar[rd]^{\beta} & \\
  & &  _3} \quad \quad \beta \alpha = 0$$
Then we can apply edge-reduction of the arrow $\beta$ to obtain
$${\cal A}^{\cirmin \beta} : \quad \quad \xymatrix{
                                                         &  _2                          & \\
 _1 \ar[ru]^{\alpha _1} \ar[r]_{\alpha _2} &  _{3'}     \ar@{-->}[u]_{\xi}           & \\
  &   &  _3 \ar@{-->}[lu]_{\eta}}
  \quad \quad \delta \left( \alpha _1 \right) = \xi \alpha _2 , \quad \alpha _2 = 0$$
 and then factor out the arrow $\alpha_2$ to get
 $${\cal A}^{\cirmin \beta q} : \quad \quad \xymatrix{
   &  _2                          & \\
  _1 \ar[ru]^{\alpha _1}                          &  _{3'}     \ar@{-->}[u]_{\xi}           & \\
 & &  _3 \ar@{-->}[lu]_{\eta}} \quad \quad \delta \left( \alpha _1 \right) = 0$$
Finally, if we apply edge-reduction of the arrow $\alpha_1$ to the preceding ditalgebra, we obtain
$${\cal A}^{\cirmin \beta q \alpha _1} : \quad \quad \xymatrix{
 _1 & _2 \ar@{-->}[ld]^{\eta `} & & \\
 _{2'} \ar@{-->}[u] _{\zeta `} & & _{3'} \ar@{-->}[ll]^{\xi _1} \ar@{-->}[lu]_{\xi _2} & _{3} \ar@{-->}[l]_{\eta}}$$
which coincides with ${\cal A}^{ \beta q\alpha _1\cirmin}$.
 \end{example}

   \section{Reduction to minimal ditalgebras}

This section is devoted to the proof of the following theorem, our main result for
non-wild ${\cal P}$-oriented interlaced weak ditalgebras $({\cal A},I)$, see (\ref{D: biquiver P-orientado}).
This result reduces the study of the  $({\cal A},I)$-modules  with  dimension bounded by some $d\in\hueca{N}$
 to the study of modules over a minimal ditalgebra
obtained from $({\cal A},I)$ by a finite number of reductions. This
is the main step to the proof of the tame-wild dichotomy for this special type of weak interlaced ditalgebras $({\cal A},I)$.

\begin{remark}  Assume that ${\cal P}$ is a finite preordered set and that ${\cal A}=(T,\delta)$ is a ${\cal P}$-oriented weak ditalgebra with layer $(R,W)$, where $R=\prod_{i\in {\cal P}}ke_i$. Then, if $\Omega\in \overline{\cal P}$ is any minimal element, we have that  $\Omega\subseteq {\cal P}$ satisfies $e_{_\Omega} W_0=0$, $e_{_\Omega} W_1=e_{_\Omega} W_1e_{_\Omega}$, and  $Re_\omega=ke_\omega$, for all $\omega\in \Omega$. That is, $\Omega$ is a multiple source of ${\cal A}$.
\end{remark}

\begin{theorem}\label{T: parametrización de módulos de dim acotada}
 Assume that the ground field $k$ is algebraically closed and let ${\cal P}$ be a finite preordered set.
 Assume $({\cal A},I)$ is a ${\cal P}$-oriented triangular interlaced weak ditalgebra as in
(\ref{D: triangular interlaced weak ditalgebras}), where $I$ is an
 ideal of $A$ contained in the radical of $A$.
Suppose that $({\cal A},I)$ is not wild and take $d\in \hueca{N}$.
Then, there is a finite sequence of reductions
$$({\cal A},I)\mapsto ({\cal A}^{z_1},I^{z_1})\mapsto\cdots\mapsto
({\cal A}^{z_1z_2\cdots z_t},I^{z_1z_2\cdots z_t})$$
of type $z_i\in \{a,d,r,q,X\}$ such that ${\cal A}^{z_1z_2\cdots z_t}$ is a
minimal ditalgebra, we have  $I^{z_1z_2\cdots z_t}=0$, and almost every
$({\cal A},I)$-module $M$ with  $\dim_k
M\leq d$ has the form
$M\cong F^{z_1}\cdots F^{z_t}(N)$, for some $N\in ({\cal A}^{z_1\cdots
z_t},I^{z_1\cdots z_t})\g\Mod$.
\end{theorem}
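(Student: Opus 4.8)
Plan: the argument goes by induction on the number $n$ of centrally primitive idempotents of $R$, detaching a source at each step, recursing, and finishing with the stellar reduction of \S 8 together with the classical passage from a non-wild seminested ditalgebra to a minimal one. First a cleaning step. Since $I$ is proper, not every centrally primitive idempotent of $R$ lies in $I$; deleting, via (\ref{P: idemp deletion}), every idempotent of $R$ that does lie in $I$ — each such deletion being an equivalence of categories, because the deleted point is annihilated by every module, and it keeps the underlying bigraph directed — we may assume no idempotent of $R$ lies in $I$. As the bigraph is directed it has a graph source $e_0$, and then $e_0W=0$, $Re_0=ke_0$ and $e_0\notin I$, so $e_0$ is a source idempotent of $(\mathcal{A},I)$ in the sense of (\ref{D: source idempotent of (A,I)}). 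When $n=1$ directedness forbids loops, so $W=0$, $\mathcal{A}$ is already a minimal ditalgebra and $I=0$; this is the base case.

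For $n>1$, detach the source as in (\ref{L: (Acir,Icir)}): by (\ref{L: densidades a nivel restringido})(1), $(\mathcal{A}^{\cir},I^{\cir})=(\mathcal{E},0)\times(\mathcal{A}^{d_0},I^{d_0})$ with $\mathcal{E}=(ke_0,0)$ and $\mathcal{S}=(\mathcal{E},0)\g\Mod$, where $(\mathcal{A}^{d_0},I^{d_0})$ is the deletion of $e_0$, a directed triangular interlaced weak ditalgebra with $n-1$ points. By (\ref{L: Acir wild implies A wild}), using the identity $(\mathcal{A}^{\cir d_0},I^{\cir d_0})=(\mathcal{A}^{d_0},I^{d_0})$ and the fact that the deletion functor into $(\mathcal{A}^{\cir},I^{\cir})$ is full, faithful and omits only the module $ke_0$, the ditalgebra $(\mathcal{A}^{d_0},I^{d_0})$ is not wild. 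If $I^{d_0}=A^{d_0}$ then $(\mathcal{A}^{d_0},I^{d_0})\g\Mod$ is trivial, so $\Res(M)\in\mathcal{S}$ and $fM=0$ for every $M\in(\mathcal{A},I)\g\Mod$; deletion of all points of $\mathcal{A}$ except $e_0$ is then an equivalence onto the minimal ditalgebra $\mathcal{E}$, and we are done. Otherwise $I^{d_0}$ is proper, and the induction hypothesis applied to $(\mathcal{A}^{d_0},I^{d_0})$ — for a dimension $d'\ge d$ to be fixed at the end — produces a finite chain $w_1,\dots,w_t$ of reductions of types in $\{a,d,r,q,X\}$, ending at a minimal ditalgebra with zero ideal, which covers almost every $(\mathcal{A}^{d_0},I^{d_0})$-module of dimension $\le d'$.

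Now lift the chain. Each $w_i$ is a reduction of the second factor of $(\mathcal{A}^{\cir},I^{\cir})$, hence by the ``with source'' lemmas (\ref{L: (Ad,Id) vs (Acird,Icird)}), (\ref{L: (Ar,Ir) vs (Acirr,Icirr)}), (\ref{L: cocientes con fuente}), (\ref{L: absorption with source}), (\ref{L: (AX,IX) vs (AcirX,IcirX)}) it determines a reduction of the same type on $(\mathcal{A},I)$, these being compatible with $\Res$ as in (\ref{L: densidades a nivel restringido})(3,4). Performing $w_1,\dots,w_t$ on $(\mathcal{A},I)$ we arrive at $(\mathcal{A}^{w_1\cdots w_t},I^{w_1\cdots w_t})$, whose detachment is $(\mathcal{E},0)\times(\text{minimal},0)$; thus the $f$-part of its layer is that of a minimal ditalgebra, so all its full arrows start at $e_0$, whence it is a stellar triangular interlaced weak ditalgebra with star center $e_0\notin I^{w_1\cdots w_t}$. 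Applying (\ref{T: de stellar wditalg to seminested ditalg}) to it for the dimension $d$ gives a further chain of reductions ending at a seminested ditalgebra $\mathcal{A}'$ with zero ideal which covers \emph{every} module of dimension $\le d$; by (\ref{P: Reducciones vs wildness}), $\mathcal{A}'$ is not wild, so — exactly as for ordinary ditalgebras, cf.\ \cite{BSZ} — a last chain of reductions carries $\mathcal{A}'$ to a minimal ditalgebra $\mathcal{A}''$ with zero ideal covering almost every module of dimension $\le d$. Composing all these reductions yields the desired $F^{z_1}\cdots F^{z_s}\colon(\mathcal{A}'',0)\g\Mod\to(\mathcal{A},I)\g\Mod$ with $\mathcal{A}''$ minimal.

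It remains to check the covering assertion, and this is where the work really lies. Every factor above is full and faithful, so it suffices that the $(\mathcal{A},I)$-modules of dimension $\le d$ outside the image of the composite form only finitely many isoclasses. If $\dim_k M\le d$ then $\dim_k fM=\dim_k\Res(M)\le d\le d'$, and by the compatibility with $\Res$ together with the density clauses (``if $\Res(M)\cong F^{\cir z}(N')$ then $M\cong F^z(N)$'') iterated along $w_1,\dots,w_t$, as soon as $fM$ lies in the image of the $w_\bullet$-chain for $(\mathcal{A}^{d_0},I^{d_0})$ one obtains $M\cong F^{w_1\cdots w_t}(L)$ with $\dim_k L\le d$, and then the stellar and minimal steps cover $L$ up to finitely many of the resulting minimal modules. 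The $(\mathcal{A}^{d_0},I^{d_0})$-modules excluded by the induction hypothesis form finitely many isoclasses; the genuinely delicate point — and this is exactly where non-wildness enters — is to control the $d$-dimensional $(\mathcal{A},I)$-modules whose restriction falls into this finite set: over a fixed such restriction the reattachment of the source $e_0$ is governed by a matrix problem that, because $(\mathcal{A},I)$ is not wild, cannot be arbitrarily large, so (possibly after enlarging the reduction chain so as to sweep in those exceptional restrictions) only finitely many isoclasses escape. Simultaneously one must keep the dimension bookkeeping consistent, which forces $d'$ to be taken large enough to absorb the dimension growth of the $F^X$-steps, bounded by the constants $c_X$ of (\ref{P: (AX, IX)})(3). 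This double bookkeeping — tracking the dimension bounds and the finiteness of the isoclasses lost at each stage — is the main obstacle; the structural skeleton (delete the $I$-idempotents, detach the source, recurse, lift through \S 8, stellar reduction, minimal reduction) is otherwise immediate from the results already established.
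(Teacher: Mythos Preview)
Your structural outline matches the paper's proof: delete idempotents lying in $I$, pick a source $e_0$, induct on the number of points by passing to $(\mathcal{A}^{d_0},I^{d_0})$, lift the resulting chain via the ``with source'' lemmas of \S\ref{Reducs with source}, observe that the result is stellar with center $e_0$, then apply (\ref{T: de stellar wditalg to seminested ditalg}) and the classical seminested-to-minimal reduction from \cite{BSZ}. The gap is in your treatment of the ``almost every''. You take the theorem's conclusion literally as the induction hypothesis, and then correctly notice that finitely many exceptional isoclasses in $(\mathcal{A}^{d_0},I^{d_0})\g\Mod$ could lift under $\Res$ to infinitely many isoclasses in $(\mathcal{A},I)\g\Mod$. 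Your proposed fix --- that non-wildness bounds the ``reattachment'' problem over a fixed restriction, possibly after enlarging the chain --- is not substantiated: non-wildness of $(\mathcal{A},I)$ gives no direct finiteness statement for the $d$-dimensional $M$ with a prescribed $fM$, and ``enlarging the reduction chain'' to absorb these exceptions is not something the available lemmas provide.

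The paper avoids this issue entirely by running the induction with \emph{every} rather than \emph{almost every}. All of the constituent reduction steps --- the initial idempotent deletions, the stellar reduction (\ref{T: de stellar wditalg to seminested ditalg}), and the passage from a non-wild seminested ditalgebra to a minimal one via \cite{BSZ}(28.22) --- are stated and proved so as to cover \emph{all} modules of the given bounded dimension. Hence the induction hypothesis may be taken as: there is a chain of reductions ending at a minimal ditalgebra with zero ideal whose composite covers every $(\mathcal{A}^{d_0},I^{d_0})$-module of dimension $\le d$. The density clauses in the ``with source'' lemmas then lift this to cover every $(\mathcal{A},I)$-module of dimension $\le d$ at the stellar stage, and your delicate point disappears. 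One bookkeeping correction: the stellar step (\ref{T: de stellar wditalg to seminested ditalg}) must be applied for the bound $d'$ on the preimages $N$ under the lifted chain, not for $d$; the paper makes this explicit.
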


\begin{proof} Let $(R,W)$ be the layer of ${\cal A}$. By assumption $R=\prod_{i\in {\cal P}}ke_i$ is a
product of fields. Since $I\subseteq \rad A$, we have
 that no idempotent $e_i$ belongs to $I$.

Since ${\cal A}$ is ${\cal P}$-oriented, if we choose a minimal element $\Omega$ in the poset $\overline{\cal P}$, we obtain a multiple source $\Omega\subseteq {\cal P}$ for the interlaced weak ditalgebra
$({\cal A},I)$. So, $e_{_\Omega} W_0=0$ , $e_{_\Omega} W_1=e_{_\Omega} W_1e_{_\Omega}$, and  $Re_\omega=ke_\omega$ with $e_\omega\not\in I$, for all $\omega\in \Omega$.

 We will  proceed by induction on the number of points of the interlaced weak
 ditalgebra $({\cal A},I)$.
The base of the induction is clear: if there is only one point $i$ in ${\cal A}$, we have ${\cal P}=\Omega=\{i\}$ and ${\cal A}$ admits no solid arrows, so ${\cal A}$ is a minimal ditalgebra, and we are done. The same holds whenever ${\cal P}=\Omega$, so we can assume that this is not the case.

Assume $n\in \hueca{N}$, with $n>1$. Suppose that the theorem holds for any
non-wild ${\cal P}'$-oriented  triangular
interlaced weak ditalgebra $({\cal A}',I')$ with $m<n$ points, where $I'$ is an ideal of $A'$ such that $I'\subseteq \rad A'$. Suppose that
$({\cal A},I)$ is a non-wild ${\cal P}$-oriented triangular interlaced weak ditalgebra
with multiple source $\Omega$ and $n$ points,
where $I$ is an ideal of $A$ with $I\subseteq \rad A$, and take $d\in \hueca{N}$.
Now,  consider the non-wild triangular interlaced weak ditalgebra
$({\cal A}^\cirmin ,I^\cirmin )$ obtained from $({\cal A},I)$ by supression of the multiple source idempotent $e_{_\Omega}$. Consider the non-empty subset
${\cal P}^\cirmin ={\cal P}\setminus \Omega$ of ${\cal P}$ with the induced preorder and notice that the biquiver $\hueca{B}^\cirmin $ of
${\cal A}^\cirmin $ is ${\cal P}^\cirmin $-oriented.
So $({\cal A}^ \cirmin ,I^\cirmin )$ is a
${\cal P}^\cirmin$-oriented non-wild triangular interlaced weak ditalgebra where $I^\cirmin$ is an ideal of $A^\cirmin$ with $I^\cirmin\subseteq \rad A^\cirmin$, to which we can apply our induction hypothesis.
So, for a fixed $d\in \hueca{N}$, to obtain a finite sequence
of reductions
$$({\cal A}^\cirmin ,I^\cirmin )\mapsto ({\cal A}^{\cirmin  z_1},I^{\cirmin  z_1})\mapsto
\cdots\mapsto
({\cal A}^{\cirmin  z_1z_2\cdots z_t},I^{\cirmin  z_1z_2\cdots z_t})$$
of type $z_i\in \{a,d,r,q,X\}$ such that
${\cal A}^{\cirmin  z_1z_2\cdots z_t}$ is a
minimal ditalgebra, we have  $I^{\cirmin  z_1z_2\cdots z_t}=0$,
and any $({\cal A}^\cirmin ,I^\cirmin )$-module $M''$ with  $\dim_k
M''\leq d$ has the form
$M''\cong F^{z_1}\cdots F^{z_t}(N'')$, for some $N''\in ({\cal A}^{\cirmin
z_1\cdots
z_t},I^{\cirmin  z_1\cdots z_t})\g\Mod$.

From the lemmas in \S\ref{Reducs with source}, we can perform a corresponding finite sequence of reductions $$({\cal A},I)\mapsto ({\cal A}^{z_1},I^{ z_1})\mapsto
\cdots\mapsto
({\cal A}^{z_1z_2\cdots z_t},I^{ z_1z_2\cdots z_t})$$
of the same type $z_i\in \{a,d,r,q,X\}$ as those applied successively to $({\cal A}^\cirmin ,I^\cirmin )$, such that
$({\cal A}^{z_1\cdots z_t\cirmin },I^{z_1\cdots z_t\cirmin })=
({\cal A}^{\cirmin  z_1\cdots z_t},I^{\cirmin  z_1\cdots z_t})$
and there is a commutative
diagram
$$\begin{matrix}({\cal
A}^{z_1\cdots z_t},I^{z_1\cdots z_t})\g\Mod&\rightmap{F^{z_t}}
&\cdots&\rightmap{F^{z_1}}&({\cal A},I)\g\Mod\\
\shortlmapdown{\Res}&&&&\shortlmapdown{\Res}\\
  ({\cal A}^{z_1\cdots z_t\cirmin },I^{z_1\cdots z_t\cirmin })\g\Mod&\rightmap{F^{\cirmin
z_t}}&\cdots&\rightmap{F^{\cirmin  z_1}}&
  ({\cal A}^\cirmin ,I^\cirmin )\g\Mod
  \end{matrix}$$
where ${\cal A}^{z_1\cdots z_t\cirmin }$  is a minimal ditalgebra,
$I^{z_1\cdots z_t\cirmin }=0$, and any
$M'\in({\cal A}^\cirmin ,I^\cirmin )\g\Mod$ with $\dim_k M'\leq d$ is of the form
$F^{\cirmin  z_1}\cdots F^{\cirmin  z_t}(N')\cong M'$.
If $M\in ({\cal A},I)\g\Mod$ and
$\dim_k M\leq d$, then $\dim_k\Res(M)\leq d$, and
$\Res(M)\cong F^{\cirmin  z_1}\cdots F^{\cirmin  z_t}(N')$. Thus, for some $N\in ({\cal
A}^{z_1\cdots z_t},I^{z_1\cdots z_t})\g\Mod$, we have
$F^{z_1}\cdots F^{z_t}(N)\cong M$. Moreover, $\dim_k N\leq d'$,
for some fixed $d'$ which depends on $d$. For the sake of simplicity, set
$({\cal A}^\prime,I^\prime)
:=({\cal A}^{z_1\cdots z_t},I^{z_1\cdots z_t})$.

Here, we have that ${\cal A}^{\prime\cirmin }$ is a minimal ditalgebra, $I^{\prime\cirmin }=0$,
and $({\cal A}',I')$ is a  triangular
interlaced weak ditalgebra with multiple source $\Omega$. If $(R',W')$ denotes the layer of
${\cal A}'$,
say with $R'=\prod_{i\in {\cal P}'}R'{e_i}$,
and we define $f=1-e_{_\Omega}=\sum_{j\in {\cal P}'\setminus\Omega}e_j\in R'$,
we know that $e{_\Omega}W'_0=0$ and $fW'_0f=0$, so $W'_0e_{_\Omega}=W'_0$.
Then, we get that  $({\cal A}',I')$ is a
stellar triangular interlaced weak ditalgebra with stars centers $\{e_\omega\}_{\omega\in \Omega}$.  Then, apply
(\ref{T: de stellar wditalg to seminested ditalg}) to $d'$, to obtain a
composition of full and faithful reduction functors
$$\begin{matrix}({\cal
A}'',0)\g\Mod&\rightmap{F^{y_s}}&\cdots&\rightmap{F^{y_1}}&({\cal
A}',I')\g\Mod\\
  \end{matrix}$$
such that ${\cal A}''$ is a non-wild seminested ditalgebra and any
$N\in ({\cal A}',I')\g\Mod$ with $\dim_k N\leq d'$ is of the form
$F^{y_1}\cdots
F^{y_s}(L)\cong N$ for some
$L\in ({\cal A}^{\prime\prime},0)\g\Mod$. Moreover, $\dim_k L\leq d''$,
for some fixed $d''$ depending on $d'$.
Then, from \cite{BSZ}(28.22), there is a minimal ditalgebra ${\cal B}$ and a
composition of full and faithful reduction functors $F:{\cal B}\g\Mod\rightmap{}{\cal A}''\g\Mod$
such that for any ${\cal A}''$-module $L$ with $\dim_k L\leq d''$,
there is a ${\cal
B}$-module $K$ such that $F(K)\cong L$.

Thus, given $M\in ({\cal A},I)\g\Mod$ with $\dim_k M\leq d$, we obtain
$$M\cong F^{z_1}\cdots F^{z_t}F^{y_1}\cdots F^{y_s}F(K).$$
\end{proof}

\section{Tame and wild dichotomy}

In this section we proceed to the proofs of our main results in the case of admissible homological systems.
In order to be precise, we need first to adapt some  known facts on tame
ditalgebras to the case of interlaced weak ditalgebras.

The following lemma is probably known, but we include a proof for the
sake of the reader.

\begin{lemma}\label{L: tame-wild disjointness for cats of modules}
Let $\Lambda$ be a finite-dimensional algebra over an algebraically closed
field.
Let ${\cal C}$ be a full subcategory of $\Lambda\g\mod$,
closed under direct summands and direct sums. Then, the category ${\cal C}$ can
not be simultaneously tame and wild.
\end{lemma}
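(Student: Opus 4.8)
The plan is to argue by contradiction, combining the classical tameness bound on parameter families with the wildness functor. Suppose ${\cal C}$ is both tame and wild. Fix a wild $\Lambda\g k\langle x,y\rangle$-bimodule $Z$, free of finite rank $r$ as a right $k\langle x,y\rangle$-module, so that $Z\otimes_{k\langle x,y\rangle}-$ preserves indecomposables and isoclasses and lands in ${\cal C}$. The first step is to use this functor to manufacture, for a suitable fixed dimension $d$, infinitely many pairwise non-isomorphic $d$-dimensional indecomposables in ${\cal C}$ that moreover come in ``two-parameter'' families. Concretely, for each pair $(\lambda,\mu)\in k^2$ consider the $2$-dimensional $k\langle x,y\rangle$-module $N_{\lambda,\mu}$ on which $x$ acts as $\left(\begin{smallmatrix}\lambda&1\\0&\lambda\end{smallmatrix}\right)$ and $y$ as $\left(\begin{smallmatrix}\mu&0\\0&\mu\end{smallmatrix}\right)$; these are indecomposable, and $N_{\lambda,\mu}\cong N_{\lambda',\mu'}$ iff $(\lambda,\mu)=(\lambda',\mu')$. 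Then $M_{\lambda,\mu}:=Z\otimes_{k\langle x,y\rangle}N_{\lambda,\mu}$ is a family of pairwise non-isomorphic indecomposables in ${\cal C}$, all of the same dimension $d:=2r$.

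The second step is to confront this family with the tameness data at dimension $d$. Tameness gives finitely many rational algebras $\Gamma_1,\dots,\Gamma_{t_d}$ and $\Lambda\g\Gamma_i$-bimodules $Z_i$, free of finite rank as $\Gamma_i$-modules, such that all but finitely many indecomposable $d$-dimensional modules in ${\cal C}$ are of the form $Z_i\otimes_{\Gamma_i}S$ for a simple $\Gamma_i$-module $S$. Each $\Gamma_i$ is a rational algebra, i.e. (a localization of) $k[x]$, so its simple modules are parametrized by a subset of $k$, essentially a one-parameter family. Hence for each $i$ the set of isoclasses $\{Z_i\otimes_{\Gamma_i}S\}$ is the image of a one-parameter family, while $d$ is fixed and $Z_i$ has bounded rank, so the number of isoclasses ``of each given module'' is controlled — more precisely, the standard counting argument shows that a one-parameter family cannot exhaust a genuinely two-parameter family up to finitely many exceptions. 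I would make this precise by passing to the induced maps on the affine varieties of module structures: the family $(\lambda,\mu)\mapsto M_{\lambda,\mu}$ determines a morphism from $k^2$ (an irreducible surface) whose fibers are finite (isoclasses are distinct), whereas each $Z_i\otimes_{\Gamma_i}-$ factors through a curve, so finitely many such curves have image of dimension $\le 1$ in the module variety; a surface of pairwise non-isomorphic modules cannot lie, up to finitely many points, inside a finite union of one-dimensional images. This yields the contradiction.

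The technical heart, and the step I expect to be the main obstacle, is making the dimension-counting rigorous without invoking more machinery than is available here: one must show that for fixed $d$, the assignment sending a simple $\Gamma_i$-module $S_\lambda$ to the isoclass of $Z_i\otimes_{\Gamma_i}S_\lambda$ has finite fibers only after possibly removing finitely many $\lambda$, and that the total number of $d$-dimensional isoclasses ``hit'' grows like a single parameter, not two. The cleanest route is the variety-theoretic one: work inside the affine variety $\mathrm{mod}_\Lambda(d)$ of $d$-dimensional $\Lambda$-module structures on $k^d$, note that $\mathrm{GL}_d$-orbits correspond to isoclasses, observe that $(\lambda,\mu)\mapsto M_{\lambda,\mu}$ gives a constructible family whose image meets infinitely many orbits in a two-dimensional way while each $Z_i\otimes_{\Gamma_i}-$ contributes a constructible image of $\mathrm{GL}_d$-dimension at most $\dim\mathrm{GL}_d+1$; since a finite union of such cannot cover a piece of $\mathrm{GL}_d$-dimension $\dim\mathrm{GL}_d+2$ minus finitely many orbits, we are done. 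I would also double-check the harmless points: that $Z\otimes_{k\langle x,y\rangle}N_{\lambda,\mu}$ really has constant dimension $d=2r$ (immediate from freeness of $Z$ of rank $r$), that these modules lie in ${\cal C}$ (by hypothesis on the wildness functor), and that they are pairwise non-isomorphic and indecomposable (because the functor preserves isoclasses and indecomposables and the $N_{\lambda,\mu}$ are). Finally, since the statement is about $\Lambda\g\mod$ and any full subcategory closed under summands and sums, no ditalgebra input is needed — this lemma is purely a statement about module categories over $\Lambda$.
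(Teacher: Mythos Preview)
Your overall strategy matches the paper's: both pass to the module variety $\mod_\Lambda(\underline{d})$ with its $\hueca{G}_{\underline{d}}$-action, use wildness to produce a two-parameter family of pairwise non-isomorphic indecomposables in ${\cal C}$, and argue that this cannot be covered (up to finitely many exceptions) by the finitely many one-parameter families supplied by tameness. Two points deserve comment.

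First, a simplification: the paper uses the one-dimensional $k\langle x,y\rangle$-modules $k_{(\lambda,\mu)}$ (where $x\mapsto\lambda$, $y\mapsto\mu$), which are already indecomposable and pairwise non-isomorphic. Your $2$-dimensional $N_{\lambda,\mu}$ work, but are unnecessary; the simpler choice gives directly a morphism $\phi:k^2=\mod_{k\langle x,y\rangle}(1)\to\mod_\Lambda(\underline{d})$.

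Second, and more substantively, your dimension count as written does not close. You assert that the $\hueca{G}_{\underline{d}}$-saturation of the two-parameter image has dimension $\dim\hueca{G}_{\underline{d}}+2$, but this ignores stabilizers: every indecomposable $M$ has $k^*\subseteq\Aut(M)$, so the map $\hueca{G}_{\underline{d}}\times k^2\to\mod_\Lambda(\underline{d})$ has fibers of dimension at least $1$, and the image has dimension at most $\dim\hueca{G}_{\underline{d}}+1$, the same upper bound as for the curve saturations. The raw dimension comparison therefore fails. A correct version requires stratifying by orbit dimension (equivalently, by $\dim\End$) and comparing ``number of parameters'' on each stratum; this is standard but is real work that your sketch does not supply.

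The paper sidesteps this issue with a direct pigeonhole argument on curves: for each fixed $\lambda$ the slice $\phi_\lambda:k\to\mod_\Lambda(\underline{d})$ must land, cofinitely, in the $\hueca{G}_{\underline{d}}$-saturation of some single $\gamma_i(E_i)$; then a cofinite piece of $\gamma_i(E_i)$ lies in $\hueca{G}_{\underline{d}}\phi_\lambda(D^\lambda)$ (using that constructible subsets of an irreducible curve are finite or cofinite). Since there are finitely many $i$, two distinct $\lambda,\lambda'$ share the same $i$, forcing some $\phi_{\lambda'}(\mu)$ into the orbit of some $\phi_\lambda(\mu')$, contradicting injectivity on isoclasses. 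This argument never compares dimensions of orbit-saturations and so never meets the stabilizer obstruction.
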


\begin{proof} The proof is essentially the same proof given on page 366
of \cite{BSZ}. Consider the usual variety $\mod_\Lambda(\underline{d})$ of
$\Lambda$-modules with dimension vector $\underline{d}$, where the algebraic
group
$\hueca{G}_{\underline{d}}$ acts in such a way that two
$\underline{d}$-dimensional $\Lambda$-modules are isomorphic iff their
corresponding points (denoted with the same symbols $M$ and $N$) satisfy
that  $N=hM$, for some $h\in \hueca{G}_{\underline{d}}$. If ${\cal C}$
is wild, there is a morphism of varieties
$\varphi:k^2=\mod_{k\langle x,y\rangle}(1)\rightmap{}\mod_\Lambda(\underline{d})$
induced by the functor
$F:k\langle x,y\rangle\g\mod\rightmap{}\Lambda\g\mod$, with image
in ${\cal C}$, which is provided by the wildness of ${\cal C}$ as in
 (\ref{D: tame and wild}).  Since $F$
preserves indecomposables and isomorphism classes, each $\varphi(\lambda,\mu)$
represents an indecomposable $\Lambda$-module in ${\cal C}$ and
$\varphi(\lambda,\mu)\not\in
\hueca{G}_{\underline{d}}\varphi(\lambda',\mu')$,
for all different pairs  $(\lambda,\mu),(\lambda',\mu') \in k^2$.
Consider, for each $\lambda\in k$, the curve
$\varphi_\lambda:k\rightmap{}\mod_{\Lambda}(\underline{d})$, defined by
$\varphi_\lambda(\mu)=\varphi(\lambda,\mu)$, for $\mu\in k$.

 If ${\cal C}$  is tame,  there is a finite number of curves
$\{\gamma_i:E_i\rightmap{}\mod_{\Lambda}(\underline{d})\}_{i=1}^m$,
defined on cofinite subsets $E_i$ of $k$, such that every
$\underline{d}$-dimensional indecomposable $\Lambda$-module in ${\cal C}$
is represented by a point in $\cup_{i=1}^m\hueca{G}_{\underline{d}}
\gamma_i(E_i)$.

Then, for each $\lambda\in k$,    $\varphi_\lambda(k)\subseteq
\cup_{i=1}^m\hueca{G}_{\underline{d}}\gamma_i(E_i)$. It follows that
$\varphi_\lambda(D^\lambda)\subseteq
\hueca{G}_{\underline{d}} \gamma_i(E_i)$, for some cofinite subset
$D^\lambda\subseteq k$ and some $i$ depending on $\lambda$.
Then, $\gamma_i(E_i^\lambda)\subseteq
\hueca{G}_{\underline{d}}\varphi_\lambda(D^\lambda)$, for some cofinite subset
$E_i^\lambda\subset E_i$.
Since we are dealing with finitely many curves $\gamma_1,\ldots,\gamma_m$, then
there is   $\lambda'\not=\lambda$ such that
$\varphi_{\lambda'}(D^{\lambda'})\subseteq
\hueca{G}_{\underline{d}} \gamma_i(E_i)$, for the same $i$. Since $E_i^\lambda$
is cofinite in $E_i$ and $D^{\lambda'}$ is infinite,
there exists $\mu\in D^{\lambda'}$ such that
$\varphi_{\lambda'}(\mu)\in
\hueca{G}_{\underline{d}}\gamma_i(E_i^\lambda)\subseteq
\hueca{G}_{\underline{d}}\varphi_{\lambda}(D^\lambda)$. This entails a
contradiction.
\end{proof}

\begin{definition}\label{D: tame weak dit} An interlaced weak
ditalgebra $({\cal A},I)$ is called \emph{tame} iff, for each $d\in \hueca{N}$,
there is a finite collection $\{(\Gamma_i,Z_i)\}_{i=1}^m$, where
$\Gamma_i=k[x]_{f_i}$ and $Z_i$ is an $(A/I)$-$\Gamma_i$-bimodule, which is free
of finite rank as a right $\Gamma_i$-module, such that for every indecomposable
$M\in ({\cal A},I)\g\Mod$ with $\dim_k M\leq d$, there are an $i\in [1,m]$ and a
simple $\Gamma_i$-module $S$ such that
$Z_i\otimes_{\Gamma_i}S\cong M$ in $({\cal A},I)\g\Mod$.
\end{definition}

The condition given in the last definition can be rearranged into
 an equivalent one, where the simple $\Gamma_i$-module $S$
is replaced by an indecomposable $\Gamma_i$-module $N$.
See \cite{BSZ}(27.2).

\begin{definition}\label{D: strictly tame weak dit} An interlaced weak
ditalgebra $({\cal A},I)$ is called \emph{strictly tame} iff, for each $d\in \hueca{N}$,
there is a finite collection $\{(\Gamma_i,Z_i)\}_{i=1}^m$, where
$\Gamma_i=k[x]_{f_i}$ and $Z_i$ is an $(A/I)$-$\Gamma_i$-bimodule, which is free
of finite rank as a right $\Gamma_i$-module, such that for almost every indecomposable
$M\in ({\cal A},I)\g\Mod$ with $\dim_k M\leq d$, there are an $i\in [1,m]$ and an indecomposable
 $\Gamma_i$-module $N$ such that
$Z_i\otimes_{\Gamma_i}N\cong M$ in $({\cal A},I)\g\Mod$. Moreover, the functors
$$\begin{matrix}\Gamma_i\g\mod&\rightmap{ \ Z_i\otimes_{\Gamma_i}- \ }&(A/I)\g\mod&
\rightmap{ \ L_{({\cal A},I)} \ }&({\cal A},I)\g\mod\\ \end{matrix}$$
are required to preserve isoclasses and indecomposables.
\end{definition}

It can be seen that in the last definition, we can replace the condition of being
``free of finite rank right $\Gamma_i$-module'', by the apparently weaker one
of being ``finitely generated right $\Gamma_i$-module", see \cite{BSZ}(27.5).

The following result is just a more concise formulation of Theorem
(\ref{T: main theorem dits}).

\begin{theorem}\label{T: tame wild dichotomy for directed inter weak dits}
Assume that the ground field is algebraically closed and that $({\cal A},I)$
is a ${\cal P}$-oriented triangular interlaced weak ditalgebra where $I$ is an ideal of $A$ contained in the radical of $A$. Then, $({\cal A},I)$ is
either tame or wild, but not both. Moreover,  $({\cal A},I)$ is tame iff it is
strictly tame.
\end{theorem}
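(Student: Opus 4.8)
The plan is to reduce Theorem~\ref{T: tame wild dichotomy for directed inter weak dits} to the corresponding dichotomy for minimal ditalgebras via the reduction machinery developed in \S7--\S11. First I would dispose of the ``not both'' part: by Theorem~\ref{T: KKO} (or simply by (\ref{modules of interlaced ditalgebras}) applied to the ideal $J$ generated by $I$), the category $({\cal A},I)\g\mod$ is equivalent to $\dot{\cal A}\g\mod$ for a ditalgebra $\dot{\cal A}$ whose right algebra is a finite-dimensional algebra $\Lambda$; transporting tameness and wildness of $({\cal A},I)$ along this equivalence (using (\ref{L: parameriz de funtores de reduccion}) to move the parametrizing bimodules to $\Lambda$-bimodules) turns the statement into the disjointness of tameness and wildness for a subcategory of $\Lambda\g\mod$ closed under direct sums and summands, which is exactly Lemma~\ref{L: tame-wild disjointness for cats of modules}. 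So the remaining content is the \emph{dichotomy} and the equivalence \emph{tame $\Leftrightarrow$ strictly tame}.

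For the dichotomy, fix $d\in\hueca{N}$. If $I$ is not proper then $({\cal A},I)\g\mod=0$ and there is nothing to prove, so assume $I$ is a proper ideal and that $({\cal A},I)$ is not wild. Then Theorem~\ref{T: parametrización de módulos de dim acotada} applies: there is a finite chain of reductions of type $z_i\in\{a,d,r,q,X\}$ producing a \emph{minimal ditalgebra} ${\cal B}={\cal A}^{z_1\cdots z_t y_1\cdots y_s}$ with trivial interlacing ideal, together with a composite functor $H:={\cal B}\g\mod\rightmap{}({\cal A},I)\g\mod$ which is full and faithful, preserves indecomposables and isoclasses (by (\ref{P: Reducciones vs wildness})), and covers almost every $M\in({\cal A},I)\g\mod$ with $\dim_kM\le d$. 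For a minimal ditalgebra ${\cal B}$ with layer $(R,0)$, $R$ a minimal $k$-algebra, the module category ${\cal B}\g\mod$ is just $R\g\mod$, which is visibly tame: the finitely many rational factors $R_j=k[x]_{g_j}$ of $R$ give the parametrizing pairs. Pulling this parametrization forward through $H$ by means of Lemma~\ref{L: parameriz de funtores de reduccion} (which guarantees $H(R)$ is a finitely generated projective, hence free of finite rank, right module over each $R_j$, and that $H$ is implemented by tensoring with this bimodule on the nose) shows that $({\cal A},I)$ is tame; in fact, since $H$ preserves indecomposables and isoclasses and each functor $R_j\g\mod\rightmap{}R\g\mod$ does too, one gets \emph{strict} tameness directly. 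Thus non-wild implies strictly tame, and strictly tame trivially implies tame, so the three conditions ``tame'', ``strictly tame'', ``not wild'' coincide; combined with the disjointness already proved, this gives the full dichotomy.

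The main obstacle is bookkeeping rather than conceptual: one must verify that every reduction functor $F^z$ ($z\in\{a,d,r,q,X\}$) used in Theorem~\ref{T: parametrización de módulos de dim acotada}, and the functor $F$ to the minimal ditalgebra coming from \cite{BSZ}(28.22), fall under the hypotheses of Lemma~\ref{L: parameriz de funtores de reduccion} so that the composite $H$ is of the form $H(\cdot)=Z\otimes_{\cal R}-$ for a bimodule $Z$ that is free of finite rank on the appropriate side; this is where the assertion that $H$ transports the tame parametrization of $R\g\mod$ to a tame (indeed strictly tame) parametrization of $({\cal A},I)\g\mod$ actually lives. One also has to observe that finitely many exceptional modules can be absorbed: the modules not covered by $H$ for the given $d$ form finitely many isoclasses by Theorem~\ref{T: parametrización de módulos de dim acotada}, so they can simply be enlarged into the exceptional set allowed in the definitions of tameness and strict tameness. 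Finally, for the ``tame $\Rightarrow$ strictly tame'' direction one invokes Lemma~\ref{L: tame-wild disjointness for cats of modules} to rule out wildness once tameness is assumed, and then the argument above produces strict tameness, closing the loop.
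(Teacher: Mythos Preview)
Your argument for the implication ``not wild $\Rightarrow$ strictly tame'' is essentially the paper's: invoke Theorem~\ref{T: parametrización de módulos de dim acotada} to reach a minimal ditalgebra ${\cal B}$, read off the parametrization of its indecomposables, and push it back through the composite of reduction functors via Lemma~\ref{L: parameriz de funtores de reduccion}. Your logical closure (not wild $\Rightarrow$ strictly tame $\Rightarrow$ tame $\Rightarrow$ not wild) also matches. Two comments:

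\emph{On the ``not both'' part.} The paper does not go through Theorem~\ref{T: KKO} and Lemma~\ref{L: tame-wild disjointness for cats of modules}; it instead observes that the geometric argument of \cite{BSZ}\S33 carries over verbatim with $A/I$ in place of $A$, since the hypothesis $\delta^2=0$ plays no role there beyond making the module category well defined (and $({\cal A},I)$ is Roiter, so the needed structural facts are available). Your alternative route is plausible but has a gap as written: Theorem~\ref{T: KKO}, as used in this paper, furnishes the equivalence ${\cal Q}\g\mod\simeq{\cal F}(\Delta)$ for the \emph{specific} ditalgebra ${\cal Q}$ built from a given quasi-hereditary $\Lambda$ (cf.\ Remark~\ref{R: box de KKO}). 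For an arbitrary directed triangular interlaced weak ditalgebra $({\cal A},I)$, the quotient $\dot{\cal A}={\cal A}/J$ need not be presented as a ``ditalgebra with directed biquiver'' in the sense required, so you cannot simply invoke items (3)--(4) of Theorem~\ref{T: KKO} to land inside some $\Lambda\g\mod$. Also, Lemma~\ref{L: parameriz de funtores de reduccion} only applies to composites of reduction functors $F^z$ and $F_\phi$, not to the KKO equivalence, so your transport of bimodules there needs a separate justification (namely, that $F(-)\cong {\cal R}(\dot{\cal A})\otimes_{\dot A}-$ with ${\cal R}(\dot{\cal A})$ finitely generated projective on the right). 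The paper's direct adaptation sidesteps all of this.

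\emph{A minor inaccuracy.} A minimal ditalgebra has layer $(R,W)$ with $W_0=0$, not layer $(R,0)$; in general $W_1\neq 0$, so ${\cal B}\g\mod$ is \emph{not} $R\g\mod$ as a category (there are extra morphisms coming from $W_1$). What is true, and all you actually use, is that the \emph{objects} and \emph{isoclasses of indecomposables} of ${\cal B}\g\mod$ coincide with those of $R\g\mod$; the paper makes this explicit by listing the indecomposables $S_i=ke_i$ and $\Gamma_i/(x-\lambda)^t$.
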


\begin{proof} If $({\cal A},I)$ was  wild and tame
simultaneously,  we can easily adapt the geometric algebraic argument
given in \cite{BSZ}\S33 (pages 360--366) to get a contradiction:
the condition $\delta^2=0$ is irrelevant there (it is only used to have
a well defined category of modules) and we can use
$A/I$ instead of $A$ in that argument. Indeed, we already know that $({\cal A},I)$ is a Roiter seminested weak ditalgebra.

If $({\cal A},I)$ is not wild and $d\in \hueca{N}$, then we can apply
(\ref{T: parametrización de módulos de dim acotada})
to obtain a finite sequence of reductions
$$({\cal A},I)\mapsto ({\cal A}^{z_1},I^{z_1})\mapsto\cdots\mapsto
({\cal A}^{z_1z_2\ldots z_t},I^{z_1z_2\ldots z_t})$$
of type $z_i\in \{d,r,q,a,X\}$ such that $I^{z_1z_2\ldots z_t}=0$,
 ${\cal B}={\cal
A}^{z_1\cdots
z_t}$ is a
minimal ditalgebra, and almost  any $({\cal A},I)$-module $M$ with  $\dim_k
M\leq d$ has the form
$M\cong F(N)$, where $F=F^{z_1}\cdots F^{z_t}$,
for some $N\in {\cal B}\g\Mod$.

Given a point $i$ of ${\cal B}$, we either
have $Be_i=ke_i$ or $\Gamma_i=Be_i\not=ke_i$. The finite-dimensional indecomposable
${\cal B}$-modules are of
the form $S_i=ke_i$, for $i$ such that $Be_i=ke_i$,
or $\Gamma_i/(x-\lambda)^t$, with $\lambda\in k$ and $t\in \hueca{N}$,
for any $i$ such that $\Gamma_i=Be_i\not=ke_i$.

Apply (\ref{L: parameriz de funtores de reduccion}) to each $\Gamma_i$ and $F$
to obtain the following diagram which commutes up to isomorphism
$$\begin{matrix}
   \Gamma_i\g\Mod&\rightmap{ \ \Gamma_i\otimes_{\Gamma_i}- \ }&B\g\Mod&
   \rightmap{ \ L_{\cal B} \ }&({\cal B},0)\g\Mod\hfill\\
   \parallel&&\shortlmapdown{\underline{F}}&&\shortlmapdown{F}\\
    \Gamma_i\g\Mod&\rightmap{ \ Z_i\otimes_{\Gamma_i}- \ }&(A/I)\g\Mod&
    \rightmap{ \ L_{({\cal A},I)} \ }
    &({\cal A},I)\g\Mod\\
  \end{matrix}$$
where $Z_i:=F(\Gamma_i)$.  Since $F$ is a composition of full and faithful functors, the composition functor $FL_{\cal
B}(\Gamma_i\otimes_{\Gamma_i}-)$ preserves indecomposablility and isoclasses, and
the functor
 $L_{({\cal A},I)}(Z_i\otimes_{\Gamma_i}-)$ has the same properties. Moreover,
$Z_i$ is an $(A/I)$-$\Gamma_i$-bimodule which is finitely generated
 projective by the right. But $\Gamma_i$ is a principal ideal domain, so $Z_i$
 is a finitely generated free right $\Gamma_i$-module.
 If $N\cong \Gamma_i/(x-\lambda)^t$, then
 $L_{({\cal A},I)}(Z_i\otimes_{\Gamma_i}N)\cong
 FL_{\cal B}(\Gamma_i\otimes_{\Gamma_i}N)\cong M$.  We have shown that $({\cal A},I)$ is strictly tame. Finally, from the remark between definitions (\ref{D: tame weak dit}) and (\ref{D: strictly tame weak dit}), we see that strict tameness implies tameness.
\end{proof}

\noindent{\bf Proof of Theorem (\ref{T: main theorem qh-algebas}) in the admissible homological system case.}
\label{C: tame an wild dichotomy for catfinstandmods of qh-algebras}
Consider the the category of
$\Delta$-filtered modules ${\cal F}(\Delta)$ for a finite-dimensional  algebra
$\Lambda$ with admissible homological system $({\cal P},\leq,\{\Delta_i\}_{i\in {\cal P}})$. From
(\ref{L: tame-wild disjointness for cats of modules}), we already know that
${\cal F}(\Delta)$ can not be simultaneously tame and wild. By (\ref{R: F(Delta) tame (wild) iff F(Delta') tame (wild)}), it will be enough to show that ${\cal F}(\Delta')$ is either tame or wild, and it is tame iff it is strictly tame.

The ditalgebra of ${\cal Q}$ of (\ref{T: KKO}) is the quotient ${\cal Q}={\cal A}/J$, where $({\cal A},I)$ is the ${\cal P}$-oriented triangular interlaced weak ditalgebra associated to the given admissible homological system, constructed in \cite{hsb}\S5, and $J$ is the ideal of ${\cal A}$ generated by $I$, see \cite{hsb}(5.22) and (\ref{R: cocientes por ideales balanceados}). As remarked in \cite{hsb}(13.1), we know that $I\subseteq \rad A$. Here,  ${\cal A}=(T,\delta)=(T_A(V),\delta)$ is a triangular weak ditalgebra and we can adopt the notation of (\ref{R: cocientes por ideales balanceados}). Thus ${\cal Q}=\overline{\cal A}=(\overline{T},\overline{\delta})$, with $\overline{T}=T_{\overline{A}}(\overline{V})$, where $\overline{A}=A/I$ and $\overline{V}=V/(IV+\delta(I)+VI)$.

From (\ref{T: KKO}) there is
an equivalence of categories
 $$F:\overline{\cal A}\g\mod\rightmap{}{\cal F}(\Delta'),$$
Moreover,
 $F(M)\cong {\Gamma}\otimes_{\overline{A}} M$, for each $M\in
\overline{\cal A}\g\mod$, where
${\Gamma}=\End_{\overline{\cal A}}(\overline{A})^{op}$ is the right algebra of $\overline{\cal A}$,
 and the right $\overline{A}$-module ${\Gamma}$ is finitely generated
projective.

By (\ref{modules of interlaced ditalgebras}),
there is an equivalence
 $G:\overline{\cal A}\g\mod\rightmap{}({\cal A},I)\g\mod$,
which is the identity on objects.
By (\ref{T: tame wild dichotomy for directed inter weak dits}),
we know that $({\cal A},I)$ is wild or strictly tame.
It follows immediately that $\overline{\cal A}$ is wild
or strictly tame.

 If $\overline{\cal A}$  is wild, then we have the composition
$$\begin{matrix}k\langle x,y\rangle\g\mod&
\rightmap{Z\otimes_{k\langle x,y\rangle }-}&\overline{A}\g\mod&\rightmap{L_{\overline{\cal A}}}&
\overline{\cal A}\g\mod&\rightmap{F}&
{\cal F}(\Delta')\end{matrix}$$
where $Z$ realizes the wildness of $\overline{\cal A}$.
If $N\in k\langle x,y\rangle\g\mod$ is indecomposable, so is
$$F(Z\otimes_{k\langle x,y\rangle}N)\cong
{\Gamma}\otimes_{\overline{A}}Z\otimes_{k\langle x,y\rangle}N,$$
where ${\Gamma}\otimes_{\overline{A}}Z$ is a finitely generated
projective right module. Clearly,  the tensor
product by the preceding bimodule preserves isoclasses.
So, ${\cal F}(\Delta')$ is wild.

Notice that, for any  $N\in \overline{A}\g\mod$, we have
$\dim_kN\leq \dim_k{\Gamma}\otimes_{\overline{A}}N$. Indeed, since
${\Gamma}\otimes_{\overline{A}}-:\overline{A}\g\mod\rightmap{}
{\Gamma}\g\mod$ is an exact functor, if we have $\dim_kN=\sum_{i\in {\cal P}}m_i$, where $m_i$ denotes the multiplicity of the simple $\overline{A}$-module $S_i$ in the composition series of the $\overline{A}$-module $N$, then we also have that
$\dim_k({\Gamma}\otimes_{\overline{A}}N)=\sum_{i\in {\cal P}}m_i\dim_k\Delta'_i$, because from (\ref{T: KKO})(2), we know that $\Delta'_i\cong {\Gamma}
\otimes_{\overline{A}}S_i$.

Then, if $\overline{\cal A}$ is strictly tame
and $d\in \hueca{N}$, we have the composition functors preserving isoclasses and indecomposables
$$\begin{matrix}\Gamma_i\g\mod&
\rightmap{Z_i\otimes_{\Gamma_i}-}&\overline{A}\g\mod&\rightmap{L_{\overline{\cal A}}}&
\overline{\cal A}\g\mod&\rightmap{F}&
{\cal F}(\Delta')\end{matrix}$$
where $Z_1,\ldots,Z_t$ are the bimodules parametrizing the indecomposable
$\overline{\cal A}$-modules $N$ with $\dim_k N\leq d$. Then, the bimodules
${\Gamma}\otimes_{\overline{A}}Z_1,\ldots,
{\Gamma}\otimes_{\overline{A}}Z_t$
parametrize the indecomposable ${\Gamma}$-modules in ${\cal F}(\Delta')$ with
dimension at most $d$.  Moreover, the functors
$${\Gamma}\otimes_{\overline{A}}Z_i\otimes_{\Gamma_i}-:\Gamma_i\g\mod\rightmap{}{\cal F}(\Delta')$$
preserve isoclasses and indecomposables, for all $i\in [1,t]$.
This finishes our proof.

$\,\hfill\hfill\square$

\noindent{\bf Proof of Theorem (\ref{T: Crawley para F(Delta)}) in the  admissible homological system case.}

We assume that $({\cal P},\leq,\{\Delta_i\}_{i\in {\cal P}})$ is an admissible homological system for the finite-dimensional algebra $\Lambda$  and ${\cal F}(\Delta)$ is tame. We
 proceed in four steps, with the notation of (\ref{T: KKO}).

\medskip
\noindent\emph{Step 1: If \hbox{\,} $E\rightmap{g}M$ is a minimal right  almost
split morphism in ${\cal I}(\Gamma)$ and
$$0\rightmap{}Dtr M\rightmap{u}H\rightmap{v}M\rightmap{}0$$
is an almost split sequence in
$\Gamma\g\mod$, then $\dim_k E\leq \dim_k(\Gamma\otimes_QH)$.}
\medskip

Indeed,  recall that the category  ${\cal I}(\Gamma)$ is the full subcategory of $\Gamma\g\mod$ which
consists of
the $\Gamma$-modules of the form $\Gamma\otimes_QN$, for some $N\in
Q\g  \mod$. Given
$H\in  \Gamma\g\mod$, we have the product map
$\mu:\Gamma\otimes_QH\rightmap{}H$, and every morphism
 $g:Z\rightmap{}H$ of  $\Gamma$-modules with $Z  \in {\cal I}(\Gamma)$
 factors through $\mu$. Then, given an almost split sequence in
 $\Gamma\g\mod$
$$0\rightmap{}Dtr M\rightmap{u}H\rightmap{v}M\rightmap{}0,$$  we obtain that
any non-retraction $h:Z\rightmap{}M$ in ${\cal I}(\Gamma)$ factors through $v$, say $h'v=h$,
for some $h':Z\rightmap{}H$; then, $h'$ factors through $\mu$. So the
composition
$v\mu:\Gamma\otimes_Q H\rightmap{}M$ is a right almost
split morphism in ${\cal I}(\Gamma)$. Since $g$ is minimal right almost split, we get
$\dim_k E\leq \dim_k (\Gamma\otimes_Q H)$.

\medskip
\noindent\emph{Step 2: There is a constant $c_0\in{\hueca
N}$ such that, for any almost
split conflation $\zeta: \tau
M\rightmap{f}E\rightmap{g}M$  in ${\cal Q}\g \mod$, we have
$\dim_k \tau M\leq \dim_k E\leq c_0\times \dim_kM.$}
\medskip

Indeed, by (\ref{T: KKO})(3), the exact full and faithful functor $F:{\cal Q}\g\mod\rightmap{}\Gamma\g\mod$ maps $\zeta$  on the
exact sequence
$$0\rightmap{}F(\tau M)\rightmap{ \ F(f) \ }FE\rightmap{ \ F(g) \ }FM\rightmap{}0.$$
Here, we have $FE\cong \Gamma\otimes_QE$ and $FM\cong  \Gamma\otimes_QM$. So, the morphism $F(g)$ belongs to ${\cal I}(\Gamma)$ and is minimal right almost split in this category because the restriction $F:{\cal Q}\g\mod\rightmap{}{\cal I}(\Gamma)\g\mod$ is an equivalence and $g$ is minimal right almost split in ${\cal Q}\g\mod$.

Now, consider the almost split sequence in $\Gamma\g\mod$ ending at $\Gamma\otimes_QM$:
$$0\rightmap{}Dtr(\Gamma\otimes_QM)\rightmap{}H\rightmap{}\Gamma\otimes_QM\rightmap{}0.$$
By Step 1, since $F(g):\Gamma\otimes_QE\rightmap{}\Gamma\otimes_QM$ is minimal right almost split in ${\cal I}(\Gamma)$, we obtain  $\dim_k (\Gamma\otimes_QE)\leq
\dim_k(\Gamma\otimes_QH)$.  From the general theory of almost split sequences, we also have that $\dim_kDtr(\Gamma\otimes_QM)\leq c\dim_k(\Gamma\otimes_QM)$, for some $c\in \hueca{N}$ which depends only on $\Gamma$. Therefore, we obtain
$$\dim_kH=\dim_kDtr(\Gamma\otimes_QM)+\dim_k(\Gamma\otimes_QM)
   \leq (c+1)\times \dim_k(\Gamma\otimes_QM).$$
Hence, we get
$$
\dim_k\tau M\leq \dim_k E\leq \dim_k(\Gamma\otimes_QE)\leq\dim_k(\Gamma\otimes_QH)\leq c_0\times \dim_kM,$$
where $c_0:=(c+1)\times (\dim_k \Gamma)^2$.

\medskip
\noindent\emph{Step 3: For any positive integer $d$, almost every $d$-dimensional indecomposable $M$ in ${\cal Q}\g\mod$ satisfies that $\tau M\cong M$.}
\medskip

It is well known that the category ${\cal Q}\g\mod$ has almost split sequences, see \cite{BK}, \cite{BB}, and \cite{Burt}.
The strategy for this Step is the same used by Crawley-Boevey in \cite{CB1}.
Fix $d\in \hueca{N}$ and let $\zeta:\tau M\rightmap{f}E\rightmap{g}M$ be an almost split conflation in ${\cal Q}\g\mod$ with $\dim_kM\leq d$. Since ${\cal F}(\Delta)$ is not wild, neither is ${\cal F}(\Delta')$ nor ${\cal Q}\g\mod$.
 Then, from (\ref{T: parametrización de módulos de dim acotada}), if we consider the number $d_0:=c_0d$,
there is a minimal seminested ditalgebra ${\cal B}$ and a full and faithful
functor $F':{\cal B}\g\mod\rightmap{}{\cal Q}\g\mod$,  such  that almost every $N\in {\cal Q}\g\mod$ with $\dim_kN\leq d_0$ is
of the form $F'(N')\cong N$,
for some $N'\in {\cal B}\g\mod$. This is the case for almost every $M$, $E$, and $\tau M$.
So $M\cong F'(M')$, $\tau M\cong F'(N')$,
and $E\cong F'(E')$. Since $F'$ is full, we have $f':N'\rightmap{}E'$ and
$g':E'\rightmap{}M'$ such that
$F'(f')=f$ and $F'(g')=g$. Since $F'$ is full and faithful, and $\zeta$ is an
almost split conflation,
it is not hard to see
that
$\zeta': N'\rightmap{f'}E'\rightmap{g'}M'$ is a conflation and, in fact, that it is an almost
split conflation in ${\cal B}\g\mod$. But the almost split conflations in ${\cal
B}\g\mod$ are well known,
see \cite{BSZ}(32.3). Then, we can apply Crawley-Boevey's analysis to show that,  after discarding a finite number of possibilities for the indecomposable $M\in {\cal Q}\g\mod$,  we
have that $\tau M\cong M$. For details, see the proof of \cite{BSZ}(32.6).

\medskip
\noindent\emph{Step 4: Final argument.}
\medskip

From (\ref{T: KKO})(6), we have an equivalence of categories $K:{\cal F}(\Delta)\rightmap{}{\cal Q}\g\mod$
which maps  short exact sequences onto conflations
and $K(\Delta_i)\cong S_i$, for each $i\in {\cal P}$. So $K$ maps almost split sequences of ${\cal F}(\Delta)$ on almost split conflations of ${\cal Q}\g\mod$, and $\tau KM=K\tau M$, for all indecomposable
$M\in {\cal F}(\Delta)$ at which an almost split sequence ends.

For $M\in {\cal F}(\Delta)$, we can denote by $m_i$ the multiplicity of $\Delta_i$ as a composition factor in the $\Delta$-filtration of $M$, thus $\dim_kM=\sum_{i\in {\cal P}}m_i\dim_k\Delta_i$. Since $K$ is exact, we get $\dim_kKM=\sum_{i\in {\cal P}}m_i\dim_kK(\Delta_i)=
\sum_{i\in {\cal P}}m_i$. So, we have $\dim_k KM\leq \dim_kM$.

 Then, given $d\in \hueca{N}$, for all indecomposable $M\in {\cal F}(\Delta)$ with $\dim_kM\leq d$,  such that an almost split sequence ends at $M$, we also have that $KM$ is indecomposable with $\dim_k KM\leq d$. From Step 3,  for almost all such indecomposable $KM$, we have $\tau KM\cong KM$. It follows that, for almost all such $M$, we have $\tau M\cong M$.

 The equivalence $\Theta:\Lambda\g\Mod\rightmap{}
 \Gamma\g\Mod$   of (\ref{T: KKO})(5) induces an equivalence of categories  $\Theta':{\cal F}(\Delta)\rightmap{}{\cal F}(\Delta')$ which preserves and reflects almost split sequences. Thus for almost all indecomposable $M\in {\cal F}(\Delta)$ there is an almost split sequence ending at $M$ iff the same property holds for ${\cal F}(\Delta')$. We will see now that the last case occurs, and from this fact we obtain what we wanted to show for ${\cal F}(\Delta)$.
Indeed, look again to the  equivalence of categories of (\ref{T: KKO})(4):
$$F:{\cal Q}\g\mod\rightmap{}{\cal I}(\Gamma)$$
such that composed with the inclusion into $\Gamma\g\mod$ gives an exact functor. If $\zeta:\tau M\rightmap{f}E\rightmap{g}M$ is an almost split conflation in ${\cal Q}\g\mod$ then its image $F\zeta:F\tau M\rightmap{}FE\rightmap{}FM$ under $F$ is an exact sequence in $\Gamma\g\mod$ with all its terms in ${\cal F}(\Delta')={\cal I}(\Gamma)$; moreover, the morphism  $F(f)$ is minimal left almost split in ${\cal I}(\Gamma)$ and $F(g)$ is minimal right almost split in  ${\cal I}(\Gamma)$, because $F$ is an equivalence.
We know, see \cite{hsb}(12.4), that ${\cal Q}\g\mod$ admits only finitely many indecomposable projectives relative to its exact structure.
Thus, with the exception of finitely many indecomposables $M$, every indecomposable $M\in {\cal Q}\g\mod$ admits an almost split conflation ending at $M$, see for instance \cite{BSZ}(7.18). This implies that
${\cal F}(\Delta')$ has the same property.

$\,\hfill\hfill\square$

\section{The case of  general homological systems}

This section is devoted to the extension of the tame-wild dichotomy for ${\cal F}(\Delta)$ to the general case of an arbitrary homological system. This extension is a consequence of our theorem (1.3) for the admissible homological case and the following theorem (\ref{T: el teorema de MSX}), due to Mendoza, S\'aenz, and Xi. The version we present here is a reformulation of some of their results in  \cite{MSX}\S3. We will briefly recall some of their arguments, in their dual form, for the sake of the reader.

Fix a general homological system $({\cal P},\leq,\{\Delta_i\}_{i\in {\cal P}})$ for a finite-dimensional algebra $\Lambda$.
The following statement is crucial in the proof of (\ref{T: el teorema de MSX}), its dual is verified in the first part of the proof of
\cite{MSX}(3.12).

\begin{lemma}\label{L: de las sucesiones especiales}
For each $ i \in {\cal P}$, there is an exact sequence $$0\rightmap{}V_i\rightmap{}U_i\rightmap{}\Delta_i\rightmap{}0$$ such that $U_i$ is an indecomposable
${\cal F}(\Delta)$-projective and $V_i\in {\cal F}(\Delta)$ has a $\Delta$-filtration with factors of the form $\Delta_{j}$ with $j>i$.
\end{lemma}

We fix a family of special exact sequences, as provided by the last lemma, for the rest of this section.

\begin{lemma}\label{L: suficientes proyectivos en F(Delta)}
For each $M\in {\cal F}(\Delta)$, there is an exact sequence
$$0\rightmap{}K\rightmap{}W\rightmap{}M\rightmap{}0$$
in ${\cal F}(\Delta)$,
such that $W\cong \bigoplus_{i \in {\cal P}}n_iU_i$. Moreover, each $n_i$ is the multiplicity of the factor $\Delta_i$ in any  $\Delta$-filtration of $M$.

The family $\{U_i\}_{i\in {\cal P}}$ is a complete set of representatives of the isomorphism classes of the indecomposable ${\cal F}(\Delta)$-projective modules.
\end{lemma}

\begin{proof} Consider the  commutative diagram
$$\begin{matrix}&&0\,\,&&0\,\,&&0\,\,&&\\
&&\shortlmapdown{}&&\shortlmapdown{}&&\shortlmapdown{}&&\\
%&\,&\\
  0&\rightarrow&K_L&\rightarrow&K_M&\rightarrow&K_N&\rightarrow&0\\

  &&\shortlmapdown{}&&\shortlmapdown{}&&\shortlmapdown{}&&\\

  0&\rightarrow&W_L&\rightarrow&W_L\oplus W_N&\rightarrow&W_N&\rightarrow&0\\
  &&\shortlmapdown{}&&\shortlmapdown{}&&\shortlmapdown{}&&\\
   0&\rightarrow& L&\rightarrow&M&\rightarrow&N&\rightarrow&0\\
 &&\shortlmapdown{}&&\shortlmapdown{}&&\shortlmapdown{}&&\\
 &&0\,\,&&0\,\,&&0\,\,&&\\
  \end{matrix}$$
where the lower row is any exact sequence in ${\cal F}(\Delta)$, the first column  is an exact sequence in ${\cal F}(\Delta)$ where $W_L$ is ${\cal F}(\Delta)$-projective, the third column  is an exact sequence in ${\cal F}(\Delta)$ where $W_N$ is ${\cal F}(\Delta)$-projective, and the central row is constructed as in the horseshoe lemma. So, all the rows and columns are exact. Therefore, the exact sequence in the central column belongs to ${\cal F}(\Delta)$ and $W_L\oplus W_N$ is ${\cal F}(\Delta)$-projective.

With the preceding argument, using the exact sequences of (\ref{L: de las sucesiones especiales}), it can be shown that each $M\in {\cal F}(\Delta)$ admits an exact sequence as described in the statement of this lemma.
We recall, from \cite{P}, that the multiplicity of the factors in a $\Delta$-filtration of $M$ is independent of the filtration.

It follows that any indecomposable ${\cal F}(\Delta)$-projective is a direct  summand of some $\bigoplus_{i\in {\cal P}}n_i U_i$, so it is isomorphic to some $U_i$. Finally, the fact that the family $\{U_i\}_{i\in {\cal P}}$ consists of pairwise non-isomorphic modules is proved in \cite{MSX}(3.7).
\end{proof}

\begin{definition}\label{D: Gamma y Theta}
Define $U:=\bigoplus_{i\in {\cal P}}U_i$ and $\Gamma:=\End_\Lambda(U)^{op}$. Moreover, consider the family
$\{\Theta_i\}_{i\in {\cal P}}$ of $\Gamma$-modules given by $\Theta_i:=\Hom_\Lambda(U,\Delta_i)$, for $i\in {\cal P}$.
\end{definition}

In the following, we review part of the proof of (\ref{T: el teorema de MSX}):  that $({\cal P},\leq,\{\Theta_i\}_{i\in {\cal P}})$ is an admissible homological system
such that ${\cal F}(\Delta)$ is equivalent to ${\cal F}(\Theta)$ as exact categories.
The quasi-inverse equivalences will be realized by the appropriate  restrictions of the  functors $H:=\Hom_\Lambda(U,-):\Lambda\g\mod\rightmap{}\Gamma\g\mod$ and $T:=U\otimes_\Gamma-:\Gamma\g\mod\rightmap{}\Lambda\g\mod$.
We start with the following.

\begin{lemma}\label{L: Hom(U,-) restringe a fiel-pleno-exact de F(Delta)} The functor $H$ restricts to a full and faithful exact functor
$$H:{\cal F}(\Delta)\rightmap{}\Gamma\g\mod.$$
\end{lemma}

\begin{proof}  Consider the full subcategory $\add(U)$ of
$\Lambda\g\mod$ formed by the direct summands of finite direct sums of $U$. Then, from
\cite{ARS}(II.2.1), we know that $H$ restricts to an equivalence of categories $\add(U)\rightmap{}\add(\Gamma)$, where $\add(\Gamma)$ coincides with the full subcategory of $\Gamma\g\mod$ formed by the finitely generated projectives. Moreover, for each $W\in \add(U)$ and $N\in \Lambda\g\mod$, the same functor $H$ restricts to an isomorphism
$\Hom_\Lambda(W,N)\rightmap{}\Hom_\Gamma(H(W),H(N))$.

Given $M\in {\cal F}(\Delta)$, from (\ref{L: suficientes proyectivos en F(Delta)}) we  derive the existence of an exact sequence of the form
$$W'\rightmap{}W\rightmap{\pi}M\rightmap{}0,$$
where $W,W'\in \add(U)$. This sequence is obtained by splicing the exact sequence $0\rightmap{}K\rightmap{}W\rightmap{}M\rightmap{}0$ produced for $M$ by this lemma, with the exact sequence  $0\rightmap{}K'\rightmap{}W'\rightmap{}K\rightmap{}0$ produced for $K$ by this lemma.  Since $U$ is ${\cal F}(\Delta)$-projective,
applying $H$, we obtain an exact sequence
$$H(W')\rightmap{}H(W)\rightmap{\pi_*}H(M)\rightmap{}0.$$

Then, for any $N\in \Lambda\g\mod$, we have the commutative diagram
$$\begin{matrix}
   0&\rightarrow&\Hom_\Lambda(M,N)&\rightarrow&\Hom_\Lambda(W,N)&\rightarrow&\Hom_\Lambda(W',N)\\
   &&\shortlmapdown{}&&\shortlmapdown{}&&\shortlmapdown{}\\
   0&\rightarrow&\Hom_\Gamma(H(M),H(N))&\rightarrow&\Hom_\Gamma(H(W),H(N))&\rightarrow&\Hom_\Gamma(H(W'),H(N))\\
  \end{matrix}$$
 where the second and third vertical arrows are isomorphisms, then so is the first one. This shows that the restriction of $H$ to the category ${\cal F}(\Delta)$ is full and faithful.  This restriction is exact because $U$ is an ${\cal F}(\Delta)$-projective module.
\end{proof}

\begin{theorem}\label{T: el teorema de MSX}
Given a homological system $({{\cal P}},\leq,\{\Delta_i\}_{i\in {\cal P}})$ for $\Lambda$,
consider the algebra $\Gamma=\End_\Lambda(U)^{op}$ as before and the triple $({{\cal P}},\leq,\{\Theta_i\}_{i\in {\cal P}})$, where $\Theta_i=\Hom_\Lambda(U,\Delta_i)$, for all $i\in {\cal P}$.
Then, $({{\cal P}},\leq,\{\Theta_i\}_{i\in {\cal P}})$ is an admissible homological system for $\Gamma$ and  the functors $H=Hom_\Lambda(U,-)$ and $T=U\otimes_\Gamma-$ induce   quasi-inverse equivalences of  exact categories between ${\cal F}(\Delta)$ and ${\cal F}(\Theta)$.
\end{theorem}

\begin{proof} With the preceding notations, we consider the following natural transformations
 $\alpha:U\otimes_\Gamma H\rightmap{}id_{\Lambda\g\mod}$
and $\beta:id_{\Gamma\g\mod}\rightmap{}H(U\otimes_\Gamma-)$ given by
\begin{enumerate}
 \item For $M\in \Lambda\g\mod$, the morphism $\alpha_M:U\otimes_\Gamma H(M)\rightmap{}M$ has the recipe  $\alpha_M(u\otimes g)=g(u)$.
 \item For $N\in \Gamma\g\mod$, the morphism $\beta_N:N\rightmap{}H(U\otimes_\Gamma N)$ has the recipe  $\beta_N(n)[u]=u\otimes n$.
\end{enumerate}

The notation ${\cal F}(\Theta)$ makes sense for any family of modules. So it will be enough to show that we have quasi-inverse exact equivalences
 $${\cal F}(\Delta)\rightmap{H} {\cal F}(\Theta)\hbox { \ and \ } {\cal F}(\Theta)\rightmap{T}{\cal F}(\Delta),$$
 since this clearly implies that $({{\cal P}},\leq,\{\Theta_i\}_{i\in {\cal P}})$ is a  homological system. It is admissible because $\Gamma=\bigoplus_{i\in {\cal P}}\Theta_i\in {\cal F}(\Theta)$.

 \medskip
 \noindent{\it Step 1. $\alpha_M$ is an isomorphism, for all $M\in {\cal F}(\Delta)$.}
 \medskip

 Since $\alpha_U$ is the composition $U\otimes_\Gamma H(U)=U\otimes_\Gamma \Gamma\cong U$, we know that $\alpha_U$ is an isomorphism. It follows that $\alpha_W$ is an isomorphism, for each $W\in \add(U)$. For a given $M\in {\cal F}(\Delta)$, consider an exact sequence
 $$W'\rightmap{}W\rightmap{}M\rightmap{}0,$$
 as before, in the proof of (\ref{L: Hom(U,-) restringe a fiel-pleno-exact de F(Delta)}), with $W,W'\in \add(U)$. Then, we have a commutative diagram
 $$\begin{matrix}
   U\otimes_\Gamma H(W')&\rightarrow&U\otimes_\Gamma H(W)
   &\rightarrow&U\otimes_\Gamma H(M)&\rightarrow&0\\
   \shortlmapdown{\alpha_{W'}}&&\shortlmapdown{\alpha_W}&&\shortlmapdown{\alpha_M}&&\\
   W'&\rightarrow&W&\rightarrow&M&\rightarrow&0\\
  \end{matrix}$$
  with exact rows, and $\alpha_N$ is an isomorphism.

 \medskip
 \noindent{\it Step 2. The functor $T=U\otimes_\Gamma-$ restricts to an exact functor ${\cal F}(\Theta)\rightarrow {\cal F}(\Delta)$.}
 \medskip

 Given one of the fixed special exact sequences
 $0\rightmap{}V_i\rightmap{}U_i\rightmap{}\Delta_i\rightmap{}0$
 given by (\ref{L: de las sucesiones especiales}), we can apply the functor $H$ to this sequence and obtain an exact sequence
 $0\rightmap{}H(V_i)\rightmap{}H(U_i)\rightmap{}H(\Delta_i)\rightmap{}0,$
 thanks to (\ref{L: Hom(U,-) restringe a fiel-pleno-exact de F(Delta)}). The middle term of this exact sequence is projective in $\Gamma\g\mod$, so from the long homology sequence associated to this sequence, we obtain an exact sequence
 $$0\rightarrow\Tor_1^{\Gamma}(U,\Theta_i)\rightarrow U\otimes_\Gamma H(V_i)\rightarrow U\otimes_\Gamma H(U_i)\rightmap{}U\otimes_\Gamma H(\Delta_i)\rightarrow 0.$$
 Comparing this sequence with the original special exact sequence using the isomorphism  $\alpha$, we obtain that $\Tor^{\Gamma}_1(U,\Theta_i)=0$. So this holds for all $i\in {\cal P}$.   A simple induction argument shows that $\Tor^{\Gamma}_1(U,N)=0$, for any $N\in {\cal F}(\Theta)$.

 Now, given an exact sequence $0\rightarrow \Theta_{j}\rightarrow N\rightarrow \Theta_i\rightarrow 0$ in $\Gamma\g\mod$, since $\Tor^{\Gamma}_1(U,\Theta_i)=0$, we get an exact sequence
 $$0\rightarrow U\otimes_\Gamma \Theta_{j}\rightarrow U\otimes_\Gamma N\rightmap{}U\otimes_\Gamma \Theta_i\rightarrow 0.$$
 Thus, we have the diagram
 $$\begin{matrix}
    0&\rightarrow &U\otimes_\Gamma \Theta_{j}&\rightarrow& U\otimes_\Gamma N&\rightarrow&U\otimes_\Gamma \Theta_i&\rightarrow &0\\
    &&\shortlmapdown{\alpha_{\Delta_{j}}}&&\shortlmapdown{id}&&\shortlmapdown{\alpha_{\Delta_i}}&&\\
    0&\rightarrow& \Delta_{j}&\rightarrow& U\otimes_\Gamma N&\rightarrow&\Delta_i&\rightarrow &0,\\
   \end{matrix}$$
where the vertical arrows are isomorphisms and the middle morphisms of the lower row are defined so that the diagram commutes. Since the first row is exact, so is the second one and $U\otimes_\Gamma N\in {\cal F}(\Delta)$.

A simple induction extends the preceding argument to show that $U\otimes_\Gamma N\in {\cal F}(\Delta)$ for any $N\in {\cal F}(\Theta)$.
Moreover, if $0\rightarrow L\rightarrow M\rightarrow N\rightarrow 0$ is an exact sequence in ${\cal F}(\Theta)$, using  that $\Tor_1^{\Gamma}(U,N)=0$, we obtain that the sequence $0\rightarrow U\otimes_\Gamma L\rightarrow U\otimes_\Gamma M\rightarrow U\otimes_\Gamma N\rightarrow 0$ is exact.

\medskip
 \noindent{\it Step 3. $\beta_N$ is an isomorphism, for all $N\in {\cal F}(\Theta)$.}
 \medskip

 For $i\in {\cal P}$, we have that the following composition is the identity map
 $$\Theta_i=H(\Delta_i)\rightmap{ \ \ \beta_{\Theta_i} \ \ }
 H(U\otimes_\Gamma H(\Delta_i))
 \rightmap{ \ \ (\alpha_{\Delta_i})_* \ \ } H(\Delta_i)=\Theta_i.$$
 So, we know that $\beta_{\Theta_i}$ is an isomorphism for all $i\in {\cal P}$. Again, if we consider an exact sequence of the form
 $0\rightarrow\Theta_{j}\rightarrow N\rightarrow \Theta_{i}\rightarrow 0$ in $\Gamma\g\mod$, we have a commutative diagram with exact rows
  $$\begin{matrix}
  0&\rightarrow& \Theta_{j}&\rightarrow& N &\rightarrow&\Theta_i&\rightarrow &0,\\ &&\shortlmapdown{\beta_{\Theta_{j}}}&&\shortlmapdown{\beta_N}&&\shortlmapdown{\beta_{\Theta_i}}&&\\
    0&\rightarrow &H(U\otimes_\Gamma \Theta_{j})&\rightarrow& H(U\otimes_\Gamma N)&\rightarrow&H(U\otimes_\Gamma \Theta_i)&\rightarrow &0.\\
   \end{matrix}$$
   So $\beta_N$ is an isomorphism. A simple induction, using the preceding argument shows that $\beta_N$ is an isomorphism, for each $N\in {\cal F}(\Theta)$.

   In summary, we have shown that we can restrict the functors $H$ and $T$ to functors
   $H:{\cal F}(\Delta)\rightmap{}{\cal F}(\Theta)$ and $T:{\cal F}(\Theta)\rightmap{}{\cal F}(\Delta)$ and that these restrictions are exact. Moreover, the natural transformations $\alpha$ and $\beta$ when restricted appropriately give rise to isomorphisms of functors $\alpha:TH\rightmap{}id_{{\cal F}(\Delta)}$ and $\beta:id_{{\cal F}(\Theta)}\rightmap{}HT$. So, the restrictions
   $H$ and $T$ are quasi-inverse equivalences between the exact categories ${\cal F}(\Delta)$ and ${\cal F}(\Theta)$.
\end{proof}

Finally, we can proceed to the following.
\medskip

\noindent{\bf Proof of Theorem (\ref{T: main theorem qh-algebas}) in the general homological system case.}\\
Consider the algebra $\Gamma$, the admissible  homological system
$({\cal P},\leq,\{\Theta_i\}_{i\in {\cal P}})$ for $\Gamma$, and the equivalence functor $U\otimes_\Gamma-:{\cal F}(\Theta)\rightmap{}{\cal F}(\Delta)$ as in (\ref{T: el teorema de MSX}).

From (\ref{L: tame-wild disjointness for cats of modules}), we already know that ${\cal F}(\Delta)$ can not be simultaneously tame and wild. From (\ref{T: main theorem qh-algebas}), we know that
${\cal F}(\Theta)$ is tame or wild.

If ${\cal F}(\Theta)$ is wild, there is a $\Gamma\g k\langle x,y\rangle$-bimodule $Z$, free of finite rank as a right $k\langle x,y\rangle$-module, such that the functor
$Z\otimes_{k\langle x,y\rangle}-:k\langle x,y\rangle\g\mod\rightmap{}{\cal F}(\Theta)$ preserves indecomposables and isomorphism classes. The $\Lambda\g k\langle x,y\rangle$-bimodule $\overline{Z}:=U\otimes_\Gamma Z$ is finitely generated by the right and
 $\overline{Z}\otimes_{k\langle x,y\rangle}-:k\langle x,y\rangle\g\mod\rightmap{}{\cal F}(\Delta)$ preserves indecomposables and isomorphism classes. The bimodule $\overline{Z}$ can be converted into a new $\Lambda\g k\langle x,y\rangle$-bimodule $\widehat{Z}$, which is free of finite rank by the right, and such that
$\overline{Z}\otimes_{k\langle x,y\rangle}-:k\langle x,y\rangle\g\mod\rightmap{}{\cal F}(\Delta)$ preserves indecomposables and isomorphism classes, see \cite{D}, \cite{CB1}\S6, or \cite{BSZ}(22.17).

Now, assume that ${\cal F}(\Theta)$ is tame. In order to show that the tameness of ${\cal F}(\Theta)$ transfers to ${\cal F}(\Delta)$, it is convenient to notice the following.

Let us call a \emph{multiplicity vector} $m=\{m_i\}_{i\in {\cal P}}$ any sequence with non-negative integer entries and define $\vert\vert m\vert\vert_\Delta=\sum_im_i\dim_k\Delta_i$. Notice that, for any $d\in \hueca{N}$, there are only finitely many multiplicity vectors with
$\vert\vert m\vert\vert_\Delta=d$. Any $M\in {\cal F}(\Delta)$ has a multiplicity vector $m_M=\{m_i\}_{i\in {\cal P}}$, where $m_i$ is the multiplicity of $\Delta_i$ as a factor in any $\Delta$-filtration of $M$, which satisfies that $\vert\vert m\vert\vert_\Delta=\dim_kM$.

Thus,  we can parametrize almost every indecomposable $\Lambda$-module in ${\cal F}(\Delta)$ with dimension $d$, for all $d\in \hueca{N}$, iff we can parametrize almost every  indecomposable $\Lambda$-module in ${\cal F}(\Delta)$ with multiplicity vector $m_M=m$, for all $m$ with $\vert\vert m\vert\vert_\Delta=d$.

Then, in order to show that ${\cal F}(\Delta)$ is tame, it will be enough to show that almost every indecomposable $\Lambda$-module in ${\cal F}(\Delta)$ with given multiplicity vector $m=\{m_i\}_{i\in {\cal P}}$ can be parametrized with a finite family of $\Lambda\g\Gamma_i$-bimodules. So fix any such  multiplicity vector $m$. Since ${\cal F}(\Theta)$ is tame, there are rational algebras $\Gamma_1,\ldots,\Gamma_t$ and $\Gamma\g \Gamma_i$-bimodules $Z_1,\ldots,Z_t$ finitely generated and free by the right, such that every indecomposable $\Gamma$-module $N$ in ${\cal F}(\Theta)$ with multiplicity vector $m_N=m$ is of the form
$N\cong Z_i\otimes_{\Gamma_i}S$, for some $i$ and some simple $\Gamma_i$-module $S$.

Now take any indecomposable $M\in {\cal F}(\Delta)$ with multiplicity vector $m$, then there is some indecomposable $N\in {\cal F}(\Theta)$ with $M\cong U\otimes_\Gamma N$. Since $U\otimes_\Gamma-$ is exact and  $U\otimes_\Gamma\Theta_i\cong \Delta_i$, for all
$i\in {\cal P}$, the multiplicity vectors of $N$ and $M$ coincide: $m_N=m_M=m$. Then, for almost every such module $M$, there is an index $i$ and a simple $\Gamma_i$-module $S$ such that
$M\cong U\otimes_\Gamma N\cong U\otimes_\Gamma  Z_i\otimes_{\Gamma_i} S$. Now, the
$\Lambda\g\Gamma_i$-bimodule $\overline{Z}_i:=U\otimes_\Gamma Z_i$ is finitely generated by the right. As usual, we can replace each one of these bimodules $\overline{Z}_i$ by new $\Lambda\g\Gamma_i$-bimodules $\widehat{Z}_i$ which are finitely generated and free by the right and which parametrize almost every $\Lambda$-module parametrized by $\overline{Z}_i$. So, ${\cal F}(\Delta)$ is also tame.
\hfill$\square$
\medskip

The proof of theorem (\ref{T: Crawley para F(Delta)}) in the general homological system case is similar to the preceding one: It is reduced to the  admissible homological system case using multiplicity vectors, which are preserved by the exact equivalence $U\otimes_\Gamma-:{\cal F}(\Theta)\rightmap{}{\cal F}(\Delta)$ given by theorem (\ref{T: el teorema de MSX}).

\begin{remark} In \cite{T}, H. Treffinger constructs a family of examples of non-admissible homological systems for finite-dimensional algebras $\Lambda$ with size which can be arbitrarily large in comparison to the number
of isomorphism classes of simple $\Lambda$-modules. The preceding theorem applies to ${\cal F}(\Delta)$ for each one of these homological systems.
\end{remark}

\noindent{\bf  Acknowledgements.} The first autor thanks S. Koenig for his
invitation to visit the Institute of
Algebra
and Number Theory, Stuttgart University, in November 2015, where this
research
started. He also thanks S. Koenig and J. K\"ulshammer by helpful comments,
specially for showing him several examples of reduction functors which
appear in \cite{K}.

\hskip2cm
%%%%%%%%%%%%%%%%%%%%%%%%%%%%%%%%%%%%%%%%%%%%%%%%%%%%%%%

\vbox{\noindent R. Bautista\\
Centro de Ciencias Matem\'aticas\\
Universidad Nacional Aut\'onoma de M\'exico\\
Morelia, M\'exico\\
raymundo@matmor.unam.mx\\}

\vbox{\noindent E. P\'erez\\
Facultad de Matem\'aticas\\
Universidad Aut\'onoma de Yucat\'an\\
M\'erida, M\'exico\\
jperezt@correo.uady.mx\\}

\vbox{\noindent L. Salmer\'on\\
Centro de Ciencias  Matem\'aticas\\
Universidad Nacional Aut\'onoma de M\'exico\\
Morelia, M\'exico\\
salmeron@matmor.unam.mx\\}

\end{document}